\documentclass{amsart}
\usepackage{amsmath, amssymb, graphicx, amsthm, pst-all}

\textwidth140mm
\textheight215mm
\hoffset-0.9cm
\voffset-0.5cm

\setlength{\unitlength}{1mm}
\numberwithin{equation}{section}

\theoremstyle{plain}
\newtheorem{prop}{Proposition}[section]
\newtheorem{algo}[prop]{Algorithm}

\newtheorem{coro}[prop]{Corollary}
\newtheorem{fact}[prop]{Fact}
\newtheorem{lemm}[prop]{Lemma}
\newtheorem{ques}[prop]{Question}
\newtheorem{thrm}[prop]{Theorem}
\newtheorem*{thrm0}{Theorem 1}
\newtheorem*{claim}{{\it Claim}}

\theoremstyle{definition}
\newtheorem{defi}[prop]{Definition}
\newtheorem{nota}[prop]{Notation}
\newtheorem{rema}[prop]{Remark}
\newtheorem{exam}[prop]{Example}

\psset{unit=1mm}
\psset{dotsep=1.5pt}
\psset{dash=1.5pt 1.5pt}
\psset{linewidth=0.5pt}
\psset{arrowsize=3.5pt}
\psset{doublesep=1pt}
\psset{coilwidth=1.2mm}
\psset{coilheight=2}
\psset{coilaspect=20}
\psset{coilarm=2}
\pslabelsep=1.5pt
\newpsstyle{exist}{linestyle=dashed}
\newpsstyle{double}{linewidth=0.5pt,doubleline=true}

\newcounter{ITEM}
\newcommand\ITEM[1]{\setcounter{ITEM}{#1}\leavevmode\hbox{\rm(\roman{ITEM})}}

\renewcommand\aa{a}

\newcommand\bb{b}
\newcommand\BB{B}
\newcommand\BP[1]{B_{#1}^{\scriptscriptstyle+}}
\newcommand\BDD[1]{B_{#1}^{\oplus}}
\newcommand\BS{\mathrm{BS}}

\newcommand\cc{c}
\newcommand\CC{C}
\newcommand\cl[1]{[#1]}
\newcommand\clp[1]{[#1]^{\scriptscriptstyle+}}
\newcommand\CCt{\widetilde{C}}
\newcommand\comp{{\scriptscriptstyle\circ}\,}

\newcommand\Deltat{\widetilde{\VR(2.2,0)\smash\Delta}}

\newcommand\disj{\mathbin{\,\amalg\,}}
\renewcommand\div{\prec}
\newcommand\dive{\mathrel{\preccurlyeq}}

\newcommand\ee{e}
\newcommand\EE{E}

\newcommand\eqpR{\equiv_\RR^{\scriptscriptstyle+}}
\newcommand\EqpR[1]{\equiv_\RR^{+(#1)}}
\newcommand\ew{\varepsilon}

\newcommand\ff{f}

\let\ge=\geqslant
\renewcommand\gg{g}
\newcommand\GG{G}
\newcommand\Gr[2]{\langle#1\,\vert\,\nobreak#2\rangle}

\newcommand\hh{h}

\newcommand\ie{{\it i.e.}}
\newcommand\ii{i}
\newcommand\II{I}
\newcommand\inv{^{-1}}

\newcommand\jj{j}

\newcommand\kk{k}

\let\le=\leqslant
\newcommand\letter[1]{\underline{#1}}
\newcommand\Lg[1]{\vert#1\vert}
\newcommand\linfty{{}^\infty\hspace{-0.3ex}}
\newcommand\LO[1]{\mathrm{LO}(#1)}

\newcommand\mm{m}
\newcommand\MM{M}
\newcommand\MMt{\widetilde{M}}
\newcommand\Mon[2]{\langle#1\,\vert\,\nobreak#2\rangle^{\!\scriptscriptstyle+}}
\newcommand\mult{\succ}

\newcommand\multeR{\mathrel{\widetilde{%
\raisebox{5pt}{\hspace{1pt}}\hspace{-1.5pt}\smash\succcurlyeq}}}

\newcommand\nn{n}
\newcommand\NN{N}
\newcommand\NNNN{\mathbb{N}}
\newcommand\NNt{\widetilde{N}}

\newcommand\parity[1]{(-1)^{#1}}
\newcommand\pdots{\hspace{0.2ex}{\cdot}{\cdot}{\cdot}\hspace{0.2ex}}
\newcommand\pp{p}
\newcommand\PP{P}
\newcommand\PPo{P_{\!\!{\scriptscriptstyle<}}}
\newcommand\Pres[2]{(#1\:;#2)}

\newcommand\qq{q}

\newcommand\resp{{\it resp.\ }}

\newcommand\rev{\curvearrowright}
\newcommand\revL{\mathrel{\hspace{2ex}\widetilde{\hspace{2ex}\hspace{-4ex}}{\curvearrowright}}}
\newcommand\revLR{\revL_{\!\RR}}
\newcommand\revLRc{\mathrel{\revL_{\!\RRasmall}}}
\newcommand\Rev[1]{\mathrel{\curvearrowright^{\!(#1)}}}
\newcommand\revR{\mathrel{\curvearrowright_{\!\RR}}}
\newcommand\revRh{\mathrel{\curvearrowright_{\!\RRh}}}
\newcommand\revRopp{\mathrel{\curvearrowright_{\!\widetilde{R}}}}
\newcommand\RevR[1]{\mathrel{\curvearrowright_{\!\RR}^{\!\!(#1)}}}
\newcommand\RevRh[1]{\mathrel{\curvearrowright_{\!\RRh}^{\!\!(#1)}}}
\newcommand\rr{r}
\newcommand\RR{R}
\newcommand\RRh{\widehat\RR}
\newcommand\RRa{\VR(3,0)\smash{\raisebox{3.5ex}{\rotatebox{180}{\hbox{$\widehat{\hspace{2ex}}$}}}\hspace{-2ex}\RR}}
\newcommand\RRasmall{\hspace{0.4ex}\VR(2.3,0)\smash{\raisebox{2.55ex}{\rotatebox{180}{\hbox{$\scriptstyle\widehat{\hspace{1ex}}$}}}\hspace{-1.35ex}{\scriptstyle}\RR}}
\newcommand\RRt{\widetilde\RR}

\newcommand\sig[1]{\sigma_{\!#1}^{\relax}}
\newcommand\siginv[1]{\sigma_{\!#1}^{-1}}
\newcommand\sigg[2]{\sigma_{\!#1}^{#2}}
\renewcommand\ss{s}
\renewcommand{\SS}{S}
\newcommand{\SSh}{\widehat\SS}

\renewcommand\tt{t}
\newcommand\TT{T}
\newcommand\tta{\mathtt{a}}
\newcommand\ttb{\mathtt{b}}
\newcommand\ttc{\mathtt{c}}
\newcommand\ttd{\mathtt{d}}
\newcommand\tte{\mathtt{e}}
\newcommand\ttx{\mathtt{x}}
\newcommand\tty{\mathtt{y}}
\newcommand\ttz{\mathtt{z}}

\newcommand\uu{u}

\def\VR(#1,#2){\vrule width0pt height#1mm depth#2mm}
\newcommand\vv{v}

\newcommand\wdots{, ...\hspace{0.2ex},}
\newcommand\wit{\lambda}
\newcommand\ww{w}
\newcommand\WW{W}

\newcommand\xx{x}

\newcommand\yy{y}

\newcommand\ZZZZ{\mathbb{Z}}


\begin{document}

\author{Patrick DEHORNOY}

\address{Laboratoire de Math\'ematiques Nicolas Oresme, UMR 6139 CNRS, Universit\'e de Caen, 14032 Caen, France}
\email{dehornoy@math.unicaen.fr}
\urladdr{//www.math.unicaen.fr/\!\hbox{$\sim$}dehornoy}

\title{Monoids of $O$-type, subword reversing, and ordered groups}

\keywords{monoid presentation, subword reversing, divisibility, quasi-central element, ordered group, space of orderings, Garside theory}

\subjclass{06F15, 20M05, 20F60}

\begin{abstract}
We describe a simple scheme for constructing finitely generated monoids in which left-divisibility is a linear ordering and for practically investigating these monoids. The approach is based on subword reversing, a general method of combinatorial group theory, and connected with Garside theory, here in a non-Noetherian context. As an application we describe several families of ordered groups whose space of left-invariant orderings has an isolated point, including torus knot groups and some of their amalgamated products.
\end{abstract}

\footnote{Work partially supported by the ANR grant ANR-08-BLAN-0269-02}


\maketitle


A group~$\GG$ is left-orderable if there exists a linear ordering on~$\GG$ that is left-invariant, \ie, $\gg < \gg'$ implies $\hh\gg < \hh\gg'$ for every~$\hh$ in~$\GG$. Viewing an ordering on~$\GG$ as a subset of~$\GG \times \GG$, one equips the family~$\LO\GG$ of all left-invariant orderings of~$\GG$ with a topology induced by the product topology of~$\mathfrak{P}(\GG \times \GG)$. Then $\LO\GG$ is a compact space and, in many cases, in particular when $\GG$ is a countable non-abelian free group, $\LO\GG$ has no isolated points and it is a Cantor set~\cite{Sik, DDHPV}. By contrast, apart from the cases when $\LO\GG$ is finite and therefore discrete, as is the case for the Klein bottle group and, more generally, for the Tararin groups~\cite{Tar, KKM}, not so many examples are known when $\LO\GG$ contains isolated points. By the results of~\cite{DuD}, this happens when $\GG$ is an Artin braid group (see also \cite{Nav}), and, by those of~\cite{Nav2, Ito}, when $\GG$ is a torus knot group, \ie, a group of presentation $\langle \ttx, \tty \mid \ttx^\mm = \tty^\nn \rangle$ with $\mm, \nn \ge 2$. These results, as well as the further results of~\cite{Ito2}, use non-elementary techniques.

The aim of this paper is to observe that a number of ordered groups with similar properties, including the above ones, can be constructed easily using a monoid approach. A necessary and sufficient condition for a submonoid~$\MM$ of a group~$\GG$ to be, when $1$ is removed, the positive cone of a left-invariant ordering on~$\GG$ is that $\MM$ is what will be called \emph{of $O$-type}, namely it is cancellative, has no nontrivial invertible element, and its left-and right-divisibility relations (see Definition~\ref{D:Div}) are linear orderings. Moreover, the involved ordering is isolated in the corresponding space~$\LO\GG$ whenever $\MM$ is finitely generated. We are thus naturally led to the question of recognizing which (finite) presentations define monoids of $O$-type. 

Here we focus on presentations of a certain syntactical type called triangular. Although no complete decidability result can probably be expected, the situation is that, in practice, many cases can be successfully addressed, actually all cases in the samples we tried. The main tool we use here is subword reversing~\cite{Dfa, Dff, Dgp, Dia}, a general method of combinatorial group theory that is especially suitable for investigating divisibility in a presented monoid and provides efficient algorithms that make experiments easy. Both in the positive case (when the defined monoid is of $O$-type) and in the negative one (when it is not), the approach leads to sufficient $\Sigma_1^1$-conditions, \ie, provides effective procedures returning a result when the conditions are met but possibly running forever otherwise. At a technical level, the main new observation is that subword reversing can be useful even in a context where the traditional Noetherianity assumptions fail.

The outcome is the construction of families of finitely generated monoids of $O$-type, hence of ordered groups with isolated points in the space of left-orderings, together with algorithmic tools for analysing these structures. There is a close connection with Garside theory~\cite{Garside} as most of the mentioned examples admit a Garside element. The scheme can be summarized as follows (see Theorem~\ref{T:Main} for a more general version): 

\begin{thrm0}
A sufficient condition for a group~$\GG$ to be orderable is that 
\begin{quote}
$\GG$ admits a (finite or infinite) presentation\\
$(*)$\hfill $\Pres{\tta_1, \tta_2, ... }{\tta_1 = \tta_2 \,\ww_2\, \tta_2, \tta_2 = \tta_3 \,\ww_3\, \tta_3, ...}$\hfill\null\\
where $\ww_2, \ww_3, ...$ are words in $\{{\tta_1, \tta_2, ... }\}$ (no~$\tta_\ii\inv$) and there exists in the monoid presented by~$(*)$ an element~$\Delta$ such that there exist~$\gg_1, \gg'_1, \gg_2, \gg'_2$,... satisfying $\tta_1 \gg_1 = \Delta = \gg'_1 \tta_1$ and $\tta_\ii \Delta = \Delta \gg_\ii$ and $\Delta \tta_\ii = \gg'_\ii \Delta$ for $\ii \ge 2$.
\end{quote}
Then, the subsemigroup of~$\GG$ generated by~$\tta_1, \tta_2$,... is the positive cone of a left-invariant
ordering on~$\GG$. If $(*)$ is finite, this ordering is isolated in the space~$\LO\GG$. If $(*)$ is finite or recursive, the word problem of~$\GG$ and the decision problem of the ordering are decidable.
\end{thrm0}

Among others, the approach applies to the above mentioned torus knot groups, providing a short construction of an isolated ordering, and in particular to the group~$B_3$ of $3$-strand braids, providing one more proof of its orderability. More examples are listed in Table~\ref{T:Recap} and in Section~\ref{S:Fam} below.

\begin{table}[h]
\smaller
\begin{tabular}{ccl}
\hline
\hline

\VR(4,0)
1:&
$\Gr{\ttx,\tty}{\ttx^{\pp+1}{=}\tty^{\qq+1}}$
&$\Pres{\tta, \ttb}{\tta = \ttb(\tta^\pp\ttb)^\qq}$\\ 

&&\hspace{5mm}$\Delta = \tta^{\pp+1}$ central (Proposition~\ref{P:Fam1});\\

\VR(3.5,0)
2:
&$\Gr{\ttx, \tty, \ttz}{\ttx^{\pp+1} = \tty^{\qq+1}, \tty^{\rr + 1} = \ttz^{\ss+1}}$
&$\Pres{\tta, \ttb, \ttc}{\tta = \ttb(\tta^\pp\ttb)^\qq, 
\ttb = \ttc((\tta^\pp\ttb)^{\rr}\tta^\pp\ttc)^{\ss}}$\\

&&\hspace{5mm}$\Delta = \tta^{(\pp+1)(\rr+1)}$ central (Proposition~\ref{P:Fam4});\\

\VR(3.5,0)
3:&$\Gr{\ttx, \tty}{\ttx^{\pp+1} = (\tty (\ttx^{\rr-\pp} \tty)^\ss)^{\qq+1}}$
&$\Pres{\tta, \ttb}{\tta = \ttb(\tta^\rr \ttb)^\ss (\tta^\pp \ttb (\tta^\rr\ttb)^\ss)^\qq}$ with $\rr \ge \pp$\\

&&\hspace{5mm}$\delta = \tta$ dominating (Proposition~\ref{P:Fam5});\\

&&\hspace{5mm}is also $\Gr{\tta, \ttb, \ttc}{\tta = \ttb(\tta^\pp \ttb)^\qq, \ttb = \ttc (\tta^\rr\ttc)^\ss}$;\\

\VR(3.5,0)
4:&$\Gr{\ttx, \tty}{\ttx^{\rr+1} = (\tty\ttx^2 \tty)^{\qq+1}}$
&$\Pres{\tta, \ttb, \ttc}{\tta = \ttb\tta^2(\ttb^2\tta^2)^\qq \ttc, \ttb = \ttc (\ttb \tta^2)^\rr \ttb\tta}$\\

\VR(0,3)
&\rlap{with $\rr = 0$ or $\rr = 1$}\hspace{1.2cm}
&\hspace{5mm}$\Delta = (\ttb\tta^2)^{2\qq + \rr + 3}$ central (Proposition~\ref{P:Fam6}).\\

\hline
\hline
\end{tabular}
\caption{\sf\smaller \VR(4,0)Some groups eligible for the current approach, hence ordered with an isolated point in the space of orderings: on the right, a presentation eligible for Theorem~1 or its extensions and the involved distinguished element~$\Delta$.}
\label{T:Recap}
\end{table} 

The paper is organized as follows. In Section~\ref{S:Otype}, we introduce the notion of a monoid of $O$-type and describe its connection with ordered groups. In Section~\ref{S:Triangular}, we define triangular presentations, raise the central question, namely recognizing when a (right)-triangular presentation defines a monoid of (right)-$O$-type (and therefore leads to an ordered group), and state without proof the main technical result (``Main Lemma''), which reduces the central question to the existence of common right-multiples. Section~\ref{S:Rev} contains a brief introduction to subword reversing, with observations about the particular form it takes in the context of right-triangular presentations. In Section~\ref{S:Complete}, we establish that every right-triangular presentation is what we call complete for right-reversing, and deduce a proof of the Main Lemma. Next, we investigate in Section~\ref{S:Mult} the notions of a dominating and a quasi-central element in a monoid and, putting things together, we obtain the expected sufficient conditions for a presentation to define a monoid of $O$-type. The proof of Theorem~1 is then completed in Section~\ref{S:Groups}. Then, we report in Section~\ref{S:Exp} about some computer investigations and, in Section~\ref{S:Fam}, we describe some examples, in particular those mentioned in Table~\ref{T:Recap}. Finally, in Section~\ref{S:Limits}, we establish some negative results about the existence of triangular presentations, and gather some open questions in~Section~\ref{S:Ques}.

The author thanks A.\,Navas, L.\,Paris, C.\,Rivas, and D.\,Rolfsen for discussions about the subject of the paper.


\section{Monoids of $O$-type}
\label{S:Otype}

If $\GG$ is an orderable group and $<$ is a left-invariant ordering of~$\GG$, the positive cone~$\PPo$ of~$<$, \ie, the set $\{\gg \in \GG \mid \gg > 1\}$, is a subsemigroup of~$\GG$ satisfying $\GG = \PPo \disj \PPo{}\inv \disj \{1\}$. Conversely, if $\PP$ is a subsemigroup of~$\GG$ satisfying $\GG =\nobreak \PP \disj\nobreak \PP\inv \disj\nobreak \{1\}$, then the relation $\gg\inv \hh \in \PP$ defines a left-invariant ordering on~$\GG$ and $\PP$ is the associated positive cone. 

In the sequel, the notions of divisors and multiples will play a central role. It is convenient to consider them in the context of monoids, \ie, semigroups with a unit.

\begin{defi}
\label{D:Div} 
Assume that $\MM$ is a monoid. For $\gg, \hh$ in~$\MM$, we say that $\gg$ is a \emph{left-divisor} of~$\hh$, or, equivalently, $\hh$ is a \emph{right-multiple} of~$\gg$, denoted $\gg \dive \hh$, if there exists~$\hh'$ in~$\MM$ satisfying $\gg \hh' = \hh$. Symmetrically, we say that $\gg$ is a \emph{right-divisor} of~$\hh$, or, equivalently, $\hh$ is a \emph{left-multiple} of~$\gg$, denoted $\hh \multeR \gg$, if there exists~$\hh'$ in~$\MM$ satisfying $\hh = \hh' \gg$. 
\end{defi}
 
For every monoid~$\MM$, left- and right-divisibility are partial preorders on~$\MM$, and they are partial orders whenever $1$ is the only invertible element of~$\MM$. Note that the right-divisibility relation of a monoid~$\MM$ is the left-divisibility relation of the opposite monoid~$\MMt$, \ie, the monoid with the same domain equipped with the operation defined by $\gg \mathbin{\widetilde\cdot} \hh = \hh\gg$.

It is easy to translate the existence of an invariant ordering in a group into the language of monoids and divisibility. We recall that a monoid is called left-cancellative (\resp right-cancellative), for all~$\gg, \gg', \hh$ in the monoid, $\hh\gg = \hh\gg'$ (\resp $\gg\hh = \gg' \hh$) implies~$\gg = \gg'$. A monoid is cancellative if it is both left- and right-cancellative. The monoids we shall investigate are as follows.  

\begin{defi}
A monoid~$\MM$ is said to be \emph{of right-$O$-type} (\resp \emph{left-$O$-type}) if $\MM$ is left-cancellative (\resp right-cancellative), $1$~is the only invertible element in~$\MM$, and, for all $\gg, \hh$ in~$\MM$, at least one of $\gg \dive \hh$, $\hh \dive \gg$ (\resp $\gg \multeR \hh$, $\hh \multeR \gg$)  holds. A monoid is \emph{of $O$-type} if it is both of right- and left-$O$-type.
\end{defi}

In other words, a monoid~$\MM$ is of right-$O$-type if it is left-cancellative and left-divisibility is a linear ordering on~$\MM$, and it is of $O$-type if it is cancellative and left- and right-divisibility are linear orderings on~$\MM$. The letter $O$ stands for ``order''; it may seem strange that the notion connected with left-divisibility is called ``right-$O$-type'', but this option is natural when one thinks in terms of multiples and it is more coherent with the forthcoming terminology. The connection with ordered groups is easy.

\begin{lemm}
\label{L:Cone}
For $\GG$ a group and $\MM$ a submonoid of~$\GG$, the following are equivalent:

\ITEM1 The group~$\GG$ admits a left-invariant ordering whose positive cone is $\MM {\setminus} \{1\}$;

\ITEM2 The monoid~$\MM$ is of $O$-type.
\end{lemm}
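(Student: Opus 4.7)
The plan is to prove the two implications separately, relying on the dictionary $\gg < \hh \Leftrightarrow \gg\inv\hh \in \PPo$ between positive cones and left-invariant orderings recalled at the start of the section. For (i) $\Rightarrow$ (ii), I would start from the fact that the positive-cone hypothesis forces $\GG = \MM \cup \MM\inv$ with $\MM \cap \MM\inv = \{1\}$. Cancellativity of~$\MM$ is automatic as $\MM \subseteq \GG$, the triviality of the invertible elements is just $\MM \cap \MM\inv = \{1\}$, and the two divisibility trichotomies reduce to case analyses on the position of $\gg\inv\hh$ (\resp $\gg\hh\inv$) among $\MM$, $\MM\inv$, and $\{1\}$: for instance $\gg\inv\hh = \kk \in \MM$ translates to $\gg \dive \hh$, while $\gg\inv\hh = \kk\inv$ translates to $\hh \dive \gg$. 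This direction is essentially routine.

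The bulk of the work lies in (ii) $\Rightarrow$ (i), where the real goal is to establish the decomposition $\GG = \MM \cup \MM\inv$. First I would extract from the linearity of left-divisibility a right-Ore condition on~$\MM$: given $\gg, \hh$ in~$\MM$, the alternative $\gg \dive \hh$ or $\hh \dive \gg$ already makes one of them a common right-multiple of the pair. Combined with left-cancellativity, Ore's theorem provides a group of right fractions in which every element has the form $\mm\nn\inv$ with $\mm, \nn$ in~$\MM$; identifying this group with the subgroup of~$\GG$ generated by~$\MM$ (which one takes to equal~$\GG$, implicit in the statement), every element of~$\GG$ is of this form. Then, for $\gg = \mm\nn\inv$, the linearity of right-divisibility coming from left-$O$-type lets me place~$\gg$: $\mm \multeR \nn$ gives $\mm = \mm'\nn$ and hence $\gg = \mm' \in \MM$; $\nn \multeR \mm$ gives $\nn = \nn'\mm$ and hence $\gg = (\nn')\inv \in \MM\inv$; and $\mm = \nn$ gives $\gg = 1$. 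The complementary subsemigroup check on $\MM \setminus \{1\}$ is immediate since two non-identity elements of~$\MM$ cannot be mutual inverses (else both would be invertible in~$\MM$).

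The main obstacle will be the Ore-fraction step: ensuring that every element of~$\GG$ admits a right-fraction expression over~$\MM$. Once this is in hand, the right-divisibility trichotomy yields $\GG = \MM \cup \MM\inv$ with intersection~$\{1\}$, which, combined with the subsemigroup property, gives exactly that $\MM \setminus \{1\}$ is the positive cone of a left-invariant ordering of~$\GG$. The rest of the argument is essentially bookkeeping.
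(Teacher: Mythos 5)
Your proposal is correct and follows essentially the same route as the paper: the routine case analysis on $\gg\inv\hh$ and $\gg\hh\inv$ for \ITEM1$\Rightarrow$\ITEM2, and for \ITEM2$\Rightarrow$\ITEM1 the deduction of common right-multiples from linearity of~$\dive$, Ore's theorem to write every element of~$\GG$ as a right fraction over~$\MM$, and the $\multeR$-trichotomy to conclude $\GG = \MM \cup \MM\inv$. Your remark that one must implicitly take $\GG$ to be generated by~$\MM$ for the Ore identification is a fair observation; the paper's proof makes the same silent assumption.
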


\begin{proof}
Assume \ITEM1. Put $\PP = \MM {\setminus} \{1\}$. First, by assumption, $\MM$ is included in a group, hence it must be cancellative. Next, assume that $\gg$ is an invertible element of~$\MM$, \ie, there exists~$\hh$ in~$\MM$ satisfying $\gg \hh = 1$. If $\gg$ belongs to~$\PP$, then so does~$\hh$ and, therefore, $\gg$ belongs to~$\PP \cap \PP\inv$, contradicting the assumption that $\PP$ is a positive cone. So $1$ must be the only invertible element of~$\MM$. Now, let $\gg, \hh$ be distinct elements of~$\MM$. Then one of $\gg\inv \hh$, $\hh\inv \gg$ belongs to~$\PP$, hence to~$\MM$: in the first case, $\gg \dive \hh$ holds, in the second, $\hh \dive \gg$. Symmetrically, one of $\gg\hh\inv$, $\hh\gg\inv$ belongs to~$\PP$, hence to~$\MM$, now implying $\gg \multeR \hh$ or $\hh \multeR \gg$. So any two elements of~$\MM$ are comparable with respect to~$\dive$ and~$\multeR$. Hence $\MM$ is of $O$-type, and \ITEM1 implies~\ITEM2.

Conversely, assume that $\MM$ is of $O$-type. Put $\PP = \MM {\setminus} \{1\}$ again. Then $\PP$ is a subsemigroup of~$\GG$. The assumption that $1$ is the only invertible element in~$\MM$ implies $\PP \cap \PP\inv = \emptyset$. Next, the assumption that any two elements of~$\MM$ are comparable with respect to~$\dive$ implies {\it a fortiori} that any two of its elements admit a common right-multiple. By Ore's theorem~\cite{ClP}, this implies that $\GG$ is a group of right-fractions for~$\MM$, \ie, every element of~$\GG$ admits an expression of the form~$\gg \hh\inv$ with~$\gg, \hh$ in~$\MM$. Now, let $\ff$ be an element of~$\GG$. As said above, there exist~$\gg, \hh$ in~$\MM$ satisfying $\ff = \gg \hh\inv$. By assumption, at least one of $\gg \multeR \hh$, $\hh \multeR \gg$ holds in~$\MM$. This means that at least one of $\ff \in \MM$, $\ff \in \MM\inv$ holds. Therefore, we have $\GG = \MM \cup \MM\inv$, which is also $\GG = \PP \cup \PP\inv \cup \{1\}$. So $\PP$ is a positive cone on~$\GG$, and \ITEM2 implies~\ITEM1.
\end{proof}

It will be convenient to restate the orderability criterion of Lemma~\ref{L:Cone} in terms of presentations. A group presentation~$\Pres\SS\RR$ is called \emph{positive} \cite{Dgp} if $\RR$ a family of relations of the form $\uu = \vv$, where $\uu, \vv$ are nonempty words in the alphabet~$\SS$ (no empty word, and no letter~$\ss\inv$). Every positive presentation~$\Pres\SS\RR$ gives rise to two structures, namely a monoid, here denoted~$\Mon\SS\RR$, and a group, denoted~$\Gr\SS\RR$. Note that a monoid admits a positive presentation if and only if $1$ is the only invertible element. Also remember that, in general, the monoid~$\Mon\SS\RR$ need not embed in the group~$\Gr\SS\RR$.  

\begin{prop}
\label{P:Recipe}
A necessary and sufficient condition for a group~$\GG$ to be orderable is that 
\begin{equation*}
\mbox{$\GG$ admits a positive presentation~$\Pres\SS\RR$ such that the monoid~$\Mon\SS\RR$ is of $O$-type.} 
\end{equation*}
In this case, the subsemigroup of~$\GG$ generated by~$\SS$ is the positive cone of a left-invariant ordering on~$\GG$. If $\SS$ is finite, this ordering is an isolated point in the space~$\LO\GG$.  
\end{prop}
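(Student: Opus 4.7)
The statement decomposes into a biconditional plus two structural addenda: the explicit description of the positive cone, and isolation when $\SS$ is finite. The pivot is Lemma~\ref{L:Cone}, together with the same appeal to Ore's theorem that appears in its proof.

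For \emph{sufficiency}, write $\MM = \Mon\SS\RR$ and $\GG = \Gr\SS\RR$ and assume $\MM$ is of $O$-type. The goal is to identify $\MM$ with the submonoid of~$\GG$ generated by~$\SS$ and then invoke the direction \ITEM2 $\Rightarrow$ \ITEM1 of Lemma~\ref{L:Cone}. The only delicate point is the injectivity of the canonical map $\MM \to \GG$. Since $\MM$ is cancellative and the linearity of~$\dive$ forces any two elements of $\MM$ to share a common right-multiple (the larger one), Ore's theorem embeds $\MM$ in its group of right-fractions; the universal property of the positive presentation identifies this group with $\Gr\SS\RR$. The positive cone supplied by Lemma~\ref{L:Cone} is $\MM \setminus \{1\}$, which by construction coincides with the subsemigroup of $\GG$ generated by~$\SS$.

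For \emph{necessity}, pick any left-invariant ordering of~$\GG$, with positive cone~$\PP$, and put $\MM = \PP \cup \{1\}$. The direction \ITEM1 $\Rightarrow$ \ITEM2 of Lemma~\ref{L:Cone} says $\MM$ is of $O$-type. Having no nontrivial invertible element, $\MM$ admits a positive monoid presentation $\Pres\SS\RR$ (one may take $\SS = \MM \setminus \{1\}$ and $\RR$ the set of all valid positive equalities). Then $\Mon\SS\RR = \MM$, and the same Ore reasoning identifies $\Gr\SS\RR$ with the group of right-fractions of~$\MM$. Since $\GG = \MM \cup \MM^{-1}$ by the proof of Lemma~\ref{L:Cone}, the group $\GG$ itself is such a group of right-fractions, so $\Gr\SS\RR \cong \GG$ by universality.

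For the \emph{isolation} claim, assume $\SS$ is finite. The positive cone~$\PP$ of the ordering constructed above is the subsemigroup of~$\GG$ generated by~$\SS$. The set $U = \{<' \in \LO\GG : 1 <' \ss \text{ for every } \ss \in \SS\}$ is a finite intersection of subbasic opens, hence open. Any $<'$ in $U$ has positive cone closed under multiplication and containing~$\SS$, hence containing~$\PP$; but $\GG = \PP \sqcup \PP^{-1} \sqcup \{1\}$ then forces the positive cone of $<'$ to equal~$\PP$, so $<' = <$. Thus $\{<\} = U$ is open in~$\LO\GG$. The one delicate step shared by both directions of the biconditional is the identification of $\Gr\SS\RR$ with the group of right-fractions of~$\Mon\SS\RR$; the $O$-type hypothesis is precisely what activates Ore's theorem in this identification, and no machinery beyond that and Lemma~\ref{L:Cone} is required.
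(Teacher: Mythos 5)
Your proposal is correct and follows essentially the same route as the paper: both directions are reduced to Lemma~\ref{L:Cone}, with Ore's theorem used to identify $\Gr\SS\RR$ with the group of right-fractions of~$\Mon\SS\RR$ (hence to embed the monoid in~$\GG$), and the isolation claim is settled by observing that the set of orderings making every element of the finite set~$\SS$ positive is open and contains only the given ordering. Your write-up merely makes explicit the open set and the argument that a positive cone containing~$\PP$ must equal~$\PP$, which the paper leaves implicit.
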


\begin{proof}
Assume that $\GG$ is an orderable group. Let $\PP$ be the positive cone of a left-invariant ordering on~$\GG$, and let $\MM = \PP \cup \{1\}$. By the implication \ITEM1\,$\Rightarrow$\,\ITEM2 of Lemma~\ref{L:Cone}, the monoid~$\MM$ is of $O$-type. As $1$ is the only invertible element in~$\MM$, the latter admits a positive presentation~$\Pres\SS\RR$. As $\GG = \MM \cup \MM\inv$ holds, $\GG$ is a group of right-fractions for~$\MM$. By standard arguments, this implies that $\Pres\SS\RR$ is also a presentation of~$\GG$. 

Conversely, assume that $\GG$ admits a positive presentation~$\Pres\SS\RR$ such that the monoid $\Mon\SS\RR$ is of $O$-type. Let $\MM$ be the submonoid of~$\GG$ generated by~$\SS$, and let $\PP = \MM{\setminus}\{1\}$. As observed in the proof of Lemma~\ref{L:Cone}, Ore's theorem implies that $\Mon\SS\RR$ embeds in a group of fractions, and the latter admits the presentation~$\Pres\SS\RR$, hence is isomorphic to~$\GG$. Hence, the identity mapping on~$\SS$ induces an embedding~$\iota$ of~$\Mon\SS\RR$ into~$\GG$. Therefore, the image of~$\iota$, which is the submonoid of~$\GG$ generated by~$\SS$, hence is~$\MM$, admits the presentation~$\Pres\SS\RR$. So the assumption implies that $\MM$ is of $O$-type. Then, by the implication \ITEM2\,$\Rightarrow$\,\ITEM1 of Lemma~\ref{L:Cone}, $\PP$ is the positive cone of a left-invariant ordering on~$\GG$. 

As for the last point, the definition of the topology on the space~$\LO\GG$~\cite{Sik} implies that, if the positive cone of a left-ordering on the group~$\GG$ is generated, as a semigroup, by a finite set~$\SS$, then the ordering is an isolated point in the space~$\LO\GG$ because this ordering is the only one in which $\SS$ is positive and the set of all such orderings is open.
\end{proof}


\section{Triangular presentations}
\label{S:Triangular}

We are thus led to looking for monoids of $O$-type and, more specifically, for recognizing which presentations define monoids of $O$-type. Owing to the symmetry of the definition, we shall mainly focus on recognizing monoids of right-$O$-type and then use the criteria for the opposite presentation. Now, if a monoid~$\MM$ is of right-$O$-type and it is generated by some subset~$\SS$, then, for all $\ss, \ss'$ in~$\SS$, the elements~$\ss$ and~$\ss'$ are comparable with respect to~$\dive$, \ie, $\ss' = \ss \gg$ holds for some~$\gg$, or {\it vice versa}. In other words, some relation of the particular form $\ss' = \ss \ww$ must be satisfied in~$\MM$. We shall consider presentations in which all relations have this form (see Section~\ref{S:Limits} for a discussion about the relevance of this approach).

\begin{defi}
A semigroup relation $\uu = \vv$ is called \emph{triangular} if either $\uu$ or $\vv$ consists of a single letter.
\end{defi}

So, a triangular relation has the generic form $\ss' = \ss \ww$, where $\ss, \ss'$ belong to the reference alphabet. For instance, $\tta = \ttb\tta\ttb$ and $\ttb = \ttc^2\ttb\tta$ are typical triangular relations in the alphabet $\{\tta, \ttb, \ttc\}$. The problem we shall address now is

\begin{ques}
\label{Q:Connection}
Assume that $\Pres\SS\RR$ is a presentation consisting of triangular relations. Is the associated monoid necessarily of right-$O$-type?
\end{ques}

The following counter-example shows that a uniform positive answer is impossible.

\begin{exam}
\label{X:Abelian}
The presentation $\Pres{\tta, \ttb, \ttc}{\ttc = \tta \ttb, \ttc = \ttb\tta}$ consists of two triangular relations. The associated monoid~$\MM$ is a rank~$2$ free Abelian monoid based on~$\tta$ and~$\ttb$, and neither of~$\tta, \ttb$ is a right-multiple of the other. So $\MM$ is not of right-$O$-type.
\end{exam}

Clearly, the problem in Example~\ref{X:Abelian} is the existence of several relations $\ttc = ...$ simultaneously. We are thus led to restricting to particular families of triangular relations. If $\SS$ is a nonempty set, we denote by~$\SS^*$ the free monoid of all words in the alphabet~$\SS$. We use $\ew$ for the empty word.

\begin{defi}
\label{D:Triang}
A positive presentation $\Pres\SS\RR$ is called \emph{right-triangular} if there exist $\SS' \subseteq \SS$ and maps~$\NN$ (``next'') $: \SS' \to \SS$ and~$\CC$ (``complement'') $: \SS' \to \SS^* {\setminus}\{\ew\}$  such that $\NN$ is injective with no fixpoint and $\RR$ consists of the  relations $\NN(\ss) = \ss \CC(\ss)$ for~$\ss$ in~$\SS'$. We write $\CC^\ii(\ss)$ for $\CC(\ss) \CC(\NN(\ss)) \pdots \CC(\NN^{\ii-1}(\ss))$ when $\NN^\ii(\ss)$ is defined, and~$\RRh$ for $\RR \cup \{\NN^\ii(\ss) =\nobreak \ss \CC^\ii(\ss) \mid \ii \ge 2\}$. A \emph{left-triangular} presentation is defined symmetrically by relations $\NNt(\ss) = \widetilde\CC(\ss) \ss$. A presentation is \emph{triangular} if it is both right- and left-triangular.
\end{defi}

\begin{exam}
\label{X:Triang}
Assume $\SS = \{\tta, \ttb, \ttc\}$ and $\RR = \{\tta = \ttb\tta\ttc, \ttb = \ttc\ttb\tta\}$. Then $\Pres\SS\RR$ is a right-triangular presentation. The associated maps~$\NN$ and~$\CC$ are given by $\NN(\ttb) = \tta$, $\CC(\ttb) = \tta\ttc$, $\NN(\ttc) = \ttb$, $\CC(\ttc) = \ttb\tta$, and we have $\RRh = \RR \cup \{\tta = \ttc\ttb\tta^2\ttc\}$. The presentation~$\Pres\SS\RR$ is also left-triangular, with~$\NNt$ and~$\CCt$ defined by $\NNt(\tta) = \ttb$, $\CCt(\tta) = \ttc\ttb$, $\NNt(\ttc) = \tta$, $\CCt(\ttc) = \ttb\tta$.
\end{exam}

If $\Pres\SS\RR$ is a right-triangular presentation, the family~$\RRh$ is a sort of transitive closure of~$\RR$, and the presentations~$\Pres\SS\RR$ and $\Pres\SS\RRh$ define the same monoid and the same group. Triangular presentations can be described in terms of the left- and right-graphs~\cite{Adj, Rem}. The \emph{left-graph} (\resp \emph{right-graph}) of~$\Pres\SS\RR$ is the unoriented graph with vertex set~$\SS$ such that $\{\ss, \ss'\}$ is an edge if and only if there exists a relation $\ss... = \ss'\!...$ (\resp $...\ss = ... \ss'$) in~$\RR$. Then a presentation~$\Pres\SS\RR$ is right-triangular if it consists of triangular relations and, in addition, the left-graph of~$\Pres\SS\RR$ is a union of discrete chains. In practice, we shall be mostly interested in the case when there is only one (countable) chain, in which case there exists a (finite or infinite) subset~$\II$ of~$\ZZZZ$ such that $\SS$ is $\{\tta_\ii \mid \ii\in \II\}$ and $\RR$ consists of one relation $\tta_{\ii-1} = \tta_\ii \ww_\ii$ for each~$\ii$ in~$\II$ that is not minimal. 

Our main technical result will be a criterion for recognizing which right-triangular presentations give rise to a monoid of right-$O$-type.

\begin{prop}[Main Lemma]
\label{P:MainLemma}
Assume that $\Pres\SS\RR$ is a right-triangular presentation. Then the following are equivalent:

\ITEM1 The monoid~$\Mon\SS\RR$ is of right-$O$-type;

\ITEM2 Any two elements of~$\Mon\SS\RR$ admit a common right-multiple.
\end{prop}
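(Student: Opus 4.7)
My plan is to treat the two directions separately. The implication \ITEM1 $\Rightarrow$ \ITEM2 is immediate: if left-divisibility on $\Mon\SS\RR$ is a linear order, then any two elements $\gg, \hh$ satisfy $\gg \dive \hh$ or $\hh \dive \gg$, whence one of $\gg, \hh$ is itself a common right-multiple of the pair.

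For the converse \ITEM2 $\Rightarrow$ \ITEM1, the plan is to rely on subword reversing, to be introduced in Section~\ref{S:Rev}, together with the completeness of right-reversing for right-triangular presentations, to be established in Section~\ref{S:Complete}. I expect completeness to provide two ingredients: first, left-cancellativity of~$\Mon\SS\RR$; second, that two elements $\overline u, \overline v$ of $\Mon\SS\RR$ admit a common right-multiple if and only if right-reversing applied to~$u\inv v$ terminates, the terminal word $v' (u')\inv$ then satisfying $\overline{uv'} = \overline{vu'}$ in~$\Mon\SS\RR$. The absence of nontrivial invertible elements is automatic from the positivity of the presentation. The remaining point is then to extract the linearity of~$\dive$ from the shape of the terminal word.

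The crucial observation, specific to right-triangular presentations, is that every elementary reversing step is one-sided. Because the relations take the form $\NN(\ss) = \ss\, \CC(\ss)$ with one side a single letter, reversing a length-two word $\ss\inv\ss'$ with $\ss, \ss'$ in~$\SS$ produces a purely positive, purely negative, or empty word: positive and equal to $\CC^\ii(\ss)$ when $\ss' = \NN^\ii(\ss)$ (using a relation from~$\RRh$), negative and equal to $(\CC^\ii(\ss'))\inv$ when $\ss = \NN^\ii(\ss')$, and empty when $\ss = \ss'$. Starting from~$u\inv v$, which already has the shape ``negatives then positives'', I would argue by induction on the number of elementary steps that every intermediate word retains this shape: the rewrite at the boundary merely extends the positive part, extends the negative part, or cancels at the boundary. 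When reversing terminates, the only way no further reversible subword remains is for the terminal~$v' (u')\inv$ to satisfy $v' = \ew$ or $u' = \ew$, which translates into $\overline v \dive \overline u$ or $\overline u \dive \overline v$.

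Combining this with \ITEM2, given $\gg, \hh$ in $\Mon\SS\RR$ with representing words~$u, v$, the assumed common right-multiple forces reversing of~$u\inv v$ to terminate, and the one-sided outcome gives $\gg \dive \hh$ or $\hh \dive \gg$. Together with the left-cancellativity and the absence of nontrivial invertibles obtained above, this yields~\ITEM1. The main obstacle I anticipate is precisely the invariance argument of the previous paragraph: one must carefully track the ``negatives then positives'' shape across all elementary steps, especially when the long replacements $\CC^\ii(\ss)$ subsequently interact with further negative letters to their left. This is best formalized by organizing the whole reversing computation into a two-dimensional reversing grid and inducting on its size, with the one-sidedness of each cell's output propagating to the boundary of the grid.
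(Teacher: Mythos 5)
Your proposal is correct and follows essentially the same route as the paper: the forward direction is the same triviality, and the converse combines completeness of right-reversing for right-triangular presentations (Proposition~\ref{P:Complete}), the equivalence between the existence of common right-multiples and termination of reversing (Lemma~\ref{L:Common}), and the fact that the terminal word of a terminating reversing of~$\uu\inv\vv$ under triangular relations must be purely positive or purely negative, together with left-cancellativity extracted from completeness. The only inessential difference is that you prove the one-sidedness by tracking the negative--positive shape of the intermediate words step by step, whereas the paper's Lemma~\ref{L:Empty} derives it by an induction based on the splitting Lemma~\ref{L:Split}; both arguments are sound.
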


The proof of the Main Lemma will be completed at the end of Section~\ref{S:Complete} below. Merging the result with Proposition~\ref{P:Recipe} immediately provides the following sufficient condition of orderability.

\begin{coro}
\label{C:Criterion}
A sufficient condition for a group~$\GG$ to be orderable is that 
\begin{quote}
$\GG$ admits a triangular presentation~$\Pres\SS\RR$ such that any two elements of the monoid~$\Mon\SS\RR$ have a common right-multiple and a common left-multiple. 
\end{quote}
In this case, the subsemigroup of~$\GG$ generated by~$\SS$ is the positive cone of a left-invariant ordering on~$\GG$. If $\SS$ is finite, this ordering is an isolated point in the space~$\LO\GG$.  
\end{coro}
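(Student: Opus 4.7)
The plan is to assemble the corollary as a straightforward combination of the Main Lemma (Proposition~\ref{P:MainLemma}) with Proposition~\ref{P:Recipe}; the only genuinely new move is to observe that the Main Lemma has a symmetric left-handed version obtainable by passing to the opposite monoid.

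First I would argue that the monoid $\MM := \Mon\SS\RR$ is of $O$-type. Since $\Pres\SS\RR$ is triangular, it is in particular right-triangular, so by the hypothesis that any two elements of $\MM$ admit a common right-multiple, the Main Lemma gives that $\MM$ is of right-$O$-type. For the left-$O$-type half, I would invoke the symmetry remark made in Section~\ref{S:Otype}: the opposite monoid $\MMt$ admits the presentation obtained from $\Pres\SS\RR$ by reversing every relation, and this opposite presentation is right-triangular precisely because $\Pres\SS\RR$ is left-triangular (the maps $\NNt,\widetilde\CC$ of Definition~\ref{D:Triang} are exactly the maps $\NN,\CC$ for the opposite presentation). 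Moreover, common left-multiples in $\MM$ correspond to common right-multiples in $\MMt$. So the Main Lemma applied to $\MMt$ yields that $\MMt$ is of right-$O$-type, which is to say that $\MM$ is of left-$O$-type. Combining, $\MM$ is of $O$-type.

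Next I would feed this into Proposition~\ref{P:Recipe}. That proposition asserts that $\GG$ is orderable as soon as it admits a positive presentation whose associated monoid is of $O$-type; a triangular presentation is {\it a fortiori} positive (no $\ss\inv$ occurs in any triangular relation $\NN(\ss) = \ss\CC(\ss)$), so our $\Pres\SS\RR$ qualifies. Proposition~\ref{P:Recipe} further states that the subsemigroup of~$\GG$ generated by~$\SS$ is the positive cone of a left-invariant ordering on~$\GG$, and that this ordering is an isolated point of~$\LO\GG$ when $\SS$ is finite. This gives every conclusion of the corollary.

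The only step requiring vigilance is the symmetric invocation of the Main Lemma: one must check that \emph{left}-triangularity of $\Pres\SS\RR$ does produce a \emph{right}-triangular presentation of the opposite monoid and that the common-multiple hypothesis transfers correctly under opposition. Once that bookkeeping is written out, the rest is a direct citation chain, so I would not expect any substantive obstacle beyond ensuring that the embedding of $\MM$ into $\GG$ furnished by Ore's theorem inside the proof of Proposition~\ref{P:Recipe} really does identify the submonoid of $\GG$ generated by $\SS$ with the abstract monoid $\Mon\SS\RR$ (so that ``positive cone'' refers to the correct subset of $\GG$).
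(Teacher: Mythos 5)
Your proposal is correct and follows exactly the paper's own argument: apply the Main Lemma to $\Pres\SS\RR$ and to its opposite (right-triangular because $\Pres\SS\RR$ is left-triangular) to get that $\Mon\SS\RR$ is of $O$-type, then invoke Proposition~\ref{P:Recipe}. The extra bookkeeping you flag about opposition and about the embedding via Ore's theorem is already handled inside Lemma~\ref{L:Cone} and Proposition~\ref{P:Recipe}, so nothing further is needed.
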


\begin{proof}
By the Main Lemma, the monoid~$\Mon\SS\RR$, which admits a right-triangular presentation is of right-$O$-type, and so is the opposite monoid. Hence $\Mon\SS\RR$ is also of left-$O$-type, and therefore it is of $O$-type. Then Proposition~\ref{P:Recipe} implies that $\GG$ is orderable, with the expected explicit ordering.
\end{proof}


\section{Subword reversing}
\label{S:Rev}

We shall prove the Main Lemma by using subword reversing. In essence, subword reversing is a strategy for constructing van Kampen diagrams in a context of monoids, \ie, equivalently, for finding derivations between words, and we shall see that it is especially relevant for investigating triangular presentations (due to the special form of triangular presentations, it might well be that alternative arguments using rewrite systems or other approaches also exist, but this is not clear).

The description given below is sketchy, as we only mention the definition and the needed technical results. We refer to~\cite{Dgp, Dia} for additional motivation and explanation.

As is usual with presented groups, if $\SS$ is an alphabet, we introduce a formal copy~$\SS\inv$ of~$\SS$ consisting of one letter~$\ss\inv$ for each letter of~$\SS$. The letters of~$\SS$ are then called \emph{positive}, whereas those of~$\SS\inv$ are called \emph{negative}. Accordingly, a word in the alphabet~$\SS \cup \SS\inv$ will be called a \emph{signed $\SS$-word}, whereas a word in the alphabet~$\SS$ is called an {$\SS$-word}, or a \emph{positive} $\SS$-word if we wish to insist that there is no negative letter. If $\ww$ is a signed $\SS$-word, $\ww\inv$ denotes the word obtained from~$\ww$ by exchanging~$\ss$ and~$\ss\inv$ everywhere and reversing the order of the letters. A word of the form~$\uu\inv \vv$ with $\uu, \vv$ positive is called \emph{negative--positive}. 

\begin{defi}
\label{D:Reversing}
Assume that $\Pres\SS\RR$ a positive presentation and $\ww, \ww'$ are signed $\SS$-words. We say that $\ww$ is \emph{right-$\RR$-reversible to~$\ww'$ in one step}, denoted $\ww \RevR1 \ww'$, if either there exist $\ss, \ss'$ in~$\SS$, a relation $\ss \vv' = \ss' \vv$ of~$\RR$, and signed words~$\ww_1, \ww_2$ satisfying
\begin{equation}
\label{E:Rev1}
\ww = \ww_1 \, \ss\inv \ss' \, \ww_2
\qquad\mbox{and}\qquad
\ww'= \ww_1 \, \vv' \vv\inv \, \ww_2,
\end{equation}
or there exist $\ss$~in~$\SS$ and signed $\SS$-words~$\ww_1, \ww_2$ satisfying
\begin{equation}
\label{E:Rev2}
\ww = \ww_1 \, \ss\inv \ss \, \ww_2
\qquad\mbox{and}\qquad
\ww' = \ww_1 \, \ww_2.
\end{equation}
We say that $\ww$ is \emph{right-$\RR$-reversible to~$\ww'$ in $\nn$~steps}, denoted $\ww \RevR\nn \ww'$, if there exist $\ww_0 \wdots \ww_\nn$ satisfying $\ww_0 = \ww$, $\ww_\nn = \ww'$ and $\ww_\ii \RevR1 \ww_{\ii+1}$ for each~$\ii$. 
We write $\ww \revR \ww'$ if $\ww \RevR\nn \ww'$ holds for some~$\nn$.
\end{defi}

Note that \eqref{E:Rev2} becomes an instance of~\eqref{E:Rev1} if, for every~$\ss$ in~$\SS$, the trivial relation $\ss = \ss$ is considered to belong to~$\RR$. Right-reversing consists in replacing a negative--positive length two subword with a posit\-ive--negative word, hence somehow reversing the signs, whence the terminology. We shall often write ``reversing'' for ``right-reversing'' (except at the end of Section~\ref{S:Mult} where left-reversing, the symmetric counterpart of right-reversing, occurs). 

\begin{exam}
\label{X:Rev}
Assume $\SS = \{\tta, \ttb, \ttc\}$ and $\RR = \{\tta = \ttb\tta\ttb, \ttb = \ttc\ttb\ttc, \tta = \ttc\ttb\ttc\tta\ttb\}$. Starting for instance with $\ww =\nobreak \tta\inv \ttc\inv \tta$, we find
$$\ww = \tta\inv \underline{\ttc\inv \tta} 
\ \RevR1\  \underline{\tta\inv \ttb} \ttc \tta \ttb
\ \RevR1\  \ttb\inv \tta\inv \ttc \tta \ttb,$$
where, at each step, the reversed subword is underlined. Observe that the word obtained after two reversing steps is $\ttb\inv \ww\inv \ttb$, so that $\ww \RevR{4\nn} \ttb^{-2\nn} \ww \ttb^{2\nn}$ holds for every~$\nn$.
\end{exam}

It is useful to associate with every sequence of reversing steps a rectangular grid diagram that illustrates it (see~\cite{Dia} for full details). Assume that $\ww_0, \ww_1, ...$ is an $\RR$-reversing sequence, \ie, $\ww_\ii \RevR1 \ww_{\ii+1}$ holds for every~$\ii$. The diagram is analogous to a van Kampen diagram, and it is constructed inductively. First we associate with~$\ww_0$ a path shaped like an ascending staircase by reading~$\ww_0$ from left to right and iteratively appending a horizontal right-oriented edge labeled~$\ss$ for each letter~$\ss$, and a vertical down-oriented edge labeled~$\ss$ for each letter~$\ss\inv$. Then, assume that the diagram for~$\ww_0 \wdots \ww_\ii$ has been constructed, and $\ww_{\ii+1}$ is obtained from~$\ww_\ii$ by reversing some subword~$\ss\inv \ss'$ into~$\vv' \vv\inv$. Inductively, the subword~$\ss\inv \ss'$ corresponds to an open pattern 
\VR(8,6)\begin{picture}(15,0)(-3,5)
\pcline{->}(0.5,10)(9.5,10)
\taput{$\ss'$}
\pcline{->}(0,9.5)(0,0.5)
\tlput{$\ss$}
\end{picture}
in the diagram, and we complete it by appending new arrows forming the closed pattern
\VR(4,8)\begin{picture}(17,0)(-3,4)
\pcline{->}(0.5,10)(9.5,10)
\taput{$\ss'$}
\pcline{->}(0,9.5)(0,0.5)
\tlput{$\ss$}
\pcline{->}(0.5,0)(9.5,0)
\tbput{$\vv'$}
\pcline{->}(10,9.5)(10,0.5)
\trput{$\vv$}
\put(3,4){$\revR$}
\end{picture}. If the length~$\ell$ of~$\vv$ is more than one, the arrow labeled~$\vv$ consists of $\ell$~concatenated arrows. If $\vv$ is empty, we append a equality sign, as in 
\VR(9,8)\begin{picture}(15,0)(-3,4)
\pcline{->}(0.5,10)(9.5,10)
\taput{$\ss'$}
\pcline{->}(0,9.5)(0,0.5)
\tlput{$\ss$}
\pcline{->}(0.5,0)(9.5,0)
\tbput{$\vv'$}
\pcline[style=double](10,9)(10,1)
\put(3,4){$\revR$}
\end{picture}. 
It then follows from the inductive definition that all words $\ww_\ii$ can be read in the diagram by following the paths that connect the bottom-left corner to the top-right corner, see Figure~\ref{F:Rev}.

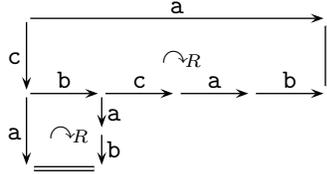
\begin{figure} [htb]
\begin{picture}(40,22)(0,2)
\pcline{->}(0.5,20)(39.5,20)
\taput{$\tta$}
\pcline{->}(0.5,10)(9.5,10)
\taput{$\ttb$}
\pcline{->}(10.5,10)(19.5,10)
\taput{$\ttc$}
\pcline{->}(20.5,10)(29.5,10)
\taput{$\tta$}
\pcline{->}(30.5,10)(39.5,10)
\taput{$\ttb$}
\pcline[style=double](1,0)(9,0)
\put(18,14){$\revR$}

\pcline{->}(0,19.5)(0,10.5)
\tlput{$\ttc$}
\pcline{->}(0,9.5)(0,0.5)
\tlput{$\tta$}
\pcline{->}(10,9.5)(10,5.5)
\trput{$\tta$}
\pcline{->}(10,4.5)(10,0.5)
\trput{$\ttb$}
\pcline[style=double](40,19)(40,11)
\put(3,4){$\revR$}
\end{picture}
\caption{\sf\smaller Reversing diagram associated with the reversing sequence of Example~\ref{X:Rev}: starting from the signed word~$\tta\inv \ttc\inv \tta$, which corresponds to the left and top arrows, we successively reverse $\ttc\inv \tta$ into $\ttb \ttc \tta \ttb$, and $\tta\inv \ttb$ into $\ttb\inv \tta\inv$, thus obtaining the final word $\ttb\inv \tta\inv \ttc \tta \ttb$.}
\label{F:Rev}
\end{figure}

If $\Pres\SS\RR$ is a positive presentation, and $\uu, \vv$ are $\SS$-words, applying iterated subword reversing to the signed word~$\uu\inv \vv$ may lead to three different behaviours:

- either the process continues for ever (as in the case of Example~\ref{X:Rev}),

- or one gets stuck with a factor~$\ss\inv \ss'$ such that $\RR$ contains no relation $\ss... = \ss'\!...$,

- or the process leads in finitely many steps to a positive--negative word~$\vv' \uu'{}\inv$ where $\uu'$ and $\vv'$ are $\SS$-words (no letter~$\ss\inv$): then the sequence cannot be extended since the last word contains no subword of the form~$\ss\inv \ss'$; this case corresponds to a reversing diagram of the form
\VR(9,8)\begin{picture}(18,0)(-3,4)
\pcline{->}(0.5,10)(11.5,10)
\taput{$\vv$}
\pcline{->}(0,9.5)(0,0.5)
\tlput{$\uu$}
\pcline{->}(0.5,0)(11.5,0)
\tbput{$\vv'$}
\pcline{->}(12,9.5)(12,0.5)
\trput{$\uu'$}
\put(4,4){$\revR$}
\end{picture}, and we shall then say that the reversing of~$\uu\inv \vv$ is \emph{terminating}.

We shall use without proof two (elementary) results about reversing. The first one connects $\RR$-reversing with $\RR$-equivalence and it expresses that a reversing diagram projects to a van Kampen diagram when the vertices connected with equality signs are identified. 

\begin{nota}
For $\Pres\SS\RR$ a positive presentation, we denote by~$\eqpR$ the smallest congruence on~$\SS^*$ that includes~$\RR$, so that $\Mon\SS\RR$ is $\SS^*\!/{\eqpR}$. For~$\ww$ an $\SS$-word, we denote by~$\clp\ww$ the $\eqpR$-class of~$\ww$, \ie, the element of the monoid~$\Mon\SS\RR$ represented by~$\ww$.
\end{nota}
 
\begin{lemm}\cite[Proposition 1.9]{Dgp}
\label{L:Equiv}
Assume that $\Pres\SS\RR$ is a positive presentation, and $\uu, \vv, \uu', \vv'$ are $\SS$-words satisfying $\uu\inv \vv \revR \vv' \uu'{}\inv$. Then $\uu \vv' \eqpR \vv \uu'$ holds. In particular, $\uu\inv \vv \revR \ew$ implies $\uu \eqpR \vv$.
\end{lemm}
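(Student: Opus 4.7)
My plan is to prove the lemma by translating the reversing sequence into a rectangular van Kampen diagram and then using that any two monotone boundary traversals from the NW corner to the SE corner read $\eqpR$-equivalent positive words. The setup is a straightforward induction on the number~$\nn$ of reversing steps.

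For the base case $\nn = 0$, I have $\uu\inv \vv = \vv' \uu'\inv$ as free words. Comparing the alternation of negative and positive letters, at most one of $\uu$ and $\vv'$ and at most one of $\vv$ and $\uu'$ can be nonempty, and inspection shows in each subcase that $\uu \vv' = \vv \uu'$ as $\SS$-words, so the conclusion is trivial. For $\nn \ge 1$, since $\uu\inv \vv$ has all its negative letters followed by all its positive letters, the first reversing step must operate on the unique $\ss\inv \ss'$ adjacency at the boundary, with $\uu = \ss \uu_2$ and $\vv = \ss' \vv_2$; it uses either a relation $\ss \vv_0' = \ss' \vv_0$ of~$\RR$ or (when $\ss = \ss'$) the trivial cancellation. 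The obstacle is that the resulting word $\uu_2\inv \vv_0' \vv_0\inv \vv_2$ is no longer of the form $(\text{neg})(\text{pos})$, so a naive recursion on~$\nn$ does not directly apply.

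I would therefore reorganize the argument diagrammatically. Every terminating reversing sequence $\uu\inv \vv \revR \vv' \uu'\inv$ can be assembled, step by step, into a rectangular grid whose outer boundary carries the labels $\vv$ (top), $\uu$ (left), $\vv'$ (bottom), $\uu'$ (right), and whose interior cells are each labeled either by a relation $\ss \vv_0' = \ss' \vv_0$ of~$\RR$ (so the two NW-to-SE paths of the cell read the two sides of this relation, hence are $\eqpR$-equivalent) or by a trivial equality~$\ss = \ss$ (so the two paths read the same word). I would then transform the top-then-right monotone boundary path, which reads $\vv \uu'$, into the left-then-bottom monotone boundary path, which reads $\uu \vv'$, by flipping one interior cell at a time: each flip swaps a right-then-down traversal of a cell for a down-then-right one, contributing exactly one elementary $\eqpR$-step. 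Concatenating these elementary equivalences yields $\vv \uu' \eqpR \uu \vv'$, as desired.

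The main obstacle I expect is not conceptual but notational: rigorously associating each reversing step with the insertion of a single new cell in a partial grid, and arranging the cell-flipping order so that the two target monotone paths are actually reached, require a careful bookkeeping that is most cleanly carried out by induction on the area of the grid (equivalently on~$\nn$), defining the grid inductively alongside the reversing sequence. The final ``in particular'' clause is immediate once the main statement is established: if $\uu\inv \vv \revR \ew$, then $\vv'$ and $\uu'$ are both empty, and the conclusion specializes to $\uu \eqpR \vv$.
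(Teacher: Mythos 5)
Your argument is correct, and it is essentially the intended one: the paper itself gives no proof of this lemma (it is quoted from~\cite[Proposition~1.9]{Dgp}), but the remark immediately following it --- that a reversing diagram ``projects to a van Kampen diagram when the vertices connected with equality signs are identified'' --- is exactly the mechanism you describe. You correctly identify the one real subtlety, namely that a naive induction on the number of reversing steps fails because the intermediate words $\ww_\ii$ are no longer negative--positive, and your remedy (read each $\ww_\ii$ as a monotone path in the partial grid, then connect the two extreme NW--SE boundary paths, reading $\vv\uu'$ and $\uu\vv'$, by single-cell flips, each flip applying one relation of $\RR$ or a trivial relation $\ss=\ss$) is the standard and correct resolution; your base case and your treatment of the cancellation cells are also fine. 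The only residue is the deferred bookkeeping, which amounts to the elementary combinatorial fact that in such a staircase region any two monotone boundary-to-boundary paths are connected by elementary face flips. If you want to avoid that entirely, note that Lemma~\ref{L:Split} (and its mirror image splitting $\uu$) lets you run a cleaner induction on $\Lg\uu+\Lg\vv$: reduce to the case $\Lg\uu=\Lg\vv=1$, where $\ss\inv\ss'\revR\vv'\uu'{}\inv$ in one step means precisely that $\ss\vv'=\ss'\uu'$ is a relation of~$\RR$ (or a trivial one), and then glue the two sub-rectangles via $\uu\vv'_1\vv'_2\eqpR\vv_1\uu_0\vv'_2\eqpR\vv_1\vv_2\uu'$ --- at the cost, of course, of first proving Lemma~\ref{L:Split}, which again comes from the same diagram.
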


In other words, the existence of a reversing diagram
\VR(8,8)\begin{picture}(19,0)(-3,4)
\pcline{->}(0.5,10)(11.5,10)
\taput{$\vv$}
\pcline{->}(0,9.5)(0,0.5)
\tlput{$\uu$}
\pcline{->}(0.5,0)(11.5,0)
\tbput{$\vv'$}
\pcline{->}(12,9.5)(12,0.5)
\trput{$\uu'$}
\put(4,4){$\revR$}
\end{picture}
implies $\uu \vv'\eqpR\nobreak \vv \uu'$, which also reads $\clp\uu \clp{\vv'} = \clp\vv \clp{\uu'}$: the element represented by~$\uu\vv'$ and~$\vv\uu'$ is a common right-multiple of~$\clp\uu$ and~$\clp\vv$ in the associated monoid. Thus subword reversing can be seen as a tool for constructing common right-multiples in presented monoids. 

The second basic result says that, when the reversing of a compound word~$\uu\inv \vv_1 \vv_2$ terminates, the reversing steps involving~$\vv_1$ and~$\vv_2$ can be separated.

\begin{lemm}\cite[Lemma~1.8]{Dgp}
\label{L:Split}
Assume that $\Pres\SS\RR$ is a positive presentation and $\uu, \vv, \uu', \vv'$ are $\SS$-words satisfying $\uu\inv \vv \RevR\nn \vv' \uu'{}\inv$. Then, for every decomposition $\vv = \vv_1 \vv_2$, there exist an $\SS$-word~$\uu_0$ and decompositions $\vv' = \vv'_1 \vv'_2$ and $\nn = \nn_1 + \nn_2$ such that  $\uu\inv \vv_1 \RevR{\nn_1} \vv'_1 \uu_0\inv$ and $\uu_0\inv \vv_2 \RevR{\nn_2} \vv'_2 \uu'{}\inv$ hold.
\end{lemm}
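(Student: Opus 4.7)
The plan is to argue by induction on the number $\nn$ of reversing steps, guided by the geometric picture of the reversing diagram. Given $\uu\inv \vv \RevR\nn \vv' \uu'{}\inv$, the associated diagram is a rectangle with $\uu$ running down the left side, $\vv$ across the top, $\vv'$ across the bottom, and $\uu'$ down the right, whose interior is tiled by the $\nn$ elementary reversing cells, filled in some order compatible with the partial order ``a cell may be filled only after its upper-left neighbour has been filled''. A decomposition $\vv = \vv_1 \vv_2$ corresponds to a vertical cut through the rectangle at the interface between the $\Lg{\vv_1}$-th and the $(\Lg{\vv_1}+1)$-th letter of the top; the word $\uu_0$ to be produced is the label read top-to-bottom along this vertical line, and the bottom splits as $\vv' = \vv'_1 \vv'_2$ at the foot of the cut.

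For the formal induction, the base case $\nn = 0$ is immediate: $\vv = \vv'$ and $\uu = \uu'$, so take $\uu_0 = \uu$, $\vv'_\ii = \vv_\ii$, and $\nn_\ii = 0$. For $\nn \ge 1$, I examine the first reversing step of the given sequence. Since $\uu\inv$ is purely negative and $\vv$ purely positive, the pattern $\ss\inv \ss'$ being reversed must straddle the boundary, with $\ss\inv$ the last letter of $\uu\inv$ and $\ss'$ the first letter of $\vv$, so we may write $\uu = \ss \tilde\uu$ and $\vv = \ss' \tilde\vv$. The step uses some relation $\ss\alpha = \ss'\beta$ in~$\RR$ (with $\alpha = \beta = \ew$ if $\ss = \ss'$), producing the word $\tilde\uu\inv \alpha \beta\inv \tilde\vv$. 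The sub-cases $\vv_1 = \ew$ (take $\uu_0 = \uu$, $\nn_1 = 0$, $\vv'_1 = \ew$, $\vv'_2 = \vv'$, $\nn_2 = \nn$) and $\vv_2 = \ew$ (take $\uu_0 = \uu'$, $\nn_2 = 0$, $\vv'_2 = \ew$, $\vv'_1 = \vv'$, $\nn_1 = \nn$) pose no difficulty.

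The main obstacle is the case where both $\vv_1$ and $\vv_2$ are nonempty: the intermediate word $\tilde\uu\inv \alpha \beta\inv \tilde\vv$ has a positive--negative block $\alpha \beta\inv$ inserted in the middle, and hence is no longer of the clean shape $\hat\uu\inv \hat\vv$ required to apply the induction hypothesis directly to the tail. The standard way around this is to reorder the $\nn$ reversing steps so that all cells lying in the leftmost $\Lg{\vv_1}$ columns of the diagram are filled before any cell in the remaining columns. This reordering preserves validity, since the only dependency between cells is the upper-left one, which is respected by a column-priority schedule. After the reordering, the first $\nn_1$ steps constitute a reversing $\uu\inv \vv_1 \RevR{\nn_1} \vv'_1 \uu_0\inv$ on the left sub-rectangle, and the remaining $\nn_2 = \nn - \nn_1$ steps constitute $\uu_0\inv \vv_2 \RevR{\nn_2} \vv'_2 \uu'{}\inv$ on the right sub-rectangle, yielding the required split. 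The technical heart of the argument is the combinatorial verification that the cells can be partitioned coherently into ``leftmost $\Lg{\vv_1}$ columns'' versus the rest, especially in the presence of relations whose right-hand sides have varying length; once this is in place, the lemma follows immediately by reading off the labels along the top, left, bottom, right, and middle vertical of the diagram.
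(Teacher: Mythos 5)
First, a point of comparison: the paper does not prove this statement at all --- it is imported from~\cite{Dgp} (Lemma~1.8 there) and used as a black box --- so your proposal has to stand on its own. As it stands, it does not. Your induction on~$\nn$ does genuine work only in the base case and in the two degenerate sub-cases $\vv_1 = \ew$ and $\vv_2 = \ew$; in the main case you abandon the induction hypothesis entirely (as you must, since $\tilde\uu\inv\alpha\beta\inv\tilde\vv$ is not of the form $\hat\uu\inv\hat\vv$) and replace it by the claim that the $\nn$ cells of the given reversing sequence can be coherently assigned to the two sides of the cut and refilled in a column-priority order. That claim \emph{is} the lemma, and you explicitly defer it (``the technical heart\dots once this is in place, the lemma follows immediately''). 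A proof whose acknowledged technical heart is omitted is an intuition, not a proof.

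Concretely, to turn the picture into an argument you would need at least the following, none of which is supplied: (i) a precise definition of the diagram attached to a \emph{given} reversing sequence and of the ``column'' of a cell, together with the inductive fact that the horizontal extent of every cell is contained in that of a single letter of the original word~$\vv$ --- this is what makes the cut between $\vv_1$ and~$\vv_2$ separate the cells into two well-defined classes despite relations whose sides have varying lengths; (ii) a local commutation (diamond) statement for reversing steps applied to disjoint factors, which is what justifies reordering the given sequence at all; note also that a cell depends on \emph{both} the cell producing its top edge and the cell producing its left edge, not on a single ``upper-left neighbour'', so the partial order you invoke is misstated (though a column-priority schedule does happen to respect the correct one); (iii) the verification that the steps assigned to the left of the cut, read in isolation, form a legal reversing sequence for the word $\uu\inv\vv_1$ that terminates in a positive--negative word, i.e.\ that the left sub-diagram closes into a full rectangle whose right side is the claimed~$\uu_0$. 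Each of these is true and provable, but together they constitute the proof rather than a routine afterthought. Alternatively one can avoid diagrams and make a direct induction on~$\nn$ work, but only after strengthening the statement to cover words such as $\tilde\uu\inv\alpha\beta\inv\tilde\vv$ that are concatenations of negative--positive blocks; some such device is unavoidable, for exactly the reason you identified.
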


In other words, every diagram 
\VR(9,8)\begin{picture}(31,0)(-3,4)
\pcline{->}(0.5,10)(11.5,10)
\taput{$\vv_1$}
\pcline{->}(12.5,10)(23.5,10)
\taput{$\vv_2$}
\pcline{->}(0,9.5)(0,0.5)
\tlput{$\uu$}
\pcline{->}(0.5,0)(23.5,0)
\tbput{$\vv'$}
\pcline{->}(24,9.5)(24,0.5)
\trput{$\uu'$}
\put(10,4){$\RevR\nn$}
\end{picture}
splits into 
\VR(2,9)\begin{picture}(38,0)(-3,4)
\pcline{->}(0.5,10)(14.5,10)
\taput{$\vv_1$}
\pcline{->}(15.5,10)(29.5,10)
\taput{$\vv_2$}
\pcline{->}(0,9.5)(0,0.5)
\tlput{$\uu$}
\pcline{->}(0.5,0)(14.5,0)
\tbput{$\vv'_1$}
\pcline{->}(15.5,0)(29.5,0)
\tbput{$\vv'_2$}
\pcline{->}(15,9.5)(15,0.5)
\trput{$\uu_0$}
\pcline{->}(30,9.5)(30,0.5)
\trput{$\uu'$}
\put(5,4){$\RevR{\nn_1}$}
\put(21,4){$\RevR{\nn_2}$}
\end{picture}.

Here comes the first specific observation about reversing with triangular relations.

\begin{lemm}
\label{L:Empty}
If $\Pres\SS\RR$ is a positive presentation consisting of triangular relations, and $\uu, \vv, \uu', \vv'$ are $\SS$-words satisfying $\uu\inv \vv \revR \vv' \uu'{}\inv$, then at least one of~$\uu', \vv'$ is empty.
\end{lemm}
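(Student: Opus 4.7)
My plan is to prove, by induction on the length of the reversing sequence, the invariant that every intermediate word has \emph{negative-positive} form, \ie, can be written as $\uu_i\inv \vv_i$ for some (possibly empty) positive $\SS$-words $\uu_i, \vv_i$. Once this invariant is established, the lemma follows immediately: the terminal word $\vv' \uu'{}\inv$ is \emph{positive-negative}, and a word that is simultaneously in both forms must be either purely positive (forcing $\uu' = \ew$) or purely negative (forcing $\vv' = \ew$).

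The inductive step relies crucially on the triangular shape of the relations. A relation $\ss \vv' = \ss' \vv$ of~$\RR$ has one side of length one, so either $\vv' = \ew$ (the relation reads $\ss = \ss' \vv$) or $\vv = \ew$ (the relation reads $\ss \vv' = \ss'$). Consequently, the replacement $\vv' \vv\inv$ used in one reversing step is either a purely negative word, a purely positive word, or empty (the last case also covering the trivial step~\eqref{E:Rev2}). Now if the current word is $\uu_i\inv \vv_i$ with both parts nonempty, the only factor of the form $\ss\inv \ss'$ sits at the sign boundary: writing $\uu_i = \ss \uu_i''$ and $\vv_i = \ss' \vv_i''$, the three subcases yield, respectively, $(\vv \uu_i'')\inv \vv_i''$, $(\uu_i'')\inv (\vv' \vv_i'')$, or $(\uu_i'')\inv \vv_i''$, each of which retains the negative-positive shape.

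The main — and essentially only — obstacle is this bookkeeping: one must check that in a negative-positive word the pattern $\ss\inv \ss'$ can appear only at the interface between the negative and positive blocks, and then verify that each of the three possible triangular replacements preserves the sign-block structure. The conclusion of the lemma then follows in one line by comparing the two shapes satisfied by the terminal word.
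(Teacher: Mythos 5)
Your proof is correct, and it takes a genuinely different route from the paper's. The paper argues by induction on the number of reversing steps, using Lemma~\ref{L:Split} to cut the diagram along a decomposition $\vv = \vv_1\vv_2$ and then running a case analysis on the two subdiagrams. You instead establish a direct structural invariant: since a triangular relation $\ss\vv' = \ss'\vv$ has $\vv = \ew$ or $\vv' = \ew$, every replacement word $\vv'\vv\inv$ is purely positive, purely negative, or empty, so each intermediate word stays negative--positive, and the unique possible factor $\ss\inv\ss'$ always sits at the sign boundary; the terminal word, being both negative--positive and positive--negative, must then be purely positive or purely negative. Your argument is shorter, avoids Lemma~\ref{L:Split} entirely, and yields a bit more: for triangular presentations the whole reversing diagram is a staircase and the position of each reversing step is forced, which makes the ``deterministic'' behaviour of triangular reversing transparent. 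The paper's version is less economical here but stays inside the general diagram-splitting framework that it needs elsewhere (e.g.\ in Lemma~\ref{L:Common}). Both proofs are complete; yours could be substituted without affecting anything downstream.
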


\begin{proof}
We use induction on the number of reversing steps, \ie, the number~$\nn$ such that $\uu\inv \vv \RevR\nn \vv' \uu'{}\inv$ holds. For $\nn = 0$, the only possibility is that $\uu, \vv, \uu', \vv'$ all are empty and the result is trivial. For $\nn = 1$, the only possibility is that $\uu$ or~$\vv$ consists of one letter, say for instance $\uu = \ss \in \SS$. Write $\vv = \ss' \ww$ with $\ss'$ in~$\SS$. If the (unique) reversing step is of the type $\ss\inv \ss \rev \ew$, we obtain $\uu' = \ew$ (and $\vv' = \ww$). Otherwise, the reversing step is either of the type $\ss\inv \ss' \rev \ww'$ with $\ss \ww' = \ss'$ a relation of~$\RR$, or of the type $\ss\inv \ss' \rev \ww'{}\inv$ with $\ss' \ww' = \ss$ a relation of~$\RR$. In the first case, we obtain $\uu' = \ew$ (and $\vv' = \ww' \ww$); in the second case, the final word~$\ww'{}\inv \ww$ is positive--negative only if $\ww$ is empty, and, in this case, we have $\vv' = \ew$ (and $\uu' = \ww'$). The argument is similar if $\vv$, instead of~$\uu$, has length one.

Assume now $\nn \ge 2$. Then at least one of the words~$\uu, \vv$ has length two or more. Assume that $\vv$ does, and write it as $\vv_1 \vv_2$ with $\vv_1, \vv_2$ nonempty. By Lemma~\ref{L:Split}, the assumption that $\uu\inv \vv_1 \vv_2$ reverses to~$\vv' \uu'{}\inv$ in $\nn$~steps implies the existence of $\SS$-words~$\vv'_1, \vv'_2, \uu_0$ and of numbers~$\nn_1, \nn_2$ satisfying
$$\vv' = \vv'_1 \vv'_2, \quad \nn = \nn_1 + \nn_2, \quad \uu\inv \vv_1 \ \RevR{\nn_1}\ \vv'_1 \uu_0\inv, \quad \mbox{and} \quad \uu_0\inv \vv_2 \ \RevR{\nn_2}\ \vv'_1 \uu'{}\inv.$$
Two cases are possible. Assume first $\nn_1 = \nn$, whence $\nn_2 = 0$. As, by assumption, $\vv_2$ is nonempty, the hypothesis that $\uu_0\inv \vv_2$ is a positive--negative word implies that $\uu_0$ is empty, and so is~$\uu'$. Assume now $\nn_1 < \nn$. The value $\nn_1 = 0$ is impossible as it would imply that $\uu$ or~$\vv_1$ is empty, contrary to the assumption. Hence we also have $\nn_2 < \nn$. Now assume that $\uu'$ is nonempty. Then, as we have $\nn_2 < \nn$, the induction hypothesis implies that $\vv'_2$ is empty. Next, $\uu'$ can be nonempty only if $\uu_0$ is nonempty. Then, as we have $\nn_1 < \nn$, the induction hypothesis implies that $\vv'_1$ is empty as well, and we conclude that $\vv'$, which is~$\vv'_1 \vv'_2$, is empty. See Figure~\ref{F:Empty}.
\end{proof}

\begin{figure}[htb]
\begin{picture}(100,10)
\psline{->}(0.5,10)(9.5,10)
\psline{->}(10.5,10)(19.5,10)
\psline{->}(0.5,0)(9.5, 0)
\psline{->}(10.5,0)(19.5,0)
\psline{->}(0,9.5)(0,0.5)
\psline[style=double](10,9)(10,1)
\psline[style=double](20,9)(20,1)

\psline{->}(40.5,10)(49.5,10)
\psline{->}(50.5,10)(59.5,10)
\psline[style=double](41,0)(49, 0)
\psline{->}(50.5,0)(59.5,0)
\psline{->}(40,9.5)(40,0.5)
\psline{->}(50,9.5)(50,0.5)
\psline[style=double](60,9)(60,1)

\psline{->}(80.5,10)(89.5,10)
\psline{->}(90.5,10)(99.5,10)
\psline[style=double](81,0)(89, 0)
\psline[style=double](91,0)(99,0)
\psline{->}(80,9.5)(80,0.5)
\psline{->}(90,9.5)(90,0.5)
\psline{->}(100,9.5)(100,0.5)
\end{picture}
\caption{\sf\smaller The three possible ways of concatenating two reversing diagrams in which one of the output words is empty: in each case, one of the final output words has to be empty.}
\label{F:Empty}
\end{figure}
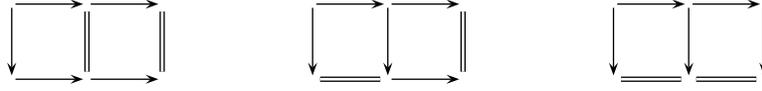

So, in the context of triangular relations, when reversing is terminating, it shows not only that the elements of the monoid represented by the initial words admit a common right-multiple, but also that these elements are comparable with respect to left-divisibility. Indeed, by Lemma~\ref{L:Equiv}, if we have $\uu\inv \vv \revR \vv'$ with~$\vv'$ a (positive) $\SS$-word, we deduce $\clp\uu \clp{\vv'} = \clp\vv$, whence $\clp\uu \dive \clp\vv$ in the monoid~$\Mon\SS\RR$ and, symmetrically, if we have $\uu\inv \vv \revR \uu'{}\inv$ with~$\uu'$ an $\SS$-word, we deduce $\clp\vv \clp{\uu'} = \clp\uu$, whence $\clp\vv \dive \clp\uu$.


\section{Completeness of subword reversing}
\label{S:Complete}

Owing to Lemma~\ref{L:Empty}, if $\Pres\SS\RR$ is a positive presentation that consists of triangular relations, in order to prove that any two elements of the monoid~$\Mon\SS\RR$ are comparable with respect to left-divisibility, it is enough to show that, for all $\SS$-words~$\uu, \vv$, there exists at least one reversing sequence from~$\uu\inv \vv$ that is terminating, \ie, is finite and finishes with a positive--negative word.

A natural situation in which reversing is guaranteed to be terminating is the case when, for every pair of letters~$\ss, \ss'$, there exists at least one relation $\ss... = \ss'\!...$ in~$\RR$ (so that one never gets stuck) and all relations~$\uu = \vv$ of~$\RR$ involve words~$\uu, \vv$ of length at most two, so that reversing does not increase the length of words. More generally, termination is guaranteed when one can identify a set of $\SS$-words~$\SSh$ including~$\SS$ so that, for all~$\uu, \vv$ in~$\SSh$, there exist~$\uu', \vv'$ in~$\SSh$ satisfying $\uu\inv \vv \revR \vv' \uu'{}\inv$ (which amounts to meet the above conditions with respect to the extended alphabet~$\SSh$). However, except in a few trivial examples, this approach fails when applied to presentations with triangular relations: usually, the closure~$\SSh$ of~$\SS$ under reversing is infinite and difficult to work with. Therefore, we must use a more subtle approach in two steps, namely  showing that 

\ITEM1 if two elements represented by words~$\uu, \vv$ admit a common right-multiple, then the reversing of~$\uu\inv \vv$ terminates, and 

\ITEM2 any two elements of the considered monoids admit a common right-multiple.\\
When this is done, Lemma~\ref{L:Empty} can be applied and one is close to concluding that the considered monoid is of right-$O$-type. In this section, we address point~\ITEM1. Here comes the second, more important observation of the paper, namely that \ITEM1 is \emph{always} true for a right-triangular presentation. Technically, the proof relies on what is known as the completeness condition.

If $\Pres\SS\RR$ is any positive presentation, then, by Lemma~\ref{L:Equiv}, $\uu\inv \vv \revR \ew$, \ie, the existence of a diagram
\VR(9,6)\begin{picture}(17,0)(-3,4)
\pcline{->}(0.5,10)(11.5,10)
\taput{$\vv$}
\pcline{->}(0,9.5)(0,0.5)
\tlput{$\uu$}
\pcline[style=double](1,0)(11,0)
\pcline[style=double](12,9)(12,1)
\put(4,4){$\revR$}
\end{picture},
implies $\uu \eqpR \vv$. We consider now the converse implication.

\begin{defi}[\bf complete]
\label{D:Complete}
 A positive presentation $\Pres\SS\RR$ is called \emph{complete for right-reversing} if, for all $\SS$-words~$\uu, \vv$,
\begin{equation}
\label{E:Complete}
\uu \eqpR \vv
\mbox{\qquad implies \qquad}
\uu\inv \vv \revR \ew. 
\end{equation}
\end{defi}

As the converse of~\eqref{E:Complete} is always true, if $\Pres\SS\RR$ is complete, \eqref{E:Complete} is an equivalence.

\begin{rema}
If there exist two letters~$\ss, \ss'$ of~$\SS$ such that, in the presentation~$\RR$, there is more than one relation of the type~$\ss... = \ss'\!...$ (including the case when there exists a relation~$\ss... = \ss...$ different from the implicit trivial relation $\ss = \ss$), then $\RR$-reversing need not be a deterministic process and, starting from some words~$\uu, \vv$, there may exist several pairs $\uu', \vv'$ satisfying $\uu\inv \vv \revR \vv' \uu'{}\inv$. According to our definitions, the condition $\uu\inv \vv \revR \ew$ involved in~\eqref{E:Complete} means that there exists \emph{at least one way} of obtaining the empty word starting from~$\uu\inv \vv$. However, this type of non-determinism never occurs with a right-triangular presentation~$\Pres\SS\RR$ or its completion $(\SS, \RRh)$: by definition, $\RRh$ contains at most one relation $\ss... = \ss'\!...$ for each pair of letters, so $\RR$- and $\RRh$-reversings are deterministic.
\end{rema} 

The intuition behind completeness is that, when a presentation is complete for right-reversing, the {\it a priori} complicated relation~$\eqpR$ can be replaced with the more simple relation~$\revR$. As explained in~\cite{Dia}, this makes recognizing some properties of the associated monoid and group easy. In our current context, in order to address point~\ITEM1 above, we are interested in connecting the existence of common multiples and termination of reversing. When the completeness condition is satisfied, this is easy.

\begin{lemm}
\label{L:Common}
Assume that $\Pres\SS\RR$ is a positive presentation that is complete for right-reversing. Then, for all~$\gg, \hh$ in~$\Mon\SS\RR$, the following are equivalent:

\ITEM1 The elements $\gg$ and $\hh$ admit a common right-multiple; 

\ITEM2 For some $\SS$-words~$\uu, \vv$ representing~$\gg$ and~$\hh$, the reversing of~$\uu\inv \vv$ is terminating; 

\ITEM3 For all $\SS$-words~$\uu, \vv$ representing~$\gg$ and~$\hh$, the reversing of~$\uu\inv \vv$ is terminating. 
\end{lemm}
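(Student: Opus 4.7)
The plan is to treat the three implications in turn, the main work lying in $\ITEM1 \Rightarrow \ITEM3$. The implication $\ITEM3 \Rightarrow \ITEM2$ is immediate since it suffices to specialize to one particular pair of representatives. For $\ITEM2 \Rightarrow \ITEM1$, if $\uu\inv \vv \revR \vv' \uu'{}\inv$ with $\vv', \uu'$ positive words, then Lemma~\ref{L:Equiv} yields $\uu \vv' \eqpR \vv \uu'$, so that $\clp{\uu\vv'}$ is a common right-multiple of $\gg = \clp\uu$ and $\hh = \clp\vv$. Note that this direction does not use the completeness hypothesis at all.

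For the substantive direction $\ITEM1 \Rightarrow \ITEM3$, I would fix arbitrary $\SS$-words $\uu, \vv$ with $\clp\uu = \gg$ and $\clp\vv = \hh$. By hypothesis $\gg$ and $\hh$ admit a common right-multiple, so there exist $\SS$-words $\qq, \pp$ with $\uu\qq \eqpR \vv\pp$. Applying the completeness hypothesis to this equivalence gives $(\uu\qq)\inv(\vv\pp) \revR \ew$, that is, $\qq\inv \uu\inv \vv \pp \revR \ew$. I then apply Lemma~\ref{L:Split} with the right-hand decomposition $\vv\pp = \vv \cdot \pp$: this produces an intermediate word $\uu_0$ with $(\uu\qq)\inv \vv \revR \uu_0\inv$ and $\uu_0\inv \pp \revR \ew$, the positive outputs being forced to be empty since the final positive word in the original reversing is $\ew$. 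Next, I invoke the symmetric variant of Lemma~\ref{L:Split}, namely the version that splits the left-hand word by cutting the reversing rectangle horizontally rather than vertically; applied to $(\uu\qq)\inv \vv \revR \uu_0\inv$ with the left decomposition $\uu\qq = \uu \cdot \qq$, this yields words $\vv_0, \uu_1$ with $\uu\inv \vv \revR \vv_0 \uu_1\inv$. This is precisely the statement that reversing of $\uu\inv \vv$ is terminating, establishing $\ITEM3$.

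The only non-routine ingredient is the left-splitting variant of Lemma~\ref{L:Split}. It is not stated explicitly in Section~\ref{S:Rev}, but the diagrammatic picture recalled there makes it clear that the same argument applies by symmetry; I would either cite it as a standard fact from~\cite{Dgp, Dia}, or state and prove it as a separate elementary lemma, its proof being that of Lemma~\ref{L:Split} after exchanging horizontal and vertical cuts in the reversing rectangle. No other difficulty arises.
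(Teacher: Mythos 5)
Your proof is correct and follows essentially the same route as the paper: the paper likewise derives $\vv'{}\inv\uu\inv\vv\uu' \revR \ew$ from completeness and applies Lemma~\ref{L:Split} twice (once in each direction) to cut out the sub-diagram that reverses $\uu\inv\vv$, which must therefore terminate. Your explicit remark that the horizontal cut requires the symmetric (left-splitting) variant of Lemma~\ref{L:Split} is a detail the paper glosses over, so it is a welcome precision rather than a gap.
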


\begin{proof}
Assume that $\gg$ and~$\hh$ admit a common right-multiple~$\ff$. By definition, there exist $\gg', \hh'$ satisfying $\ff = \gg \hh' = \hh \gg'$. Let $\uu, \vv, \uu', \vv'$ be arbitrary $\SS$-words representing~$\gg, \hh, \gg'$, and~$\hh'$. Then we have $\uu \vv' \eqpR \vv \uu'$, whence $(\uu \vv')\inv (\vv \uu') \revR\nobreak \ew$, \ie, $\vv'{}\inv \uu\inv \vv \uu' \revR \ew$ since $\Pres\SS\RR$ is complete for right-reversing. Applying Lemma~\ref{L:Split} twice, we split the reversing diagram of~$(\uu \vv')\inv (\vv \uu')$ into four diagrams:
$$\begin{picture}(30,23)
\pcline{->}(0.5,20)(14.5,20)
\taput{$\vv$}
\pcline{->}(15.5,20)(29.5,20)
\taput{$\uu'$}
\pcline{->}(0.5,10)(14.5,10)
\pcline{->}(15.5,10)(29.5,10)
\pcline[style=double](1,0)(14,0)
\pcline[style=double](16,0)(29,0)
\pcline{->}(0,19.5)(0,10.5)
\tlput{$\uu$}
\pcline{->}(0,9.5)(0,0.5)
\tlput{$\vv'$}
\pcline{->}(15,19.5)(15,10.5)
\pcline{->}(15,9.5)(15,0.5)
\pcline[style=double](30,19)(30,11)
\pcline[style=double](30,9)(30,1)
\put(5,4){$\revR$}
\put(21,4){$\revR$}
\put(5,14){$\revR$}
\put(21,14){$\revR$}
\put(32,0){.}
\end{picture}$$
Each of the four diagrams above necessarily corresponds to a terminating reversing and, in particular, the reversing of~$\uu\inv \vv$ must terminate. So \ITEM1 implies~\ITEM3. 

On the other hand, it is obvious that \ITEM3 implies~\ITEM2. Finally, by Lemma~\ref{L:Equiv}, any relation $\uu\inv \vv \revR \vv' \uu'{}\inv$ implies that the element of~$\Mon\SS\RR$ both represented by~$\uu\vv'$ and~$\vv\uu'$ is a common right-multiple of the elements represented by~$\uu$ and~$\vv$, so \ITEM2 implies~\ITEM1.
\end{proof}

Owing to Lemmas~\ref{L:Empty} and~\ref{L:Common}, we are led to wondering whether a presentation consisting of triangular relations is necessarily complete for right-reversing. A priori, the question may seem hopeless as the only method known so far for establishing that a presentation~$\Pres\SS\RR$ is complete for right-reversing \cite{Dff, Dgp} consists in establishing a certain combinatorial condition (the ``cube condition'') using an induction that is possible only when the associated monoid~$\MM$ satisfies some Noetherianity condition, namely that, for every~$\gg$ in~$\MM$, 
\begin{equation}
\label{E:Noeth}
\mbox{there is no infinite sequence $\gg_0, \gg_1, ...$ satisfying $\gg_0 \prec \gg_1 \prec \gg_2 \prec \pdots \dive \gg$,}
\end{equation} 
where $\gg \prec \hh$ means $\gg \dive \hh$ with $\gg \not= \hh$. Now, \eqref{E:Noeth} turns out to fail whenever $\RR$ contains a relation of the form $\ss = ... \ss ...$, hence in most of the cases we are interested in. However, right-triangular presentations turn out to be eligible for an alternative completeness argument. 

\begin{prop}
\label{P:Complete}
For every right-triangular presentation $\Pres\SS\RR$, the associated presentation $(\SS, \RRh)$ is complete for right-reversing.
\end{prop}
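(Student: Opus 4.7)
The plan is to establish the non-trivial direction $\uu \eqpR \vv \Rightarrow \uu\inv \vv \revRh \ew$ by showing that the relation $\sim$ on~$\SS^*$ defined by $\uu \sim \vv \iff \uu\inv \vv \revRh \ew$ is a congruence on~$\SS^*$ containing the relations of~$\RRh$. Since $\eqpR$ is the smallest congruence containing~$\RR$ and \emph{a fortiori} the smallest congruence containing~$\RRh$, this gives $\eqpR \subseteq \sim$, which is the completeness condition.

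The easy verifications are: $\sim$ contains $\RRh$ because each relation $\NN^\ii(\ss) = \ss \CC^\ii(\ss)$ gives $(\NN^\ii(\ss))\inv \ss \CC^\ii(\ss) \revRh (\CC^\ii(\ss))\inv \CC^\ii(\ss) \revRh \ew$ by one reversing step followed by letter-by-letter cancellation; reflexivity $\uu \sim \uu$ is the iterated cancellation of $\ss\inv \ss$ pairs in $\uu\inv \uu$; and compatibility with concatenation on either side follows by applying the given reversing of $\uu\inv \vv$ inside a larger word and cancelling $\ww\inv \ww$, as formalised by Lemma~\ref{L:Split}. Symmetry $\uu \sim \vv \Rightarrow \vv \sim \uu$ follows by reading each reversing step via its underlying relation $\NN^\ii(\ss) = \ss \CC^\ii(\ss)$ in the opposite direction; concretely, the reversing diagram realising $\uu\inv \vv \revRh \ew$ (a rectangle tiled by reversing cells with double-bar bottom and right edges) can be transposed into a valid reversing diagram for~$\vv\inv \uu$, each reversing cell transposing to a cell reading the same underlying relation.

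The main obstacle is transitivity: given $\uu\inv \vv \revRh \ew$ and $\vv\inv \ww \revRh \ew$, produce a reversing $\uu\inv \ww \revRh \ew$. Composing the two given reversings immediately shows $\uu\inv \vv \vv\inv \ww \revRh \ew$, but this is not the same starting word as $\uu\inv \ww$, and there is no obvious way to cancel the inserted~$\vv \vv\inv$ within the reversing process. The standard workaround---Noetherian induction on the divisibility order~$\dive$---is not available, since right-triangular presentations are typically non-Noetherian: any relation of the form $\ss = \ldots \ss \ldots$ destroys well-foundedness of~$\dive$, as the paper has already noted just before the statement.

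My plan for transitivity is to exploit two special features of the right-triangular setting: first, $\RRh$-reversing is \emph{deterministic}, since $\RRh$ contains at most one relation $\ss \ldots = \ss'\!\ldots$ per ordered pair of letters; second, every $\RRh$-relation has the very restricted shape ``letter $=$ letter $\cdot$ word'', which by Lemma~\ref{L:Empty} forces any terminating reversing to leave at least one of its two outputs empty. Using these, I intend to analyse the unique forced reversing sequence of~$\uu\inv \ww$ step by step, matching each step against the two given reversings by repeated application of Lemma~\ref{L:Split}, and inducting on the total number of reversing steps in the two given sequences. The delicate point---and the reason this step is the heart of the proof---is running this induction without a well-founded order to fall back on, relying solely on the combinatorial rigidity imposed by the triangular shape of the relations.
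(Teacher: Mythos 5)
There is a genuine gap, and it sits exactly where you locate it: transitivity of the relation $\uu \sim \vv \iff \uu\inv\vv \revRh \ew$. Everything else in your congruence framework is indeed routine (containment of~$\RRh$, reflexivity, the two compatibilities via Lemma~\ref{L:Split}, and symmetry by transposing the diagram), but the whole content of completeness is concentrated in the transitivity step, and what you offer there is a plan, not an argument. Concretely: composing the two given reversings only yields $\uu\inv\vv\vv\inv\ww \revRh \ew$, which is a statement about a different starting word, and you name no quantity that decreases in your proposed induction ``on the total number of reversing steps.'' A single $\RRh$-reversing step can create many new letters, and matching the forced steps of $\uu\inv\ww$ against the two given diagrams via Lemma~\ref{L:Split} produces subproblems whose reversing sequences are not visibly shorter than the ones you started from. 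In the standard theory this is precisely the point where the cube condition is combined with Noetherian induction on~$\dive$, and since Noetherianity fails here, you would need to exhibit a different well-founded measure; none is provided, and I see no way to extract one from determinism and Lemma~\ref{L:Empty} alone.

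The paper closes this gap by abandoning the congruence/transitivity route altogether and inducting instead on the length~$\nn$ of an $\RR$-derivation witnessing $\uu \eqpR \vv$ (with a secondary induction on $\max(\Lg\uu,\Lg\vv)$). The engine is a structural analysis of derivations specific to the triangular shape: the initial letter of a word can only move along the $\NN$-chain, one introduces the notion of a letter~$\ss$ \emph{underlying} a word and the normalization $\EE_\ss(\ww) = \ss\,\CC^\ii(\ss)\,\TT(\ww)$, and one proves (Lemmas~\ref{L:Underlying}--\ref{L:Derivation2}) that either $\II(\uu)=\II(\vv)$ with $\TT(\uu) \EqpR\nn \TT(\vv)$ (same~$\nn$, strictly shorter words), or some~$\ss$ underlies both words with $\EE_\ss(\uu) \EqpR{<\nn} \EE_\ss(\vv)$ (strictly shorter derivation). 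That dichotomy is the well-founded measure your plan is missing, and it is what lets the induction terminate without any Noetherianity of divisibility. If you want to salvage your approach, you would in effect have to reprove this dichotomy inside your transitivity argument, at which point you may as well run the paper's induction directly on $\uu \EqpR\nn \vv$.
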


The proof will be split into several steps. Until the end of the proof, we assume that $\Pres\SS\RR$ is a fixed right-triangular presentation, with associated functions~$\NN$ and~$\CC$. We recall that this means that $\RR$ consists of the relations $\NN(\ss) = \ss \CC(\ss)$ with~$\ss$~in~$\SS$. We recall also that $\CC^\ii(\ss)$ stands for $\CC(\ss) \CC(\NN(\ss)) \pdots \CC(\NN^{\ii-1}(\ss))$ whenever $\NN^\ii(\ss)$ is defined. 

By definition, if $\uu, \vv$ are $\SS$-words, then $\uu \eqpR \vv$ holds, \ie, $\uu$ and~$\vv$ represent the same element in~$\Mon\SS\RR$, if and only if there exists an \emph{$\RR$-derivation} from~$\uu$ to~$\vv$, \ie, a sequence $\ww_0 = \uu, \ww_1 \wdots \ww_\nn = \vv$ such that each word~$\ww_\kk$ is obtained from~$\ww_{\kk-1}$ by applying exactly one relation of~$\RR$. We write $\uu \EqpR\nn \vv$ when there exists a length~$\nn$ derivation from~$\uu$ to~$\vv$.

\begin{defi}
Assume that $\ww$ is a nonempty $\SS$-word. We denote by~$\II(\ww)$ the initial letter of~$\ww$, and by~$\TT(\ww)$ (like ``tail'') the subword satisfying $\ww =\nobreak \II(\ww) \TT(\ww)$. We say that a letter~$\ss$ of~$\SS$ \emph{underlies}~$\ww$ if $\II(\ww) = \NN^\ii(\ss)$ holds for some $\ii \ge 0$. In this case, we put $\EE_\ss(\ww) = \ss \CC^\ii(\ss) \TT(\ww)$; otherwise, we put $\EE_\ss(\ww) = \ww$.
\end{defi}

A straightforward induction on~$\ii$ gives $\NN^\ii(\ss) \eqpR \ss \CC^\ii(\ss)$ whenever defined, and we deduce $\ww = \NN^\ii(\ss) \TT(\ww) \eqpR \ss \CC^\ii(\ss) \TT(\ww) = \EE_\ss(\ww)$ whenever $\II(\ww) = \NN^\ii(\ss)$ holds.

We begin with a direct consequence of the definition of a right-triangular presentation.
 
\begin{lemm}
\label{L:Underlying}
Assume that $(\ww_0 \wdots \ww_\nn)$ is a sequence of $\SS$-words such that, for every~$\kk$, some letter of~$\SS$ underlies~$\ww_\kk$ and~$\ww_{\kk+1}$. Then some letter underlies all of~$\ww_0 \wdots \ww_\nn$.
\end{lemm}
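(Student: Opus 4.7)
My plan is to isolate a single structural fact about the map~$\NN$: the set of letters of~$\SS$ underlying a given word forms a chain under a natural ``ancestor'' pre-order, and then to run a short induction on~$\nn$. Introduce the pre-order on~$\SS$ defined by $\ss \preceq \ss'$ iff $\ss' = \NN^\ii(\ss)$ for some $\ii \ge 0$. Whether a letter $\ss$ underlies a word $\ww$ depends only on $\II(\ww)$, and amounts exactly to $\ss \preceq \II(\ww)$; so the set of underliers of~$\ww$ is the principal down-set $\{\ss \in \SS : \ss \preceq \II(\ww)\}$.

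The key preliminary claim is that such a down-set is $\preceq$-linearly ordered. Suppose $\ss, \ss'$ both underlie~$\ww$, so that $\NN^\ii(\ss) = \II(\ww) = \NN^\jj(\ss')$ for some $\ii, \jj \ge 0$, and assume without loss of generality that $\ii \le \jj$. Injectivity of $\NN$ allows $\ii$-fold left-cancellation, yielding $\ss = \NN^{\jj - \ii}(\ss')$, hence $\ss' \preceq \ss$. So any two underliers of a common word are $\preceq$-comparable. Injectivity of $\NN$ is the only ingredient used here; the no-fixpoint condition plays no role.

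With this in hand, I induct on~$\nn$. The case $\nn = 0$ is trivial, since $\II(\ww_0)$ itself underlies $\ww_0$ via $\ii = 0$. For the inductive step, the inductive hypothesis applied to $(\ww_0, \ldots, \ww_{\nn-1})$ furnishes a letter $\ss$ underlying every one of them, while the pair hypothesis at $\kk = \nn - 1$ furnishes a letter $\ss'$ underlying both $\ww_{\nn-1}$ and $\ww_\nn$. Both $\ss$ and $\ss'$ underlie $\ww_{\nn-1}$, so by the chain property one of $\ss \preceq \ss'$, $\ss' \preceq \ss$ holds. In the first case, $\ss \preceq \ss' \preceq \II(\ww_\nn)$ shows that $\ss$ also underlies $\ww_\nn$, so $\ss$ is a common underlier. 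In the second case, $\ss' \preceq \ss \preceq \II(\ww_\kk)$ for every $\kk \le \nn - 1$ shows that $\ss'$ underlies each earlier $\ww_\kk$, and $\ss'$ already underlies $\ww_\nn$ by hypothesis.

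The only non-routine step is the chain property, which reduces directly to injectivity of~$\NN$ as built into the definition of a right-triangular presentation; I do not expect any further obstacle, the induction itself being mere bookkeeping via transitivity of~$\preceq$.
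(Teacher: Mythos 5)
Your proof is correct and follows essentially the same route as the paper's: an induction on~$\nn$ in which the two candidate letters obtained from the induction hypothesis and from the pair hypothesis both underlie a common word, and injectivity of~$\NN$ is used to cancel and show one is an $\NN$-iterate of the other, hence underlies the whole sequence. Packaging this as a chain property of the pre-order $\ss \preceq \ss'$ iff $\ss' = \NN^\ii(\ss)$ is a clean reformulation, but the underlying argument is the same.
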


\begin{proof}
We use induction on~$\nn$. The result is obvious for $\nn \le 1$. Assume $\nn \ge 2$. By induction hypothesis, there exists~$\ss$ underlying~$\ww_0$ and~$\ww_1$, and $\ss'$ underlying $\ww_1 \wdots \ww_\nn$. So there exist~$\ii, \jj$ such that $\II(\ww_1)$ is both $\NN^\ii(\ss)$ and~$\NN^\jj(\ss')$. Assume first $\ii \le \jj$. The injectivity of the map~$\NN$ implies $\ss = \NN^{\jj-\ii}(\ss')$. Hence $\ss'$ underlies~$\ww_0$ as well, and, therefore, $\ss'$ underlies all of~$\ww_0 \wdots \ww_\nn$. The argument is similar in the case $\ii \ge \jj$, with now $\ss' = \NN^{\ii-\jj}(\ss')$ and $\ss$ underlying all of~$\ww_0 \wdots \ww_\nn$.
\end{proof}

\begin{lemm}
\label{L:Derivation1}
Assume that $\uu, \vv$ are nonempty $\SS$-words satisfying $\uu \EqpR1 \vv$. Then there exists~$\ss$ underlying~$\uu$ and~$\vv$ and, for every~$\ss$ in~$\SS$, we have $\EE_\ss(\uu) \EqpR{\le1} \EE_\ss(\vv)$. Moreover, exactly one of the following holds:

\ITEM1 we have $\II(\uu) = \II(\vv)$ and $\TT(\uu) \EqpR1 \TT(\vv)$; 

\ITEM2 we have $\II(\uu) \not= \II(\vv)$ and $\EE_\ss(\uu) =\nobreak \EE_\ss(\vv)$ for all~$\ss$ underlying $\uu$ and~$\vv$. 
\end{lemm}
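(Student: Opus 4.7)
The plan is to isolate the single relation application taking $\uu$ to $\vv$ and split on whether it touches the initial letter. Since $\uu \EqpR1 \vv$, up to swapping $\uu$ and $\vv$ there exist $\SS$-words $\xx, \yy$ and a letter $\ss_0$ in~$\SS'$ with $\uu = \xx\, \NN(\ss_0)\, \yy$ and $\vv = \xx\, \ss_0\, \CC(\ss_0)\, \yy$. If $\xx \neq \ew$, then both words start with $\II(\xx)$, this letter underlies both (at exponent~$0$), and factoring it out gives $\TT(\uu) \EqpR1 \TT(\vv)$: this matches~\ITEM1. If $\xx = \ew$, then $\II(\uu) = \NN(\ss_0) \neq \ss_0 = \II(\vv)$ by the no-fixed-point assumption, and $\ss_0$ underlies both $\uu$ (exponent~$1$) and $\vv$ (exponent~$0$): this matches~\ITEM2. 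Exclusivity of~\ITEM1 and~\ITEM2 is automatic from the dichotomy $\II(\uu) = \II(\vv)$ versus $\II(\uu) \neq \II(\vv)$.

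For the identity $\EE_\ss(\uu) = \EE_\ss(\vv)$ in case~\ITEM2 whenever $\ss$ underlies both, I would argue as follows. Write $\II(\uu) = \NN^i(\ss)$ and $\II(\vv) = \NN^j(\ss)$. The equation $\NN^i(\ss) = \NN(\ss_0) = \NN(\NN^j(\ss))$ together with injectivity of~$\NN$ forces $i = j+1$ and $\ss_0 = \NN^j(\ss)$. Unfolding the definition of~$\CC^{j+1}$ yields $\CC^{j+1}(\ss) = \CC^j(\ss)\, \CC(\ss_0)$, so both $\EE_\ss(\uu) = \ss\, \CC^{j+1}(\ss)\, \yy$ and $\EE_\ss(\vv) = \ss\, \CC^j(\ss)\, \CC(\ss_0)\, \yy$ coincide as words.

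For the auxiliary claim $\EE_\ss(\uu) \EqpR{\le 1} \EE_\ss(\vv)$ for every $\ss$ in $\SS$, I would split on which of $\uu, \vv$ the letter~$\ss$ underlies. When $\ss$ underlies neither, $\EE_\ss$ is the identity on both words and the claim reduces to the hypothesis. When $\ss$ underlies both, either case~\ITEM1 applies, and factoring out a common prefix $\ss\, \CC^i(\ss)$ reduces the claim to $\TT(\uu) \EqpR1 \TT(\vv)$; or case~\ITEM2 applies and equality is given by the preceding paragraph. The delicate subcase, which I expect to be the main obstacle, is the asymmetric one: here injectivity forces a very rigid shape. If $\ss$ underlies $\vv$ at level~$j$, then automatically $\NN^{j+1}(\ss) = \NN(\ss_0) = \II(\uu)$ shows that $\ss$ underlies $\uu$ at level $j+1$; and conversely if $\ss$ underlies $\uu$ at level $i \ge 1$, then $\ss_0 = \NN^{i-1}(\ss)$ shows that $\ss$ underlies $\vv$. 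Therefore the asymmetric situation can only occur in case~\ITEM2 with $i = 0$, that is, with $\ss = \NN(\ss_0)$, where $\EE_\ss(\uu) = \uu$ and $\EE_\ss(\vv) = \vv$, so the claim again collapses to the hypothesis $\uu \EqpR1 \vv$.
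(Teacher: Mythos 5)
Your proof is correct and follows essentially the same route as the paper's: split on whether the single relation application touches the initial letter (your dichotomy $\xx=\ew$ versus $\xx\neq\ew$ is the paper's $\pp=1$ versus $\pp\ge2$), then use injectivity of~$\NN$ and the telescoping identity $\CC^{\jj+1}(\ss)=\CC^{\jj}(\ss)\,\CC(\NN^{\jj}(\ss))$ to get the equality $\EE_\ss(\uu)=\EE_\ss(\vv)$, with the letter $\NN(\ss_0)$ isolated as the one asymmetric case where $\EE_\ss$ acts as the identity on both words. The only differences from the paper are organizational (it sorts the letters $\ss$ by whether they underlie $\uu$, you sort by the underlies-$\uu$/underlies-$\vv$ pattern), and your implicit use of unique exponents in $\NN^\ii(\ss)=\II(\ww)$ is at the same level of rigor as the paper's.
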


\begin{proof}
The assumption that $\uu \EqpR1 \vv$ holds means that there exist a number~$\pp \ge 1$ and a relation of~$\RR$ such that $\vv$ is obtained from~$\uu$ by applying that relation to its subword starting at position~$\pp$. Assume first $\pp \ge 2$. In this case, the initial letter is not changed, \ie, we have $\II(\uu) = \II(\vv)$, whereas $\TT(\vv)$ is obtained from~$\TT(\uu)$ by applying a relation of~$\RR$ at position~$\pp-1$, and $\TT(\uu) \EqpR1 \TT(\vv)$ holds. Next, underlying a word~$\ww$ depends on the initial letter of~$\ww$  only, hence, as $\uu$ and~$\vv$ have the same initial letter, the letters underlying~$\uu$ and $\vv$ coincide. Finally, let $\ss$ belong to~$\SS$. Assume first that $\II(\uu)$, which is also~$\II(\vv)$, is~$\NN^\ii(\ss)$. Then, by definition, we have $\EE_\ss(\uu) = \ss\CC^\ii(\ss) \TT(\uu)$ and $\EE_\ss(\vv) = \ss\CC^\ii(\ss) \TT(\vv)$, so that $\TT(\uu) \EqpR1 \TT(\vv)$ implies $\EE_\ss(\uu) \EqpR1 \EE_\ss(\vv)$. Otherwise, we have $\EE_\ss(\uu) = \uu$ and $\EE_\ss(\vv) = \vv$, whence $\EE_\ss(\uu) \EqpR1 \EE_\ss(\vv)$ again. Hence $\EE_\ss(\uu) \EqpR1 \EE_\ss(\vv)$ holds for every~$\ss$ in this case.

Assume now $\pp = 1$. This means that there exists~$\ss$ and~$\ww$ satisfying $\uu = \ss\CC(\ss) \ww$ and $\vv = \NN(\ss) \ww$, or {\it vice versa}. In this case, we have $\II(\uu) = \ss \not= \NN(\ss) = \II(\vv)$ and, by definition, $\ss$ underlies both~$\uu$ and~$\vv$. Now, assume that $\ss'$ is any element of~$\SS$ that underlies~$\uu$. This means that we have $\II(\uu) = \ss = \NN^\ii(\ss')$ for some~$\ii \ge 0$, and we then have $\II(\vv) = \NN(\ss) = \NN^{\ii+1}(\ss')$, so $\ss'$ underlies~$\vv$ as well. Then we find 
$$\EE_{\ss'}(\uu) = \ss' \CC^\ii(\ss') \TT(\uu) = \ss' \CC^\ii(\ss') \CC(\ss) \ww = \ss' \CC^{\ii+1}(\ss') \ww = \EE_{\ss'}(\vv).$$
On the other hand, assume that $\ss'$ is an element of~$\SS$ that does not underlie~$\uu$. Then we have $\EE_{\ss'}(\uu) = \uu$. If $\ss'$ underlies~$\vv$, owing to the fact that $\II(\vv)$ is~$\NN(\II(\uu))$ and $\NN$ is injective, the only possibility is $\ss' = \NN(\ss)$ and, in this case, we have $\EE_{\ss'}(\vv) = \vv$. If $\ss'$ does not underlie~$\vv$, by definition we have $\EE_{\ss'}(\vv) = \vv$ as well. So, in every such case, we find $\EE_{\ss'}(\uu) = \uu \EqpR1 \vv = \EE_{\ss'}(\vv)$. 

The case $\uu = \NN(\ss) \ww$, $\vv= \ss\CC(\ss) \ww$ is of course similar. Then the proof is complete since the relation $\EE_{\ss}(\uu) \EqpR{\le1} \EE_{\ss}(\vv)$ has been established for every~$\ss$ in every case. 
\end{proof}

\begin{lemm}
\label{L:Derivation2}
Assume that $\uu, \vv$ are nonempty $\SS$-words satisfying $\uu \EqpR\nn \vv$. Then at least one of the following holds:

- we have $\II(\uu) = \II(\vv)$ and $\TT(\uu) \EqpR\nn \TT(\vv)$; 

- there exists~$\ss$ underlying $\uu$ and~$\vv$ and satisfying $\EE_\ss(\uu) \EqpR{<\nn} \EE_\ss(\vv)$. 
\end{lemm}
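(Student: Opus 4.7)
The plan is to fix a derivation $\uu = \ww_0, \ww_1 \wdots \ww_\nn = \vv$ of length~$\nn$ witnessing $\uu \EqpR\nn \vv$, and to split the argument according to whether the sequence of initial letters $\II(\ww_0) \wdots \II(\ww_\nn)$ is constant along the derivation. Every $\ww_\kk$ is nonempty because every relation of~$\RR$ has nonempty sides by the definition of a right-triangular presentation, so Lemma~\ref{L:Derivation1} will apply to each of the $\nn$ single-step transitions.

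First I would treat the case when $\II(\ww_\kk) = \II(\ww_{\kk+1})$ holds for every $\kk < \nn$. Then each step falls in the first alternative of Lemma~\ref{L:Derivation1}, yielding $\TT(\ww_\kk) \EqpR1 \TT(\ww_{\kk+1})$, and concatenating these $\nn$ tail-steps gives $\TT(\uu) \EqpR\nn \TT(\vv)$ together with $\II(\uu) = \II(\vv)$, which is the first alternative of the statement.

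Otherwise some index~$\kk_0 < \nn$ satisfies $\II(\ww_{\kk_0}) \neq \II(\ww_{\kk_0+1})$. The first assertion of Lemma~\ref{L:Derivation1} supplies a common underlying letter for each consecutive pair $(\ww_\kk, \ww_{\kk+1})$, so Lemma~\ref{L:Underlying} furnishes a single letter~$\ss$ underlying every word $\ww_0 \wdots \ww_\nn$, in particular $\uu$ and~$\vv$. A second invocation of Lemma~\ref{L:Derivation1}, now applied to each step, gives two subcases. When $\II(\ww_\kk) = \II(\ww_{\kk+1})$, writing this common initial letter as~$\NN^\ii(\ss)$, the words $\EE_\ss(\ww_\kk) = \ss \CC^\ii(\ss) \TT(\ww_\kk)$ and $\EE_\ss(\ww_{\kk+1}) = \ss \CC^\ii(\ss) \TT(\ww_{\kk+1})$ are related by a single relation. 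When $\II(\ww_\kk) \neq \II(\ww_{\kk+1})$, the second alternative of Lemma~\ref{L:Derivation1} yields $\EE_\ss(\ww_\kk) = \EE_\ss(\ww_{\kk+1})$, since $\ss$ underlies both. Summing the $\nn$ step-costs, the total length of the resulting derivation from $\EE_\ss(\uu)$ to $\EE_\ss(\vv)$ is at most $\nn$ minus the number of indices where the initial letter changes, hence at most $\nn - 1$ because of~$\kk_0$. This gives $\EE_\ss(\uu) \EqpR{<\nn} \EE_\ss(\vv)$, namely the second alternative.

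The main delicate point I anticipate is the uniform choice of~$\ss$ in the second case, since Lemma~\ref{L:Derivation1} only guarantees a common underlying letter one step at a time; however, this is precisely the role of Lemma~\ref{L:Underlying}, which propagates a single underlying letter through the whole derivation once it is fed the pairwise common underlying letters from Lemma~\ref{L:Derivation1}. Everything else reduces to step-by-step accounting along the fixed derivation.
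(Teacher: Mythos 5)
Your proposal is correct and follows essentially the same route as the paper: the same case split on whether the initial letter stays constant along a fixed derivation, the same use of Lemma~\ref{L:Underlying} to propagate a single underlying letter through the whole sequence, and the same step-by-step accounting showing that each change of initial letter contributes an equality $\EE_\ss(\ww_\kk) = \EE_\ss(\ww_{\kk+1})$ and hence drops the total length below~$\nn$. No gaps to report.
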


\begin{proof}
Let $(\ww_0 \wdots \ww_\nn)$ be an $\RR$-derivation from~$\uu$ to~$\vv$. Two cases are possible. Assume first that the initial letter never changes in the considered derivation, \ie, $\II(\ww_\kk) = \II(\uu)$ holds for every~$\kk$. Then all one step derivations $(\ww_\kk, \ww_{\kk+1})$ correspond to case~\ITEM1 in Lemma~\ref{L:Derivation1}. The latter implies $\II(\ww_\kk) = \II(\ww_{\kk+1})$ and $\TT(\ww_\kk) \EqpR1 \TT(\ww_{\kk+1})$ for every~$\kk$, whence $\II(\uu) = \II(\vv)$ and $\TT(\uu) \EqpR\nn \TT(\vv)$.

Assume now that the initial letter changes at least once in $(\ww_0 \wdots \ww_\nn)$, say $\II(\ww_\ii) \not=\nobreak \II(\ww_{\ii+1})$. First, Lemma~\ref{L:Derivation1} together with Lemma~\ref{L:Underlying} implies the existence of~$\ss$ in~$\SS$ that underlies~$\ww_\kk$ for every~$\kk$. Next, each one step derivation $(\ww_\ii, \ww_{\ii+1})$ corresponds to case~\ITEM2 in Lemma~\ref{L:Derivation1}. So, as $\ss$ underlies~$\ww_\ii$ and~$\ww_{\ii+1}$, we have $\EE_\ss(\ww_\ii) = \EE_\ss(\ww_{\ii+1})$. On the other hand, by Lemma~\ref{L:Derivation1} again, we have $\EE_\ss(\ww_\kk) \EqpR{\le1} \EE_\ss(\ww_{\kk+1})$ for $\kk \not= \ii$, so, summing up, we obtain $\EE_\ss(\uu) \EqpR{<\nn} \EE_\ss(\vv)$ for this particular choice of~$\ss$.
\end{proof}

We can now complete the argument establishing that the presentation $(\SS, \RRh)$ is complete for right-reversing. We denote by~$\Lg\ww$ the length (number of letters) of a word~$\ww$.

\begin{proof}[Proof of Proposition~\ref{P:Complete}]
We show using induction on~$\nn \ge 0$ and, for a given value of~$\nn$, on $\max(\Lg\uu, \Lg\vv)$, that $\uu \EqpR\nn \vv$ implies $\uu\inv \vv \revRh \ew$. 

Assume first $\nn = 0$. Then the assumption implies $\uu = \vv$, in which case $\uu\inv \vv$ reverses to the empty word by $\Lg\uu$ successive deletions of subwords~$\ss\inv \ss$.

Assume now $\nn \ge 1$. Then $\uu$ and~$\vv$ must be nonempty. Assume first that $\II(\uu) =\nobreak \II(\vv)$ and $\TT(\uu) \EqpR\nn \TT(\vv)$ hold.
By definition, we have 
$$\max(\Lg{\TT(\uu)}, \Lg{\TT(\vv)}) =\nobreak \max(\Lg\uu, \Lg\vv) -1,$$
so the induction hypothesis implies $\TT(\uu)\inv \TT(\vv) \revRh \ew$. On the other hand, as $\II(\uu)$ and $\II(\vv)$ are equal, we have
$$\uu\inv \vv = \TT(\uu)\inv \II(\uu)\inv \II(\vv) \TT(\vv) \RevRh1 \TT(\uu)\inv \TT(\vv).$$
By transitivity of reversing, we deduce $\uu\inv \vv \revRh \ew$.

Assume now that $\II(\uu) =\nobreak \II(\vv)$ and $\TT(\uu) \EqpR\nn \TT(\vv)$ do not hold. Then, by Lemma~\ref{L:Derivation2}, there must exist~$\ss$ such that $\ss$ underlies~$\uu$ and~$\vv$ and $\EE_\ss(\uu) \EqpR{\nn'} \EE_\ss(\vv)$ holds for some~$\nn' <\nobreak \nn$. Then the induction hypothesis implies $\EE_\ss(\uu)\inv \EE_\ss(\vv) \revRh\nobreak \ew$. Write $\ss_\kk$ for~$\NN^\kk(\ss)$. As $\ss$ underlies~$\uu$ and~$\vv$, there exist~$\ii, \jj$ satisfying $\II(\uu) = \ss_\ii$ and $\II(\vv) = \ss_\jj$. Assume for instance $\ii \le \jj$. By definition, we have $\EE_\ss(\uu) = \ss \CC^\ii(\ss) \TT(\uu)$ and $\EE_\ss(\vv) =\nobreak \ss \CC^\jj(\ss) \TT(\vv) = \ss \CC^\ii(\ss) \CC^{\jj-\ii}(\ss_\ii) \TT(\vv)$, so that, if $\ell$ is the length of the word~$\ss \CC^\ii(\ss)$, the first $\ell$~steps in any reversing sequence starting from $\EE_\ss(\uu)\inv \EE_\ss(\vv)$ must be
\begin{align*}
\EE_\ss(\uu)\inv \EE_\ss(\vv) 
= \TT(\uu)\inv \CC^\ii(\ss)\inv \ss\inv \ss \CC^\ii(\ss) 
&\CC^{\jj-\ii}(\ss_\ii) \TT(\vv) \\
&\RevRh{\ell}\  \TT(\uu)\inv \CC^{\jj-\ii}(\ss_\ii) \TT(\vv).
\end{align*}
It follows that the relation $\EE_\ss(\uu)\inv \EE_\ss(\vv) \revRh\nobreak \ew$ deduced above from the induction hypothesis implies 
\begin{equation}
\label{E:Derivation2}
\TT(\uu)\inv \CC^{\jj-\ii}(\ss_\ii) \TT(\vv) \ \revRh\  \ew.
\end{equation}
Now, let us consider the $\RRh$-reversing of~$\uu\inv \vv$, \ie, of $\TT(\uu)\inv \ss_\ii\inv \ss_\jj \TT(\vv)$. By definition, the only relation of~$\RRh$ of the form $\ss_\ii ... = \ss_\jj ...$ is $\ss_\ii \CC^{\jj-\ii}(\ss_\ii) = \ss_\jj$, so the first step in the reversing must be $\TT(\uu)\inv \ss_\ii\inv \ss_\jj \TT(\vv) \revRh \TT(\uu)\inv \CC^{\jj-\ii}(\ss_\ii) \TT(\vv)$. Concatenating this with~\eqref{E:Derivation2}, we deduce $\uu\inv \vv \revRh \ew$ again, which completes the induction.
\end{proof}

With the above completeness at hand, we are now ready for assembling pieces and establishing the Main Lemma (Proposition~\ref{P:MainLemma}).

\begin{proof}[Proof of the Main Lemma]
Put $\MM = \Mon\SS\RR$. If $\MM$ is of right-$O$-type, any two elements of~$\MM$ are comparable with respect to left-divisibility, hence they certainly admit a common right-multiple, namely the larger of them. So \ITEM1 trivially implies~\ITEM2.

Conversely, assume that any two elements of~$\MM$ admit a common right-multiple. First, as $\MM$ admits a right-triangular presentation, hence a positive presentation, $1$ is the only invertible element in~$\MM$.

Next, $\MM$ must be left-cancellative. Indeed, the point is to prove that, if $\ss$ belongs to~$\SS$ and $\uu, \vv$ are $\SS$-words satisfying $\ss\uu \eqpR \ss\vv$, then $\uu \eqpR \vv$ holds. By Proposition~\ref{P:Complete}, the presentation~$(\SS, \RRh)$ is complete for right-reversing. Hence $\ss\uu \eqpR \ss\vv$ implies $(\ss\uu)\inv (\ss\vv) \revRh\nobreak \ew$, \ie, $\uu\inv \ss\inv \ss \vv \revRh \ew$. Now, the first step in any reversing sequence from $\uu\inv \ss\inv \ss \vv$ is $\uu\inv \ss\inv \ss \vv \revRh \uu\inv \vv$, so the assumption implies $\uu\inv \vv \revRh \ew$, whence $\uu \eqpR \vv$.

Finally, let $\gg, \hh$ be two elements of~$\MM$. Hence, by Lemma~\ref{L:Common}, which is relevant as $(\SS, \RRh)$ is complete for right-reversing, there exist $\SS$-words~$\uu, \vv$ representing~$\gg$ and~$\hh$ and such that the $\RRh$-reversing of~$\uu\inv \vv$ is terminating, \ie, there exist $\SS$-words~$\uu', \vv'$ satisfying $\uu\inv \vv \revRh \vv' \uu'{}\inv$. By construction, the family~$\RRh$ consists of triangular relations so, by Lemma~\ref{L:Empty}, at least one of the words~$\uu', \vv'$ is empty. This means that at least one of $\gg \dive \hh$ or $\hh \dive \gg$ holds in~$\MM$, \ie, $\gg$ and~$\hh$ are comparable with respect to left-divisibility. So $\MM$ is a monoid of right-$O$-type, and \ITEM2 implies~\ITEM1.
\end{proof}

To conclude this section, we observe in view of future examples that the triangular presentations defining monoids admitting common right-multiples must be of some simple type.

\begin{lemm}
\label{L:Enum}
Assume that $\Pres\SS\RR$ is a right-triangular presentation defining a monoid in which any two elements admit a common right-multiple. Then there exists a (finite or infinite) interval~$\II$ of~$\ZZZZ$ such that $\SS$ is $\{\tta_\ii \mid \ii \in \II\}$ and $\RR$ consists of one relation $\tta_{\ii-1} =\nobreak \tta_\ii \CC(\tta_\ii)$ for each non-minimal~$\ii$ in~$\II$.
\end{lemm}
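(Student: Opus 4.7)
By the Main Lemma, $\MM := \Mon\SS\RR$ is of right-$O$-type, so left-divisibility is a linear order on~$\MM$, the monoid is left-cancellative, and $1$ is its only invertible element; by Proposition~\ref{P:Complete}, the extended presentation $(\SS,\RRh)$ is complete for right-reversing. Write $\NN$ for the next-function of~$\Pres\SS\RR$. The plan is to show that $\SS$ sits in a single $\NN$-orbit, that this orbit contains no cycle, and then to index it by an interval of~$\ZZZZ$ so that the relations of~$\RR$ take the asserted form.

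The key step will be to prove that any two distinct letters $\ss, \ss' \in \SS$ are $\NN$-related, meaning $\ss' = \NN^\kk(\ss)$ or $\ss = \NN^\kk(\ss')$ for some $\kk \ge 1$. By $\dive$-linearity we may assume $\ss \dive \ss'$, and then pick an $\SS$-word~$\ww$ with $\ss\ww \eqp \ss'$. Completeness yields $\ww\inv \ss\inv \ss' \revRh \ew$; but the only negative-positive adjacent factor in~$\ww\inv \ss\inv \ss'$ is $\ss\inv \ss'$ itself, so every reversing sequence must begin by processing that factor, and since $\ss \ne \ss'$ the trivial deletion rule does not apply---so $\RRh$ must contain a relation with $\ss$ as one initial letter and $\ss'$ as the other. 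Because every relation of~$\RRh$ has the shape $\NN^\kk(\tt) = \tt \CC^\kk(\tt)$ with $\kk \ge 1$, this forces the claimed $\NN$-relation between~$\ss$ and~$\ss'$. To exclude cycles, suppose $\NN^\kk(\ss) = \ss$ with $\kk \ge 1$; fixpoint-freeness of~$\NN$ forces $\kk \ge 2$, and the relation $\NN^\kk(\ss) = \ss \CC^\kk(\ss)$ of~$\RRh$ becomes $\ss \eqp \ss \CC^\kk(\ss)$ in~$\MM$, so left-cancellation yields $\CC^\kk(\ss) \eqp 1$, contradicting both the non-emptiness of the word~$\CC^\kk(\ss)$ and the fact that $1$ is the only invertible element of~$\MM$.

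It follows that the directed graph on vertex set~$\SS$ with edges $\ss \to \NN(\ss)$ is a single directed path---finite, one-way infinite, or two-way infinite. Fix any letter and call it~$\tta_0$; set $\tta_{-\ii} := \NN^\ii(\tta_0)$ for each $\ii \ge 0$ for which this is defined, and for each $\jj \ge 0$ for which such a letter exists, let $\tta_\jj$ denote the unique $\ss \in \SS$ with $\NN^\jj(\ss) = \tta_0$ (uniqueness follows from the injectivity of~$\NN$). This yields a bijection between an interval $\II \subseteq \ZZZZ$ containing~$0$ and the set~$\SS$, and in this indexing one has $\NN(\tta_\ii) = \tta_{\ii-1}$ precisely when $\ii - 1 \in \II$, \ie\ precisely when $\ii$ is non-minimal in~$\II$; the relations $\NN(\ss) = \ss\CC(\ss)$ of~$\RR$, one per $\ss \in \SS'$, then translate into exactly one relation $\tta_{\ii-1} = \tta_\ii \CC(\tta_\ii)$ per non-minimal $\ii \in \II$. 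The main subtlety lies in the reversing argument above, where one must observe that no intermediate step can avoid the unique negative-positive factor $\ss\inv \ss'$ of~$\ww\inv \ss\inv \ss'$, so that the existence of a successful reversing sequence really does force a relation in~$\RRh$ connecting $\ss$ and~$\ss'$.
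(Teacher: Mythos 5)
Your proof is correct and rests on the same key mechanism as the paper's: since the reversing of $\ss\inv\ss'$ must succeed (the paper gets this from Lemma~\ref{L:Common} applied to a common right-multiple of $\ss$ and $\ss'$, you get it from completeness applied to $\ss\ww\eqpR\ss'$), the very first reversing step forces $\RRh$ to contain a relation $\ss\pdots=\ss'\pdots$, whence the left-graph is a single chain. The extra bookkeeping you supply — excluding $\NN$-cycles via left-cancellation and writing out the indexing by an interval of~$\ZZZZ$ — is material the paper folds into the definition of a right-triangular presentation, and it is handled correctly.
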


\begin{proof}
Let $\ss, \ss'$ belong to~$\SS$. As $\ss$ and $\ss'$ admit a common right-multiple, Lemma~\ref{L:Common} implies that the $\RRh$-reversing of~$\ss\inv \ss'$ terminates, which in turn requires that $\RRh$ contains at least one relation of the form $\ss ... = \ss' \!...$\ . Hence the left-graph of~$\Pres\SS\RR$ consists of a unique chain, which, by definition of a right-triangular presentation, means that $\Pres\SS\RR$ has the form stated in the lemma.
\end{proof}


\section{Existence of common multiples}
\label{S:Mult}

Owing to the Main Lemma (Proposition~\ref{P:MainLemma}), the point for establishing that a monoid specified by a right-triangular presentation is of right-$O$-type is to prove that any two elements admit a common right-multiple. We now establish several sufficient conditions in this direction.


\subsection*{Top words and ceiling}

The first criterion deals with the existence of what will be called top words. We recall that, for $\ww$ a word, $\Lg\ww$ denotes the length of~$\ww$.

\begin{defi}
\label{D:Ceiling}
Assume that $\MM$ is a monoid generated by a set~$\SS$. An $\SS$-word~$\ww$ is called a \emph{right-top} word in~$\MM$ if $\gg \dive \clp\ww$ holds for every $\gg$ in~$\SS^{\Lg\ww}$. A \emph{right-$\SS$-ceiling} for~$\MM$ is a left-infinite $\SS$-word $... \ss_2 \ss_1$ such that, for every~$\nn$, the word $\ss_\nn \pdots \ss_1$ is a right-top $\SS$-word.
\end{defi}

The criterion we obtain is as follows.

\begin{prop}
\label{P:CeilingO}
Assume that $\MM$ is a monoid that admits a triangular presentation based on a finite set~$\SS$. Then the following are equivalent:

\ITEM1 The monoid~$\MM$ is of right-$O$-type;

\ITEM2 The monoid~$\MM$ admits a right-$\SS$-ceiling;

\ITEM3 There exist $\ss_1, \ss_2, ...$ in~$\SS$ satisfying
\begin{equation}
\label{E:Ceiling}
\forall\nn\ \forall\ss{\in}\SS\, (\ss \, \ss_{\nn-1} \pdots \ss_1 \dive \ss_\nn \ss_{\nn-1}\pdots \ss_1);
\end{equation}

\ITEM4 The monoid~$\MM$ admits right-top $\SS$-words of unbounded length.
\end{prop}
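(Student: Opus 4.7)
The plan is to prove the cycle \ITEM1 $\Rightarrow$ \ITEM3 $\Rightarrow$ \ITEM2 $\Rightarrow$ \ITEM4 $\Rightarrow$ \ITEM1. Two implications are essentially built into the definitions: \ITEM2 $\Rightarrow$ \ITEM4 is immediate since a right-$\SS$-ceiling provides right-top words of every length, and \ITEM2 $\Rightarrow$ \ITEM3 follows by applying the right-top property of $\ss_\nn\pdots\ss_1$ to the length-$\nn$ word $\ss\,\ss_{\nn-1}\pdots\ss_1$. So what actually needs proof is \ITEM3 $\Rightarrow$ \ITEM2, \ITEM4 $\Rightarrow$ \ITEM1, and \ITEM1 $\Rightarrow$ \ITEM3.

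For \ITEM3 $\Rightarrow$ \ITEM2 I would show by induction on~$\nn$ that $\ss_\nn\pdots\ss_1$ is right-top. The base case $\nn=1$ is exactly~\eqref{E:Ceiling} for that value. For the step, given letters $\tt_1\wdots\tt_\nn$, the induction hypothesis gives $\clp{\tt_2\pdots\tt_\nn}\dive\clp{\ss_{\nn-1}\pdots\ss_1}$; left-multiplying by~$\tt_1$, which preserves~$\dive$ in any monoid, promotes this to $\clp{\tt_1\tt_2\pdots\tt_\nn}\dive\clp{\tt_1\,\ss_{\nn-1}\pdots\ss_1}$, and one more application of~\eqref{E:Ceiling} gives $\clp{\tt_1\,\ss_{\nn-1}\pdots\ss_1}\dive\clp{\ss_\nn\pdots\ss_1}$, so transitivity concludes. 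For \ITEM4 $\Rightarrow$ \ITEM1 I would reduce to the Main Lemma: given $\gg,\hh$ in~$\MM$ represented by $\SS$-words $\uu,\vv$, use~\ITEM4 to pick a right-top word~$\ww$ with $\Lg\ww\ge\max(\Lg\uu,\Lg\vv)$, pad~$\uu$ and~$\vv$ with arbitrary letters of~$\SS$ to reach length~$\Lg\ww$, and invoke the right-top property of~$\ww$ to see that both padded words, and therefore $\gg$ and $\hh$, left-divide~$\clp\ww$. Proposition~\ref{P:MainLemma}, which applies since the given presentation is in particular right-triangular, then delivers~\ITEM1.

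The hard step, and the only one genuinely requiring finiteness of~$\SS$, is \ITEM1 $\Rightarrow$ \ITEM3. Under~\ITEM1, left-divisibility is a linear order on~$\MM$, so every nonempty finite subset of~$\MM$ has a $\dive$-maximum. I would build the $\ss_\nn$'s greedily: let $\ss_1$ be a $\dive$-maximum of~$\SS$ (so that $\ss\dive\ss_1$ for every $\ss\in\SS$); once $\ss_1\wdots\ss_{\nn-1}$ are fixed, choose $\ss_\nn\in\SS$ so that $\clp{\ss_\nn\,\ss_{\nn-1}\pdots\ss_1}$ is the maximum of the finite set $\{\clp{\ss\,\ss_{\nn-1}\pdots\ss_1}:\ss\in\SS\}$. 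Both maxima exist precisely because $\SS$ is finite and $\dive$ is linear, and~\eqref{E:Ceiling} then holds by construction. Once the Main Lemma is in hand, the substantive remaining content of Proposition~\ref{P:CeilingO} is really just this greedy construction together with the routine inductions above, which package the right-$O$-type property into the more concrete combinatorial object of a ceiling.
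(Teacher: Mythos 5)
Your proof is correct and follows essentially the same route as the paper's: the greedy construction of the $\ss_\nn$ (using finiteness of~$\SS$ and linearity of~$\dive$) for \ITEM1$\Rightarrow$\ITEM3, the induction of Lemma~\ref{L:Max}\ITEM3 for \ITEM3$\Rightarrow$\ITEM2, and the padding argument of Lemma~\ref{L:Max}\ITEM1 combined with the Main Lemma for \ITEM4$\Rightarrow$\ITEM1. The only difference is organizational: by arranging the implications in a cycle you sidestep the paper's direct proof of \ITEM4$\Rightarrow$\ITEM2, which invokes the uniqueness of right-top words (Lemma~\ref{L:CeilingUnique}).
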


We naturally say that a presentation is \emph{based on~$\SS$} if it has the form~$\Pres\SS\RR$ for some~$\RR$. Before proving Proposition~\ref{P:CeilingO}, we begin with easy preliminary observations about top words and ceilings.

\begin{lemm}
\label{L:Max}
Assume that $\MM$ is a left-cancellative monoid generated by a set~$\SS$.

\ITEM1 If $\ww$ is a right-top $\SS$-word in~$\MM$, then $\gg \dive \clp\ww$ holds for every $\gg$ in~$\SS^{\le\Lg\ww}$.

\ITEM2 A final fragment of a right-top word is a right-top word.

\ITEM3 A left-infinite $\SS$-word $... \ss_2 \ss_1$ is a right-$\SS$-ceiling in~$\MM$ if and only \eqref{E:Ceiling} is satisfied.
\end{lemm}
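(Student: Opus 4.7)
The plan is to treat the three assertions in order, since each is a short chase through the definition of left-divisibility, with no new ingredients beyond the hypothesis that $\MM$ is left-cancellative and generated by~$\SS$.

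For~\ITEM1, the idea is to pad on the right up to length~$\Lg\ww$. Concretely, given $\gg$ represented by an $\SS$-word~$\uu$ of length $\mm\le\Lg\ww$, I would pick any $\SS$-word~$\uu'$ of length~$\Lg\ww-\mm$ so that $\uu\uu'$ has length exactly~$\Lg\ww$. The right-top hypothesis on~$\ww$ yields $\clp{\uu\uu'}\dive\clp\ww$, while $\gg = \clp\uu \dive \clp{\uu\uu'}$ is immediate from the definition of left-divisibility. Transitivity of~$\dive$ then gives $\gg\dive\clp\ww$. Nothing beyond the definition of right-top is needed here.

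For~\ITEM2, write $\ww = \uu\vv$ with $\vv$ the final fragment of interest. If $\gg$ is represented by an $\SS$-word~$\gg'$ of length~$\Lg\vv$, then $\uu\gg'$ is an $\SS$-word of length~$\Lg\ww$, so the right-top property of~$\ww$ gives $\clp\uu\gg = \clp{\uu\gg'}\dive\clp\ww = \clp\uu\clp\vv$. Picking $\hh$ in~$\MM$ with $\clp\uu\gg\hh=\clp\uu\clp\vv$ and applying left-cancellativity strips the common prefix~$\clp\uu$ and produces $\gg\hh=\clp\vv$, whence $\gg\dive\clp\vv$. This is the only step in the lemma where left-cancellativity is used, and it is the main (minor) subtlety.

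For~\ITEM3, the ``only if'' direction is just the right-top property of~$\ss_\nn\pdots\ss_1$ applied to the specific length-$\nn$ $\SS$-word $\ss\ss_{\nn-1}\pdots\ss_1$. For the ``if'' direction, I would induct on~$\nn$ to show that each $\ss_\nn\pdots\ss_1$ is a right-top $\SS$-word, with the case $\nn=0$ vacuous. In the inductive step, an arbitrary $\SS$-word of length~$\nn$ has the form $\ss\vv$ with $\ss\in\SS$ and $\Lg\vv = \nn-1$; the inductive hypothesis gives $\clp\vv\dive\clp{\ss_{\nn-1}\pdots\ss_1}$, from which left-multiplying the witnessing equality by~$\ss$ yields $\ss\clp\vv\dive\ss\clp{\ss_{\nn-1}\pdots\ss_1}$. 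Chaining this with the instance of~\eqref{E:Ceiling} for the given $\ss$ and $\nn$ by transitivity produces $\clp{\ss\vv}\dive\clp{\ss_\nn\pdots\ss_1}$, closing the induction. The whole lemma thus unfolds by elementary manipulations, and I do not expect any serious obstacle beyond the left-cancellation step in~\ITEM2.
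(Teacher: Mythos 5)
Your proposal is correct and follows essentially the same route as the paper: padding to full length for \ITEM1, left-cancelling the prefix for \ITEM2, and the same induction on~$\nn$ chaining the inductive hypothesis with~\eqref{E:Ceiling} for \ITEM3. No gaps.
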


\begin{proof}
\ITEM1 Assume that $\ww$ is a right-top word and we have $\gg \in \SS^\nn$ with $\nn \le \Lg\ww$. Let $\ss$ be an arbitrary element of~$\SS$. Then $\gg \ss^{\Lg\ww-\nn}$ belongs to~$\SS^{\Lg\ww}$, and we have $\gg \dive \gg \ss^{\Lg\ww-\nn} \dive \clp\ww$, whence $\gg \dive \clp\ww$.

\ITEM2 Assume that $\ww$ is a right-top word and $\vv$ is a final fragment of~$\ww$, say $\ww = \uu \vv$. Let $\hh$ belong to~$\SS^{\Lg\vv}$. By construction, $\clp\uu \hh$ belongs to~$\SS^{\Lg\ww}$, so $\clp\uu \hh \dive \clp\ww$ holds. As $\MM$ is left-cancellative and $\clp\ww = \clp\uu \clp\vv$ holds, we deduce $\hh \dive \clp\vv$, so $\vv$ is a right-top word.

\ITEM3 Assume that $...\ss_2 \ss_1$ is a right-$\SS$-ceiling in~$\MM$. Then, for all~$\nn$ and~$\ss$ in~$\SS$, the element $\ss\ss_{\nn-1} \pdots \ss_1$ belongs to~$\SS^\nn$ hence, by definition, $\ss\ss_{\nn-1} \pdots \ss_1 \dive \ss_\nn\ss_{\nn-1} \pdots \ss_1$ holds, whence~\eqref{E:Ceiling}. 

Conversely, assume that $\ss_1, \ss_2, ...$ satisfy~\eqref{E:Ceiling}. We prove using induction on~$\nn$ that $\gg \in \SS^\nn$ implies $\gg \dive \ss_\nn \pdots \ss_1$. For $\nn = 1$, \eqref{E:Ceiling} directly gives $\ss \dive \ss_1$ for every~$\ss$ in~$\SS$. Assume $\nn \ge 2$ and $\gg \in \SS^\nn$. Write $\gg = \ss\hh$ with $\ss \in \SS$ and $\hh \in \SS^{\nn-1}$. The induction hypothesis gives $\hh \dive \ss_{\nn-1} \pdots \ss_1$, and then \eqref{E:Ceiling} implies $\gg  = \ss\hh \dive \ss \ss_{\nn-1} \pdots \ss_1 \dive \ss_\nn \ss_{\nn-1} \pdots \ss_1$.
\end{proof}

\begin{lemm}
\label{L:CeilingUnique}
Assume that $\MM$ is a cancellative monoid with no nontrivial invertible element, and $\SS$ generates~$\MM$. Then, for every~$\ell$, there exists at most one right-top $\SS$-word of length~$\ell$, and there exists at most one right-$\SS$-ceiling in~$\MM$; when the latter exists, every right-top word is a final fragment of it.
\end{lemm}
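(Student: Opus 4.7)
The plan is to prove the three assertions in order, with the first one doing most of the work and the other two following by routine inductions.

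For the uniqueness up to equality in~$\MM$ of a right-top word of length~$\ell$, I would take two such words~$\ww, \ww'$ and note that, by the very definition of a right-top word (and since $\clp{\ww'}$, $\clp{\ww}$ both lie in~$\SS^\ell$), we have $\clp{\ww'} \dive \clp\ww$ and $\clp\ww \dive \clp{\ww'}$. Hence there exist $\gg, \hh$ in~$\MM$ with $\clp\ww = \clp{\ww'} \gg$ and $\clp{\ww'} = \clp\ww \hh$; substituting one into the other gives $\clp\ww \cdot \hh\gg = \clp\ww \cdot 1$, whence $\hh\gg = 1$ by left-cancellativity. The key further step is to upgrade this one-sided equation to genuine invertibility: multiplying $\hh\gg = 1$ on the right by~$\hh$ gives $\hh(\gg\hh) = \hh \cdot 1$, and left-cancelling~$\hh$ yields $\gg\hh = 1$. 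So $\gg$ is two-sided invertible, forcing $\gg = 1$ by hypothesis, and therefore $\clp\ww = \clp{\ww'}$.

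For the uniqueness of the right-$\SS$-ceiling, I would take two ceilings $\ldots\ss_2\ss_1$ and $\ldots\ss'_2\ss'_1$ and show $\ss_\nn = \ss'_\nn$ by induction on~$\nn$. For each~$\nn$, the words $\ss_\nn \pdots \ss_1$ and $\ss'_\nn \pdots \ss'_1$ are both right-top of length~$\nn$, hence the first part gives $\clp{\ss_\nn \pdots \ss_1} = \clp{\ss'_\nn \pdots \ss'_1}$. For $\nn = 1$ this directly reads $\ss_1 = \ss'_1$. For $\nn \ge 2$, rewrite the equality as $\ss_\nn \cdot \clp{\ss_{\nn-1}\pdots\ss_1} = \ss'_\nn \cdot \clp{\ss'_{\nn-1}\pdots\ss'_1}$; applying the first part to length~$\nn-1$ shows that the right-hand factors coincide in~$\MM$, so right-cancellativity yields $\ss_\nn = \ss'_\nn$.

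For the last assertion, let $...\ss_2\ss_1$ be the (unique) right-$\SS$-ceiling and let $\ww = \tt_\ell \pdots \tt_1$ be any right-top word. By Lemma~\ref{L:Max}\,\ITEM2, each final fragment $\tt_\jj \pdots \tt_1$ of~$\ww$ is a right-top word of length~$\jj$, so, by the first part, it represents the same element of~$\MM$ as $\ss_\jj \pdots \ss_1$. The same induction as in the ceiling-uniqueness argument then gives $\tt_\jj = \ss_\jj$ for $\jj = 1, \ldots, \ell$, so $\ww$ is the length-$\ell$ final fragment of the ceiling.

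The main obstacle is the very first step: turning mutual divisibility into equality. The subtlety is that left-cancellativity delivers only the one-sided relation~$\hh\gg = 1$, and one must then bootstrap this to $\gg\hh = 1$ by the one-line left-cancellation trick above before the no-nontrivial-invertibles hypothesis can be used. Once this has been done cleanly, the remainder of the lemma is simply induction driven by right-cancellation.
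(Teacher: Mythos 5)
Your proof is correct and follows essentially the same route as the paper: the paper likewise first observes that two right-top words of the same length represent the same element of~$\MM$ (mutual left-divisibility plus the absence of nontrivial invertible elements), and then runs exactly your letter-by-letter induction --- Lemma~\ref{L:Max}\ITEM2 to see that final fragments are again right-top, followed by right-cancellation on the common tail --- to upgrade element equality to equality of words. The one organizational difference is that the paper performs that induction once, for an \emph{arbitrary} pair of right-top words of the same length, which establishes the lemma's first assertion (uniqueness of the right-top word of length~$\ell$ as a word, not merely of the element it represents) in full generality, and then reads off both ceiling statements as corollaries; as written, your first paragraph only proves uniqueness up to equality in~$\MM$, and you recover word-level uniqueness only by comparison with the ceiling, which leaves the first assertion unaddressed in the case where right-top words exist only up to some bounded length and no ceiling exists. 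This is repaired verbatim by running your paragraph-two induction on two arbitrary right-top words of length~$\ell$ instead of on two ceilings. Your explicit bootstrap from $\hh\gg=1$ to $\gg\hh=1$ is a correct and slightly more careful use of the no-nontrivial-invertibles hypothesis, which the paper invokes without detail.
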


\begin{proof}
First, if $\ww, \ww'$ are right-top $\SS$-words of the same length, then, by definition, we have $\clp\ww \dive\nobreak \clp{\ww'}$ and $\clp{\ww'} \dive \clp\ww$, whence $\clp\ww = \clp{\ww'}$ owing to the assumption that there is no nontrivial invertible element in~$\MM$. 

Now, we prove the uniqueness of a right-top word of length~$\ell$ using induction on~$\ell$. For $\ell = 0$, the empty word is the unique word of length~$\ell$. For $\ell = 1$, if $\ss$ and $\ss'$ are distinct elements of~$\SS$, then $\clp\ss = \clp{\ss'}$ is impossible by hypothesis. Assume now that $\ww, \ww'$ are right-top words of length~$\ell$ with $\ell \ge 2$. Write $\ww = \ss\vv$, $\ww' = \ss' \vv'$. By Lemma~\ref{L:Max}\ITEM2, $\vv$ and~$\vv'$ are right-top words, so, by induction hypothesis, $\vv = \vv'$ holds. By the above observation, we have $\clp\ww = \clp{\ww'}$. Then the assumption that $\MM$ is right-cancellative implies $\clp{\ss} = \clp{\ss'}$, hence $\ss = \ss'$, and, finally, $\ww = \ww'$.

Next, assume that $\WW, \WW'$ are right-$\SS$-ceilings in~$\MM$. Then, for every~$\nn$, the length~$\nn$ final fragments of~$\WW$ and~$\WW'$ are right-top words of length~$\nn$, and, therefore, by the above result, they coincide. As this holds for every~$\nn$, so do~$\WW$ and~$\WW'$. By the same argument, if $\ww$ is a length~$\nn$ right-top word and $\WW$ is a right-$\SS$-ceiling, then the length~$\nn$ final fragment of~$\WW$ is a right-top word, hence it coincides with~$\ww$. So, $\ww$ is a final fragment of~$\WW$.
\end{proof}

\begin{proof}[Proof of Proposition~\ref{P:CeilingO}]
First, \ITEM2, \ITEM3, and~\ITEM4 are equivalent. Indeed, the equivalence of~\ITEM2 and~\ITEM3 directly follows from Lemma~\ref{L:Max}\ITEM3. Next, \ITEM2 implies~\ITEM4 by definition. Conversely, the assumption that $\MM$ admits a triangular presentation implies that $\MM$ is cancellative and admits no nontrivial invertible element. If $\MM$ admits right-top $\SS$-words of unbounded length, then, by Lemma~\ref{L:CeilingUnique}\ITEM1, there exists a well defined left-infinite word~$\WW$ such that the latter all are final fragments of~$\WW$. Then $\WW$ is a right-$\SS$-ceiling, and \ITEM4 implies~\ITEM2.

Assume now that $\MM$ is of right-$O$-type. By assumption, the relation~$\dive$ is a linear ordering on~$\MM$. Hence, as $\SS$ is finite, we can inductively select elements~$\ss_1, \ss_2, ...$ in~$\SS$ such that $\ss_\nn(\ss_{\nn-1} \pdots \ss_1)$ is the upper bound of the elements $\ss\, (\ss_{\nn-1} \pdots \ss_1)$ with~$\ss$ in~$\SS$. Then \eqref{E:Ceiling} holds, and \ITEM1 implies~\ITEM3, hence~\ITEM2 and~\ITEM4.

Conversely, assume that $\MM$ admits right-top words of unbounded length. Let $\gg, \hh$ be two elements of~$\MM$. As $\SS$ generates~$\MM$, there exist~$\nn, \pp$ such that $\gg$ lies in~$\SS^\nn$ and $\hh$ lies in~$\SS^\pp$. Let $\ww$ be a right-top word of length at least equal to~$\nn$ and~$\pp$. By Lemma~\ref{L:Max}\ITEM1, $\clp\ww$ is a common right-multiple of~$\gg$ and~$\hh$. So any two elements of~$\MM$ admit a common right-multiple and, therefore, by the Main Lemma (Proposition~\ref{P:MainLemma}), $\MM$ is of right-$O$-type. So \ITEM4 implies~\ITEM1.
\end{proof}

Note that, in the above proof, the hypothesis that the monoid admits a triangular presentation is not used to establish the existence of the ceiling: in every cancellative monoid of right-$O$-type, hence in particular in every monoid of $O$-type, generated by a finite set, there exists a unique ceiling. However, for the converse direction (the one we are mainly interested in), the existence of a ceiling is sufficient to guarantee that $\dive$ is a linear ordering only when the Main Lemma is valid, hence, in practice, for monoids that admit a (right)-triangular presentation.


\subsection*{Dominating elements}

The approach of Proposition~\ref{P:CeilingO} is concrete and makes experiments easy (see Section~\ref{S:Exp}). However, it is in general not easy to prove the existence of a ceiling explicitly, and it is often more convenient to use the following notion.

\begin{defi}
\label{D:Domin}
Assume that $\MM$ is a monoid. For $\delta, \gg$ in~$\MM$, we say that $\delta$ \emph{right-dominates}~$\gg$ 
if we have
\begin{equation}
\label{E:Domin}
\forall\nn{\ge} 0\ (\,\gg\delta^{\nn} \mathbin{\dive} \delta^{\nn+1}\,)
\end{equation}
For $\SS \subseteq \MM$, we say that $\delta$ right-dominates~$\SS$ if it right-dominates every element of~$\SS$. 
\end{defi}

Note that an element always dominates oneself. The counterpart of Proposition~\ref{P:CeilingO} is now an implication, not an equivalence.

\begin{prop}
\label{P:DominO}
Assume that $\MM$ is a monoid that admits a right-triangular presentation based on a set~$\SS$, and there exists in~$\MM$ an element that right-dominates~$\SS$. Then $\MM$ is of right-$O$-type.
\end{prop}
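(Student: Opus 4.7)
The plan is to invoke the Main Lemma (Proposition~\ref{P:MainLemma}): since $\MM$ admits a right-triangular presentation, it suffices to show that any two elements of~$\MM$ admit a common right-multiple. The key observation is that the powers~$\delta^\nn$ will serve as universal common right-multiples for elements expressible as products of $\nn$~generators.

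First I would prove by induction on~$\nn \ge 0$ the statement
\begin{equation*}
\forall \gg{\in}\SS^\nn\ (\gg \dive \delta^\nn),
\end{equation*}
where $\SS^\nn$ is understood as the set of elements of~$\MM$ expressible as a product of $\nn$ generators from~$\SS$. The case $\nn = 0$ is trivial, as $\gg = 1 \dive 1 = \delta^0$. For the inductive step, write $\gg = \ss \hh$ with $\ss \in \SS$ and $\hh \in \SS^\nn$; by the induction hypothesis $\hh \dive \delta^\nn$, so $\gg = \ss\hh \dive \ss\delta^\nn$. The assumption that $\delta$ right-dominates~$\ss$ gives $\ss\delta^\nn \dive \delta^{\nn+1}$, whence $\gg \dive \delta^{\nn+1}$ by transitivity. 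Note also that the powers of~$\delta$ form a chain with respect to left-divisibility, since $\delta^\mm \dive \delta^\nn$ holds trivially for $\mm \le \nn$.

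Now let $\gg, \hh$ be two arbitrary elements of~$\MM$. Because $\SS$ generates~$\MM$, there exist $\nn, \pp$ with $\gg \in \SS^\nn$ and $\hh \in \SS^\pp$. Setting $\kk = \max(\nn, \pp)$, the inductive claim above together with the chain property yield $\gg \dive \delta^\nn \dive \delta^\kk$ and $\hh \dive \delta^\pp \dive \delta^\kk$, so $\delta^\kk$ is a common right-multiple of~$\gg$ and~$\hh$. The Main Lemma then concludes that $\MM$ is of right-$O$-type.

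There is no serious obstacle: the definition of a right-dominating element is tailored exactly so that the one-step inductive passage $\ss\delta^\nn \dive \delta^{\nn+1}$ holds, and the Main Lemma shoulders all the nontrivial combinatorial work (completeness for reversing plus Lemma~\ref{L:Empty}). The only point that deserves a line of care is verifying that~$\delta$ need not belong to~$\SS$ itself; this causes no trouble since the induction is on the $\SS$-length of an element, and the divisibilities $\ss\delta^\nn \dive \delta^{\nn+1}$ are assumed as hypotheses rather than derived from any relation in the presentation.
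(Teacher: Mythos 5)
Your proof is correct and follows essentially the same route as the paper: the paper isolates the induction $\gg \in \SS^\nn \Rightarrow \gg \dive \delta^\nn$ as a separate lemma (Lemma~\ref{L:Domin}) and then applies the Main Lemma exactly as you do. No gaps.
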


The result will directly follow from

\begin{lemm}
\label{L:Domin}
Assume that $\MM$ is a monoid and $\delta$ is an element of~$\MM$ that right-dominates some subset~$\SS$ of~$\MM$. Then $\gg \dive \delta^\nn$ holds for every~$\nn$ and every~$\gg$ in~$\SS^\nn$, and any two elements in the submonoid of~$\MM$ generated by~$\SS$ admit a common right-multiple. In particular, if $\SS$ generates~$\MM$, any two elements of~$\MM$ admit a common right-multiple.
\end{lemm}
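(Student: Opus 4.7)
The plan is to establish the divisibility statement $\gg \dive \delta^\nn$ for every $\gg \in \SS^\nn$ by induction on $\nn$, and then deduce the common right-multiple claim by taking a sufficiently large power of $\delta$.

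For the base case $\nn = 0$, the only element of $\SS^0$ is $1$, and $1 \dive 1 = \delta^0$ is immediate. For the inductive step, I would factor an arbitrary $\gg \in \SS^{\nn+1}$ as $\gg = \ss \gg'$ with $\ss \in \SS$ and $\gg' \in \SS^\nn$, apply the induction hypothesis to get $\gg' \dive \delta^\nn$, left-multiply by $\ss$ to obtain $\gg = \ss\gg' \dive \ss\delta^\nn$, and then invoke the defining inequality $\ss\delta^\nn \dive \delta^{\nn+1}$ supplied by right-dominance of $\delta$ over $\ss$ (namely \eqref{E:Domin} instantiated at exponent $\nn$). One application of transitivity of $\dive$ then closes the induction.

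For the common right-multiple claim, given $\gg, \hh$ in the submonoid $\langle \SS \rangle$ of~$\MM$, they lie in $\SS^\nn$ and $\SS^\mm$ respectively for some $\nn, \mm$. Setting $\NN = \max(\nn,\mm)$, the first part of the lemma yields $\gg \dive \delta^\nn$ and $\hh \dive \delta^\mm$. Since $\delta^\NN = \delta^\nn \cdot \delta^{\NN-\nn} = \delta^\mm \cdot \delta^{\NN-\mm}$, both $\delta^\nn \dive \delta^\NN$ and $\delta^\mm \dive \delta^\NN$ hold, so $\delta^\NN$ is a common right-multiple of $\gg$ and $\hh$. The ``in particular'' clause is immediate since $\SS$ generating $\MM$ simply means $\langle \SS \rangle = \MM$.

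There is no serious obstacle here: the definition of right-dominance is calibrated precisely so that the inductive step reduces to one transitivity. The only point worth a moment's attention is the choice of factorisation used in the induction; because $\dive$ is controlled by left-multiplication and the dominance hypothesis provides inequalities of the shape $\ss\delta^\nn \dive \delta^{\nn+1}$, the factor peeled off $\gg$ must come from~$\SS$ on the left, making the leftmost factorisation $\gg = \ss \gg'$ the natural one.
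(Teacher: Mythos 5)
Your proof is correct and follows essentially the same route as the paper: induction on $\nn$ peeling off a left factor $\ss\in\SS$, using $\gg'\dive\delta^\nn$ followed by $\ss\delta^\nn\dive\delta^{\nn+1}$, and then taking $\delta^{\max(\nn,\mm)}$ as the common right-multiple. No issues.
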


\begin{proof}
We prove using induction on~$\nn$ that $\gg \in \SS^\nn$ implies $\gg \dive \delta^\nn$. For $\nn = 0$, \ie, for $\gg = 1$, the property is obvious and, for $\nn = 1$, it is~\eqref{E:Domin}. Assume $\nn \ge 2$ and $\gg \in \SS^\nn$. Write $\gg = \ss \hh$ with $\ss \in \SS$ and $\hh \in \SS^{\nn-1}$. By induction hypothesis, we have $\hh \dive \delta^{\nn-1}$, whence $\gg = \ss \hh \dive \ss \delta^{\nn-1}$, whence $\gg \dive \delta^\nn$ by~\eqref{E:Domin}. 

Now, let $\MM'$ be the submonoid of~$\MM$ generated by~$\SS$, \ie, the union of $\{1\}$ and all $\SS^\nn$ for $\nn \ge 1$. Let $\gg, \hh$ belong to~$\MM' {\setminus} \{1\}$. There exist $\nn, \pp$ such that $\gg$ belongs to~$\SS^\nn$ and $\hh$ belongs to~$\SS^\pp$. Then $\delta^{\max(\nn, \pp)}$ is a common right-multiple of~$\ff$ and~$\gg$.
\end{proof}

\begin{proof}[Proof of Proposition~\ref{P:DominO}]
By hypothesis, the set~$\SS$ generates~$\MM$. So Lemma~\ref{L:Domin} implies that any two elements of~$\MM$ admit a common right-multiple, and the Main Lemma (Proposition~\ref{P:MainLemma} then implies that $\MM$ is of right-$O$-type.
\end{proof}

We add two observations. The first one says that, in order to establish that an element dominates another one, it can be enough to consider exponents lying in an arithmetic series.

\begin{lemm}
\label{L:DominBis}
For $\delta, \gg$ in a monoid~$\MM$, a necessary and sufficient condition for $\delta$ to right-dominate~$\gg$ is that there exist~$\mm \ge 1$ satisfying
\begin{equation}
\label{E:DominBis}
\forall\kk{\ge}0\ (\,\gg\delta^{\kk\mm + \mm-1} \mathbin{\dive} \delta^{\kk\mm +1}\,).
\end{equation}
\end{lemm}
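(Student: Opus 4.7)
The plan is to treat the two directions separately, with necessity being immediate and sufficiency carrying the substance.

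For necessity, if $\delta$ right-dominates $\gg$ then~\eqref{E:Domin} holds for every $n \ge 0$, and taking $m = 1$ turns~\eqref{E:DominBis} into a direct specialization of~\eqref{E:Domin}. So necessity is essentially a one-line check.

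For sufficiency, I would fix $m \ge 1$ satisfying~\eqref{E:DominBis} and show $\gg\delta^n \dive \delta^{n+1}$ for every $n \ge 0$. The hypothesis provides this at the exponents lying in the arithmetic progression $\{(k+1)m-1 : k \ge 0\}$, so the task is to fill in the remaining residue classes modulo $m$. The natural strategy is to combine consecutive cap relations: if $h_k$ witnesses $\gg\delta^{(k+1)m-1} h_k = \delta^{(k+1)m}$, substituting the $k$-th equality into the $(k+1)$-st and left-cancelling the common prefix $\gg\delta^{(k+1)m-1}$ yields a \emph{shift identity} of the form $h_k \delta^m = \delta^m h_{k+1}$. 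Iterating this identity together with the cap relations themselves, I would produce, for each $0 \le r < m$ and each sufficiently large $k$, a witness $h'_{k,r}$ with $\gg\delta^{km+r} h'_{k,r} = \delta^{km+r+1}$, thereby establishing the dominating condition at $n = km+r$.

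The main obstacle will be twofold. First, the argument genuinely requires left-cancellativity; in the context of this paper, left-cancellativity of $\MM$ comes for free from the Main Lemma once one knows that any two elements admit a common right-multiple, but this must be invoked rather than proved from scratch inside the lemma. Second, the smallest exponents are delicate, because the bootstrapping ``runs out of room'' (there is no cap relation below level $k=0$), so reaching $n=0$ requires propagating information downward within the single block corresponding to $k=0$ rather than across blocks. Here one must use the $h_0$-relation directly together with the shift identity, relying on antisymmetry of $\dive$ in the absence of nontrivial invertible elements to close the argument.
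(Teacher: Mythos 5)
Your necessity direction is fine, but your sufficiency argument rests on a misreading of \eqref{E:DominBis}. The hypothesis is $\gg\delta^{\kk\mm+\mm-1} \dive \delta^{\kk\mm+1}$ --- the exponent on the right is $\kk\mm+1$, not $(\kk+1)\mm$. You have read it as $\gg\delta^{(\kk+1)\mm-1}\hh_\kk = \delta^{(\kk+1)\mm}$, i.e.\ as the dominating condition restricted to exponents $\nn \equiv \mm-1 \pmod\mm$. Under that weaker reading your strategy cannot close: from the cap relation you can only conclude $\gg\delta^{\kk\mm+\rr} \dive \delta^{\kk\mm+\mm}$ for $\rr < \mm-1$, and there is no way to descend from $\delta^{\kk\mm+\mm}$ to the required $\delta^{\kk\mm+\rr+1}$ --- left-cancellativity does not let you strip a power of $\delta$ off the right, and the shift identities $\hh_\kk\delta^\mm = \delta^\mm\hh_{\kk+1}$ move you across blocks but never downward within one. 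This is why you find yourself needing cancellativity, antisymmetry, and special handling of small exponents; none of these hypotheses are available, since the lemma is stated for an arbitrary monoid (and invoking the Main Lemma to supply left-cancellativity would be out of place here).

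With the hypothesis read correctly, the converse is a three-link transitivity chain requiring nothing beyond the two trivial facts that $\gg\delta^{\nn} \dive \gg\delta^{\nn'}$ and $\delta^{\aa} \dive \delta^{\bb}$ whenever $\nn \le \nn'$ and $\aa \le \bb$: given $\nn \ge 0$, take $\kk$ maximal with $\kk\mm \le \nn$, so that $\kk\mm \le \nn \le \kk\mm+\mm-1$, and then
\begin{equation*}
\gg\delta^{\nn} \ \dive\ \gg\delta^{\kk\mm+\mm-1} \ \dive\ \delta^{\kk\mm+1} \ \dive\ \delta^{\nn+1}.
\end{equation*}
This is the paper's proof; no cancellation, no bootstrapping, and no difficulty at $\nn = 0$.
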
 

\begin{proof}
If $\delta$ right-dominates~$\gg$, then, by definition, \eqref{E:DominBis} holds with $\mm = 1$. 

Conversely, assume \eqref{E:DominBis}. Let $\nn$ be a nonnegative integer. Let $\kk$ be maximal with $\kk\mm \le \nn$. Then we have $\nn \le \kk\mm + \mm - 1$, and \eqref{E:DominBis} implies $\gg\delta^\nn \dive \gg\delta^{\kk\mm + \mm - 1} \dive \delta^{\kk\mm + 1} \dive \delta^{\nn+1}$, so $\delta$ right-dominates~$\ss$.
\end{proof}

The second observation is a connection between ceiling and dominating element.

\begin{lemm}
\label{L:CeilingDomin}
Assume that $\MM$ is a cancellative monoid that admits no nontrivial invertible element and is generated by a set~$\SS$. Then the following are equivalent:

\ITEM1 The monoid~$\MM$ admits a right-$\SS$-ceiling that is periodic with period $\ss_\ell \pdots \ss_1$;

\ITEM2 The element~$\ss_\ell \pdots \ss_1$ right-dominates~$\SS^\ell$ in~$\MM$.
\end{lemm}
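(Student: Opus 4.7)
The plan is to rephrase both \ITEM1 and~\ITEM2 in terms of divisibilities involving powers of $\delta := \ss_\ell \pdots \ss_1$, and then invoke the two pieces of infrastructure already at hand: Lemma~\ref{L:Max}\ITEM2 (final fragments of right-top words are right-top) and Lemma~\ref{L:Domin} (a right-dominating element is an upper bound for all products of dominated factors).

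For \ITEM1\,$\Rightarrow$\,\ITEM2 I would argue directly. Assume that $\WW = \pdots \ss_\ell \pdots \ss_1 \, \ss_\ell \pdots \ss_1$ is a right-$\SS$-ceiling of~$\MM$. By periodicity, for every $\nn \ge 0$, the length-$(\nn{+}1)\ell$ final fragment of~$\WW$ represents $\delta^{\nn+1}$ in~$\MM$, and, by the ceiling condition, every element of $\SS^{(\nn+1)\ell}$ left-divides $\delta^{\nn+1}$. Given an arbitrary $\gg$ in~$\SS^\ell$, the element $\gg\delta^\nn$ lies in $\SS^{(\nn+1)\ell}$, so $\gg\delta^\nn \dive \delta^{\nn+1}$ holds, which is exactly~\eqref{E:Domin}. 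Hence $\delta$ right-dominates every element of~$\SS^\ell$.

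For the converse \ITEM2\,$\Rightarrow$\,\ITEM1, I would apply Lemma~\ref{L:Domin} with the subset $\SS^\ell$ of~$\MM$ in place of the $\SS$ appearing there: since $\delta$ right-dominates $\SS^\ell$, the lemma yields $\hh \dive \delta^\nn$ for every~$\nn$ and every $\hh$ in $(\SS^\ell)^\nn = \SS^{\nn\ell}$. In other words, the $\SS$-word consisting of $\nn$ copies of the block $\ss_\ell \pdots \ss_1$ is a right-top word of length~$\nn\ell$. Define $\WW$ to be the left-infinite word obtained by infinitely repeating $\ss_\ell \pdots \ss_1$. Its final fragments of length divisible by~$\ell$ are right-top by what precedes; an arbitrary length-$\mm$ final fragment of~$\WW$ is a final fragment of some right-top word of length $\nn\ell \ge \mm$, so by Lemma~\ref{L:Max}\ITEM2 it is itself right-top (the lemma applies since $\MM$ is left-cancellative, a standing hypothesis of the statement). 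Thus $\WW$ is a right-$\SS$-ceiling, evidently periodic with period~$\ss_\ell \pdots \ss_1$.

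No step looks particularly difficult. The only subtlety is the last observation, where Lemma~\ref{L:Max}\ITEM2 interpolates the right-top property across non-multiples of~$\ell$: it is what promotes the ``coarse'' conclusion of Lemma~\ref{L:Domin}, valid only on lengths in $\ell\NNNN$, to the ``pointwise'' ceiling statement required by Definition~\ref{D:Ceiling}.
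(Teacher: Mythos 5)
Your proof is correct and follows essentially the same route as the paper's: both directions rest on the observation that $\gg\delta^\nn$ lies in $\SS^{(\nn+1)\ell}$ for $\gg$ in~$\SS^\ell$, and on Lemma~\ref{L:Domin} applied with $\SS^\ell$ in place of~$\SS$. The only (harmless) divergence is the final step of \ITEM2$\Rightarrow$\ITEM1, where the paper invokes Proposition~\ref{P:CeilingO} and Lemma~\ref{L:CeilingUnique} to obtain the ceiling and identify it as $\linfty(\ss_\ell \pdots \ss_1)$, whereas you construct the periodic word directly and use Lemma~\ref{L:Max}\ITEM2 to extend the right-top property from lengths in $\ell\NNNN$ to all lengths --- which amounts to inlining the relevant fragment of the proof of Proposition~\ref{P:CeilingO}.
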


\begin{proof}
Assume \ITEM1. Put $\delta = \ss_\ell \pdots \ss_1$. For all~$\nn$ and~$\gg$ in~$\SS^\ell$, the element $\gg \delta^\nn$ belongs to~$\SS^{\ell(\nn+1)}$, so, by definition, we have $\gg \delta^\nn \dive \ss_{\ell(\nn+1)} \pdots \ss_1 = (\ss_\ell \pdots \ss_1)^{\nn+1} = \delta^{\nn+1}$. So $\delta$ right-dominates~$\SS^\ell$, and \ITEM1 implies~\ITEM2.

Conversely, assume that $\ss_\ell \pdots \ss_1$ right-dominates~$\SS^\ell$, and let~$\gg$ belong to~$\SS^{\nn\ell}$. As $\gg$ belongs to~$(\SS^\ell)^\nn$, Lemma~\ref{L:Domin} implies $\gg \dive \delta^\nn$, \ie, $\gg \dive \clp{(\ss_\ell \pdots \ss_1)^\nn}$. This shows that, for every~$\nn$, the $\SS$-word $(\ss_\ell \pdots \ss_1)^\nn$ is a right-top word. By Proposition~\ref{P:CeilingO}, we deduce that there exists a right-$\SS$-ceiling in~$\MM$ and, by Lemma~\ref{L:CeilingUnique}\ITEM2, that this sequence is $\linfty(\ss_\ell \pdots \ss_1)$.
\end{proof}

An important special case of Lemma~\ref{L:CeilingDomin} is when some element~$\ss$ of~$\SS$ dominates~$\SS$, \ie, when the top generator of~$\SS$ dominates~$\SS$. By Lemma~\ref{L:CeilingDomin}, this is equivalent to the existence of a constant ceiling $\linfty\ss$. We shall see in Sections~\ref{S:Exp} and~\ref{S:Fam} that this situation often occurs. However, there are also cases when the ceiling is not constant, corresponding to a dominating element~$\delta$ that belongs to~$\SS^\ell$ for some $\ell \ge 2$. In that case, by Lemma~\ref{L:Domin}, the condition that $\delta$ dominates~$\SS$, and not~$\SS^\ell$ as in Lemma~\ref{L:CeilingDomin}, is sufficient to establish the existence of common multiples.


\subsection*{Quasi-central elements}

Checking that an element is possibly dominating requires considering unbounded exponents. We consider now a notion that is stronger, but easier to establish.

\begin{defi}
\label{D:Quasi}
An element~$\Delta$ of a monoid~$\MM$ is called \emph{right-quasi-central} 
if there exists an endomorphism~$\phi$ of~$\MM$ such that, for every~$\gg$ in~$\MM$, we have
\begin{equation}
\label{E:Quasi}
\gg \Delta = \Delta \phi(\gg). 
\end{equation}
\end{defi}

When $\phi$ is the identity, we recover the standard notion of a central element, \ie, one that commutes with every element. Once again, we obtain a sufficient condition for a monoid to be of right-$O$-type.

\begin{prop}
\label{P:QuasiO}
Assume that $\MM$ is a monoid that admits a right-triangular presentation based on a set~$\SS$, and there exists in~$\MM$ a right-quasi-central element~$\Delta$ satisfying $\forall\ss{\in}\SS\,(\ss \dive\nobreak \Delta)$. Then $\MM$ is of right-$O$-type.
\end{prop}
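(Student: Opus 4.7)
The plan is to deduce the statement from Proposition~\ref{P:DominO} by showing that $\Delta$ right-dominates~$\SS$; combined with that proposition, this immediately yields that $\MM$ is of right-$O$-type. The first preliminary step is to observe that $\MM$ is left-cancellative. This follows exactly as in the proof of the Main Lemma: by Proposition~\ref{P:Complete}, the completion~$(\SS, \RRh)$ is complete for right-reversing, so an equality $\ss\uu \eqpR \ss\vv$ gives $\uu\inv \ss\inv \ss\vv \revRh \ew$, and since the deterministic first $\RRh$-reversing step deletes~$\ss\inv\ss$, one obtains $\uu\inv \vv \revRh \ew$, whence $\uu \eqpR \vv$.

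The crux of the argument is then to show that $\phi$ fixes~$\Delta$. Applying~\eqref{E:Quasi} with $\gg = \Delta$ gives $\Delta\cdot\Delta = \Delta\cdot\phi(\Delta)$, and left-cancellation yields $\phi(\Delta) = \Delta$, hence $\phi^\nn(\Delta) = \Delta$ for all $\nn \ge 0$. Because $\phi$ is an endomorphism, it preserves left-divisibility of~$\Delta$: if $\gg \dive \Delta$ with $\Delta = \gg\hh$, applying~$\phi$ gives $\Delta = \phi(\Delta) = \phi(\gg)\phi(\hh)$, so $\phi(\gg) \dive \Delta$; iterating, $\phi^\nn(\gg) \dive \Delta$ for every $\nn$. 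In particular, the hypothesis $\ss \dive \Delta$ propagates to $\phi^\nn(\ss) \dive \Delta$ for every $\ss \in \SS$ and every $\nn \ge 0$.

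The final step is an $\nn$-fold iteration of~\eqref{E:Quasi}:
\[
\ss \Delta^\nn = \Delta\, \phi(\ss)\, \Delta^{\nn-1} = \Delta^2\, \phi^2(\ss)\, \Delta^{\nn-2} = \pdots = \Delta^\nn\, \phi^\nn(\ss),
\]
whence $\ss\Delta^\nn \dive \Delta^\nn \Delta = \Delta^{\nn+1}$ by the previous paragraph. This is precisely condition~\eqref{E:Domin} for $\gg = \ss$, so $\Delta$ right-dominates~$\SS$, and Proposition~\ref{P:DominO} concludes. The only mildly subtle point is the intermediate identity $\phi(\Delta) = \Delta$, which rests on left-cancellativity; once that is secured via Proposition~\ref{P:Complete}, every other manipulation is a formal consequence of the quasi-central identity and the fact that $\phi$ is an endomorphism, so I expect no further obstacle.
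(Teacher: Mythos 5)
Your proof is correct and follows essentially the same route as the paper: the paper invokes Lemma~\ref{L:QuasiDomin} (in the case $\mm=1$) to conclude that $\Delta$ right-dominates~$\SS$ and then applies Proposition~\ref{P:DominO}. You simply inline the proof of that lemma for $\mm=1$ --- via $\phi(\Delta)=\Delta$ and the iteration $\ss\Delta^\nn = \Delta^\nn\phi^\nn(\ss)$ --- and make explicit the left-cancellativity of~$\MM$ (via Proposition~\ref{P:Complete}), which the paper's one-line proof leaves implicit.
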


The proof will relie on the following connection with dominating elements.

\begin{lemm}
\label{L:QuasiDomin}
Assume that $\MM$ is a left-cancellative monoid generated by a set~$\SS$ and $\delta^\mm$ is right-quasi-central in~$\MM$. Then $\delta$ right-dominates every element~$\gg$ that satisfies $\gg \delta^{\mm-1} \dive \delta$.
\end{lemm}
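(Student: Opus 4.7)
The plan is to apply Lemma~\ref{L:DominBis} with the given~$\mm$, so it suffices to verify $\gg \delta^{\kk\mm + \mm - 1} \dive \delta^{\kk\mm + 1}$ for every $\kk \ge 0$. The hypothesis $\gg \delta^{\mm-1} \dive \delta$ provides an element $\hh$ of~$\MM$ satisfying $\delta = \gg \delta^{\mm-1} \hh$, hence $\delta^{\kk\mm+1} = \gg \delta^{\mm-1} \hh \, \delta^{\kk\mm}$. The task then reduces to pushing the factor $\delta^{\kk\mm}$ past~$\hh$ to the left, so that the whole expression becomes $\gg \delta^{\kk\mm + \mm - 1}$ multiplied on the right by some element of~$\MM$, which is exactly the desired divisibility.

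To do this pushing, I would use the endomorphism~$\phi$ of~$\MM$ witnessing the right-quasi-centrality of~$\Delta = \delta^\mm$, so $\gg' \delta^\mm = \delta^\mm \phi(\gg')$ for every $\gg'$ in~$\MM$. A straightforward induction on~$\kk$ then gives the key identity
\[
\hh \, \delta^{\kk\mm} = \delta^{\kk\mm} \phi^\kk(\hh) \qquad \text{for every } \kk \ge 0,
\]
the inductive step being $\hh \delta^{(\kk+1)\mm} = \delta^{\kk\mm} \phi^\kk(\hh) \delta^\mm = \delta^{\kk\mm} \delta^\mm \phi^{\kk+1}(\hh) = \delta^{(\kk+1)\mm} \phi^{\kk+1}(\hh)$, where the second equality applies quasi-centrality to~$\phi^\kk(\hh)$ and uses that $\phi$ is an endomorphism so that $\phi(\phi^\kk(\hh)) = \phi^{\kk+1}(\hh)$.

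Plugging this back yields $\delta^{\kk\mm+1} = \gg \delta^{\mm-1}\delta^{\kk\mm} \phi^\kk(\hh) = \gg \delta^{\kk\mm + \mm - 1} \phi^\kk(\hh)$, so $\gg \delta^{\kk\mm + \mm - 1} \dive \delta^{\kk\mm + 1}$, and Lemma~\ref{L:DominBis} delivers the conclusion. The only slightly delicate point is the inductive identity for $\hh\delta^{\kk\mm}$: quasi-centrality is stated only for the single element~$\Delta = \delta^\mm$, so one must iterate it on successive images $\phi^\kk(\hh)$ rather than apply it directly to~$\delta^{\kk\mm}$, and it is here that the assumption that~$\phi$ is an \emph{endomorphism} (not merely a map) is essential. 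Note that left-cancellativity is not actually needed in this argument; it appears in the hypothesis because it is the ambient context in which the notions of this section are developed.
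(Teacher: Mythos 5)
Your proof is correct, and it follows the same overall strategy as the paper: verify the arithmetic-progression condition of Lemma~\ref{L:DominBis} by iterating the quasi-centrality relation $\gg'\Delta = \Delta\phi(\gg')$ to move powers of~$\Delta = \delta^\mm$ to the left. It differs in one step, though, and the difference is worth recording. The paper applies $\phi^\kk$ to the divisibility $\gg\delta^{\mm-1} \dive \delta$ itself, which requires two preliminary facts: that $\phi$ preserves~$\dive$ (this is where the endomorphism property is genuinely used) and that $\phi(\delta) = \delta$ (this is where left-cancellativity is used, to cancel $\Delta$ in $\Delta\delta = \delta\Delta = \Delta\phi(\delta)$). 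You instead fix an explicit witness~$\hh$ with $\gg\delta^{\mm-1}\hh = \delta$ and push $\delta^{\kk\mm}$ leftward past~$\hh$ alone, so neither preliminary fact is needed; as you observe, your version is valid in an arbitrary monoid. One small correction to your closing remark: the endomorphism property of~$\phi$ is in fact not used anywhere in your argument either. The inductive step only applies the relation $\gg'\Delta = \Delta\phi(\gg')$ to the element $\gg' = \phi^\kk(\hh)$, which is legitimate because that relation holds for \emph{every} element of~$\MM$, and the identity $\phi(\phi^\kk(\hh)) = \phi^{\kk+1}(\hh)$ is just the definition of iterated composition, valid for any map. So your proof actually uses strictly weaker hypotheses than the paper's, not merely the same ones minus left-cancellativity.
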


\begin{proof}
Put $\Delta = \delta^\mm$, and let $\phi$ be the (necessarily unique) endomorphism of~$\MM$ witnessing that $\Delta$ is right-quasi-central. First, \eqref{E:Quasi} applied with $\gg = \Delta$ gives $\delta \Delta =\nobreak \delta^{\mm+1} = \Delta \phi(\delta)$, whence $\phi(\delta) = \delta$ since $\MM$ is left-cancellative. 

Next, we claim that $\gg \dive \hh$ implies $\phi(\gg) \dive \phi(\hh)$. Indeed, by definition, $\gg \dive \hh$ implies the existence of~$\hh'$ satisfying $\gg \hh' = \hh$, whence $\phi(\gg) \phi(\hh') = \phi(\hh)$ since $\phi$ is an endomorphism. This shows that $\phi(\gg) \dive \phi(\hh)$ is satisfied. So, in particular, and owing to the above equality, $\gg \dive \delta$ implies $\phi(\gg) \dive \delta$.

Now assume $\gg \delta^{\mm-1} \dive \delta$. Then, for every~$\kk$, we find
$$\gg \delta^{\kk\mm + \mm-1} = \gg \delta^{\mm-1} \Delta^\kk = \Delta^\kk \phi^\kk(\gg\delta^{\mm-1}) \dive \Delta^\kk \phi^\kk(\delta) = \Delta^\kk \delta = \delta^{\kk\mm + 1},
$$
and, by Lemma~\ref{L:DominBis}, we conclude that $\delta$ right-dominates~$\gg$.
\end{proof}

\begin{proof}[Proof of Proposition~\ref{P:QuasiO}]
Under the hypotheses, by Lemma~\ref{L:QuasiDomin}, the element~$\Delta$ right-dominates~$\SS$. Then Proposition~\ref{P:DominO} implies that $\MM$ is of right-$O$-type.
\end{proof}

The main interest of considering quasi-central elements here is that, due to the following characterization, establishing that an element is quasi-central is easy, involving in particular finitely many verifications only.

\begin{lemm}
\label{L:Quasi}
Assume that $\MM$ is a left-cancellative monoid generated by a set~$\SS$. Then, for every~$\Delta$ in~$\MM$, the following are equivalent:

\ITEM1 The element $\Delta$ is right-quasi-central and satisfies $\ss \dive \Delta$ for every~$\ss$ in~$\SS$,

\ITEM2 The relation $\forall\ss{\in}\SS\,(\ss \dive \Delta \dive \ss \Delta)$ holds.
\end{lemm}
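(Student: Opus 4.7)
The implication \ITEM1$\Rightarrow$\ITEM2 is essentially immediate: if $\Delta$ is right-quasi-central with associated endomorphism~$\phi$, then for every $\ss\in\SS$ we have $\ss\Delta = \Delta\phi(\ss)$, which displays $\Delta$ as a left-divisor of $\ss\Delta$; combined with the hypothesis $\ss\dive\Delta$, this gives \ITEM2.

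For the converse, the plan is to \emph{construct} the endomorphism $\phi$ directly. First I would prove by induction on the length of an $\SS$-word representing~$\gg$ that every $\gg\in\MM$ satisfies $\Delta\dive\gg\Delta$: the base case $\gg=1$ is trivial, the case $\gg=\ss\in\SS$ is the right half of the hypothesis \ITEM2, and for $\gg=\ss\hh$ the inductive step gives $\gg\Delta = \ss\Delta\,\hh'$ for some $\hh'$, and then $\ss\Delta = \Delta\,\hh''$ rewrites this as $\gg\Delta = \Delta\hh''\hh'$. Left-cancellativity of $\MM$ ensures that, for each~$\gg$, the element~$\hh$ with $\gg\Delta=\Delta\hh$ is \emph{unique}; I denote it by $\phi(\gg)$. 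Note that this definition is intrinsic (it refers to~$\gg$, not to a particular word for~$\gg$), so there is no well-definedness issue with respect to the relations of any presentation.

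Next I would check that $\phi$ is an endomorphism of $\MM$. For the unit, $1\cdot\Delta = \Delta\cdot 1$ and left-cancellation give $\phi(1)=1$. For products, starting from $(\gg\hh)\Delta = \gg(\hh\Delta) = \gg\Delta\,\phi(\hh) = \Delta\,\phi(\gg)\phi(\hh)$ and using uniqueness of $\phi(\gg\hh)$, we get $\phi(\gg\hh)=\phi(\gg)\phi(\hh)$. By construction, $\gg\Delta = \Delta\phi(\gg)$ holds for every $\gg\in\MM$, so $\Delta$ is right-quasi-central; and the left half of \ITEM2 directly gives $\ss\dive\Delta$ for every $\ss\in\SS$.

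The only delicate point is the uniqueness of $\phi(\gg)$, which is precisely where left-cancellativity is used; once this is in place, both the endomorphism property and the quasi-centrality equation \eqref{E:Quasi} follow by purely formal rewriting. No further hypothesis on $\MM$ (beyond being generated by~$\SS$ and left-cancellative) is needed, and no appeal to subword reversing or to triangularity of a presentation is required for this lemma.
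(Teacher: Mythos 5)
Your proof is correct and follows essentially the same route as the paper: both directions hinge on the defining equation $\gg\Delta = \Delta\phi(\gg)$ together with left-cancellativity of~$\MM$. The only (cosmetic) difference is that the paper defines $\phi$ on the generators and then checks compatibility with the relations of~$\MM$ to extend it to all elements, whereas you define $\phi(\gg)$ intrinsically for each~$\gg$ after an induction establishing $\Delta \dive \gg\Delta$; the two organizations are interchangeable.
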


\begin{proof}
Assume \ITEM1 and let $\phi$ be the witnessing endomorphism. Let $\ss$ belong to~$\SS$. By assumption, $\ss \dive \Delta$ is true. Let $\gg$ be the element satisfying $\ss \gg = \Delta$. Then \eqref{E:Quasi} implies $\ss \Delta = \Delta \phi(\ss)$, whence $\ss \Delta = \ss \gg \phi(\ss)$, and $\Delta = \gg \phi(\ss)$ since $\MM$ is left-cancellative. We deduce $\Delta = \ss\gg \dive \ss\gg \phi(\ss) = \ss\Delta$, and \ITEM1 implies~\ITEM2.

Conversely, assume \ITEM2. We shall define an endomorphism~$\phi$ witnessing that $\Delta$ is right-quasi-central in~$\MM$. First, for~$\ss$ in~$\SS$, we define~$\phi(\ss)$ to be the unique element satisfying 
\begin{equation}
\label{E:Quasi3}
\ss \Delta = \Delta \phi(\ss),
\end{equation}
which exists since, by assumption, $\Delta \dive \ss\Delta$ holds. Now, assume that $\ss_1 \wdots \ss_\nn$, $\ss'_1 \wdots\ss'_\pp$ belong to~$\SS$ and $\ss_1 \pdots \ss_\nn = \ss'_1 \pdots \ss'_\pp$ holds in~$\MM$. By applying~\eqref{E:Quasi3} repeatedly, we obtain
$$\Delta \phi(\ss_1) \pdots \phi(\ss_\nn) 
= \ss_1 \pdots \ss_\nn \Delta
= \ss'_1 \pdots \ss'_\pp \Delta
= \Delta \phi(\ss'_1) \pdots \phi(\ss'_\pp),$$
whence $\phi(\ss_1) \pdots \phi(\ss_\nn) = \phi(\ss'_1) \pdots \phi(\ss'_\pp)$ since $\MM$ is left-cancellative. It follows that, for every~$\gg$ in~$\MM {\setminus} \{1\}$, we can define~$\phi(\gg)$ to be the common value of $\phi(\ss_1) \pdots \phi(\ss_\nn)$ for all expressions of~$\gg$ as a product of elements of~$\SS$. We complete with $\phi(1) = 1$. Then, by construction, $\phi$ is an endomorphism of~$\MM$ and \eqref{E:Quasi} is satisfied for every~$\gg$ in~$\MM$. 
\end{proof}

We conclude the section with a connection between central element and ceiling.

\begin{lemm}
\label{L:CentralCeiling}
Assume that $\MM$ is a cancellative monoid of right-$O$-type, and that $\ss_\ell \pdots \ss_1$ is a right-top $\SS$-word in~$\MM$ such that $\clp{\ss_\ell \pdots \ss_1}$ is central in~$\MM$. Then we have $\ss_\ii = \ss_1$ for every~$\ii$, and $\linfty\ss_1$ is the right-$\SS$-ceiling in~$\MM$.
\end{lemm}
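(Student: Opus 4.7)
The plan is to carry out two stages. First I will establish that $\ss_1 = \ss_2 = \pdots = \ss_\ell$; then I will show that $\ss_1^n$ is a right-top $\SS$-word for every $n \ge 1$, at which point the conclusion that $\linfty\ss_1$ is the right-$\SS$-ceiling follows by uniqueness (Lemma~\ref{L:CeilingUnique}).

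Set $\Delta = \clp{\ss_\ell \pdots \ss_1}$. Centrality of $\Delta$ gives $\ss_1 \Delta = \Delta \ss_1$ in $\MM$, that is, $\clp{\ss_1 \ss_\ell \pdots \ss_1} = \clp{\ss_\ell \pdots \ss_1 \ss_1}$. Since $\MM$ is right-cancellative, removing the trailing $\ss_1$ on both sides yields $\clp{\ss_1 \ss_\ell \ss_{\ell-1} \pdots \ss_2} = \Delta$. The word $\ss_1 \ss_\ell \pdots \ss_2$ has length $\ell$ and represents $\Delta$; and since $\ss_\ell \pdots \ss_1$ is a right-top word, every $\gg \in \SS^\ell$ left-divides $\Delta$, so $\ss_1 \ss_\ell \pdots \ss_2$ is itself a right-top $\SS$-word of length $\ell$. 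By the uniqueness part of Lemma~\ref{L:CeilingUnique}, a right-top word of a given length is determined letter by letter, so $\ss_1 \ss_\ell \pdots \ss_2$ and $\ss_\ell \pdots \ss_1$ must coincide as sequences of letters. Comparing position by position gives $\ss_1 = \ss_\ell$, $\ss_\ell = \ss_{\ell-1}$, $\pdots$, $\ss_2 = \ss_1$, so all letters equal $\ss_1$.

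With every $\ss_\ii$ equal to $\ss_1$ established, the element $\Delta = \ss_1^\ell$ is central and so is each $\Delta^k$. A straightforward induction on $k$ then shows that every $\gg \in \SS^{k\ell}$ left-divides $\Delta^k$: in the step from $k$ to $k+1$, split $\gg \in \SS^{(k+1)\ell}$ as $\gg = \gg_0 \gg'$ with $\gg_0 \in \SS^\ell$ and $\gg' \in \SS^{k\ell}$, pick $\hh_0, \hh'$ with $\gg_0 \hh_0 = \Delta$ and $\gg' \hh' = \Delta^k$, and use the centrality of $\Delta^k$ to get $\Delta^{k+1} = \gg_0 \hh_0 \Delta^k = \gg_0 \Delta^k \hh_0 = \gg \cdot \hh' \hh_0$. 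For an arbitrary $n$ and $\gg \in \SS^n$, pick $k$ with $k\ell \ge n$: then $\ss_1^{k\ell - n}\gg$ lies in $\SS^{k\ell}$ and divides $\ss_1^{k\ell} = \ss_1^{k\ell - n} \ss_1^n$, and left-cancellativity gives $\gg \dive \ss_1^n$. This establishes that $\ss_1^n$ is a right-top $\SS$-word for every $n$, so $\linfty\ss_1$ is a right-$\SS$-ceiling, and Lemma~\ref{L:CeilingUnique} makes it the unique one.

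The main obstacle is the first stage, namely recognizing that centrality of $\Delta$ combined with right-cancellation manufactures a second right-top $\SS$-word of length $\ell$ that can then be compared position by position with $\ss_\ell \pdots \ss_1$ via the uniqueness of right-top words. Once the identification of the letters is made, the second stage is essentially a routine exploitation of centrality and cancellativity; in fact, the hypothesis that $\MM$ is of right-$O$-type enters only indirectly through cancellativity and absence of nontrivial invertibles, the linearity of $\dive$ playing no explicit role in the argument.
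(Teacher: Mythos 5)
Your proof is correct. It rests on the same central mechanism as the paper's proof --- centrality of $\Delta$ manufactures a second right-top word of a fixed length, and the uniqueness of right-top words (Lemma~\ref{L:CeilingUnique}) then forces the letters to coincide --- but the surrounding organization is genuinely different. The paper first invokes Lemmas~\ref{L:QuasiDomin} and~\ref{L:CeilingDomin} to establish that the right-$\SS$-ceiling exists and is periodic with period $\ss_\ell \pdots \ss_1$, and only then compares the two length~$\ell+1$ right-top words $\ss_1\ss_\ell\pdots\ss_1$ and $\ss_\ell\pdots\ss_1\ss_1$. You instead right-cancel the trailing $\ss_1$ from $\ss_1\Delta = \Delta\ss_1$ so as to compare two length~$\ell$ right-top words directly, which lets you identify the letters before knowing anything about the ceiling; you then rebuild the ceiling $\linfty\ss_1$ by a bare-hands induction that is in effect a special case of Lemma~\ref{L:Domin} combined with the padding trick of Lemma~\ref{L:Max}. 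The net effect is a more self-contained argument that bypasses the dominating-element machinery entirely, at the cost of redoing a small amount of it by hand; as you note, both versions use only cancellativity, the absence of nontrivial invertible elements, and centrality, so neither actually exploits the linearity of~$\dive$.
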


\begin{proof}
Let $\Delta = \ss_\ell \pdots \ss_1$. First, we have $\gg \dive \Delta$ for every~$\gg$ in~$\SS^\ell$. Then Lemma~\ref{L:QuasiDomin} implies that $\Delta$ right-dominates~$\SS^\ell$. Hence, by Lemma~\ref{L:CeilingDomin}, the right-$\SS$-ceiling is periodic with period $\ss_\ell \pdots \ss_1$. Now consider its length $\ell+1$ final fragment $\ss_1 \ss_\ell \pdots \ss_1$. Then, in $\MM$, we have $\ss_1 \ss_\ell \pdots \ss_1 = \ss_\ell \pdots \ss_1 \ss_1$, so $\ss_1 \ss_\ell \pdots \ss_1$ and $\ss_\ell \pdots \ss_1 \ss_1$ are two right-top $\SS$-words of length~$\ell + 1$. By Lemma~\ref{L:CeilingUnique}, these words must coincide, which is possible only for $\ss_1 = ... = \ss_\ell$. 
\end{proof}

Thus the only situation when a right-top word can be central is when it is a power of the top generator. Note that Lemma~\ref{L:CentralCeiling} says nothing about central elements that admit no expression as a right-top word.


\section{Back to ordered groups}
\label{S:Groups}

In Section~\ref{S:Mult}, we established several sufficient conditions for a monoid to be of right-$O$-type. Returning to the context of ordered groups, we immediately deduce orderability conditions. We recall that, if $\Pres\SS\RR$ is a positive group presentation, $\Pres\SS\RRt$ is the opposite presentation (same generators, reversed relations).

\begin{prop}
\label{P:Main1}
A sufficient condition for a group~$\GG$ to be orderable is that 
\begin{quote}
$\GG$ admits a triangular presentation~$\Pres\SS\RR$ such that the monoids~$\Mon\SS\RR$ and~$\Mon\SS\RRt$ are eligible for at least one of Propositions~\ref{P:CeilingO}, \ref{P:DominO}, or~\ref{P:QuasiO}. 
\end{quote}
In this case, the subsemigroup of~$\GG$ generated by~$\SS$ is the positive cone of a left-invariant ordering on~$\GG$. If $\SS$ is finite, this ordering is an isolated point in the space~$\LO\GG$.  
\end{prop}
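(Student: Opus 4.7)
The plan is to combine the sufficient conditions of Section~\ref{S:Mult} with the left-right symmetry of the $O$-type property, and then invoke Proposition~\ref{P:Recipe}.

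First, since $\Pres\SS\RR$ is triangular it is in particular right-triangular, so the Main Lemma (Proposition~\ref{P:MainLemma}) applies. By hypothesis, $\Mon\SS\RR$ is eligible for at least one of Propositions~\ref{P:CeilingO}, \ref{P:DominO}, or~\ref{P:QuasiO}, each of which directly yields that $\Mon\SS\RR$ is of right-$O$-type.

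Next, I would observe that $\Pres\SS\RRt$ is itself right-triangular, so that the same three propositions can be invoked on the opposite monoid: this is immediate from $\Pres\SS\RR$ being left-triangular, since reversing a relation $\NNt(\ss) = \widetilde\CC(\ss)\,\ss$ produces a relation of the right-triangular shape $\NNt(\ss) = \ss\,\widetilde\CC(\ss)^{\mathrm{rev}}$. Applying the selected proposition to $\Mon\SS\RRt$, which is the opposite monoid of $\Mon\SS\RR$, yields that $\Mon\SS\RRt$ is of right-$O$-type. Since right-divisibility in an opposite monoid is left-divisibility in the original, $\Mon\SS\RR$ is then of left-$O$-type as well, hence of $O$-type.

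Finally, Proposition~\ref{P:Recipe} delivers all three conclusions at once: orderability of~$\GG$, that the subsemigroup generated by~$\SS$ is the positive cone of the resulting left-invariant ordering, and, for finite~$\SS$, that this ordering is isolated in~$\LO\GG$. There is no really hard step, since all the heavy lifting has already been done in Sections~\ref{S:Complete} and~\ref{S:Mult}; the only subtlety is bookkeeping, namely keeping straight that eligibility of~$\Mon\SS\RRt$ for one of the three propositions is exactly the symmetric condition on~$\Mon\SS\RR$ (existence of a left-ceiling, a left-dominating element, or a left-quasi-central element), so that the hypothesis of the proposition is genuinely left-right symmetric and Propositions~\ref{P:CeilingO}--\ref{P:QuasiO} can be applied on both sides without adjustment.
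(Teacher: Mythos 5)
Your proposal is correct and follows essentially the same route as the paper: each of the three propositions gives that $\Mon\SS\RR$ and its opposite $\Mon\SS\RRt$ are of right-$O$-type, whence $\Mon\SS\RR$ is of $O$-type, and the conclusions follow from the orderability criterion of Section~\ref{S:Otype}. The only cosmetic difference is that the paper concludes by citing Corollary~\ref{C:Criterion} while you invoke Proposition~\ref{P:Recipe} directly; these amount to the same thing, and your remark that $\Pres\SS\RRt$ is right-triangular because $\Pres\SS\RR$ is left-triangular is exactly the bookkeeping the paper leaves implicit.
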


\begin{proof}
Whenever the monoid~$\Mon\SS\RR$ is eligible for one of the mentioned results, it is of right-$O$-type. For a similar reason, the monoid~$\Mon\SS\RRt$ is of right-$O$-type, hence $\Mon\SS\RR$ is of left-$O$-type. Therefore, $\Mon\SS\RR$ is of $O$-type, and then we apply Corollary~\ref{C:Criterion}.
\end{proof}

Let us now address the solvability of the decision problem for the ordering involved in Proposition~\ref{P:Main1}. We shall establish the correctness of Algorithm~\ref{A:Decision} below. The latter simultaneously appeals to right-reversing as defined in Section~\ref{S:Rev} and to \emph{left-reversing}~$\revL$, the symmetric procedure that replaces $\ss'\ss\inv$ with $\vv\inv\vv'$ such that $\vv\ss' = \vv'\ss$ is a relation. The properties of left-reversing are of course symmetric to those of right-reversing: formally, using $\widetilde\ww$ for the mirror-image of~$\ww$ (same letters in reserved order), $\ww \revLR \ww'$ is equivalent to $\widetilde\ww \revRopp \widetilde\ww'$ where, as usual, $\RRt$ is the family of all relations $\widetilde\uu = \widetilde\vv$ for $\uu = \vv$ in~$\RR$. We denote by~$\RRa$ the family obtained by adding to~$\RR$ the relations~$\NNt^\ii(\ss) = \CCt^\ii(\ss) \ss$ with $\ii \ge 2$ (the left-completion of~$\RR$, symmetric to the right-completion~$\RRh$). If $\ww$ is a signed $\SS$-word, we denote by~$\cl\ww$ the element of the group~$\Gr\SS\RR$ represented by~$\ww$.

\begin{algo}[decision problem of the ordering]
\label{A:Decision}

\hfill\\
\noindent $\bullet$ {\bf Data}: A finite (or recursive) triangular presentation $\Pres\SS\RR$;

\noindent $\bullet$ {\bf Input}: A signed $\SS$-word~$\ww$;

\noindent $\bullet$ {\bf Procedure}: 

- Right-$\RRh$-reverse $\ww$ into~$\vv \uu\inv$ with $\uu, \vv$ in~$\SS^*$;

- Left-$\RRa$-reverse $\vv \uu\inv$ into~$\uu'{}\inv \vv'$ with $\uu', \vv'$ in~$\SS^*$;

\noindent $\bullet$ {\bf Output}: 

- For $\uu' \not= \ew$ and $\vv' = \ew$, return ``\;$\cl\ww <1$''; 

- For $\uu' = \vv' = \ew$, return ``\;$\cl\ww = 1$'';
 
- For $\uu' = \ew$ and $\vv' \not= \ew$, return ``\;$\cl\ww >1$''.
\end{algo}

\begin{prop}
\label{P:Main2}
Under the hypotheses of Proposition~\ref{P:Main1}, if $\Pres\SS\RR$ is finite or recursive, Algorithm~\ref{A:Decision} solves the decision problem for the ordering, and the word problem of~$\GG$.
\end{prop}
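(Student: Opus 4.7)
The plan is to establish two properties of Algorithm~\ref{A:Decision}: termination on every input, and correctness of its output. Both the decision problem of the ordering and the word problem of~$\GG$ (the ``$\cl\ww = 1$'' case) then follow at once.

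\emph{Termination.} Under the hypotheses of Proposition~\ref{P:Main1}, the monoid $\MM = \Mon\SS\RR$ is of $O$-type, so any two of its elements admit both a common right-multiple and a common left-multiple. By Proposition~\ref{P:Complete}, $(\SS,\RRh)$ is complete for right-reversing, and the symmetric statement applied to the opposite presentation gives that $(\SS,\RRa)$ is complete for left-reversing. For a general signed input~$\ww$, I will write it as an alternation of positive and negative blocks $\ww = \uu_0 \vv_1\inv \uu_1 \pdots \vv_\kk\inv \uu_\kk$ and reverse one negative-positive junction $\vv_\ii\inv \uu_\ii$ at a time. Each local reversing terminates by Lemma~\ref{L:Common} applied to $\clp{\vv_\ii}$ and $\clp{\uu_\ii}$ (which admit a common right-multiple since $\MM$ is of $O$-type), and the new positive-negative word merges with the surrounding blocks so that the number of junctions strictly drops. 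After finitely many rounds, $\ww$ has been right-reversed into positive-negative form $\vv\uu\inv$. The left-reversing phase is dispatched by the analogous symmetric argument.

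\emph{Correctness.} Each elementary reversing step preserves the class in~$\GG$: the rule $\ss\inv \ss' \revR \vv'\vv\inv$ comes from a relation $\ss \vv' = \ss' \vv$, giving $\cl{\ss\inv\ss'} = \cl{\vv'\vv\inv}$ in~$\GG$; the rule $\ss\inv\ss \revR \ew$ is trivial; and left-reversing is similar. Consequently $\cl\ww = \cl{\uu'}\inv \cl{\vv'}$ after the two phases of the algorithm. Because $\Pres\SS\RR$ is triangular, its left-completion~$\RRa$ also consists of triangular relations, so the symmetric version of Lemma~\ref{L:Empty} forces at least one of $\uu', \vv'$ to be empty. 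Moreover, $\MM$ has no nontrivial invertible element, so any nonempty positive $\SS$-word represents a nontrivial element of~$\MM$, hence lies in the positive cone~$\PPo$. The three outputs are then immediate: $\uu' = \vv' = \ew$ gives $\cl\ww = 1$; $\uu' = \ew \not= \vv'$ gives $\cl\ww = \cl{\vv'} \in \PPo$, hence $\cl\ww > 1$; $\uu' \not= \ew = \vv'$ gives $\cl\ww = \cl{\uu'}\inv \in \PPo\inv$, hence $\cl\ww < 1$.

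The delicate step will be the termination of right-reversing on a \emph{general} signed word, since Lemma~\ref{L:Common} directly treats only the negative-positive case. The block-by-block strategy above is the natural route, but it must be phrased so that the local reductions genuinely compose into a valid global right-reversing sequence for~$\ww$; this is exactly where the completeness of $(\SS,\RRh)$, established in Proposition~\ref{P:Complete} without any Noetherianity hypothesis, plays the decisive role.
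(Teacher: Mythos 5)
Your proof is correct and follows essentially the same route as the paper: termination via completeness of $(\SS,\RRh)$ and $(\SS,\RRa)$ combined with the existence of common right- and left-multiples (Lemma~\ref{L:Common}), emptiness of one of $\uu',\vv'$ via the counterpart of Lemma~\ref{L:Empty}, and correctness from the fact that reversing preserves the represented group element. The only place you go beyond the paper is the block-by-block argument reducing termination on a general signed word to the negative--positive case treated by Lemma~\ref{L:Common}; the paper simply asserts this, and your junction-counting reduction is a valid way to fill that gap.
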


\begin{proof}
Put $\MM = \Mon\SS\RR$. First, as $\Pres\SS\RR$ is finite or, at least, recursive, the relations~$\revRh$ and~$\revLRc$ are recursive, so Algorithm~\ref{A:Decision} is indeed effective. Next, as the presentation $(\SS, \RRh)$ is complete for right-reversing and, by Proposition~\ref{P:Main1}, any two elements of the monoid~$\MM$ admit a common right-multiple, every right-$\RRh$-reversing sequence is terminating: for every signed $\SS$-word~$\ww$, there exist positive $\SS$-words~$\uu, \vv$ satisfying $\ww \revRh \uu \vv\inv$. Similarly, as $(\SS, \RRa)$ is complete for left-reversing and any two elements of~$\MM$ admit a common left-multiple, every $\RRa$-reversing sequence is terminating and, therefore, there exist positive $\SS$-words~$\uu', \vv'$ satisfying $\uu \vv\inv \revLRc \uu'{}\inv \vv'$. Hence Algorithm~\ref{A:Decision} always terminates. Moreover, by (the counterpart of) Lemma~\ref{L:Empty}, at least one of the words~$\uu', \vv'$ is empty.

By construction, $\ww \revRh \uu \vv\inv \revLRc \uu'{}\inv \vv'$ implies $\cl\ww = \cl{\uu'{}\inv \vv'}$ in~$\Gr\SS\RR$. If $\uu'$ is nonempty and $\vv'$ is empty, we deduce $\cl\ww = \cl{\uu'{}\inv} \in \MM\inv{\setminus}\{1\}$, whence $\cl\ww < 1$ for the ordering whose positive cone is $\MM\inv{\setminus}\{1\}$. If $\uu'$ and $\vv'$ are empty, we deduce $\cl\ww = \cl{\ew} = 1$. Finally, if $\uu'$ is empty and $\vv'$ is nonempty, we obtain $\cl\ww = \cl{\vv'} \in \MM{\setminus}\{1\}$, whence $\cl\ww > 1$. So Algorithm~\ref{A:Decision} decides the relation~$<$. As $<$ is a strict linear ordering, the algorithm also solves the word problem as $\cl\ww \not=1$ is equivalent to the disjunction of $\cl\ww < 1$ and $\cl\ww > 1$.
\end{proof}

\begin{rema}
Algorithm~\ref{A:Decision} does not only give a YES/NO answer: for every initial signed $\SS$-word~$\ww$, the method provides a positive $\SS$-word~$\ww'$ such that $\ww$ is equivalent either to~$\ww'$ or or to~$\ww'{}\inv$. In other words, it gives for every element~$\gg$ of the group~$\Gr\SS\RR$ an explicit positive or negative decomposition of~$\gg$ in terms of the distinguished generators of the positive cone.
\end{rema}

The proof of Theorem~1 can now be completed. We state the results in a more general form that avoids unnecessary symmetries and corresponds to the case of dominating or quasi-central elements; a similar statement involving right-ceilings and \eqref{E:Ceiling} exists of course. 

\begin{thrm}
\label{T:Main}
A sufficient condition for a group~$\GG$ to be orderable is that 
\begin{quote}
$\GG$ admits a triangular presentation~$\Pres\SS\RR$ and, in the monoid~$\Mon\SS\RR$, there exist~$\Delta$, $\Deltat$ satisfying $\forall\ss{\in}\SS\,(\ss \dive \Delta \dive \ss\Delta)$ or $\forall\ss{\in}\SS\,\, \forall\nn\, (\ss \Delta^\nn \dive \Delta^{\nn+1})$, and $\forall\ss{\in}\SS\,(\Deltat\ss \multeR \Deltat \multeR\nobreak \ss)$ or $\forall\ss{\in}\SS\,\, \forall\nn\, (\Deltat^{\nn+1} \multeR \Deltat^\nn \ss)$.
\end{quote}
In this case, the subsemigroup of~$\GG$ generated by~$\SS$ is the positive cone of a left-invariant ordering on~$\GG$. If $\SS$ is finite, this ordering is isolated in the space~$\LO\GG$. If $\Pres\SS\RR$ is finite or recursive, the word problem of~$\GG$ and the decision problem of the ordering are decidable.
\end{thrm}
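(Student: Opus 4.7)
The plan is to assemble the machinery of Sections~\ref{S:Otype}--\ref{S:Groups}: the $\Delta$-hypothesis will force $\Mon\SS\RR$ to be of right-$O$-type, the $\Deltat$-hypothesis will force it to be of left-$O$-type via the opposite monoid, so $\Mon\SS\RR$ is of $O$-type, and then Proposition~\ref{P:Recipe} supplies orderability and isolation while Proposition~\ref{P:Main2} supplies the decidability statements.

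For the $\Delta$-side I would split on the disjunction. If $\forall\ss{\in}\SS\,(\ss \dive \Delta \dive \ss\Delta)$ holds, this is exactly condition~\ITEM2 of Lemma~\ref{L:Quasi}, so $\Delta$ is right-quasi-central in $\Mon\SS\RR$ with $\ss \dive \Delta$ for every~$\ss$, whence Proposition~\ref{P:QuasiO} yields right-$O$-type. If instead $\forall\ss{\in}\SS\,\forall\nn\,(\ss\Delta^\nn \dive \Delta^{\nn+1})$ holds, then by Definition~\ref{D:Domin} the element $\Delta$ right-dominates~$\SS$, and Proposition~\ref{P:DominO} gives right-$O$-type directly. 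Either disjunct thus yields the first half.

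For the $\Deltat$-side I would pass to the opposite monoid~$\MMt := \Mon\SS\RRt$. As $\Pres\SS\RR$ is triangular (both right- and left-triangular), the mirrored presentation $\Pres\SS\RRt$ is again right-triangular, and the underlying set of~$\MMt$ coincides with that of~$\MM$ under the product $\gg \mathbin{\widetilde\cdot} \hh = \hh\gg$. Under this swap, $\dive$ in~$\MMt$ matches $\multeR$ in~$\MM$ and conversely; in particular $\ss \mathbin{\widetilde\cdot} \Deltat$ is the same element as $\Deltat\ss$. A routine translation then shows that $\forall\ss{\in}\SS\,(\Deltat\ss \multeR \Deltat \multeR \ss)$ in~$\MM$ reads $\forall\ss{\in}\SS\,(\ss \dive \Deltat \dive \ss\mathbin{\widetilde\cdot}\Deltat)$ in~$\MMt$, while $\forall\ss{\in}\SS\,\forall\nn\,(\Deltat^{\nn+1} \multeR \Deltat^\nn\ss)$ becomes $\forall\ss{\in}\SS\,\forall\nn\,(\ss\mathbin{\widetilde\cdot}\Deltat^\nn \dive \Deltat^{\nn+1})$ in~$\MMt$. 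Applying the $\Delta$-side argument to~$\MMt$ shows $\MMt$ is of right-$O$-type, equivalently $\Mon\SS\RR$ is of left-$O$-type; combined with the first half, $\Mon\SS\RR$ is of $O$-type.

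To finish, a triangular presentation is positive by definition, so Proposition~\ref{P:Recipe} yields that $\GG$ is orderable, that the subsemigroup of $\GG$ generated by~$\SS$ is the positive cone of a left-invariant ordering, and that this ordering is isolated in~$\LO\GG$ whenever $\SS$ is finite. When $\Pres\SS\RR$ is furthermore finite or recursive, Proposition~\ref{P:Main2} shows that Algorithm~\ref{A:Decision} decides the ordering and the word problem of~$\GG$. The only delicate step is the opposite-monoid bookkeeping of the third paragraph --- checking that the $\multeR$-conditions on~$\Deltat$ convert to the appropriate $\dive$-conditions on~$\Deltat$ in~$\MMt$ and that $\Pres\SS\RRt$ really remains right-triangular --- but this is mechanical once one writes out the identity $\gg \mathbin{\widetilde\cdot} \hh = \hh\gg$.
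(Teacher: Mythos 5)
Your proposal is correct and follows essentially the same route as the paper: reduce both disjuncts on the $\Delta$-side to right-$O$-type via Lemma~\ref{L:Quasi}/Proposition~\ref{P:QuasiO} and Proposition~\ref{P:DominO}, handle $\Deltat$ by passing to the opposite monoid, and conclude with Propositions~\ref{P:Recipe} and~\ref{P:Main2}. The paper merely compresses the first case by routing the quasi-central hypothesis through Lemma~\ref{L:QuasiDomin} into the dominating case, which is exactly what Proposition~\ref{P:QuasiO} does internally, so the two arguments coincide.
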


\begin{proof}
Owing to Lemmas~\ref{L:QuasiDomin} and~\ref{L:Quasi}, the element~$\Delta$ right-dominates~$\SS$ in~$\Mon\SS\RR$, so the latter is eligible for Proposition~\ref{P:DominO}. Symmetrically, $\Deltat$ right-dominates~$\SS$ in~$\Mon\SS\RRt$, and $\Mon\SS\RRt$ is eligible for Proposition~\ref{P:DominO} as well. Then the hypotheses of Propositions~\ref{P:Main1} and~\ref{P:Main2} are satisfied, and the latter give the results.
\end{proof}

We add two more observations. The first one involves monoids that are of right-$O$-type but not necessarily of $O$-type. In this case, the termination of left-reversing is not guaranteed, and the monoid need not be connected with a left-invariant ordering in the group. However, the group is still a group of right-fractions for the monoid, and we can solve its word problem by appealing to right-reversing only.

\begin{algo}[word problem]
\label{A:WordPb}
\hfill\\
\noindent $\bullet$ {\bf Data}: A finite (or recursive) right-triangular presentation $\Pres\SS\RR$;

\noindent $\bullet$ {\bf Input}: A signed $\SS$-word~$\ww$;

\noindent $\bullet$ {\bf Procedure}: 

- Right-$\RRh$-reverse $\ww$ into~$\vv \uu\inv$ with $\uu, \vv$ in~$\SS^*$;

- Right-$\RRh$-reverse $\uu\inv \vv$ into~$\vv' \uu'{}\inv$ with $\uu', \vv'$ in~$\SS^*$;

\noindent $\bullet$ {\bf Output}: 

- For $\uu' = \vv' = \ew$, return ``\;$\cl\ww = 1$'';
 \enlargethispage{3mm}
 
- For $\uu' \not= \ew$ or $\vv' \not= \ew$, return ``\;$\cl\ww \not= 1$''.
\end{algo}

\begin{prop}
\label{P:DoubleRev}
Assume that $\Pres\SS\RR$ is a right-triangular presentation and there exists an element~$\Delta$ in~$\Mon\SS\RR$ satisfying $\forall\ss{\in}\SS\,(\ss \dive \Delta \dive \ss\Delta)$ or $\forall\ss{\in}\SS\,\, \forall\nn\, (\ss \Delta^\nn \dive \Delta^{\nn+1})$. Then Algorithm~\ref{A:WordPb} solves the word problem of the group~$\Gr\SS\RR$.
\end{prop}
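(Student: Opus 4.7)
The plan is to parallel Proposition~\ref{P:Main2}, but using right-reversing only. Under either hypothesis on~$\Delta$, Lemmas~\ref{L:Quasi} and~\ref{L:QuasiDomin} in the quasi-central case, and the definition directly in the dominating case, ensure that $\Delta$ right-dominates~$\SS$ in $\MM := \Mon\SS\RR$. Proposition~\ref{P:DominO} then gives that $\MM$ is of right-$O$-type, Lemma~\ref{L:Domin} furnishes a common right-multiple for any two elements of~$\MM$, and Proposition~\ref{P:Complete} supplies completeness of $(\SS, \RRh)$ for right-reversing. Termination of the second step of the algorithm is then immediate from Lemma~\ref{L:Common}. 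For the first step, I would decompose an arbitrary signed $\SS$-word~$\ww$ into maximal alternating positive and negative blocks and apply Lemma~\ref{L:Split} iteratively, reducing termination to the two-block case already handled.

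For correctness, Lemma~\ref{L:Equiv} translates the two reversings into identities in $\GG := \Gr\SS\RR$: namely $\cl\ww = \cl\vv\cl\uu\inv$ and $\cl\uu\cl{\vv'} = \cl\vv\cl{\uu'}$. Substituting one into the other gives $\cl\ww = \cl\vv\cdot\cl{\vv'}\cl{\uu'}\inv\cdot\cl\vv\inv$, so $\cl\ww = 1$ in $\GG$ iff $\cl{\vv'} = \cl{\uu'}$ in $\GG$. Since $\RRh$ consists of triangular relations, Lemma~\ref{L:Empty} forces at least one of $\uu', \vv'$ to be empty; when both are empty, the two sides of the equivalence are trivial and the algorithm's answer $\cl\ww = 1$ is correct. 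The delicate case is when exactly one, say $\vv'$, is nonempty: the algorithm then asserts $\cl\ww \ne 1$, which is correct provided $\cl{\vv'} \ne 1$ in $\GG$.

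The main obstacle is therefore to show that no nonempty positive $\SS$-word represents~$1$ in $\GG$, i.e., that the canonical map $\MM \to \GG$ is injective. The Main Lemma already gives left-cancellativity of~$\MM$, so by Ore's theorem it is enough to upgrade this to right-cancellativity; once this is done, $\GG$ is realized as the group of right-fractions of~$\MM$, the embedding follows, and the positive presentation guarantees that every nonempty positive word has nontrivial image in $\GG$. The plan for right-cancellativity is to exploit the action of~$\Delta$: given $\gg\hh = \gg'\hh$ in~$\MM$, pick $n$ with $\hh \dive \Delta^n$, say $\Delta^n = \hh\,\hh''$, and use the quasi-central relation $\gg\Delta^n = \Delta^n \phi^n(\gg)$ (or, in the dominating case, the iterated divisibility $\gg\Delta^n \dive \Delta^{n+1}$) to transport $\Delta^n$ to the left of the equation, after which the established left-cancellativity removes the common factor and forces $\gg = \gg'$. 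This completes the remaining correctness cases, and the symmetric case $\uu' \ne \ew$, $\vv' = \ew$ is handled identically.
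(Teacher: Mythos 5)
Your overall architecture coincides with the paper's: termination comes from completeness of $(\SS,\RRh)$ (Proposition~\ref{P:Complete}) together with the existence of common right-multiples supplied by the dominating element (Lemmas~\ref{L:Quasi}, \ref{L:QuasiDomin}, \ref{L:Domin} and Lemma~\ref{L:Common}), and correctness is obtained by translating the two reversings into identities in~$\Gr\SS\RR$ and reducing everything to the injectivity of the canonical map $\Mon\SS\RR \to \Gr\SS\RR$. The paper disposes of that last point in one line by invoking Ore's theorem; you are right that Ore's theorem needs cancellation on both sides, while the Main Lemma only delivers left-cancellativity for a presentation that is merely right-triangular, so the point you isolate as ``the main obstacle'' is indeed the crux.

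However, your proposed proof of right-cancellativity does not close. From $\gg\hh = \gg'\hh$ and $\hh\hh'' = \Delta^\nn$ you get $\gg\Delta^\nn = \gg'\Delta^\nn$, hence $\Delta^\nn\phi^\nn(\gg) = \Delta^\nn\phi^\nn(\gg')$, and left-cancelling $\Delta^\nn$ yields only $\phi^\nn(\gg) = \phi^\nn(\gg')$. To conclude $\gg = \gg'$ you need $\phi$ to be injective; but, by left-cancellativity, $\phi(\gg) = \phi(\gg')$ is equivalent to $\gg\Delta = \gg'\Delta$, so injectivity of~$\phi$ is precisely right-cancellability of~$\Delta$ --- the special case of the property you have just reduced the general case to. The argument is circular. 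In the purely dominating case it is weaker still: there is no endomorphism~$\phi$, and $\gg\Delta^\nn \dive \Delta^{\nn+1}$ only provides complements that depend on~$\gg$, from which nothing about $\gg$ versus~$\gg'$ can be extracted. So the step ``forces $\gg = \gg'$'' is unjustified, and with it the correctness of the output in the case $\uu' = \ew$, $\vv' \ne \ew$ (one must know that a nonempty positive word cannot represent~$1$ in the group). For what it is worth, the paper's own proof simply cites Ore's theorem here and supplies no right-cancellativity argument either; your instinct to verify it is sound, but the verification you sketch does not work and a genuinely different idea is needed to fill this step.
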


\begin{proof}
By Proposition~\ref{P:Complete}, the presentation $(\SS, \RRh)$ is complete for right-reversing and, by Proposition~\ref{P:Main1}, any two elements of the monoid~$\Mon\SS\RR$ admit a common right-multiple, hence every $\RRh$-reversing sequence is terminating: for every signed $\SS$-word~$\ww$, there exist positive $\SS$-words~$\uu, \vv$ satisfying $\ww \revRh \uu \vv\inv$. Hence Algorithm~\ref{A:WordPb}, which consists of two concatenated reversings, always terminates. 

Then, by construction, $\ww \revRh \uu \vv\inv$ implies $\cl\ww = \cl{\uu\vv\inv}$ in~$\Gr\SS\RR$. Hence $\cl\ww = 1$ holds if and only if we have $\cl{\uu\vv\inv} = 1$, or, equivalently, $\cl\uu = \cl\vv$. By Ores's theorem, the monoid~$\Mon\SS\RR$ embeds in the group~$\Gr\SS\RR$, so the latter condition is equivalent to $\clp\uu = \clp\vv$, \ie, to~$\uu \eqpR \vv$. As $(\SS, \RRh)$ is complete for right-reversing, the latter condition is equivalent to $\uu\inv \vv \revRh \ew$, \ie, with the notation of Algorithm~\ref{A:WordPb}, to $\uu' = \vv' = \ew$. 
\end{proof}

The second observation is a connection with Garside theory~\cite{Garside}. Say that an element~$\Delta$ is \emph{left-quasi-central} in a monoid~$\MM$ if it is right-quasi-central in the opposite monoid~$\MMt$.

\begin{prop}
\label{P:Garside}
Assume that $\MM$ is a monoid of right-$O$-type and $\Delta$ is right-quasi-central (\resp simultaneously right- and left-quasi-central) in~$\MM$ and its left-divisors generate~$\MM$. Then $\Delta$ is a right-Garside element (\resp a Garside element) in~$\MM$ in the sense of~\cite[Definitions VI.1.36 and 2.29]{Garside}.
\end{prop}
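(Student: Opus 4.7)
The plan is to check, for each case, the axioms that define a right-Garside (resp.\ Garside) element in the sense of~\cite{Garside}. These reduce to three items: (a)~the family $D = \{g \in \MM : g \dive \Delta\}$ of left-divisors of~$\Delta$ generates~$\MM$; (b)~$D$ is closed under right-lcm (and right-gcd); and (c)~a balance condition saying that, for every $g$ in~$D$, the complementary element~$g^*$ defined by $g g^* = \Delta$ lies in~$D$ as well, or, equivalently, that $g \mapsto g^*$ is an involution of~$D$.

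Condition~(a) is given by hypothesis. For~(b) I will invoke right-$O$-type: since $\dive$ is a linear ordering on~$\MM$, the right-lcm and right-gcd of any two elements are simply their $\dive$-maximum and $\dive$-minimum, and~$D$, being a $\dive$-initial segment of~$\MM$, is automatically closed under both operations. The substantive step is~(c). I will fix $g \in D$, write $\Delta = g h$, apply the defining identity $g\Delta = \Delta\phi(g)$ to substitute, and left-cancel~$g$ (which is legal because $\MM$ is left-cancellative) to obtain the key equation
\[
\Delta = g h = h \phi(g).
\]
The first factorisation shows that~$h$ is a left-divisor of~$\Delta$, so the complement map preserves~$D$; the second shows that $\phi(g)$ is a right-divisor of~$\Delta$. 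Re-applying the same computation to $\Delta = h \phi(g)$ yields $\Delta = \phi(g) \phi(h)$, so $\phi(g)$ itself belongs to~$D$. This gives the compatibility of conjugation by~$\Delta$ with left- and right-divisibility that is demanded by Definition~VI.1.36.

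For the Garside case, I will exploit the additional left-quasi-centrality, which provides an endomorphism~$\psi$ satisfying $\Delta g = \psi(g)\, \Delta$. Combining the two identities gives $g \Delta = \Delta\phi(g) = \psi(\phi(g))\, \Delta$, and right-cancelling~$\Delta$---legal because $\MM$ embeds in a group of right-fractions by Ore's theorem (common right-multiples exist, being given by the $\dive$-larger of any two elements in the linear order)---yields $g = \psi(\phi(g))$, and symmetrically $g = \phi(\psi(g))$. Hence $\phi$ is an automorphism of~$\MM$, and the mirror of the argument above will then establish the symmetric axioms for right-divisors of~$\Delta$ that are required by Definition~VI.2.29.

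The main (and essentially only) obstacle is step~(c): it is here that quasi-centrality does its essential work in converting a statement about left-divisibility into one about right-divisibility. Once the identity $\Delta = h\phi(g)$ has been produced by a single left-cancellation, the remaining verifications are routine consequences of the linearity of~$\dive$ and of the basic properties of monoid endomorphisms.
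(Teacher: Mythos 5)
Your treatment of the right-Garside case is correct and is essentially the paper's own argument: the identity $\Delta = g h = h\,\phi(g)$, obtained by substituting $\Delta = gh$ into $g\Delta = \Delta\phi(g)$ and left-cancelling~$g$, is exactly the computation the paper uses to show that every right-divisor of~$\Delta$ left-divides~$\Delta$, and the gcd/lcm requirements are, as you say, immediate from the linearity of~$\dive$.

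The Garside case, however, contains a genuine gap. First, your licence to right-cancel~$\Delta$ rests on Ore's theorem, but Ore's theorem needs $\MM$ to be cancellative on \emph{both} sides in addition to having common right-multiples; a monoid of right-$O$-type is only guaranteed to be left-cancellative, so the step producing $g = \psi(\phi(g))$ is not justified from the stated hypotheses. Second, and independently of that, proving that $\phi$ is an automorphism is a detour that never reaches the condition actually distinguishing a Garside element from a right-Garside one in Definition~VI.2.29, namely that the left- and right-divisors of~$\Delta$ coincide. The inclusion you still owe is that every left-divisor of~$\Delta$ is a right-divisor, and this is what the paper extracts from left-quasi-centrality as the exact mirror of the main computation (reading $\Delta x = \psi(x)\Delta$ instead of $x\Delta = \Delta\phi(x)$ and cancelling on the other side). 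Your closing sentence, that ``the mirror of the argument will then establish the symmetric axioms,'' defers precisely the statement that has to be proved; as written, the second half of the proposal establishes properties of~$\phi$ and~$\psi$ but not the required equality of the two divisor sets.
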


\begin{proof}
By assumption, the monoid~$\MM$ is left-cancellative and the left-divisors of~$\Delta$ generate~$\MM$. As $\Delta$ is right-quasi-central, every right-divisor of~$\Delta$ is left-divides~$\Delta$ since, as noted in the proof of Lemma~\ref{L:QuasiDomin}, $\Delta = \gg' \gg$ implies $\gg \phi(\gg') = \Delta$. Finally, for every~$\gg$ in~$\MM$, the elements~$\gg$ and~$\Delta$ admit a greatest common left-divisor (left-gcd), namely the smaller of them with respect to~$\dive$. Hence, by definition, $\Delta$ is a right-Garside element in~$\MM$. 

If $\Delta$ is also left-quasi-central, then, by symmetry, the left-divisors of~$\Delta$ must be included in its right-divisors, and, therefore, the left- and right-divisors of~$\Delta$ coincide. Then $\Delta$ is a Garside element in~$\MM$.
\end{proof}

It follows that, under the hypotheses of Proposition~\ref{P:Garside}, the left-divisors of~$\Delta$ in~$\MM$ form what is called a Garside family \cite[Definition I.1.34]{Garside} and every element of~$\MM$ admits a distinguished decomposition in terms of these elements. However, as left-divisibility is a linear ordering in this case, this decomposition is rather trivial: every element is left-divisible by some maximal power of~$\Delta$, and the normal decompositions all have the simple form $(\Delta \wdots \Delta, \gg)$ with $\gg \dive \Delta$.


\section{Experimental approach}
\label{S:Exp}

Subword reversing is easily implemented, allowing for computer experiments. Thus we can consider arbitrary (right)-triangular presentations and investigate whether the associated monoids are of (right)-$O$-type. Owing to Lemma~\ref{L:Enum}, we consider presentations of form
\begin{equation}
\label{E:PresExp}
\Pres{\tta_1 \wdots \tta_\nn}{\tta_1 = \tta_2 \CC(\tta_2) \wdots \tta_{\nn-1} = \tta_\nn \CC(\tta_\nn)}
\end{equation}
where $\CC(\tta_2) \wdots \CC(\tta_\nn)$ are words in the alphabet $\{\tta_1 \wdots \tta_\nn\}$. Then the first natural step in the investigation consists in (trying to) compute (a final fragment of) the right-ceiling, if it exists. The principle is as follows.

\begin{lemm}
\label{L:CeilingExp}
Assume that $\Pres\SS\RR$ is a presentation of the form~\eqref{E:PresExp}. Starting from~$\ss_1 = \tta_1$, inductively find~$\ss_\nn$ in~$\SS$ so that, for every~$\ii$, \begin{equation}
\label{E:CeilingExp}
\mbox{$(\tta_\ii \ss_{\nn-1} \pdots \ss_1)\inv (\ss_\nn \ss_{\nn-1} \pdots \ss_1)$ is $\RRh$-reversible to a positive word}.\end{equation}
\nobreak
Then $\Mon\SS\RR$ is of right-$O$-type if and only if the construction never stops.
\end{lemm}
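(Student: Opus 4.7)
The plan is to read~\eqref{E:CeilingExp} as a syntactic, reversing-theoretic rewording of the divisibility condition~\eqref{E:Ceiling} of Proposition~\ref{P:CeilingO}, and then invoke that proposition. The bridge between the two is provided by completeness: since $(\SS,\RRh)$ is complete for right-reversing by Proposition~\ref{P:Complete}, Lemma~\ref{L:Common} gives that the $\RRh$-reversing of $\uu\inv\vv$ terminates iff $\clp\uu$ and $\clp\vv$ admit a common right-multiple, and Lemma~\ref{L:Empty} then forces one of the two output words to be empty, the surviving one recording the direction of divisibility. Consequently, for every~$\ii$ and~$\nn$, condition~\eqref{E:CeilingExp} is equivalent to the divisibility $\tta_\ii\ss_{\nn-1}\pdots\ss_1 \dive \ss_\nn\ss_{\nn-1}\pdots\ss_1$ in~$\Mon\SS\RR$, which, since $\SS = \{\tta_1\wdots\tta_\nn\}$, is precisely~\eqref{E:Ceiling}.

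The converse direction of the lemma is then immediate: if the construction never stops, it produces an infinite left-sequence $\linfty\ss_2\ss_1$ whose finite final fragments all satisfy~\eqref{E:CeilingExp}, hence, via Lemma~\ref{L:Equiv} applied to the positive output word, relation~\eqref{E:Ceiling} for every $\ii$ and~$\nn$. Proposition~\ref{P:CeilingO} then yields that $\Mon\SS\RR$ is of right-$O$-type.

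For the forward direction, assume $\Mon\SS\RR$ is of right-$O$-type. Proposition~\ref{P:CeilingO} provides a right-$\SS$-ceiling $\linfty\ss_2^*\ss_1^*$, unique by Lemma~\ref{L:CeilingUnique}. The defining relations in~\eqref{E:PresExp} yield the chain $\tta_\ii\dive\tta_{\ii-1}\dive\pdots\dive\tta_1$ in~$\Mon\SS\RR$, so $\tta_1$ is a right-multiple of every letter of~$\SS$, which forces $\ss_1^* = \tta_1$ and matches the algorithm's initialization. Then, by induction on~$\nn$, assuming the algorithm has selected $\ss_1=\ss_1^*\wdots\ss_{\nn-1}=\ss_{\nn-1}^*$, the ceiling property provides $\tta_\ii\ss_{\nn-1}\pdots\ss_1 \dive \ss_\nn^*\ss_{\nn-1}\pdots\ss_1$ for every~$\ii$; the corresponding $\RRh$-reversing terminates by Lemma~\ref{L:Common}, by Lemma~\ref{L:Empty} one of its two output sides is empty, and the cancellativity of~$\Mon\SS\RR$ (automatic for right-$O$-type via the embedding in a group of right-fractions granted by Ore's theorem) combined with the explicit divisibility forces the negative side to be empty, so~\eqref{E:CeilingExp} holds for~$\ss_\nn := \ss_\nn^*$. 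The step I expect to require most care is the identification of the $\ss_\nn$ actually produced by the algorithm with the ceiling letter~$\ss_\nn^*$, so that the induction hypothesis keeps propagating; this follows because~\eqref{E:CeilingExp} pins $\ss_\nn\ss_{\nn-1}\pdots\ss_1$ down to being a right-top $\SS$-word of length~$\nn$, whose uniqueness is granted by Lemma~\ref{L:CeilingUnique}. Hence the construction continues indefinitely, as required.
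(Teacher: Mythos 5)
Your proof is correct and follows essentially the same route as the paper: both directions are funneled through Proposition~\ref{P:CeilingO} by translating condition~\eqref{E:CeilingExp} into the divisibility~\eqref{E:Ceiling}, using the completeness of $(\SS,\RRh)$ from Proposition~\ref{P:Complete} together with Lemmas~\ref{L:Equiv}, \ref{L:Common}, and~\ref{L:Empty}. The only organizational difference is in the forward direction, where the paper argues contrapositively (a stalled step comes from a non-terminating reversing, which witnesses two elements without a common right-multiple), whereas you argue directly that the algorithm must reproduce the unique right-ceiling via Lemma~\ref{L:CeilingUnique}; both rest on the same lemmas, and your remark that left-cancellativity (part of the definition of right-$O$-type, no appeal to Ore needed) forces the negative output to be empty is exactly the point that makes the direct version go through.
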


\begin{proof}
By Lemma~\ref{L:Empty}, and by completeness of the presentation~$\Pres\SS\RR$ for subword reversing, the only possibilities, when a negative--positive word~$\uu\inv \vv$ is reversed are

- either the reversing terminates in finitely many steps with a word that is either positive or negative, in which case $\clp\uu \dive \clp\vv$ or $\clp\vv \dive \clp\uu$ holds in~$\Mon\SS\RR$,

- or the reversing never terminates, in which case $\clp\uu$ and $\clp\vv$ have no common right-multiple in~$\Mon\SS\RR$.

\noindent So, assuming that $\ss_1 \wdots \ss_{\nn-1}$ have been found, two cases may occur in the search of~$\ss_\nn$ satisfying~\eqref{E:CeilingExp}:

- either all reversings terminate in finitely many steps, $\ss_\nn$ is found, and the process continues; then \eqref{E:Ceiling} is satisfied, $\ss_\nn \pdots \ss_1$ is a right-top $\SS$-word, and, if this continues for every~$\nn$, then, by Proposition~\ref{P:CeilingO}, $\Mon\SS\RR$ is of right-$O$-type,

- or some reversing does not terminate, and the process stops; then two elements $\ss \ss_{\nn-1} \pdots \ss_1$ and $\ss' \ss_{\nn-1} \pdots \ss_1$ admit no common right-multiple, and $\Mon\SS\RR$ cannot be of right-$O$-type. 
\end{proof}

Note that, for an alphabet~$\SS$ of size~$\mm$, determining the final $\ell + 1$ letters in the right-ceiling requires only $\ell(\nn-1)$ word reversings and, in particular, only $\ell$~reversing are required in the case of two generators.

Although easy, the previous step can lead to suspicions, and not to a proven conclusion: on the positive side, one obtains a finite fragment of the ceiling, from which it is \textit{a priori} impossible to deduce the existence of the ceiling; symmetrically, on the negative side, a long finite reversing sequence is not a proof of a non-terminating reversing. However, so far, the suspicions provided by Lemma~\ref{L:CeilingExp} never turned to be wrong: although \textit{ad hoc} examples could certainly be constructed, we know of no monoid containing a right-top word of length~20 that eventually turned to be not of $O$-type, and of no reversing sequence of length~$10,000$ that eventually turned to be terminating.

Now, the good point is that, in some cases, proven conclusions can be obtained. Let us begin with the negative case, \ie, establishing that a monoid is not of $O$-type. The following result shows that some syntactic conditions \textit{a priori} discard certain presentations.

\begin{lemm}
\label{L:Excluded}
Assume that $\Pres\SS\RR$ is a triangular presentation.

\ITEM1 If a relation of~$\RRh$ has the form $\ss = \ww$ with $\Lg\ww > 1$ and $\ww$ finishing with~$\ss$, then $\Mon\SS\RR$ is not right-cancellative and, therefore, $\Mon\SS\RR$ is not of right-$O$-type.

\ITEM2 If a relation of~$\RRh$ has the form $\ss = \ww$ with $\ww$ beginning with $(\uu\vv)^\rr \uu \ss$ with~$\rr \ge\nobreak 1$, $\uu$~nonempty, and $\vv$ such that $\vv\inv \ss$ reverses to a word beginning with~$\ss$, hence in particular if $\vv$ is empty or it can be decomposed as $\uu_1 \wdots \uu_\mm$ where $\uu_\kk\ss$ is a prefix of~$\ww$ for every~$\kk$, then the elements $\ss$ and $\clp\uu \ss$ have no common right-multiple in~$\Mon\SS\RR$ and, therefore, $\Mon\SS\RR$ is not of right-$O$-type.
\end{lemm}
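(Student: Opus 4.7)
The proof splits according to the two items.

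For Part~(i), I would write $\ww = \ww' \ss$ with $\ww'$ nonempty (possible since $\Lg\ww > 1$). The relation $\ss = \ww' \ss$ projects to $\Mon\SS\RR$ as the identity $\clp\ss = \clp{\ww'}\,\clp\ss$. Because $\Pres\SS\RR$ is a positive presentation (every relation has nonempty sides), any $\RR$-derivation sends a nonempty word to a nonempty word; in particular, $\ww'$ cannot represent~$1$, so $\clp{\ww'} \neq 1$. Then $1 \cdot \clp\ss = \clp{\ww'} \cdot \clp\ss$ with $1 \neq \clp{\ww'}$ is an explicit witness of the failure of right-cancellativity in~$\Mon\SS\RR$. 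To derive that $\Mon\SS\RR$ is not of right-$O$-type, one invokes the standard observation that a right-$O$-type monoid, being left-cancellative by definition and admitting common right-multiples by the Main Lemma, embeds via Ore's theorem into its group of right-fractions and is hence cancellative on both sides.

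For Part~(ii), I would use Proposition~\ref{P:Complete} and Lemma~\ref{L:Common} to reduce the question to non-termination of the $\RRh$-reversing of the signed word $\ss\inv \uu \ss$: if this reversing never terminates, then $\clp\ss$ and $\clp{\uu\ss}$ admit no common right-multiple and, by the Main Lemma, $\Mon\SS\RR$ cannot be of right-$O$-type. I rewrite the hypothesis $\ww$ starts with $(\uu\vv)^\rr \uu \ss$ in the equivalent form $\ww = \uu (\vv\uu)^\rr \ss \ww_0$ for some tail~$\ww_0$. The first block of reversing---reversing $\ss\inv$ against the leading letter of~$\uu$ using the relation $\ss = \ww$, then absorbing the remainder of~$\uu$ via an internal cancellation---produces, in finitely many steps, the word $\ww_0\inv\, \ss\inv\, (\vv\uu)^{-\rr}\, \ss$. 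The central block $(\uu\inv\vv\inv)^\rr \ss$ is then processed from right to left: at each round, the hypothesis ``$\vv\inv \ss$ reverses to a word beginning with~$\ss$'' handles the innermost $\vv\inv \ss$, and the same relation $\ss = \uu(\vv\uu)^\rr \ss \ww_0$ is used to reverse the newly exposed $\uu\inv \ss$ into $(\vv\uu)^\rr \ss \ww_0$. Keeping track of the ensuing cancellations, the net effect of one round is to peel off one $\vv\uu$ pair from the central block while producing additional positive material on the right; after $\rr$ rounds, the pattern $\ss\inv \uu \ss$ reappears as a factor of the current word, flanked by $\ww_0\inv$ on the left and extra positive letters on the right, so the process loops indefinitely.

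The ``in particular'' clauses are concrete sufficient conditions for the hypothesis on~$\vv\inv \ss$: when $\vv$ is empty the condition is trivial, and when $\vv = \uu_1 \pdots \uu_\mm$ with each $\uu_\kk \ss$ a prefix of~$\ww$, each $\uu_\kk\inv \ss$ reverses to a word starting with~$\ss$ by the same prefix-matching mechanism used for $\uu\inv \ss$ in the main argument, and iterating from right to left yields the same conclusion for~$\vv\inv \ss$. The main obstacle is the bookkeeping in Part~(ii): verifying that the cancellations between the central $(\uu\inv\vv\inv)^\rr$ block and the $(\vv\uu)^\rr$ fragment freshly produced by reversing~$\uu\inv \ss$ combine cleanly round by round, so that the factor $\ss\inv \uu \ss$ really does reappear verbatim after each full cycle and fuels the next iteration.
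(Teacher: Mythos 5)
Your overall strategy coincides with the paper's for both items, but there are two genuine problems in the execution. In Part~(i) the core argument (write $\ww = \ww'\ss$, observe $\clp\ss = \clp{\ww'}\,\clp\ss$ with $\clp{\ww'} \neq 1$ because a positive presentation never equates a nonempty word with the empty one) is exactly the paper's. However, your justification of the clause ``therefore not of right-$O$-type'' does not work: Ore's theorem requires cancellativity on \emph{both} sides as a hypothesis, and a left-cancellative monoid with common right-multiples need not be right-cancellative --- for instance $\Mon{\tta,\ttb}{\ttb = \tta\ttb}$ is left-cancellative, has trivial units and linear left-divisibility, yet $\tta\ttb = \ttb$ with $\tta \neq 1$ shows it is not right-cancellative. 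So you cannot deduce two-sided cancellativity from right-$O$-type via Ore. (The paper's own proof of~(i) stops at non-right-cancellativity and leaves that clause unargued, so you are no worse off than the source, but the step you supplied is incorrect as written.)

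In Part~(ii) the reduction to non-termination of the reversing of $\ss\inv\uu\ss$ via Proposition~\ref{P:Complete} and Lemma~\ref{L:Common} is the paper's, and your first block, $\ss\inv\uu\ss \revRh \ww_0\inv\,\ss\inv(\vv\uu)^{-\rr}\ss$, is correct. The loop itself, which you explicitly leave unverified, is where the gap lies, and your picture of it is not what happens. After one application of the hypothesis ($\vv\inv\ss \revRh \ss\ww_2$) and one reversal of $\uu\inv\ss$ into $(\vv\uu)^{\rr}\ss\ww_0$, the freshly produced $(\vv\uu)^{\rr}$ cancels \emph{all} of the remaining $(\uu\inv\vv\inv)^{\rr-1}$ in one stroke, leaving $\ww_0\inv\,\ss\inv\vv\uu\ss\,\ww_0\ww_2$; there are no ``$\rr$ rounds each peeling off one $\vv\uu$ pair''. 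More importantly, the step that actually re-exposes the factor $\ss\inv\uu\ss$ is the reversal of the surviving $\ss\inv\vv$ into $\ww_2\inv\ss\inv$ --- the transposed form of the hypothesis $\vv\inv\ss \revRh \ss\ww_2$, legitimate because transposing a reversing diagram again yields a reversing diagram --- and this step is absent from your account. With it one obtains $\ss\inv\uu\ss \revRh \ww_0\inv\ww_2\inv\cdot\ss\inv\uu\ss\cdot\ww_0\ww_2$ and hence the non-terminating iteration; without it the word is stuck at $\cdots\ss\inv\vv\uu\ss\cdots$ and the loop is not closed. Your treatment of the ``in particular'' clauses is fine.
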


\begin{proof}
\ITEM1 If $\RR$ contains a relation $\ss = \uu \ss$ with $\uu$ nonempty, $\ss = \clp\uu \ss$ holds in~$\Mon\SS\RR$, whereas $1 = \clp\uu$ fails. So $\Mon\SS\RR$ is not right-cancellative.

\ITEM2 We claim that the right-$\RRh$-reversing of $\ss\inv \uu \ss$ cannot be terminating, see Figure~\ref{F:NonTerminating}. Indeed, writing the involved relation $\ss = (\uu \vv)^\rr \uu \ss \ww_1$ with $\vv\inv \ss \revRh \ss \ww_2$, we find
\begin{align*}
\ss\inv \uu \ss 
&\ \revRh\  \ww_1\inv \ss\inv (\vv \uu)^{-(\rr-1)} \uu\inv \vv\inv \ss\\
&\ \revRh\  \ww_1\inv \ss\inv (\vv \uu)^{-(\rr-1)} \uu\inv \ss \ww_2\\
&\ \revRh\  \ww_1\inv \ss\inv (\vv \uu)^{-(\rr-1)} (\vv\uu)^{\rr-1} \vv \uu \ss \ww_1 \ww_2\\
&\ \revRh\  \ww_1\inv \ss\inv \vv \uu \ss \ww_1 \ww_2\\
&\ \revRh\  \ww_1\inv \ww_2\inv \cdot \ss\inv  \uu \ss \cdot \ww_1 \ww_2. 
\end{align*}
We deduce that $\ss\inv \uu \ss \ \revRh \ (\ww_1\inv \ww_2\inv)^\nn\cdot \ss\inv  \uu \ss \cdot (\ww_1 \ww_2)^\nn$ holds for every~$\nn$ and, therefore, it is impossible that $\ss\inv \uu \ss$ leads in finitely many steps to a positive--negative word. Then, by Lemma~\ref{L:Common}, which is relevant since, by Proposition~\ref{P:Complete}, $(\SS, \RRh)$ is complete for right-reversing, $\ss$ and $\clp\uu \ss$ admit no common right-multiple in~$\Mon\SS\RR$. 
\end{proof}

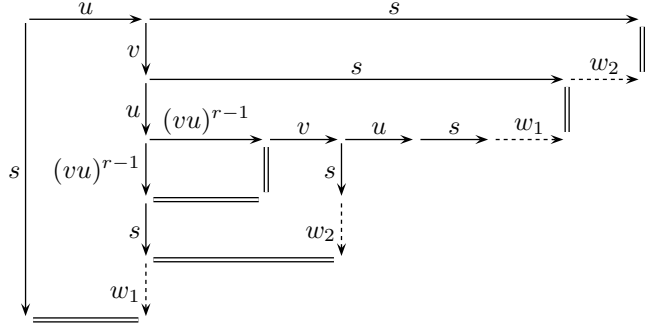
\begin{figure}[htb]
\begin{picture}(82,42)(0,0)
\pcline{->}(0,39.5)(0,0.5)
\tlput{$\ss$}
\pcline{->}(16,39.5)(16,32.5)
\tlput{$\vv$}
\pcline{->}(16,31.5)(16,24.5)
\tlput{$\uu$}
\pcline{->}(16,23.5)(16,16.5)
\tlput{$(\vv\uu)^{\rr-1}$}
\pcline{->}(16,15.5)(16,8.5)
\tlput{$\ss$}
\pcline[style=exist]{->}(16,7.5)(16,0.5)
\tlput{$\ww_1$}
\pcline{->}(0.5,40)(15.5,40)
\taput{$\uu$}
\pcline[style=double](1,0)(15,0)
\pcline{->}(16.5,24)(31.5,24)
\taput{$(\vv\uu)^{\rr-1}$}
\pcline{->}(32.5,24)(41.5,24)
\taput{$\vv$}
\pcline{->}(42.5,24)(51.5,24)
\taput{$\uu$}
\pcline{->}(52.5,24)(61.5,24)
\taput{$\ss$}
\pcline[style=exist]{->}(62.5,24)(71.5,24)
\taput{$\ww_1$}
\pcline{->}(16.5,32)(71.5,32)
\taput{$\ss$}
\pcline[style=exist]{->}(72.5,32)(81.5,32)
\taput{$\ww_2$}
\pcline{->}(16.5,40)(81.5,40)
\taput{$\ss$}
\pcline[style=double](82,39)(82,33)
\pcline[style=double](72,31)(72,25)
\pcline[style=double](32,23)(32,17)
\pcline[style=double](17,16)(31,16)
\pcline[style=double](17,8)(41,8)
\pcline{->}(42,23.5)(42,16.5)
\tlput{$\ss$}
\pcline[style=exist]{->}(42,15.5)(42,8.5)
\tlput{$\ww_2$}
\end{picture}
\caption{\sf\smaller Proof of Lemma~\ref{L:Excluded}\ITEM2: in a positive number of steps, the word~$\ss\inv \uu \ss$ reverses to a word that includes it and, therefore, the reversing cannot be terminating.}
\label{F:NonTerminating}
\end{figure}

For instance, a relation $\tta = \ttb\tta\ttb\tta\ttb^3\tta^2...$ is impossible in a right-triangular presentation for a monoid of right-$O$-type: indeed, the right-hand side of the relation can be written as $(\ttb\tta)\ttb\tta\ttb^2(\ttb\tta)\tta...$, which is eligible for Lemma~\ref{L:Excluded}\ITEM2 with $\uu = \ttb\tta$ and $\vv = \ttb\tta\ttb\cdot\ttb$, a product of two words~$\uu_1, \uu_2$ such that $\uu_\ii\tta$ is a prefix of the right-hand term of the relation.

Symmetrically, on the positive side, \ie, for establishing that a monoid is of right-$O$-type, the existence of a quasi-central element is can be checked in finite time. 

\begin{lemm}
\label{L:QuasiExp}
Assume that $\Pres\SS\RR$ is a presentation of the form~\eqref{E:PresExp}, and $\ww$ is an $\SS$-word beginning with~$\tta_1$ and such that, for $2 \le \ii \le \nn$, 
\begin{equation}
\label{E:QuasiExp}
\mbox{$\ww\inv \tta_\ii \ww$ is $\RRh$-reversible to a positive word}.
\end{equation}
Then $\clp\ww$ is right-quasi-central in~$\Mon\SS\RR$ and the latter is a monoid of right-$O$-type.
\end{lemm}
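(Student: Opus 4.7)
The plan is to set $\Delta := \clp\ww$ and to verify the two-sided divisibility criterion $\ss \dive \Delta \dive \ss\Delta$ of Lemma~\ref{L:Quasi}\ITEM2 for every generator $\ss$; once this is secured, Lemma~\ref{L:Quasi} delivers the endomorphism witnessing that $\Delta$ is right-quasi-central in~$\Mon\SS\RR$, and Proposition~\ref{P:QuasiO} concludes that $\Mon\SS\RR$ is of right-$O$-type. The only non-trivial prerequisite of Lemma~\ref{L:Quasi}, namely left-cancellativity of~$\Mon\SS\RR$, comes essentially for free: Proposition~\ref{P:Complete} gives completeness of $(\SS, \RRh)$ for right-reversing, and the left-cancellativity step inside the Main Lemma's proof (the one reducing $\ss\uu \eqpR \ss\vv$ to $\uu\inv \vv \revRh \ew$) uses only completeness, not any assumption about common multiples.

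The ``lower'' divisibility $\ss \dive \Delta$ is almost immediate: since $\ww$ begins with $\tta_1$, we have $\tta_1 \dive \Delta$, and for $\ii \ge 2$ iterating the triangular relation $\tta_{\ii-1} = \tta_\ii\, \CC(\tta_\ii)$ gives $\tta_\ii \dive \tta_{\ii-1} \dive \pdots \dive \tta_1 \dive \Delta$. The ``upper'' divisibility $\Delta \dive \ss\Delta$ is where the termination hypothesis is spent: when $\ss = \tta_\ii$ with $\ii \ge 2$, the reversing $\ww\inv \tta_\ii \ww \revRh \vv_\ii$ combines with Lemma~\ref{L:Equiv} to yield the monoid equality $\clp\ww\, \clp{\vv_\ii} = \tta_\ii \clp\ww$, whence $\Delta \dive \tta_\ii \Delta$. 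The remaining case $\ss = \tta_1$ (assuming $\nn \ge 2$; the case $\nn = 1$ is trivial, as the monoid is then cyclic and $\Delta$ central) is not directly covered by the hypothesis and is handled by bootstrapping from $\ii = 2$: using $\tta_1 = \tta_2\, \CC(\tta_2)$, rewrite $\tta_1 \Delta = \tta_2 \clp{\CC(\tta_2)}\, \Delta$, and chain $\Delta \dive \tta_2 \Delta \dive \tta_2 \clp{\CC(\tta_2)}\, \Delta = \tta_1 \Delta$.

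The main thing to watch is precisely this $\ii = 1$ bootstrapping, where the termination assumption is silent; thereafter, everything is an assembly of prior results (completeness, Lemma~\ref{L:Equiv}, Lemma~\ref{L:Quasi}, Proposition~\ref{P:QuasiO}) and the final conclusion follows with no further work.
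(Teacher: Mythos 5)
Your architecture is the same as the paper's: establish $\ss \dive \Delta \dive \ss\Delta$ for the generators, invoke Lemma~\ref{L:Quasi} (whose left-cancellativity prerequisite you correctly extract from Proposition~\ref{P:Complete} alone), read $\Delta \dive \tta_\ii \Delta$ off the terminating reversings via Lemma~\ref{L:Equiv}, and finish with Proposition~\ref{P:QuasiO}; all of that agrees with the paper and is sound. You are also right that hypothesis~\eqref{E:QuasiExp} is silent about $\ii = 1$, a point the paper's one-line proof does not address. The trouble is that your repair of that point fails. The middle link $\tta_2\Delta \dive \tta_2\,\clp{\CC(\tta_2)}\,\Delta$ of your chain is, after left-cancelling~$\tta_2$, exactly the assertion $\Delta \dive \clp{\CC(\tta_2)}\,\Delta$; it is not an instance of $\gg \dive \gg\hh$, because $\clp{\CC(\tta_2)}$ sits between $\tta_2$ and $\Delta$, and left-divisibility is not preserved under right-multiplication (from $\gg\gg' = \hh$ one gets $\gg\kk \dive \hh\kk$ only when $\kk \dive \gg'\kk$). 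To obtain $\Delta \dive \clp{\CC(\tta_2)}\,\Delta$ you would need $\Delta \dive \ss\Delta$ for every letter $\ss$ occurring in~$\CC(\tta_2)$ --- and $\CC(\tta_2)$ typically contains the letter~$\tta_1$ itself (already $\CC(\ttb) = \tta\ttb$ for the Klein bottle relation $\tta = \ttb\tta\ttb$), so your argument presupposes the very relation $\Delta \dive \tta_1\Delta$ it is supposed to prove.

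Concretely, writing $\ww = \tta_1\ww'$, the missing condition $\Delta \dive \tta_1\Delta$ is equivalent, by left-cancelling~$\tta_1$, to $\clp{\ww'} \dive \tta_1\clp{\ww'}$, which is a genuine extra requirement for a general~$\ww$. It holds automatically when $\ww$ is a power of~$\tta_1$ (then $\Delta$ commutes with~$\tta_1$), which is what happens in the applications of Sections~\ref{S:Exp} and~\ref{S:Fam}; for an arbitrary~$\ww$ the honest fix is either to also require~\eqref{E:QuasiExp} for $\ii = 1$ (one more reversing to run) or to restrict the statement to $\ww$ a power of the top generator.
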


\begin{proof}
That $\clp\ww$ is right-quasi-central in~$\Mon\SS\RR$ directly follows from Lemma~\ref{L:Quasi}, since \eqref{E:QuasiExp} implies $\clp\ww \dive \tta_\ii \clp\ww$. Moreover, by construction, we have $\tta_\nn \dive \pdots \dive\nobreak \tta_1 \dive\nobreak \clp\ww$, whence $\tta_\ii \dive \clp\ww$ for every~$\ii$. Hence, by Proposition~\ref{P:QuasiO}, $\Mon\SS\RR$ is of right-$O$-type.
\end{proof}

Thus, the generic program for investigating a right-triangular presentation~$\Pres\SS\RR$ is

\noindent\ITEM1 check if $\Pres\SS\RR$ is eligible for Lemma~\ref{L:Excluded}---in which case $\Mon\SS\RR$ is not of right-$O$-type;

\noindent\ITEM2 if not, compute the right-ceiling using Lemma~\ref{L:CeilingExp} with escape conditions on the length of the reversing sequences and of the ceiling;

\noindent\ITEM3 if some reversing in step~\ITEM2 seems to be non-terminating, try to extract a cyclic reversing pattern, \ie, finding an $\SS$-word~$\uu$ satisfying $\uu \RevRh\nn ... \uu ...$ for some $\nn > 0$---in which case $\Mon\SS\RR$ is not of right-$O$-type;

\noindent\ITEM4 otherwise try to find a right-quasi-central element---in which case $\Mon\SS\RR$ is of right-$O$-type; if the ceiling seems to be periodic with period~$\ww$, then $\clp\ww$ is a natural candidate.
 
It turns out that the above program works well, at least for presentations that are short enough. In the case of two generators, almost all presentations we tried are either discarded by Lemma~\ref{L:QuasiExp} or contain a quasi-central element that is a power of the top generator.

\begin{fact}
\label{F:Exp2}
Among the 1,023 presentations $(\tta, \ttb ; \tta = \ttb\ww)$ with $\ww$ of length~$\le 9$ in $\{\tta, \ttb\}^*$, 

- 854 are eligible for Lemma~\ref{L:Excluded}, yielding a monoid not of right-$O$-type.

- 3 lead to a cyclic reversing, yielding a monoid not of right-$O$-type.

- 166 are eligible for Lemma~\ref{L:QuasiExp}, yielding a monoid of right-$O$-type (33 are of $O$-type).
\end{fact}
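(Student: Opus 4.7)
The statement is essentially a census: the $1023 = 2^{10}-1$ words $\ww$ of length $\le 9$ in $\{\tta,\ttb\}^*$ index $1023$ candidate presentations $(\tta,\ttb\;;\;\tta=\ttb\ww)$, and the claim is that the computer program outlined in items \ITEM1--\ITEM4 of Section~\ref{S:Exp} classifies every one of them. My plan is therefore to describe the finite algorithmic procedure that performs the verification and to argue that it is guaranteed to halt on each input with a correct, proven verdict.

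First I would enumerate the $1023$ words $\ww$ and, for each presentation $\Pres\SS\RR$ with $\SS=\{\tta,\ttb\}$ and $\RR=\{\tta=\ttb\ww\}$, apply the syntactic test of Lemma~\ref{L:Excluded}. Since there is a single relation, the set $\RRh$ coincides with $\RR$, so the two conditions reduce to inspecting the word $\ttb\ww$ for the patterns ``ends with~$\tta$'' (part~\ITEM1) or ``begins with $(\uu\vv)^\rr\uu\tta$'' with $\vv$ a concatenation of prefixes~$\uu_k$ of~$\ttb\ww$ satisfying $\uu_k\tta$ a prefix of~$\ttb\ww$ (part~\ITEM2, with $\ss=\tta$). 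Both are bounded checks on a word of length $\le 10$. The computer asserts that $854$ presentations fall into this class, and each such verdict is a proof, by Lemma~\ref{L:Excluded}, that the associated monoid is not of right-$O$-type.

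For each remaining presentation I would run the ceiling construction of Lemma~\ref{L:CeilingExp}, attempting to right-$\RRh$-reverse the words $(\tta\,\ss_{n-1}\pdots\ss_1)\inv(\ss_n\,\ss_{n-1}\pdots\ss_1)$ for $\ss_n\in\{\tta,\ttb\}$. Three behaviours can be detected in finite time: (a)~both candidate reversings terminate and agree with the previous letter of a periodic pattern, in which case the program proceeds; (b)~a cyclic reversing pattern $\uu\RevRh{n}\pdots\uu\pdots$ with $n>0$ is exhibited, which by the argument behind Lemma~\ref{L:Excluded}\ITEM2 (iterating the loop) proves that the two words admit no common right-multiple in $\Mon\SS\RR$, hence the monoid is not of right-$O$-type; this is asserted to occur for exactly $3$ presentations, and the cyclic witness is recorded and checked by hand. (c)~the reversings terminate and reveal a periodic ceiling with period $\ww_0$; the natural candidate $\Delta=\clp{\ww_0}$ (or, equivalently, a power of the top generator $\ttb$) is then tested via Lemma~\ref{L:QuasiExp} by right-reversing $\ww_0\inv\ttb\ww_0$ and checking that a positive word is reached in finitely many steps. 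A successful termination constitutes a proof that $\Delta$ is right-quasi-central and therefore, by Proposition~\ref{P:QuasiO}, that $\Mon\SS\RR$ is of right-$O$-type. The program records $166$ successes here.

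Finally, to certify the parenthetical claim that $33$ of these $166$ monoids are of $O$-type, I would apply the same pipeline to the opposite presentation $\Pres\SS\RRt$: check Lemma~\ref{L:Excluded} symmetrically, compute a left-ceiling, and seek a left-quasi-central element via the mirrored test. By Proposition~\ref{P:Main1} (and the symmetry discussion preceding Algorithm~\ref{A:Decision}), a simultaneous success on both sides proves that $\Mon\SS\RR$ is of $O$-type. The counts $854+3+166=1023$ partition the family, which is the global consistency check to be verified. The main obstacle is not mathematical but computational: one must bound the depth of reversing in cases~(b) and~(c) and then justify that every instance we classify as ``never terminates'' really does contain an exhibited periodic loop, and every instance we classify as ``terminates'' really does reach a positive word; both are verifiable certificates, so the verdict for each of the $1023$ presentations is a finite, checkable proof rather than a heuristic.
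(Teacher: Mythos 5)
Your proposal is essentially the paper's own approach: Fact~\ref{F:Exp2} is stated as the outcome of the computer program described immediately before it (the four-step pipeline using Lemma~\ref{L:Excluded}, the ceiling computation of Lemma~\ref{L:CeilingExp}, cyclic-reversing detection, and Lemma~\ref{L:QuasiExp}, applied also to the opposite presentation for the $O$-type count), and you reproduce exactly that pipeline together with the correct observation that each verdict comes with a finite checkable certificate. The only slip is cosmetic: in these presentations the top generator (the $\dive$-larger one, whose powers are the natural quasi-central candidates, as in Table~\ref{T:Exp2}) is~$\tta$, not~$\ttb$.
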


\begin{table}[tb]
\smaller
\begin{tabular}{lccl}
\hline
\hline
\VR(3.5,1.5)&right-$O$
&left-$O$\\
\hline

\VR(4,0)$\tta = \ttb\tta\ttb\tta\ttb \tta\ttb$
&YES 
&YES
&{\smaller$\Delta = \tta^2$ central;}\\

\VR(0,0)$\tta = \ttb\tta^2\ttb\tta\ttb \tta\ttb$
&YES 
&&{\smaller$\Delta = \tta^3$ right-quasi-central,
$\phi(\tta) = \tta$, 
$\phi(\ttb) = (\ttb\tta\ttb\tta\ttb)^3$}\\

&&NO 
&{\smaller$\tta = ... \tta (\tta\ttb)(\tta\ttb)(\tta\ttb)$: 
Lemma~$\widetilde{\ref{L:Excluded}}$ with $\uu = (\tta\ttb)$, $\vv = \ew$}\\

\VR(0,0)$\tta = \ttb\tta\ttb\tta^2\ttb \tta\ttb$
&NO 
&NO 
&{\smaller$\tta = (\ttb\tta)(\ttb\tta)\tta...$: 
Lemma~\ref{L:Excluded} with $\uu = \ttb\tta$ and $\vv = \ew$}\\

\VR(0,0)
$\tta = \ttb\tta^3\ttb\tta\ttb \tta\ttb$
&YES 
&&{\smaller$\Delta = \tta^4$ right-quasi-central,
$\phi(\tta) = \tta$, 
$\phi(\ttb) = \ttb(\tta\ttb)^8$}\\

&&NO
&{\smaller$\tta = ... \tta (\tta\ttb)(\tta\ttb)$: 
Lemma~$\widetilde{\ref{L:Excluded}}$ with $\uu = (\tta\ttb)$ and $\vv = \ew$}\\

\VR(3.5,0)$\tta = \ttb\tta\ttb^3\tta\ttb$
&YES
&YES
&{\smaller$\Delta = (\tta\ttb)^3 = (\ttb\tta)^3$ central}\\

\VR(0,0)$\tta = \ttb\tta^2\ttb\tta\ttb\tta^2\ttb$
&YES
&YES
&{\smaller$\Delta = (\tta^2\ttb)^2 = (\ttb\tta^2)^2$ right- and left-quasi-central,}\\

&&&\hspace{10mm}{\smaller
$\phi(\tta) = \tta(\ttb\tta^2\ttb)^2$, 
$\widetilde\phi(\tta) = (\ttb\tta^2\ttb)^2\tta$, 
$\phi(\ttb) = \widetilde\phi(\ttb) = \ttb$}\\

\VR(0,0)$\tta = \ttb\tta^2\ttb^3\tta^2\ttb$
&NO
&NO
&{\smaller non-terminating right-reversing: $\uu \Rev{10} \vv^{-1} \uu \vv$}\\

&&&\hspace{10mm}{\smaller for $\uu = \tta^{-2}\ttb\tta^2\ttb\tta$ and $\vv = \ttb\tta^2\ttb^3$}\\

\VR(0,0)$\tta = \ttb\tta^2\ttb\tta\ttb^2\tta^2\ttb$
&YES
&&{\smaller$\Delta = (\tta^2\ttb)^2$ right-quasi-central, 
$\phi(\tta) = (\tta\ttb^2\tta)^2\tta\ttb$, 
$\phi(\ttb) = (\ttb\tta^2\ttb^2)^2$}\\

&&NO
&{\smaller non-terminating left-reversing: $\uu\  \revL^{(26)}\, \vv \uu \vv\inv$}\\

&&&\hspace{10mm}{\smaller for $\uu = \tta^2\ttb^2\tta^2\ttb\tta\ttb^3\tta^2\ttb\tta\inv$ and $\vv = \ttb$}\\

\VR(0,0)$\tta = \ttb\tta^2\ttb^4\tta^2\ttb$
&NO
&NO
&{\smaller non-terminating right-reversing: $\uu \Rev{12} \vv^{-1} \uu \vv$}\\

\VR(0,2)
&&&\hspace{10mm}{\smaller for $\uu = \ttb^{-1}\tta^{-2}\ttb\tta^2\ttb\tta$ and $\vv = \ttb^4\tta^2\ttb\tta\ttb^4\tta^2\ttb$}\\

\hline
\hline
\end{tabular}
\caption{\sf\smaller \VR(4,0) Examples of two-generator monoids with a triangular presentation: in all cases that are not \textit{a priori} discarded by Lemma~\ref{L:Excluded} or its symmetric counterpart ``Lemma~$\widetilde{\ref{L:Excluded}}$\,'' , one can either find a (quasi)-central element or identify a non-terminating reversing, hence decide whether the associated monoid is of right- or left-$O$-type.}
\label{T:Exp2}
\end{table}

See Table~\ref{T:Exp2} for some typical examples. Results are entirely similar in the case of three generators or more. Again, most of the cases that are not discarded by Lemma~\ref{L:Excluded} or its counterpart turn out to be eligible for Lemma~\ref{L:QuasiExp}, and the exceptions can be successfully addressed directly. It is probably useless to give more details here.


\section{Some families of monoids of $O$-type}
\label{S:Fam}

We shall now describe five infinite families for which we can exhibit central, quasi-central, or dominating elements and which therefore are of right-$O$-type or of $O$-type. We begin with the simplest case, namely when some power of the top generator is central or quasi-central.

\begin{prop}
\label{P:Fam1}
For $\pp, \qq, \rr \ge 1$, let $\MM$ be the monoid defined by  
\begin{equation}
\label{E:Fam1}
\Pres{\tta, \ttb}{\tta = \ttb (\tta^\pp \ttb^\rr)^\qq}.
\end{equation}

\ITEM1 The element $\tta^{\pp+1}$ is right-quasi-central in~$\MM$ and $\MM$ is of right-$O$-type.

\ITEM2 For $\rr = 1$, the element $\tta^{\pp+1}$ is central in~$\MM$ and $\MM$ is of $O$-type. 
\end{prop}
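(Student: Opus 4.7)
The presentation is right-triangular, and the defining relation $\tta = \ttb(\tta^\pp\ttb^\rr)^\qq$ shows $\ttb\dive\tta$ in $\MM$. Hence $\ttb\cdot\tta^\nn\dive\tta\cdot\tta^\nn=\tta^{\nn+1}$ for every $\nn\ge0$, so $\tta$ right-dominates $\{\tta,\ttb\}$, and Proposition~\ref{P:DominO} yields that $\MM$ is of right-$O$-type, proving the second clause of~(1).

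To show $\Delta:=\tta^{\pp+1}$ is right-quasi-central, I apply Lemma~\ref{L:Quasi}\ITEM2, reducing the task to $\ss\dive\Delta\dive\ss\Delta$ for $\ss\in\{\tta,\ttb\}$. Three of these are immediate: $\tta\dive\Delta$ and $\Delta\dive\tta\Delta$ are trivial, and $\ttb\dive\Delta$ follows from $\ttb\dive\tta\dive\tta^{\pp+1}$. The essential point is $\Delta\dive\ttb\Delta$, namely $\tta^{\pp+1}\dive\ttb\tta^{\pp+1}$.

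For part~(2), when $\rr=1$, I establish the stronger claim that $\Delta$ commutes with $\ttb$ by the explicit derivation
\begin{align*}
\ttb\tta^{\pp+1}
&= \ttb\tta^\pp\cdot\tta
= \ttb\tta^\pp\cdot\ttb(\tta^\pp\ttb)^\qq \\
&= \ttb(\tta^\pp\ttb)^{\qq+1}
= \bigl[\ttb(\tta^\pp\ttb)^\qq\bigr]\cdot\tta^\pp\ttb
= \tta\cdot\tta^\pp\ttb
= \tta^{\pp+1}\ttb.
\end{align*}
Thus $\Delta$ is central in $\MM$, which trivially yields quasi-centrality with $\phi=\mathrm{id}$. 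Moreover, the word $\ttb(\tta^\pp\ttb)^\qq$ is palindromic (its reversal is itself), so the opposite monoid $\MMt$ is presented by the same relation; by the dominating argument above applied to $\MMt$, it too is of right-$O$-type, and hence $\MM$ is of left-$O$-type, whence of $O$-type.

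For part~(1) with general $\rr$, it remains to establish $\tta^{\pp+1}\dive\ttb\tta^{\pp+1}$ in $\MM$. Since $\MM$ has already been shown to be of right-$O$-type, Ore's theorem embeds $\MM$ in the group~$\GG$ of right-fractions; so the task reduces to verifying that $\phi(\ttb):=\tta^{-(\pp+1)}\ttb\tta^{\pp+1}$, computed in $\GG$, admits a positive $\SS$-representative. Using $\ttb=\tta\gg^{-1}$ with $\gg:=(\tta^\pp\ttb^\rr)^\qq$, one gets $\tta^{-1}\ttb\tta=\gg^{-1}\tta$ and iterates: for instance when $\pp=\qq=1$ one finds $\tta^{-1}\ttb\tta=\ttb^{-\rr}$, so that $\phi(\ttb)=\ttb^{\rr^2}$, manifestly positive. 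For larger $\pp,\qq$ the positive expression for $\phi(\ttb)$ is more intricate but can still be produced; equivalently, one can give a direct derivation in $\MM$ by starting from $\ttb\tta^{\pp+1}$, expanding the last~$\tta$ via $\tta\to\ttb(\tta^\pp\ttb^\rr)^\qq$, and then performing further expansions at well-chosen positions until the leftmost subword takes the form $\ttb(\tta^\pp\ttb^\rr)^\qq$, which is then collapsed back to $\tta$ to expose $\tta^{\pp+1}$ as a prefix. The main obstacle is carrying out this derivation (or verifying positivity of $\phi(\ttb)$) uniformly in $\pp,\qq,\rr$; the pattern---a sequence of expansions followed by one collapse---is regular, but the bookkeeping grows with $\pp$ and~$\qq$.
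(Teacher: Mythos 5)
There is a genuine gap — in fact two, and they sit exactly at the heart of the argument.

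First, your opening inference is invalid. From $\ttb \dive \tta$ you conclude ``hence $\ttb\tta^\nn \dive \tta\cdot\tta^\nn = \tta^{\nn+1}$''. This uses the principle that $\gg \dive \hh$ implies $\gg\ff \dive \hh\ff$, which is false: left-divisibility is compatible with \emph{left} multiplication, not right multiplication. Concretely, $\tta = \ttb(\tta^\pp\ttb^\rr)^\qq$ gives $\tta^{\nn+1} = \ttb\,(\tta^\pp\ttb^\rr)^\qq\tta^\nn$, which shows $\ttb \dive \tta^{\nn+1}$ but says nothing about whether $\ttb\tta^\nn$ left-divides $\tta^{\nn+1}$. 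So the claim that $\tta$ right-dominates $\ttb$, and with it your derivation of right-$O$-type, is unsupported. (The conclusion is true, but in the paper's logic it comes \emph{after} quasi-centrality, via Lemma~\ref{L:QuasiDomin}: one needs $\ttb\tta^\pp \dive \tta$ \emph{together with} the quasi-centrality of $\tta^{\pp+1}$ to get domination — not the other way around.) This also undermines your appeal to Ore's theorem: the embedding of $\MM$ into its group of fractions presupposes common right-multiples, i.e.\ essentially what you are trying to prove; Remark~\ref{R:Embed} warns precisely against verifying $\Delta \dive \ttb\Delta$ in the group rather than in the monoid.

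Second, even granting the reduction to $\tta^{\pp+1} \dive \ttb\tta^{\pp+1}$, you do not actually prove it for general $\pp,\qq,\rr$: you check $\pp=\qq=1$, and for the rest you write that the expansion ``can still be produced'' and that ``the bookkeeping grows''. That verification \emph{is} the proof of part \ITEM1; the paper does it uniformly and purely inside $\MM$ by iterating the relation $\rr$~times to get $\tta = \ttb^\rr(\tta^\pp\ttb^\rr)^\qq\bigl((\tta^{\pp-1}\ttb^\rr)(\tta^\pp\ttb^\rr)^{\qq-1}\bigr)^{\rr-1}$, substituting this for the last $\tta$ in $\ttb\tta^\pp\tta$, and collapsing one copy of the relation to expose $\tta^{\pp+1}$ on the left, yielding the explicit value $\phi(\ttb) = \ttb^\rr\bigl((\tta^{\pp-1}\ttb^\rr)(\tta^\pp\ttb^\rr)^{\qq-1}\bigr)^{\rr-1}$. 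Your part~\ITEM2 computation for $\rr=1$ is correct and matches the paper, and it in fact suffices on its own for that case via Proposition~\ref{P:QuasiO} and symmetry — but as written you route its $O$-type conclusion back through the broken domination step.
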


\begin{proof}
\ITEM1 Applying~\eqref{E:Fam1}, we first find
$$\tta = \ttb (\tta^\pp\ttb^\rr)^\qq 
= \ttb \cdot \tta \cdot (\tta^{\pp-1} \ttb^\rr)(\tta^\pp \ttb^\rr)^{\qq-1},$$
whence, repeating the operation $\rr$~times and moving the brackets,
$$\tta = \ttb^\rr \cdot \tta \cdot ((\tta^{\pp-1} \ttb^\rr)(\tta^\pp \ttb^\rr)^{\qq-1})^\rr 
= \ttb^\rr (\tta^\pp\ttb^\rr)^\qq \cdot ((\tta^{\pp-1} \ttb^\rr)(\tta^\pp \ttb^\rr)^{\qq-1})^{\rr-1}.$$
Let $\Delta = \tta^{\pp+1}$. Substituting the above value of~$\tta$ at the underlined position, moving the brackets, and applying the relation once in the contracting direction, we find
\begin{align*}
\ttb \cdot \Delta
= \ttb \tta^\pp \underline{\tta}
&= \ttb \tta^\pp \cdot \ttb^\rr (\tta^\pp\ttb^\rr)^\qq \cdot ((\tta^{\pp-1} \ttb^\rr)(\tta^\pp \ttb^\rr)^{\qq-1})^{\rr-1}\\
&= \ttb (\tta^\pp \ttb^\rr)^\qq \cdot \tta^\pp \ttb^\rr ((\tta^{\pp-1} \ttb^\rr)(\tta^\pp \ttb^\rr)^{\qq-1})^{\rr-1}\\
&= \tta\cdot \tta^\pp \cdot \ttb^\rr ((\tta^{\pp-1} \ttb^\rr)(\tta^\pp \ttb^\rr)^{\qq-1})^{\rr-1}
= \Delta \cdot \ttb^\rr ((\tta^{\pp-1} \ttb^\rr)(\tta^\pp \ttb^\rr)^{\qq-1})^{\rr-1}.
\end{align*}
Now, we have $\ttb \dive \tta \dive \tta^{\pp+1} = \Delta$ in~$\MM$. Next, as $\Delta$ is a power of~$\tta$, it commutes with~$\tta$. So, by Lemma~\ref{L:Quasi}, $\Delta$ is right-quasi-central in~$\MM$, with associated endomorphism defined by
\begin{equation}
\label{E:Fam1f}
\phi(\tta) = \tta, \qquad \phi(\ttb) = \ttb^\rr ((\tta^{\pp-1} \ttb^\rr)(\tta^\pp \ttb^\rr)^{\qq-1})^{\rr-1}.
\end{equation}
By Proposition~\ref{P:QuasiO}, $\MM$ is of right-$O$-type. 

\ITEM2 Assume now $\rr = 1$. By~\ITEM1, $\MM$ is of right-$O$-type. Now \eqref{E:Fam1} reduces here to $\tta =\nobreak \ttb (\tta^\pp\ttb)^\qq$, which is symmetric. Hence, by~\ITEM1, the opposite monoid~$\MMt$ is of right-$O$-type, so $\MM$ is of left-$O$-type, and, therefore, of $O$-type. Finally, by~\eqref{E:Fam1f}, the endomorphism~$\phi$ is the identity here, so that $\Delta$ is central. 
\end{proof}

By Proposition~\ref{P:Recipe}, every monoid of $O$-type gives rise to an ordered group. The groups occurring in connection with the monoids of Proposition~\ref{P:Fam1} are the torus knot groups. 

\begin{coro}
\label{C:Fam1}
For $\pp, \qq \ge 1$, let $\GG$ be the torus knot group $\Gr{\ttx, \tty}{\ttx^{\pp+1} =\nobreak \tty^{\qq+1}}$. Then $\GG$ is orderable, the subsemigroup of~$\GG$ generated by~$\ttx$ and~$\ttx^{-\pp}\tty$ is the positive cone of a left-invariant ordering on~$\GG$ which is isolated in~$\LO\GG$. 
\end{coro}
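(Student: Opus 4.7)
The plan is to reduce the statement to Proposition~\ref{P:Fam1}\ITEM2 (the case $\rr = 1$) by a Tietze transformation, and then invoke Proposition~\ref{P:Recipe} to package the orderability, explicit positive cone, and isolation conclusions simultaneously.

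First, I would introduce new generators $\tta := \ttx$ and $\ttb := \ttx^{-\pp}\tty$ in~$\GG$, so that $\tty = \tta^\pp \ttb$. Substituting in the defining relation $\ttx^{\pp+1} = \tty^{\qq+1}$ yields
\begin{equation*}
\tta^{\pp+1} \ =\ (\tta^\pp \ttb)^{\qq+1} \ =\ \tta^\pp \ttb \,(\tta^\pp\ttb)^\qq,
\end{equation*}
and left-cancelling~$\tta^\pp$ in~$\GG$ gives $\tta = \ttb(\tta^\pp\ttb)^\qq$. Hence $\GG$ admits the presentation $\Pres{\tta,\ttb}{\tta = \ttb(\tta^\pp\ttb)^\qq}$, which is exactly the presentation~\eqref{E:Fam1} with $\rr = 1$. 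Proposition~\ref{P:Fam1}\ITEM2 then tells us that the associated monoid $\MM := \Mon{\tta,\ttb}{\tta = \ttb(\tta^\pp\ttb)^\qq}$ is of $O$-type.

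At this point Proposition~\ref{P:Recipe} delivers everything at once: $\GG$ is orderable, the subsemigroup of~$\GG$ generated by the images of~$\tta$ and~$\ttb$ is the positive cone of a left-invariant ordering on~$\GG$, and, since $\{\tta,\ttb\}$ is finite, this ordering is isolated in~$\LO\GG$. Unwinding the Tietze substitution identifies this subsemigroup with the one generated by~$\ttx$ and~$\ttx^{-\pp}\tty$, which is the statement of the corollary.

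The one step requiring genuine care is the Tietze reduction: one must verify that $\Pres{\tta,\ttb}{\tta=\ttb(\tta^\pp\ttb)^\qq}$ is a presentation of~$\GG$, not a proper quotient. This amounts to checking that the substitutions $\ttx\mapsto\tta$, $\tty\mapsto\tta^\pp\ttb$ and $\tta\mapsto\ttx$, $\ttb\mapsto\ttx^{-\pp}\tty$ respect the defining relation in each direction and are mutually inverse on generators---routine but the only non-mechanical moment. Beyond this bookkeeping no obstacle arises, as all the substantive work has already been done in Proposition~\ref{P:Fam1}.
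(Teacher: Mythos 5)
Your proposal is correct and follows essentially the same route as the paper: identify $\tta=\ttx$, $\ttb=\ttx^{-\pp}\tty$, verify that $\Pres{\tta,\ttb}{\tta=\ttb(\tta^\pp\ttb)^\qq}$ is a presentation of~$\GG$ via the mutually inverse substitutions, and then combine Proposition~\ref{P:Fam1}\ITEM2 with Proposition~\ref{P:Recipe}. The Tietze verification you flag as the only delicate step is exactly the content of the paper's proof (carried out there by exhibiting the two homomorphisms $\Phi$, $\Phi'$ and checking $\Phi'\comp\Phi=\mathrm{id}$), so nothing is missing.
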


\begin{proof}
In view of Propositions~\ref{P:Recipe} and~\ref{P:Fam1}, it is sufficient to prove that \eqref{E:Fam1} is a presentation of~$\GG$ in terms of the mentioned elements. Now, put $\aa = \ttx$ and $\bb = \ttx^{-\pp}\tty$ in~$\GG$. We find $\aa^{\pp+1} =\nobreak \bb(\aa^\pp\bb)^\qq\aa^\pp$, whence $\aa =\nobreak \bb(\aa^\pp\bb)^\qq$, so, if $\GG'$ is the group defined by the presentation~\eqref{E:Fam1}, there exists a well defined homomorphism~$\Phi$ of~$\GG'$ to~$\GG$ mapping~$\tta$ to~$\aa$ and~$\ttb$ to~$\bb$. Moreover $\Phi$ is surjective as $\aa$ and~$\bb$ generate~$\GG$. Conversely, put $\xx = \tta$ and $\yy = \tta^\pp \ttb$ in~$\GG'$. We obtain now $\xx^{\pp+1} = \tta \tta^\pp = \ttb (\tta^\pp \ttb)^\qq \ttb = (\ttb \tta^\pp)^{\qq+1} = \yy^{\qq+1}$, and there exists a surjective homomorphism~$\Phi'$ of~$\GG$ to~$\GG'$ mapping~$\ttx$ to~$\xx$ and~$\tty$ to~$\yy$. By construction, $\Phi' \comp \Phi$ is the identity, so $\Phi$ is an isomorphism, and \eqref{E:Fam1} is a presentation of~$\GG$.
\end{proof}

\begin{rema}
\label{R:Embed}
Once noted that the group $\Gr{\tta, \ttb}{\tta = \ttb(\tta^\pp\ttb)^\qq}$ is isomorphic to the group $\Gr{\ttx, \tty}{\ttx^{\pp+1} =\nobreak \tty^{\qq+1}}$, it is obvious that $\ttx^{\pp+1}$, \ie, $\tta^{\pp+1}$, is central in the group. However, this is not sufficient to deduce that $\tta^{\pp+1}$ is central in the monoid~$\Mon{\tta, \ttb}{\tta = \ttb(\tta^\pp\ttb)^\qq}$ as the latter is not {\it a priori} known to embed in the group. So, in order to apply the scheme of Theorem~\ref{T:Main}, it is crucial to make all verifications inside the monoid, \ie, without using inverses except possibly those provided by cancellativity. 
\end{rema}

For $\pp = \qq = 1$, the group~$\GG$ of Corollary~\ref{C:Fam1} is the Klein bottle group~$\Gr{\tta, \ttb}{\tta = \ttb\tta\ttb}$. For $\pp = 2$ and $\qq = 1$, the group~$\GG$, \ie, $\Gr{\tta, \ttb}{\tta = \ttb \tta^2 \ttb}$, is Artin's braid group~$B_3$. In terms of the standard Artin generators~$\sig\ii$, the elements~$\tta$ and~$\ttb$ can be realized as $\sig1\sig2$ and~$\siginv2$, and the associated ordering is the isolated ordering described by Dubrovina--Dubrovin in~\cite{DuD} (see also~\cite{Pic}). The braid group~$B_3$ is also obtained for $\pp = 1$ and $\qq = 2$, \ie, for $\Gr{\tta, \ttb} {\tta = \ttb \tta \ttb \tta \ttb}$, with $\tta$ and~$\ttb$ now realizable as $\sig1\sig2\sig1$ and~$\siginv2$. Note that, when realized as above, the associated submonoids of~$B_3$ coincide as, using $\tta$ in the case~$\pp = 2, \qq = 1$ and $\tta'$ in the case~$\pp = 1, \qq = 2$, we find $\tta = \tta' \ttb$ and $\tta' = \ttb \tta^2$. Therefore the associated (isolated) orderings of~$B_3$ coincide.

For $\rr \ge 2$, the monoid~$\MM$ of Proposition~\ref{P:Fam1} embeds in a group of right-fractions~$\GG$. However, the left counterpart of Lemma~\ref{L:Excluded} (``Lemma~$\widetilde{\ref{L:Excluded}}$'') implies that $\tta$ and~$\tta\ttb$ have no common left-multiple in~$\MM$. Hence $\MM$ is not of left-$O$-type, and the group~$\GG$ is not a group of left-fractions for~$\MM$: the right-fraction $\tta \ttb \tta\inv$ is an element of~$\GG$ that cannot be expressed as a left-fraction. As a consequence, the semigroup~$\MM{\setminus}\{1\}$ defines a \emph{partial} left-invariant ordering on~$\GG$ only: for instance, the elements~$\ttb\inv \tta\inv$ and $\tta\inv$ are not comparable as their quotient $\tta\ttb \tta\inv$ belongs neither to~$\MM$ nor to~$\MM\inv$. Note that, for $\pp = \qq = 1$, the group~$\GG$, \ie, $\Gr{\tta, \ttb}{\tta = \ttb\tta\ttb^{\rr+1}}$, is the Baumslag--Solitar group $\BS(\rr+1, -1)$, whereas the opposite group $\Gr{\tta, \ttb}{\tta = \ttb^{\rr+1}\tta\ttb}$ is $\BS(-1, \qq+1)$. Besides these examples, the case $\pp = \rr = 2$, $\qq = 1$, \ie, $\Gr{\tta, \ttb}{\tta = \ttb\tta^2\ttb^2}$, is the first non-classical example in the family. In this case, $\tta^3$ is a right-quasi-central element that is not central, and the associated endomorphism is given by $\phi(\tta) = \tta$ and $\phi(\ttb) = \ttb^2 \tta \ttb^2$.

We now describe a family involving an arbitrarily large family of generators. 

\begin{prop}
\label{P:Fam4}
Assume $\ell \ge 2$ and $\mm_2, \nn_2, \mm_3, \nn_3 \wdots \mm_\ell, \nn_\ell \ge 1$. Let $\MM$ the be monoid defined by
\begin{equation}
\label{E:Fam4}
\Pres{\tta_1 \wdots \tta_{\ell}}{\tta_1 = \tta_2 \ww_2^{\nn_2} \wdots \tta_{\ell-1} = \tta_\ell \ww_\ell^{\nn_\ell}},
\end{equation} 
with $\ww_1 = \tta_1$ and $\ww_\ii$ inductively defined by $\ww_\ii = \ww_{\ii-1}^{\mm_\ii} \pdots \ww_2^{\mm_3} \ww_1^{\mm_2} \tta_\ii$. Then $\MM$ is of $O$-type.
\end{prop}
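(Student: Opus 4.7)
The proof will be by induction on~$\ell$, with base case $\ell = 2$ being the case $\rr = 1$ of Proposition~\ref{P:Fam1}. I will show that $\Delta := \tta_1^{K}$, with $K := (\mm_2+1)(\mm_3+1)\cdots(\mm_\ell+1)$, is central in~$\MM$. Since every generator satisfies $\tta_\ii \dive \tta_1 \dive \Delta$, centrality will allow invoking Proposition~\ref{P:QuasiO} for both~$\MM$ and~$\MMt$, concluding that $\MM$ is of $O$-type.

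The first step will be to verify, by a direct computation in~$\MM$, the identity $\ww_\ii^{\mm_{\ii+1}+1} = \ww_{\ii+1}^{\nn_{\ii+1}+1}$: one expands $\ww_\ii^{\mm_{\ii+1}+1} = \ww_\ii^{\mm_{\ii+1}} \ww_{\ii-1}^{\mm_\ii} \cdots \ww_1^{\mm_2} \tta_\ii$ and substitutes $\tta_\ii = \tta_{\ii+1}\ww_{\ii+1}^{\nn_{\ii+1}}$ to reassemble $\ww_{\ii+1} \cdot \ww_{\ii+1}^{\nn_{\ii+1}}$. Iterating this identity yields $\Delta = \ww_\ell^{L}$ with $L := (\nn_2+1)(\nn_3+1)\cdots(\nn_\ell+1)$.

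The second step will be to identify the submonoid $\MM_0$ of~$\MM$ generated by $\tta_1, \ldots, \tta_{\ell-1}$ with the $(\ell{-}1)$-generator monoid of the same family. By Proposition~\ref{P:Complete}, the right-reversing in $(\SS, \RRh)$ of a signed word involving only these generators uses only reversing rules derived from the first $\ell - 2$ relations, so it coincides with the reversing in the restricted presentation; completeness of both then forces the natural inclusion to be injective. Under the induction hypothesis, the element $\Delta_0 := \tta_1^{K/(\mm_\ell+1)}$ is central in~$\MM_0$, hence commutes with $\tta_1, \ldots, \tta_{\ell-1}$ in~$\MM$; since $\Delta = \Delta_0^{\mm_\ell+1}$, so does~$\Delta$.

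The crux will be to establish $\tta_\ell \Delta = \Delta \tta_\ell$. The plan is to compute
\begin{equation*}
\tta_\ell \Delta = \tta_\ell \ww_\ell^{L} = (\tta_\ell \ww_\ell^{\nn_\ell}) \cdot \ww_\ell^{L - \nn_\ell} = \tta_{\ell-1} \cdot \ww_{\ell-1}^{(\mm_\ell+1)(L_0-1)} \cdot \ww_\ell,
\end{equation*}
where $L_0 := L/(\nn_\ell+1)$, the last equality using the key identity $\ww_\ell^{\nn_\ell+1} = \ww_{\ell-1}^{\mm_\ell+1}$ together with the decomposition $L - \nn_\ell = (L_0-1)(\nn_\ell+1) + 1$. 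Then, iteratively applying the lower relations $\tta_{\jj-1} = \tta_\jj \ww_\jj^{\nn_\jj}$ for $\jj$ descending from $\ell - 1$ to~$2$, expanding $\ww_\ell$ via its inductive definition to feed each peel, and grouping the surviving powers of $\ww_{\ell-1}, \ldots, \ww_1$ into powers of~$\Delta_0$ that are then commuted past the ambient generators via step two, one reassembles exactly $\Delta \tta_\ell$. The template is the explicit $\ell = 3$ calculation, with $\Delta_0 = \tta^{\pp+1}$:
\begin{equation*}
\ttc \cdot \tta^{(\pp+1)(\rr+1)} = \ttb \cdot \ww_2^{\qq(\rr+1)} \cdot \ww_3 = \tta \cdot \Delta_0^{\rr} \cdot \tta^\pp \ttc = \Delta_0^{\rr+1} \ttc = \tta^{(\pp+1)(\rr+1)} \ttc.
\end{equation*}

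The main obstacle will be the bookkeeping of exponents in this last computation: each rewrite must be a monoid operation (no inverses), and each intermediate power of $\Delta_0$ that appears must be recognized as such and commuted past the surrounding generators before the next peel step can be applied.
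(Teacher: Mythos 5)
Your proposal is correct, and its backbone coincides with the paper's: the same element $\Delta = \tta_1^{(\mm_2+1)\pdots(\mm_\ell+1)}$, the same key identity $\clp{\ww_{\ii-1}}^{\mm_\ii+1} = \clp{\ww_\ii}^{\nn_\ii+1}$, and the same conclusion via Proposition~\ref{P:QuasiO}. Where you genuinely diverge is in how centrality of~$\Delta$ is established. The paper notes that iterating the key identity makes $\Delta$ a power of \emph{every}~$\clp{\ww_\ii}$ (not only of~$\clp{\ww_\ell}$), hence commuting with each of them, and then runs a two-line induction on the generator index inside the single monoid~$\MM$: from $\Delta\tta_{\ii-1} = \tta_{\ii-1}\Delta$ and $\tta_{\ii-1} = \tta_\ii\clp{\ww_\ii}^{\nn_\ii}$ one gets $\Delta\,\tta_\ii\clp{\ww_\ii}^{\nn_\ii} = \tta_\ii\,\Delta\clp{\ww_\ii}^{\nn_\ii}$ and right-cancels $\clp{\ww_\ii}^{\nn_\ii}$, which is legitimate because the presentation is also left-triangular, so $\MM$ is right-cancellative. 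This single cancellation step replaces both your submonoid-embedding step and your exponent-peeling computation. Two remarks on your version: first, step two is over-engineered --- to transport $\Delta_0\tta_\jj = \tta_\jj\Delta_0$ from the $(\ell{-}1)$-generator presented monoid into~$\MM$ you only need the canonical homomorphism, not its injectivity, so the completeness argument can be dropped; second, your crux computation does close up (peeling $\tta_\jj\ww_\jj^{\nn_\jj}\mapsto\tta_{\jj-1}$ and converting $\ww_\jj^{\nn_\jj+1}$ into $\ww_{\jj-1}^{\mm_\jj+1}$ at each stage, the exponent carried on~$\ww_\jj$ is $(\mm_{\jj+1}+1)\pdots(\mm_\ell+1)(\nn_2+1)\pdots(\nn_\jj+1)-1$, which telescopes to~$\tta_1^{K-1}$ and reassembles $\Delta\tta_\ell$ without ever needing to commute~$\Delta_0$ past anything), but as written it is only a sketch, and the paper's cancellation argument avoids this bookkeeping entirely. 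Both routes prove the same statement; the paper's is shorter because it trades your explicit rewriting for cancellativity.
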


\begin{proof}
First, the presentation~\eqref{E:Fam4} is triangular (and symmetric). Now put $\gg_\ii = \clp{\ww_\ii}$ in~$\MM$. For $\ii \ge 2$, we obtain 
\begin{gather*}
\gg_{\ii-1}^{\mm_\ii+1} 
= \gg_{\ii-1}^{\mm_\ii} \cdot \gg_{\ii-1} 
= \gg_{\ii-1}^{\mm_\ii} \gg_{\ii-2}^{\mm_{\ii-1}} \pdots \gg_1^{\mm_2} \tta_{\ii-1},\\
\gg_{\ii}^{\nn_\ii+1} 
= \gg_{\ii} \cdot \gg_{\ii}^{\nn_\ii}
= \gg_{\ii-1}^{\mm_\ii} \pdots \gg_1^{\mm_2} \tta_{\ii}  \cdot \gg_{\ii}^{\nn_\ii}.
\end{gather*}
The relations of~\eqref{E:Fam4} are valid in~$\MM$, so, in particular, we have $\tta_{\ii-1} = \tta_{\ii}  \cdot \gg_{\ii}^{\nn_\ii}$. It follows that $\gg_{\ii-1}^{\mm_\ii+1} = \gg_{\ii}^{\nn_\ii+1}$ holds for $\ii = 2 \wdots \ell$. 

Put $\Delta = \tta_1^\mm$ where $\mm$ is such that $\mm (\nn_2+1) \pdots (\nn_\ii+1)$ is a multiple of~$(\mm_2+\nobreak1) \pdots (\mm_\ii+1)$ for each~$\ii$---which, for instance, is the case if $\mm$ is a common multiple of $\mm_2+1 \wdots \mm_\ell+1$---say $\mm (\nn_2+1) \pdots (\nn_\ii+1) = \ee_\ii (\mm_2+\nobreak1) \pdots (\mm_\ii+1)$. Then, for every~$\ii$, we have $\Delta = \gg_\ii^{\ee_\ii}$, and, therefore, $\Delta$ commutes with every~$\gg_\ii$. Then an induction on~$\ii$
 shows that $\Delta$ commutes with every~$\tta_\ii$: for $\ii = 1$, the result is obvious as $\Delta$ is a power of~$\tta_1$, for $\ii > 1$, we have $\tta_{\ii-1} = \tta_\ii \gg_\ii$, whence, using the induction hypothesis,
$$\Delta \tta_\ii \gg_\ii 
= \Delta \tta_{\ii-1}
= \tta_{\ii-1} \Delta
= \tta_{\ii-1} \gg_\ii \Delta
= \tta_\ii \Delta \gg_\ii,$$
and $\Delta \tta_\ii  = \tta_\ii \Delta$ by right-cancelling~$\gg_\ii$, which is legitimate as $\MM$, which admits a left-triangular presentation, must be right-cancellative. As $\tta_1 \wdots \tta_\ell$ generate~$\MM$, the element~$\Delta$ is central in~$\MM$, and, by Proposition~\ref{P:QuasiO}, $\MM$ is of $O$-type.
\end{proof}

The corresponding groups turn out to be amalgamated products of torus knot groups.

\begin{coro}
\label{C:Fam4}
For $\ell \ge 2$ and $\mm_2, \nn_2, \mm_3, \nn_3 \wdots \mm_\ell, \nn_\ell \ge 1$, let $\GG$ be a group 
\begin{equation}
\label{E:Fam4group}
\Pres{\ttx_1, \ttx_2 \wdots \ttx_{\ell}}{\ttx_1^{\mm_2+1} = \ttx_2^{\nn_2+1}, \ttx_2^{\mm_3+1} = \ttx_3^{\nn_3+1} \wdots \ttx_{\ell-1}^{\mm_\ell+1} = \ttx_{\ell}^{\nn_\ell+1}}.
\end{equation}
Then $\GG$ is orderable, the subsemigroup of~$\GG$ generated by~$\ttx_1$, $\ttx_1^{-\mm_2}\ttx_2 \wdots \ttx_1^{-\mm_2} \pdots \ttx_{\ii-1}^{-\mm_\ii} \ttx_\ii$ is the positive cone of a left-invariant ordering on~$\GG$ which is isolated in~$\LO\GG$.
\end{coro}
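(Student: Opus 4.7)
My plan is to mirror the strategy used for Corollary~\ref{C:Fam1}: verify that the group presented by~\eqref{E:Fam4} is isomorphic to~$\GG$ via an isomorphism sending $\tta_\ii$ to $\ttx_1^{-\mm_2}\pdots\ttx_{\ii-1}^{-\mm_\ii}\ttx_\ii$, and then invoke Propositions~\ref{P:Fam4} and~\ref{P:Recipe}. Denote by $\GG'$ the group presented by~\eqref{E:Fam4}.

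For the forward direction, I would set $\tta_\ii^* := \ttx_1^{-\mm_2}\pdots\ttx_{\ii-1}^{-\mm_\ii}\ttx_\ii$ in~$\GG$ (with $\tta_1^* = \ttx_1$), and let $\ww_\ii^*$ denote the word $\ww_\ii$ evaluated with $\tta_\jj$ replaced by~$\tta_\jj^*$. A short induction on~$\ii$ using the telescoping definition $\ww_\ii = \ww_{\ii-1}^{\mm_\ii}\pdots\ww_1^{\mm_2}\tta_\ii$ gives $\ww_\ii^* = \ttx_\ii$: at the inductive step,
\[
\ww_\ii^* = \ttx_{\ii-1}^{\mm_\ii}\pdots\ttx_1^{\mm_2}\cdot \ttx_1^{-\mm_2}\pdots\ttx_{\ii-1}^{-\mm_\ii}\ttx_\ii = \ttx_\ii.
\]
Then, using the torus-knot-type relation $\ttx_{\ii-1}^{\mm_\ii+1}=\ttx_\ii^{\nn_\ii+1}$,
\[
\tta_\ii^* (\ww_\ii^*)^{\nn_\ii} = \ttx_1^{-\mm_2}\pdots\ttx_{\ii-1}^{-\mm_\ii}\ttx_\ii^{\nn_\ii+1}
= \ttx_1^{-\mm_2}\pdots\ttx_{\ii-2}^{-\mm_{\ii-1}}\ttx_{\ii-1} = \tta_{\ii-1}^*,
\]
so the relations of~\eqref{E:Fam4} hold in~$\GG$, and one obtains a homomorphism $\Phi:\GG'\to\GG$ with $\Phi(\tta_\ii)=\tta_\ii^*$. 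Surjectivity of~$\Phi$ follows since $\ttx_\ii = \ttx_1^{\mm_2}\pdots\ttx_{\ii-1}^{\mm_\ii}\tta_\ii^*$ expresses each~$\ttx_\ii$ inductively in the image.

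For the inverse, I would use the identities $\gg_{\ii-1}^{\mm_\ii+1}=\gg_\ii^{\nn_\ii+1}$ established in the proof of Proposition~\ref{P:Fam4} (with $\gg_\ii=\clp{\ww_\ii}$): these hold already in the monoid~$\MM$, hence a fortiori in~$\GG'$. Therefore the assignment $\ttx_\ii \mapsto \clp{\ww_\ii}$ extends to a homomorphism $\Psi:\GG\to\GG'$. The composition $\Psi\comp\Phi$ sends $\tta_\ii$ to $\clp{\ww_1}^{-\mm_2}\pdots\clp{\ww_{\ii-1}}^{-\mm_\ii}\clp{\ww_\ii}$, which, by the same telescoping identity, collapses to~$\tta_\ii$; and $\Phi\comp\Psi$ sends $\ttx_\ii$ to $\ww_\ii^*=\ttx_\ii$. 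Hence $\Phi$ is an isomorphism.

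Finally, Proposition~\ref{P:Fam4} gives that $\MM$ is of $O$-type, so, by Proposition~\ref{P:Recipe} applied to the presentation~\eqref{E:Fam4} (which, transported via~$\Phi$, is a positive presentation of~$\GG$ on the finite generating set $\{\tta_1^*,\dots,\tta_\ell^*\}$), the subsemigroup of~$\GG$ generated by $\ttx_1, \ttx_1^{-\mm_2}\ttx_2, \dots, \ttx_1^{-\mm_2}\pdots\ttx_{\ell-1}^{-\mm_\ell}\ttx_\ell$ is the positive cone of a left-invariant ordering on~$\GG$, and, since this generating set is finite, the ordering is an isolated point of~$\LO\GG$. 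The only genuine work lies in verifying the isomorphism~$\Phi$; the main pitfall is simply to keep the nested exponent conventions in the definition of~$\ww_\ii$ straight.
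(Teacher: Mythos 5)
Your proposal is correct and follows essentially the same route as the paper: establish that $\GG$ admits the presentation~\eqref{E:Fam4} with respect to the generators $\tta_\ii^* = \ttx_1^{-\mm_2}\pdots\ttx_{\ii-1}^{-\mm_\ii}\ttx_\ii$ by constructing $\Phi$ and its inverse via the telescoping identity $\ww_\ii^*=\ttx_\ii$ and the relations $\gg_{\ii-1}^{\mm_\ii+1}=\gg_\ii^{\nn_\ii+1}$ from the proof of Proposition~\ref{P:Fam4}, then apply Propositions~\ref{P:Fam4} and~\ref{P:Recipe}. The computations check out, so nothing further is needed.
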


\begin{proof}
We just have to show that $\GG$ admits the presentation~\eqref{E:Fam4} with respect to the specified generators. Now, in~$\GG$, put $\aa_1 = \ttx_1$ and, inductively, $\aa_\ii = \ttx_1^{-\mm_2} \pdots \ttx_{\ii-1}^{-\mm_\ii} \ttx_\ii$. An immediate induction gives $\ttx_\ii = \cl{\ww_\ii}$ for each~$\ii$, where $\cl{\ww_\ii}$ means the evaluation of the word~$\ww_\ii$ when the letter~$\tta_\ii$ is given the value~$\aa_\ii$. Then (as above) we obtain
$$\ttx_{\ii_1}^{\mm_\ii} \pdots \ttx_1^{\mm_2} \aa_{\ii-1} = \ttx_{\ii-1}^{\mm_\ii+1} 
= \ttx_\ii^{\nn_\ii+1} 
= \ttx_{\ii_1}^{\mm_\ii} \pdots \ttx_1^{\mm_2} \aa_\ii \ttx_\ii^{\nn_\ii},$$
whence $\aa_{\ii-1} = \aa_\ii \cl{\ww_\ii}^{\nn_\ii}$. This shows that the relations of~\eqref{E:Fam4} are satisfied by~$\aa_1 \wdots \aa_\ell$ in~$\GG$, yielding an homomorphism~$\Phi$ of the group~$\GG'$ presented by~\eqref{E:Fam4} to~$\GG$ that maps~$\tta_\ii$ to~$\aa_\ii$.

Conversely, in~$\GG'$, define $\xx_1$ to be~$\tta_1$ and $\xx_\ii$ to be~$\cl{\ww_\ii}$ for $\ii \ge 2$. Then the same computation as above in the monoid~$\MM$ shows that $\xx_{\ii-1}^{\mm_\ii+1} = \xx_\ii^{\nn_\ii+1}$ holds for every~$\ii$ in~$\GG'$, leading to an homomorphism of~$\GG$ to~$\GG'$ that is the inverse of~$\Phi$. So $\GG$ admits \eqref{E:Fam4} as a presentation.
\end{proof} 

\begin{exam}
Assume $\mm_2 = \nn_2 = \pdots = \mm_\ell = \nn_\ell = \pp$. Then $\GG$ admits the presentation $\Pres{\ttx_1 \wdots \ttx_\ell}{\ttx_1^{\pp+1} = \ttx_2^{\pp+1} = \pdots = \ttx_\ell^{\pp+1}}$, and the result applies with $\Delta = \ttx_1^{\pp+1}$. The positive cone of the associated isolated ordering is defined by the presentation~\eqref{E:Fam4}, whose relations, in the current case, take the form (as usual, we write $\tta, \ttb, ...$ for $\tta_1, \tta_2, ...$)
$$\tta = \ttb (\tta^\pp\ttb)^\pp,
\quad \ttb = \ttc ((\tta^\pp\ttb)^\pp \tta^\pp\ttc)^\pp,
\quad \ttc = \ttd (((\tta^\pp\ttb)^\pp \tta^\pp\ttc)^\pp (\tta^\pp\ttb)^\pp \tta^\pp\ttd)^\pp, \ {\it etc.}
$$
See Table~\ref{T:Recap} (row 2) for other particular cases.
\end{exam}

We now turn to a family of a different type, where the verification of the $O$-type relies not on the existence of a quasi-central element as in the previous examples, but on the existence of a dominating element---hence an infinitary condition.

\begin{prop}
\label{P:Fam5}
For $\pp, \qq, \rr, \ss \ge 0$, let $\MM$ be the monoid defined by   
\begin{equation}
\label{E:Fam5}
\Pres{\tta, \ttb, \ttc}{\tta = \ttb(\tta^\pp \ttb)^\qq, \ttb = \ttc (\tta^{\rr}\ttc)^\ss},
\end{equation}
For $\rr \ge \pp$ or $\qq = 0$, the element~$\tta$ dominates $\tta, \ttb, \ttc$ in~$\MM$, and $\MM$ is of $O$-type; for $\rr < \pp$ with $\qq \ge 1$, it is not.
\end{prop}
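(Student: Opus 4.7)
I will split the proof into the two directions claimed by the statement, after a brief setup step. First I would dispose of degeneracies and exploit symmetry. When $\qq = 0$, the first relation collapses to $\tta = \ttb$, and $\MM$ reduces to $\Mon{\tta, \ttc}{\tta = \ttc(\tta^\rr\ttc)^\ss}$, which is covered by Proposition~\ref{P:Fam1} with parameters $(\rr, 1, \ss)$: the element $\tta^{\rr+1}$ is central, $\MM$ is of $O$-type, and Lemma~\ref{L:QuasiDomin} yields the advertised domination. For the generic regime $\pp, \qq, \rr, \ss \ge 1$, I note that both right-hand sides $\ttb(\tta^\pp\ttb)^\qq$ and $\ttc(\tta^\rr\ttc)^\ss$ are palindromes in their alphabets, so the reversed presentation coincides with the original and $\MMt \cong \MM$; in particular right-$O$-type and left-$O$-type are equivalent for $\MM$, and I may concentrate on the right-handed notions.

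For the positive case ($\rr \ge \pp$), my plan is to show that $\Delta = \tta^{\pp+1}$ is right-quasi-central in $\MM$ and then to invoke Lemma~\ref{L:QuasiDomin}. By Lemma~\ref{L:Quasi} this reduces to checking $\ttc \dive \Delta \dive \ttc \Delta$ (together with the trivial analogues for $\tta$ and $\ttb$). The two algebraic identities I would exploit are
\[
\tta^{\pp+1} = (\tta^\pp \ttb)^{\qq+1} = (\ttb \tta^\pp)^{\qq+1}, \qquad \ttc \tta^\rr \ttb = \ttb \tta^\rr \ttc,
\]
the first immediate from the first relation and the second obtained by equating the two descriptions $(\tta^\rr \ttc)^{\ss+1} = \tta^\rr \ttb$ and $(\ttc \tta^\rr)^{\ss+1} = \ttb \tta^\rr$ derived from the second relation. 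The first identity yields $\ttb \tta^{\pp+1} = \tta^{\pp+1} \ttb$ at once. For $\Delta \dive \ttc \Delta$, I would iterate the second identity $\qq+1$ times through the factorization $(\tta^\pp \ttb)^{\qq+1}$ of $\Delta$, using the split $\tta^\rr = \tta^\pp \cdot \tta^{\rr-\pp}$ (legal precisely because $\rr \ge \pp$) to produce an explicit positive word $X$ with $\ttc \Delta = \Delta X$. Lemma~\ref{L:QuasiDomin} then shows that $\tta$ right-dominates every $\gg$ satisfying $\gg \tta^\pp \dive \tta$, which includes $\ttb$ (from $\tta = \ttb \tta^\pp \ttb \cdots$) and $\ttc$ (from $\tta = \ttc \tta^\rr \cdots$ together with $\pp \le \rr$). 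Proposition~\ref{P:DominO} then gives right-$O$-type, and palindromic symmetry upgrades this to $O$-type.

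For the negative case ($\rr < \pp$ with $\qq \ge 1$), I would apply Lemma~\ref{L:Excluded}\ITEM2 to the chained relation $\tta = \ttc Z$ in $\RRh$, with $Z = (\tta^\rr\ttc)^\ss (\tta^\pp\ttc(\tta^\rr\ttc)^\ss)^\qq$. Regrouping prefixes gives $\ttc Z = (\ttc \tta^\rr)^\ss \ttc \cdot (\tta^\pp \ttc(\tta^\rr\ttc)^\ss)^\qq$. I would take $\uu = \ttc\tta^\rr$, $\vv = \ew$, and set the lemma's exponent equal to our $\ss$; then
\[
(\uu\vv)^\ss \uu \tta \;=\; (\ttc\tta^\rr)^{\ss+1}\tta \;=\; (\ttc\tta^\rr)^\ss \ttc \tta^{\rr+1}
\]
is a prefix of $\ttc Z$ if and only if $\tta^{\rr+1}$ is a prefix of the subsequent block $\tta^\pp \cdots$, i.e., if and only if $\rr+1 \le \pp$. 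The hypothesis $\qq \ge 1$ ensures that the block exists, and the side-condition on $\vv^{-1}\tta$ is trivially satisfied since $\vv = \ew$. Lemma~\ref{L:Excluded}\ITEM2 then yields that $\tta$ and $\ttc\tta^{\rr+1}$ admit no common right-multiple in $\MM$; by the Main Lemma (Proposition~\ref{P:MainLemma}), $\MM$ is not of right-$O$-type, and {\it a fortiori} not of $O$-type.

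The delicate step is the positive direction, specifically the verification that $\Delta \dive \ttc \Delta$: producing the explicit positive witness $X$ requires iterating the swap $\ttc\tta^\rr\ttb = \ttb\tta^\rr\ttc$ in concert with the splitting $\tta^\rr = \tta^\pp \tta^{\rr-\pp}$ while being careful never to leave the monoid. The hypothesis $\rr \ge \pp$ is used precisely to keep every intermediate word positive. The negative direction, by contrast, reduces to a mechanical prefix-matching computation once the correct chained relation is singled out.
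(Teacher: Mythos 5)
Your handling of the case $\qq=0$ and of the negative direction ($\rr<\pp$, $\qq\ge1$) follows the paper's argument: the chained relation $\tta=\ttc(\tta^\rr\ttc)^\ss(\tta^\pp\ttb)^\qq$ begins with $(\ttc\tta^\rr)^{\ss+1}\tta$ exactly when $\rr+1\le\pp$, and Lemma~\ref{L:Excluded}\ITEM2 with $\uu=\ttc\tta^\rr$, $\vv=\ew$ finishes. The positive direction, however, rests on a false claim: the element $\Delta=\tta^{\pp+1}$ is in general \emph{not} right-quasi-central in~$\MM$, so the relation $\Delta\dive\ttc\Delta$ that you defer as ``the delicate step'' cannot be established -- it is simply untrue. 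Take $\pp=\qq=\ss=1$, $\rr=2$, so the relations are $\tta=\ttb\tta\ttb$ and $\ttb=\ttc\tta^2\ttc$. Then $\ttc\tta^2\dive\ttb\dive\tta\dive\tta^2$; if in addition $\tta^2\dive\ttc\tta^2$ held, left-cancellativity and the absence of nontrivial invertible elements would give $\ttc\tta^2=\tta^2$, whence $\ttc=1$ by right-cancellation (legitimate, as the presentation is left-triangular), a contradiction. The paper records this phenomenon explicitly in Example~\ref{X:NoQuasi}: whenever $\pp+1$ does not divide $\rr+1$ (and $\rr\ge2$), \emph{no} power of~$\tta$ is quasi-central in~$\MM$. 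The identity $\ttc\tta^\rr\ttb=\ttb\tta^\rr\ttc$ you derive is correct, but it rewrites $\ttc\tta^\rr\ttb$, whereas the factors of $\Delta=(\tta^\pp\ttb)^{\qq+1}$ only ever supply $\ttc\tta^\pp\ttb$; the ``split $\tta^\rr=\tta^\pp\cdot\tta^{\rr-\pp}$'' cannot manufacture the missing $\tta^{\rr-\pp}$ inside~$\Delta$.

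This is precisely why the paper's proof is so much longer. It separates two subcases according to whether $\pp+1$ divides $\rr+1$. If it does not, it writes $\rr=\mm+\pp'$ with $\pp+1\mid\mm$ and $\pp'<\pp$, passes to the submonoid~$\MM'$ generated by $\tta$, $\ttb$, and $\ttc'=\ttc\tta^\pp$, proves that $\tta^\mm$ is right-quasi-central \emph{in $\MM'$} (not in~$\MM$), and extracts from this the relations $\ttc\tta^{\kk\mm+\rr}\dive\tta^{\kk\mm+1}$, which are exactly what Lemma~\ref{L:DominBis} needs to conclude that $\tta$ right-dominates~$\ttc$. If $\pp+1\mid\rr+1$, it instead exhibits the genuinely central element $\tta^{\qq\mm+\rr+1}$ and then proves domination of~$\ttc$ by~$\tta$ through a separate explicit induction (formula~\eqref{E:Fam5Domin}). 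Some argument of this kind is unavoidable, so as it stands the positive half of the proposition is not established by your outline.
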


\begin{proof}
As a preliminary remark, we note that the case $\qq = 0$ is trivial, as \eqref{E:Fam5} then reduces to $\tta = \ttb$ and $\MM$ is isomorphic to $\Mon{\tta, \ttb}{\tta = \ttb(\tta^{\rr}\ttb)^\ss}$, in which, by Proposition~\ref{P:Fam1}, $\tta^{\rr + 1}$ is central, hence it right-dominates $\tta, \ttb, \ttc$ by Lemma~\ref{L:QuasiDomin}, and $\MM$ is of $O$-type. So, from now on, we assume $\qq \ge 1$. 

Assume first $\pp < \rr$. Then the relation $\tta = \ttc(\tta^\rr\ttc)^\ss (\tta^\pp\ttb)^\qq$ belongs to the completion~$\RRh$ of the presentation, and it can be written as $\tta = (\ttc\tta^\rr)^{\ss + 1} \tta ...$. Hence, by Lemma~\ref{L:Excluded} with $\uu = \ttc\tta^\rr$, the monoid~$\MM$ cannot be of $O$-type.

From now on, we assume $\rr \ge \pp$. As a preliminary remark, we compute
\begin{equation}
\label{E:Fam5b}
\tta^{\pp+1} \cdot \ttb
= \tta\tta^\pp \ttb
= (\ttb (\tta^\pp \ttb)^\qq) \tta^\pp \ttb
= \ttb (\tta^\pp \ttb)^{\qq+1}
= \ttb \tta^\pp (\ttb (\tta^\pp \ttb)^\qq)
= \ttb \tta^\pp \tta = \ttb \cdot \tta^{\pp+1},
\end{equation}
which shows that $\tta^{\pp+1}$ commutes with~$\tta$ and~$\ttb$. Moreover, as $\ttb\tta^\pp \dive \tta$ holds, applying Lemma~\ref{L:QuasiDomin} in the submonoid of~$\MM$ generated by~$\tta$ and~$\ttb$ shows that $\tta$ right-dominates~$\ttb$. 

Next, let us write $\rr = \mm + {\pp'}$ with $\pp + 1 \,\vert\, \mm$ and $0\le {\pp'} \le \pp$. Then $\tta^\mm$ is a power of~$\tta^{\pp+1}$, hence it commutes with~$\tta$ and~$\ttb$. From here, we separate two cases. 

Assume first that $\pp+1$ does not divide $\rr + 1$, \ie, ${\pp'} < \pp$ holds. Put $\ttc' = \ttc\tta^\pp$, and let $\MM'$ be the submonoid of~$\MM$ generated by~$\tta$, $\ttb$, and $\ttc'$. Let $\Delta = \tta^\mm$. We shall prove that $\Delta$ is right-quasi-central in~$\MM'$. First, we can write $\tta \cdot \tta^{\mm-1} = \Delta$ and $\Delta \cdot \tta = \tta\Delta$ both in~$\MM$ and in~$\MM'$, so $\tta \dive \Delta \dive \tta\Delta$ holds in~$\MM'$ as it does in~$\MM$. Next, we have $\ttb \cdot (\tta^\pp\ttb)^\qq\tta^{\mm-1} = \Delta$ and $\Delta \cdot \ttb = \ttb\Delta$ in~$\MM$ and~$\MM'$, so $\ttb \dive \Delta \dive \ttb\Delta$ holds in~$\MM'$. It remains to treat~$\ttc'$. One the one hand, starting from $\tta = \ttc(\tta^{\rr}\ttc)^\ss (\tta^\pp\ttb)^\qq$, which we re-arrange into $\tta = \ttc' \cdot (\tta^{\rr-\pp}\ttc')^\ss (\ttb\tta^\pp)^{\qq-1}\ttb$ by moving brackets, we see that $\ttc' \dive \tta$ holds in~$\MM'$, whence \textit{a fortiori} $\ttc' \dive \Delta$. On the other hand, in~$\MM$, we find
\begin{align}
\label{E:Fam5c}
\ttc' \cdot \Delta
&=\ttc \tta^\rr \cdot \tta \cdot \tta^{\pp - {\pp'} - 1}
&&\mbox{by $\rr = \mm + {\pp'}$ with ${\pp'} < \pp$}\\\notag
&=\ttc\tta^\rr \cdot \ttb (\tta^\pp\ttb)^\qq \cdot \tta^{\pp - {\pp'} - 1}
&&\mbox{by \eqref{E:Fam5} for $\tta$}\\\notag
&=\ttc\tta^\rr \cdot \ttc (\tta^{\rr}\ttc)^\ss  \cdot (\tta^\pp\ttb)^\qq \cdot \tta^{\pp - {\pp'} - 1}
&&\mbox{by \eqref{E:Fam5} for $\ttb$}\\\notag
&=\ttc (\tta^\rr\ttc)^\ss \cdot \tta^{\rr}\ttc  \cdot \tta^\pp \cdot (\ttb \tta^\pp)^{\qq-1}\ttb\tta^{\pp - {\pp'} - 1}
&&\mbox{by moving brackets}\\\notag
&=\ttb \cdot \tta^{\rr}\cdot \ttc\tta^\pp \cdot (\ttb \tta^\pp)^{\qq-1}\ttb\tta^{\pp - {\pp'} - 1}
&&\mbox{by \eqref{E:Fam5} for $\ttb$}\\\notag
&=\ttb \cdot \Delta \cdot \tta^{\pp'} \cdot \ttc' \cdot (\ttb \tta^\pp)^{\qq-1}\ttb\tta^{\pp - {\pp'} - 1}
&&\mbox{by $\rr = \mm + {\pp'}$}\\\notag
&=\Delta \cdot \ttb \tta^{{\pp'}}\cdot \ttc' \cdot (\ttb \tta^\pp)^{\qq-1}\ttb\tta^{\pp - {\pp'} - 1}
&&\mbox{by \eqref{E:Fam5b}},
\end{align}
which shows that $\Delta \dive \ttc' \Delta$ holds in~$\MM'$. Being a submonoid of a cancellative monoid, the monoid~$\MM'$ is cancellative, so, by Lemma~\ref{L:QuasiDomin}, $\Delta$, \ie, $\tta^\mm$, is right-quasi-central in~$\MM'$. Moreover, in~$\MM$, we have $\tta = \ttc\tta^\pp \cdot (\tta^{\rr-\pp} \ttc\tta^\pp)^\ss (\ttb\tta^\pp)^{\qq-1}\ttb$, which we can re-arrange as $\tta =\nobreak \ttc\tta^\pp (\tta^{\rr-\pp} \ttc\tta^\pp)^\ss (\ttb\tta^\pp)^{\qq-1}\ttb$, \ie, $\tta = \ttc' (\tta^{\rr-\pp} \ttc')^\ss (\ttb\tta^\pp)^{\qq-1}\ttb$, a relation that makes sense in~$\MM'$. So, in~$\MM'$, we have $\ttc' \tta^{\rr-\pp} \dive \tta$. Applying Lemma~\ref{L:QuasiDomin} in~$\MM'$ is not sufficient, but, repeating its proof, we obtain, writing~$\phi'$ for the endomorphism of~$\MM'$ associated with~$\Delta$,
$$\ttc'\tta^{\rr-\pp} \Delta{}^\kk 
= \Delta{}^\kk \phi'{}^\kk(\ttc' \tta^{\rr-\pp})
\dive \Delta{}^\kk \phi'{}^\kk(\tta)
= \Delta{}^\kk \tta,$$
which implies in~$\MM$ the relation $\ttc \tta^{\kk\mm + \rr} \dive \tta^{\kk\mm +1}$ for every~$\kk$. As $\mm-1 \le \rr$ is true by definition, Lemma~\ref{L:DominBis} implies that $\tta$ right-dominates~$\ttc$ in~$\MM$. We saw above that $\tta$ right-dominates~$\ttb$, hence $\tta$ right-dominates all of $\tta, \ttb, \ttc$. By Proposition~\ref{P:DominO}, $\MM$ is of right-$O$-type, hence of $O$-type by symmetry. 

Assume now that $\pp+1$ divides $\rr+1$, \ie, We have ${\pp'} = \pp$ holds. Then, starting from $\tta = \ttb (\tta^\pp)^\qq$, we obtain $\tta^{\qq\mm + 1} 
= \ttb (\tta^\pp\ttb)^\qq (\tta^\mm)^\qq
= \ttb (\tta^\pp\tta^\mm\ttb)^\qq
= \ttb (\tta^\rr\ttb)^\qq$, 
whence, substituting $\ttb$ with $\ttc(\tta^\rr\ttc)^\ss$, 
$$\tta^{\qq\mm + 1} = \ttc(\tta^\rr\ttc)^\qq (\tta^\rr\ttc(\tta^\rr\ttc)^\ss 
= \ttc (\tta^\rr\ttc)^{(\ss+1)\qq + \ss}
= (\ttc \tta^\rr)^{(\ss+1)\qq + \ss}\ttc,$$
and, now putting $\Delta = \tta^{\qq\mm + \rr + 1}$ (not the same value as above),
$$\ttc \cdot \Delta 
= \ttc\tta^\rr \cdot \tta^{\qq\mm + 1} 
= \ttc\tta^\rr \cdot  (\ttc\tta^\rr)^{(\ss+1)\qq + \ss}\ttc
= (\ttc\tta^\rr)^{(\ss+1)\qq + \ss}\ttc \cdot  \tta^\rr\ttc
= \tta^{\qq\mm + 1}  \cdot \tta^\rr \ttc
= \Delta \cdot \ttc,$$
so $\Delta$ commutes with~$\ttc$. As it is a power of~$\tta^{\pp+1}$, it commutes with~$\ttb$, and with~$\tta$, so $\Delta$ is central in~$\MM$, and $\MM$ is of right-$O$-type, hence of $O$-type by symmetry. 

To complete the study of~$\MM$, we shall check that $\tta$ is dominating in all cases. We already proved that $\tta$ right-dominates~$\tta$ and~$\ttb$, and the point is to show that $\tta$ right-dominates~$\ttc$ in the case $\pp + 1 \,\vert\, \rr+1$. Above we proved that $\tta^{\qq\mm + \rr+1}$ is central in this case. Now, we only have $\ttc\tta^\rr \dive \tta$ so, unless in the degenerated case $\qq = 0$, we are not in position for applying Lemma~\ref{L:QuasiDomin} directly. Instead we shall prove $\ttc\tta^{\rr + \kk\mm} \dive \tta^{1 + \kk\mm}$ for $\kk = 0 \wdots \qq$. As $\mm \le \rr$ holds (we have $\rr = \mm + \pp$ by definition), we deduce $\ttc\tta^\nn \dive \tta^{\nn+1}$ for $0 \le \nn \le \qq\mm + \rr$ using the argument of Lemma~\ref{L:QuasiDomin}. Now, as $\tta^{\qq\mm + \rr + 1}$ is central, the same relations then repeat periodically, and $\ttc\tta^\nn \dive \tta^{\nn+1}$ holds for every~$\nn$. What we shall do is to prove
\begin{equation}
\label{E:Fam5Domin}
\tta^{1 + \kk\mm} = \ttc\tta^{\rr + \kk\mm} \cdot (\ttb \tta^\pp)^\kk \ttc(\tta^\rr\ttc)^{\ss-1} (\tta^\pp\ttb)^{\qq-\kk}
\end{equation}
using induction on~$\kk$. For $\kk = 0$, we have $\tta = \ttb (\tta^\pp\ttb)^\qq = \ttc\tta^\rr \tta(\tta^\rr\ttc)^{\ss-1} (\tta^\pp\ttb)^\qq$, which is~\eqref{E:Fam5Domin}. Assume $\qq \ge \kk > 0$. We find
\begin{align*}
\tta^{1 + \kk\mm} 
&= \tta^{1 + (\kk-1)\mm} \cdot \tta^\mm\\
&= \ttc\tta^{\rr + (\kk-1)\mm} \cdot (\ttb \tta^\pp)^{\kk-1} \ttc(\tta^\rr\ttc)^{\ss-1} (\tta^\pp\ttb)^{\qq-\kk+1} \cdot \tta^\mm
&&\mbox{by induction hyp.}\\
&= \ttc\tta^{\rr + (\kk-1)\mm} \cdot (\ttb \tta^\pp)^{\kk-1} \ttc(\tta^\rr\ttc)^{\ss-1} \tta^\mm(\tta^\pp\ttb)^{\qq-\kk+1}
&&\mbox{by $\tta^\mm \, \ttb = \ttb \, \tta^\mm$}\\
&= \ttc\tta^{\rr + (\kk-1)\mm} \cdot (\ttb \tta^\pp)^{\kk-1} \ttc(\tta^\rr\ttc)^{\ss-1} \tta^\rr \ttb(\tta^\pp\ttb)^{\qq-\kk}
&&\mbox{by $\rr = \mm + \pp$}\\
&= \ttc\tta^{\rr + (\kk-1)\mm} \cdot (\ttb \tta^\pp)^{\kk-1} \ttc(\tta^\rr\ttc)^{\ss-1} \tta^\rr \ttb(\tta^\pp\ttb)^{\qq-\kk}
&&\mbox{by $\rr = \mm + \pp$}\\
&= \ttc\tta^{\rr + (\kk-1)\mm} \cdot (\ttb \tta^\pp)^{\kk-1} \ttc(\tta^\rr\ttc)^{\ss-1} \tta^\rr\ttc (\tta^\rr\ttc)^{\ss-1}(\tta^\pp\ttb)^{\qq-\kk}
&&\mbox{by \eqref{E:Fam5} for~$\ttb$}\\
&= \ttc\tta^{\rr + (\kk-1)\mm} \cdot (\ttb \tta^\pp)^{\kk-1} \ttc(\tta^\rr\ttc)^\ss \tta^\rr \ttc (\tta^\rr\ttc)^{\ss-1}(\tta^\pp\ttb)^{\qq-\kk}
&&\mbox{by moving brackets}\\
&= \ttc\tta^{\rr + (\kk-1)\mm} \cdot (\ttb \tta^\pp)^{\kk-1} \ttb \tta^\rr \ttc (\tta^\rr\ttc)^{\ss-1}(\tta^\pp\ttb)^{\qq-\kk}
&&\mbox{by \eqref{E:Fam5} for~$\ttb$}\\
&= \ttc\tta^{\rr + (\kk-1)\mm} \cdot (\ttb \tta^\pp)^{\kk-1} \ttb \tta^\mm\tta^\pp \ttc (\tta^\rr\ttc)^{\ss-1}(\tta^\pp\ttb)^{\qq-\kk}
&&\mbox{by $\rr = \mm + \pp$}\\
&= \ttc\tta^{\rr + (\kk-1)\mm} \cdot \tta^\mm(\ttb \tta^\pp)^{\kk-1} \ttb \tta^\pp \ttc (\tta^\rr\ttc)^{\ss-1}(\tta^\pp\ttb)^{\qq-\kk}
&&\mbox{by by $\tta^\mm \, \ttb = \ttb \, \tta^\mm$}\\
&= \ttc\tta^{\rr + \kk\mm} \cdot (\ttb \tta^\pp)^\kk \ttc (\tta^\rr\ttc)^{\ss-1}(\tta^\pp\ttb)^{\qq-\kk}
&&\mbox{which is \eqref{E:Fam5Domin}.}\\
\end{align*}
So the argument is complete and, even in the case $\pp + 1 \,\vert\, \rr+1$, the element~$\tta$ dominates each of~$\tta$, $\ttb$, and $\ttc$. 
\end{proof}

Note that the monoid of Proposition~\ref{P:Fam5} is generated by~$\tta$ and~$\ttc$ alone and admits the corresponding (less readable) presentation $\Pres{\tta, \ttc}{\tta = \ttc(\tta^\rr\ttc)^\ss (\tta^\pp \ttc (\tta^\rr\ttc)^\ss)^\qq}$. 
The $4$-parameter family of Proposition~\ref{P:Fam5} contains in particular all presentations $\Pres{\tta, \ttb}{\tta = \ttb\tta^\rr\ttb\tta^\pp\ttb\tta^\rr\ttb}$, and all presentations $\Pres{\tta, \ttb}{\tta =\nobreak (\ttb (\tta^\rr \ttb)^\ss)^\qq}$. In terms of orderable groups, we deduce

\begin{coro}
\label{C:Fam5}
For $\pp, \qq, \ss \ge 0$ and $\rr \ge \pp$, let $\GG$ be the group $\Gr{\ttx, \tty}{\ttx^{\pp+1} = (\tty (\ttx^{\rr-\pp} \tty)^\ss)^{\qq+1}}$. Then $\GG$ is orderable, the subsemigroup of~$\GG$ generated by~$\ttx$ and $\tty\ttx^{-\pp}$ is the positive cone of a left-invariant ordering on~$\GG$ which is isolated in~$\LO\GG$.
\end{coro}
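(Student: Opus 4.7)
The plan is to reduce the corollary to the three-generator group $\GG' = \Gr{\tta,\ttb,\ttc}{\tta = \ttb(\tta^\pp\ttb)^\qq,\ \ttb = \ttc(\tta^\rr\ttc)^\ss}$, whose monoid has just been shown (under the hypothesis $\rr \ge \pp$) to be of $O$-type by Proposition~\ref{P:Fam5}. Proposition~\ref{P:Recipe} then applies directly to~$\GG'$: it is orderable, the subsemigroup generated by~$\{\tta,\ttb,\ttc\}$ is the positive cone of a left-invariant ordering on~$\GG'$, and that ordering is isolated in~$\LO{\GG'}$ since the generating set is finite.

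Next I would establish an explicit isomorphism $\Phi' \colon \GG' \to \GG$ defined by $\tta \mapsto \ttx$, $\ttc \mapsto \tty\ttx^{-\pp}$, and $\ttb \mapsto W$ with $W := (\tty\ttx^{-\pp})(\ttx^\rr\tty\ttx^{-\pp})^\ss$, together with an inverse $\Phi \colon \GG \to \GG'$ given by $\ttx \mapsto \tta$ and $\tty \mapsto \ttc\tta^\pp$. The second relation of~$\GG'$ is satisfied by~$\Phi'$ by construction. For the first relation, the key step is the rearrangement $W = V\ttx^{-\pp}$ with $V := \tty(\ttx^{\rr-\pp}\tty)^\ss$, after which conjugation telescopes $W(\ttx^\pp W)^\qq$ to $V^{\qq+1}\ttx^{-\pp}$; the relation $\ttx = W(\ttx^\pp W)^\qq$ is then equivalent to $\ttx^{\pp+1} = V^{\qq+1}$, which is the defining relation of~$\GG$. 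For~$\Phi$, the standard manipulation $\tta^{\pp+1} = (\ttb\tta^\pp)^{\qq+1}$ drawn from the first relation of~$\GG'$, combined with $\ttb = (\ttc\tta^\rr)^\ss\ttc$ drawn from the second, turns the image of the relator of~$\GG$ into an identity in~$\GG'$. A check on generators confirms $\Phi\comp\Phi' = \mathrm{id}_{\GG'}$ and $\Phi'\comp\Phi = \mathrm{id}_\GG$.

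To conclude, I would transport the positive cone through~$\Phi'$: the subsemigroup of~$\GG'$ generated by $\{\tta,\ttb,\ttc\}$ maps to the subsemigroup of~$\GG$ generated by $\{\ttx,\, W,\, \tty\ttx^{-\pp}\}$. Because $W$ is by definition a positive word in~$\ttx$ and~$\tty\ttx^{-\pp}$, this subsemigroup equals the one generated by~$\{\ttx,\, \tty\ttx^{-\pp}\}$ alone. The isolation statement in Proposition~\ref{P:Recipe} then transfers since this generating set remains finite, yielding the isolated ordering on~$\GG$ with the stated positive cone. The main obstacle to watch carefully is the telescoping identity $W(\ttx^\pp W)^\qq = V^{\qq+1}\ttx^{-\pp}$: although it is a bookkeeping exercise on $\ss$-fold and $\qq$-fold products, it is exactly what forces the well-definedness of~$\Phi'$ to match the single defining relation of~$\GG$.
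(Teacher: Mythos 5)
Your proposal is correct and follows essentially the same route as the paper: both establish that $\GG$ admits the presentation~\eqref{E:Fam5} with respect to $\tta = \ttx$, $\ttc = \tty\ttx^{-\pp}$, $\ttb = \ttc(\tta^\rr\ttc)^\ss$ via a pair of mutually inverse homomorphisms, and then invoke Propositions~\ref{P:Fam5} and~\ref{P:Recipe}. Your telescoping identity $W(\ttx^\pp W)^\qq = V^{\qq+1}\ttx^{-\pp}$ is just the paper's computation $(\cc(\aa^\rr\cc)^\ss\aa^\pp)^{\qq+1} = \bb(\aa^\pp\bb)^\qq\aa^\pp$ read in the other direction, and your explicit remark that $W$ lies in the subsemigroup generated by $\ttx$ and $\tty\ttx^{-\pp}$ is a point the paper leaves implicit.
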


\begin{proof}
Once more, it suffices to show that $\GG$ admits the presentation~\eqref{E:Fam5} in terms of the mentioned elements. Put $\aa = \ttx$, $\cc = \tty\ttx^{-\pp}$, and $\bb = \cc(\aa^{\rr}\cc)^\ss$ in~$\GG$. The second relation of~\eqref{E:Fam5} is automatically satisfied; on the other hand, we find
$$\aa^{\pp+1} 
= (\tty (\ttx^{\rr-\pp} \tty)^\ss)^{\qq+1}
= (\cc\aa^\pp (\aa^{\rr-\pp} \cc \aa^\pp)^\ss)^{\qq+1} 
= (\cc (\aa^{\rr}\cc)^\ss \aa^\pp)^{\qq+1}
= (\bb \aa^\pp)^{\qq+1}
= \bb (\aa^\pp \bb)^\qq \aa^\pp,$$
whence the first relation of~\eqref{E:Fam5} by right-cancelling~$\aa^\pp$, yielding an homomorphism of the group~$\GG'$ defined by~\eqref{E:Fam5} to~$\GG$. Conversely, put $\xx = \tta$ and $\yy = \ttc\tta^\pp$ in~$\GG'$. Then $\xx$ and~$\yy$ generate~$\GG'$ and satisfy $\xx^{\pp+1} = (\yy (\xx^{\rr-\pp} \yy)^\ss)^{\qq+1}$, whence an homomorphism of~$\GG$ to~$\GG'$, and, finally, an isomorphism. So $\GG$ admits the expected presentation.
\end{proof}

We finish with a family of still another type, namely one where the third generator is used to split an initially not triangular relation into two triangular relations, by observing that a two-generator relation $\tta \uu = \ttb \vv \ttb$ defines the same group as the two triangular relations $\tta = \ttb \vv \ttc$, $\ttb = \ttc \uu$. The example we choose is interesting as it corresponds to groups that are not torus knot groups.

\begin{prop}
\label{P:Fam6}
For $\pp, \qq, \rr \ge 0$, let $\MM$ be the monoid defined by   
\begin{equation}
\label{E:Fam6}
\Pres{\tta, \ttb, \ttc}{\tta = \ttb\tta^{\pp+2}(\ttb\tta^\pp \ttb \tta^{\pp+2})^\qq \ttc, \ttb = \ttc (\ttb \tta^{\pp+2})^\rr \ttb\tta},
\end{equation}
Then, for $\rr =0$ and for $\rr = 1$, the monoid~$\MM$ is of $O$-type.
\end{prop}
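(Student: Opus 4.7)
\emph{Plan.} The plan is to apply Theorem~\ref{T:Main} by exhibiting in~$\MM$ a single element $\Delta$ that is central (so that one may take $\Deltat = \Delta$ and the conjugating endomorphism trivial), and that is left-divisible by each of~$\tta, \ttb, \ttc$. Guided by row~4 of Table~\ref{T:Recap}, the natural candidate is
$$\Delta = (\ttb\tta^{\pp+2})^{2\qq+\rr+3}.$$
As a preliminary, I would check that the presentation~\eqref{E:Fam6} is simultaneously right-triangular, with $\NN(\ttb) = \tta$ and $\NN(\ttc) = \ttb$, and left-triangular, with $\NNt(\tta) = \ttb$ and $\NNt(\ttc) = \tta$, so that both the Main Lemma (Proposition~\ref{P:MainLemma}) and its opposite are available throughout the argument.

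The divisibility step comes next. The relations (R1) and (R2) directly yield the left-divisibility chain $\ttc \dive \ttb \dive \tta$, and since $\Delta$ begins with $\ttb\tta^{\pp+2}$ one has immediately $\ttb \dive \Delta$ and $\ttc \dive \Delta$. For $\tta \dive \Delta$, one uses (R2) to replace an inner occurrence of~$\ttb$ by $\ttc(\ttb\tta^{\pp+2})^\rr\ttb\tta$ and then (R1) to contract the resulting prefix $\ttb\tta^{\pp+2}(\ttb\tta^\pp\ttb\tta^{\pp+2})^\qq\ttc$ back into a single letter~$\tta$; the bookkeeping exhibits an explicit factorization $\Delta = \tta \cdot \delta$ in~$\MM$. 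Right-divisibility of~$\Delta$ by each generator is handled by the symmetric manipulation using the left-triangular form of the relations.

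The heart of the argument, and its main obstacle, is the verification that $\Delta$ commutes with each of $\tta, \ttb, \ttc$. This is a direct word manipulation in~$\MM$ in the style of the proofs of Propositions~\ref{P:Fam1} and~\ref{P:Fam5}: one transports a generator past the $2\qq+\rr+3$ copies of $\ttb\tta^{\pp+2}$ in~$\Delta$, at each stage applying (R1) or (R2) to rewrite a local block. The exponent $2\qq+\rr+3$ is dictated by the length arithmetic that makes the two sides of each commutation identity close up; and the restriction $\rr \in \{0,1\}$ enters precisely here, in that the tail $(\ttb\tta^{\pp+2})^\rr\ttb\tta$ of (R2) must be short enough to absorb cleanly into a single block~$\ttb\tta^{\pp+2}$ during the transport. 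Once centrality and divisibility are established, Lemma~\ref{L:Quasi} applied with $\phi = \mathrm{id}$ shows that $\Delta$ is right-quasi-central and each generator left-divides~$\Delta$, so Proposition~\ref{P:QuasiO} yields that $\MM$ is of right-$O$-type; the opposite version, available by left-triangularity, gives left-$O$-type, and thus $\MM$ is of $O$-type. The expected difficulty lies entirely in the centrality verification, which is lengthy but algorithmic; everything else is direct bookkeeping from results already proved in the paper.
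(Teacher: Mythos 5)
Your plan coincides with the paper's own argument: the paper's (explicitly sketched) proof also takes $\Delta=(\tta^{\pp+2}\ttb)^{2\qq+\rr+3}$ (which, once central, equals your $(\ttb\tta^{\pp+2})^{2\qq+\rr+3}$ by left-cancellation), asserts centrality and $\tta\dive\Delta$, and concludes via Proposition~\ref{P:QuasiO} together with the triangularity of the opposite presentation. Like the paper, you leave the centrality computation (where the restriction $\rr\in\{0,1\}$ actually bites, consistently with the paper's later remark that no quasi-central element seems to exist for $\rr\ge2$) as an unperformed but routine word manipulation, so the proposal matches the paper's proof in both strategy and level of detail.
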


\begin{proof}[Proof (sketch)]
The presentation~\eqref{E:Fam6} is triangular. Put $\Delta = (\tta^{\pp+2}\ttb)^{2\qq + \rr +3}$. Then $\Delta$ is central in~$\MM$ and $\tta \dive \Delta$ holds. Hence, by Proposition~\ref{P:QuasiO}, $\MM$ is of right-$O$-type, hence of $O$-type as the opposite relations also are triangular (with the ordering of $\tta$ and $\ttb$ exchanged).
\end{proof}

\begin{coro}
\label{C:Fam6}
For $\qq, \rr \ge 0$, let $\GG$ be the group $\Gr{\ttx, \tty}{\ttx^{\rr+1} = (\tty\ttx^2 \tty)^{\qq+1}}$. Then, for $\rr = 0$ and~$1$, the group $\GG$ is orderable and, for every~$\pp$, the subsemigroup of~$\GG$ generated by~$\ttx\inv$, $\ttx\tty^{\pp+2}$, and $\ttx \tty \ttx^{-(\rr+1)}$ is the positive cone of a left-invariant ordering on~$\GG$ which is isolated in~$\LO\GG$.
\end{coro}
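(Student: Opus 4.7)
The plan is to apply Proposition~\ref{P:Recipe} to the monoid~$\MM$ of Proposition~\ref{P:Fam6}. Since Proposition~\ref{P:Fam6} gives that $\MM$ is of $O$-type for $\rr \in \{0, 1\}$, it suffices to establish that $\GG$ admits~\eqref{E:Fam6} as a presentation with respect to the three elements specified in the statement, by exhibiting an isomorphism between $\GG$ and the group~$\GG'$ presented by~\eqref{E:Fam6}.

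Concretely, I would set $\aa := \ttx\inv$, $\bb := \ttx\tty^{\pp+2}$, and $\cc := \ttx\tty\ttx^{-(\rr+1)}$ in~$\GG$, and verify the two triangular relations of~\eqref{E:Fam6} directly, using the defining relation $\ttx^{\rr+1} = (\tty\ttx^2\tty)^{\qq+1}$ of~$\GG$. This gives a homomorphism $\Phi \colon \GG' \to \GG$, which is surjective because $\ttx = \aa\inv$ and $\tty = \aa\cc\aa^{-(\rr+1)}$ express the generators of~$\GG$ as words in $\aa, \bb, \cc$. For the opposite direction, I would set $\xx := \tta\inv$ and $\yy := \tta\ttc\tta^{-(\rr+1)}$ in~$\GG'$ and verify the single defining relation $\xx^{\rr+1} = (\yy\xx^2\yy)^{\qq+1}$; this yields a map $\Phi' \colon \GG \to \GG'$, and a check on generators (in particular, checking that $\bb = \aa\inv\yy^{\pp+2}$ is forced by the triangular relations of~\eqref{E:Fam6} once~$\aa$ and~$\cc$ are fixed) shows $\Phi$ and $\Phi'$ are mutually inverse.

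The main obstacle is the explicit verification of the two triangular relations of~\eqref{E:Fam6} in~$\GG$ and of the single relation of~$\GG$ in~$\GG'$. These are intricate algebraic identities involving long words, made delicate by the fact that the third generator in~\eqref{E:Fam6} is not independent of the other two when viewed in the group: $\bb$ must be recoverable from $\aa, \cc$ via $\aa\inv (\aa\cc\aa^{-(\rr+1)})^{\pp+2}$, and this consistency must be reconciled with the two triangular relations. The restriction to $\rr \in \{0, 1\}$ makes the combinatorics tractable. Once the isomorphism $\GG \cong \GG'$ is established, the orderability of~$\GG$, the identification of the positive cone as the subsemigroup generated by $\aa, \bb, \cc$, and the isolatedness of the ordering in~$\LO\GG$ (which follows because the generating set is finite) are all immediate consequences of Proposition~\ref{P:Recipe}.
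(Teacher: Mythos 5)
Your strategy (mutually inverse homomorphisms between $\GG$ and the group $\GG'$ presented by~\eqref{E:Fam6}, then Propositions~\ref{P:Fam6} and~\ref{P:Recipe}) is the one the paper uses for Corollaries~\ref{C:Fam1}, \ref{C:Fam4} and~\ref{C:Fam5}, but it is not what the paper does here, and the difference is the whole point. The paper's proof is a short chain of Tietze transformations: eliminate~$\ttc$ using the splitting principle stated just before Proposition~\ref{P:Fam6} (the pair $\tta = \ttb\vv\ttc$, $\ttb = \ttc\uu$ is equivalent to the single relation $\tta\uu = \ttb\vv\ttb$), obtaining $\Gr{\tta,\ttb}{\tta(\ttb\tta^{\pp+2})^\rr\ttb\tta = \ttb\tta^{\pp+2}(\ttb\tta^\pp\ttb\tta^{\pp+2})^\qq\ttb}$; introduce $\ttx = \ttb\tta^{\pp+2}$ and eliminate~$\ttb$, which collapses the relation to $\tta\ttx^{\rr+1}\tta = \ttx(\ttx\tta^{-2}\ttx)^\qq\ttx$; set $\tty = \tta\inv$ to get $\ttx^{\rr+1} = (\tty\ttx^2\tty)^{\qq+1}$. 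Each move is automatically an isomorphism, so the ``intricate algebraic identities'' you single out as the main obstacle never need to be verified. Since your write-up explicitly defers those verifications rather than performing them, what you have is a plan, not a proof: the entire mathematical content of the corollary beyond quoting Propositions~\ref{P:Fam6} and~\ref{P:Recipe} is concentrated in precisely the step you skip.

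Two further points would trip you up if you tried to execute the plan as written. First, the verification with $\aa = \ttx\inv$ fails: unwinding the Tietze chain gives $\tta = \tty\inv$ (the occurrence of $\ttx\inv$ in the statement is a slip for $\tty\inv$), together with $\ttb = \ttx\tty^{\pp+2}$ and $\ttc = \ttx\tty\ttx^{-(\rr+1)}$. For instance, with $\pp = \qq = 0$ the first relation of~\eqref{E:Fam6} reads $\tta = \ttb\tta^2\ttc$, and one gets $\ttb\tta^2\ttc = \ttx\tty^2\cdot\tty^{-2}\cdot\ttx\tty\ttx^{-(\rr+1)} = \ttx^2\tty\ttx^{-(\rr+1)} = \tty\inv$ exactly by the relation $\ttx^{\rr+1} = \tty\ttx^2\tty$, whereas with $\tta = \ttx\inv$ the two sides do not even agree in the abelianization for general~$\rr$. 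Second, your remark that ``the restriction to $\rr \in \{0,1\}$ makes the combinatorics tractable'' mislocates that hypothesis: the presentation equivalence $\GG \cong \GG'$ holds for every $\rr \ge 0$; the restriction enters only through Proposition~\ref{P:Fam6}, i.e., in knowing that the monoid is of $O$-type.
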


\begin{proof}
According to the principle stated before Proposition~\ref{P:Fam6}, the group defined by~\eqref{E:Fam6} admits the presentation $\Gr{\tta, \ttb}{\tta (\ttb\tta^{\pp+2})^\rr \ttb \tta = \ttb\tta^{\pp+2} (\ttb\tta^\pp \ttb \tta^{\pp+2})^\qq \ttb}$. Introducing $\ttx = \ttb\tta^{\pp+2}$, this becomes $\Gr{\tta, \ttb, \ttx}{\tta \ttx^\rr \ttb \tta = \ttx (\ttx \tta^{-2} \ttx)^\qq \ttb, \ttx = \ttb\tta^{\pp+2}}$, whence $\Gr{\tta, \ttx}{\tta \ttx^{\rr+1} \tta = \ttx (\ttx \tta^{-2} \ttx)^\qq \ttx}$. Putting $\tty = \tta\inv$, this becomes $\Gr{\ttx, \tty}{\ttx^{\rr+1} = \tty \ttx (\ttx \tty^2 \ttx)^\qq \ttx \tty}$, whence $\Gr{\ttx, \tty}{\ttx^{\rr+1} = (\tty\ttx^2 \tty)^{\qq+1}}$ by arranging the brackets. The result then follows from the converse relations $\tta = \ttx\inv$, $\ttb = \ttx\tty^{\pp+2}$, and $\ttc = \ttx \tty \ttx^{-(\rr+1)}$.
\end{proof}

For $\rr \ge 2$, it seems that the monoid~$\MM$ of Proposition~\ref{P:Fam6} is still of $O$-type: experiments suggest that, for $\pp = 0$ and $\rr = 2$, the right-ceiling of~$\MM$ is the periodic word $\linfty{(\ttb^2\tta^3)}$ for every~$\qq$ but, as no central or quasi-central element seems to exist, this remains a conjecture.

A few further families are displayed in Table~\ref{T:RecapBis}. We skip the verifications.

\begin{table}[h]
\smaller
\begin{tabular}{cclc}
\hline
\hline

\VR(4,0)
1:
&$\Pres{\tta, \ttb}{\tta = \ttb (\tta\ttb^\pp)^\qq \tta\ttb}$ 
&$\Delta = (\tta\ttb^{\pp-1})^2$ central
\hfill\llap{$\Gr{\ttx, \tty}{\ttx^{\qq+2}{=}\tty^2}$}\\

\VR(3.5,0)
2:
&$\Pres{\tta, \ttb}{\tta = \ttb\tta^{\rr}\ttb\tta^{\pp}\ttb\tta^{\rr}\ttb}$
&$\Delta = (\tta^{\rr}\ttb)^2$ right-quasi-central\\

&\hfill\llap{with $\pp+1 \,\vert \, \rr$\quad}
&\hspace{5mm}with $\phi(\tta) = \tta^{\pp} (\ttb \tta^{\rr} \ttb)^2$, $\phi(\ttb) = \ttb$\\

&&\hfill\llap{$\Gr{\ttx, \tty}{\ttx^\pp = (\tty \ttx^{\rr - \pp} \tty)^2}$}\\

\VR(3.5,0)
3:
&$\Pres{\tta, \ttb, \ttc}{\tta = \ttb\tta^\pp\ttb, \ttb = \ttc \ttb\tta^\rr  \ttc}$
&$\Delta = \tta^{\pp(\rr-\pp)+1}$ central\\

&\hfill\llap{with $\pp+1 \,\vert\, \rr+1$\quad}
&\hfill\llap{$\Gr{\ttx, \tty, \ttz}{\ttx^{\pp+1} = \tty^2, \tty = \ttz \tty\ttx^{\rr-\pp}  \ttz}$}\\

\VR(3.5,0)
4:&
$\Pres{\tta, \ttb, \ttc}{\tta = \ttb\tta^\pp\ttb, \ttb = \ttc \ttb\tta^\rr  \ttc}$
&$\Delta = \tta^\rr\ttb^2$ quasi-central\\

&\hfill\llap{with $\pp+1 \,\vert\, \rr$\quad}
&\hspace{5mm}with $\phi(\tta) = \tta^\pp \ttb \tta^{\pp-1} \ttb^3$, $\phi(\ttb) = \ttb$, $\phi(\ttc) = \ttc$;\\

&&\hfill\llap{$\Gr{\ttx, \tty, \ttz}{\ttx^{\pp+1} = \tty^2, \tty = \ttz \tty\ttx^{\rr-\pp} \ttz}$}\\

\VR(3.5,0)
5:
&$\Pres{\tta, \ttb, \ttc}{\tta = \ttb(\tta \ttb)^\pp, 
\ttb = \ttc \ttb (\tta^\rr \ttb)^\pp \ttc}$
&$\Delta = \tta^{\pp(\pp+1)(\rr-1)+2}$ central\\

\VR(0,3)
&\hfill\llap{with $\rr$ odd\quad}
&\hfill\llap{$\Gr{\ttx, \tty, \ttz}{\ttx^2 = (\ttx\tty)^{\pp+1}, \ttz\tty\ttz = \tty(\ttx^\rr\tty)^\qq}$}\\

\VR(3.5,0)
6:
&$(\tta, \ttb, \ttc \,; \tta = \ttb \tta^{\pp+1}(\ttb\tta^\pp\ttb\tta^{\pp+1})^\qq\ttc, ...$\hspace{1cm}\null
&$\Delta = (\tta^{\pp+1} \ttb)^{\rr+3}$ central\\

\VR(0,3)
&\hfill\llap{$\ttb = \ttc(\ttb\tta^{\pp+1})^\rr\ttb\tta)$\quad}
&\hfill\llap{$\Gr{\ttx, \tty}{\tty^{\rr+3} = \ttx^{\qq+2}}$}\\

\hline
\hline
\end{tabular}
\caption{\sf\smaller \VR(4,0)More examples of monoids of $O$-type, with the justification and an alternative presentation of the associated group; in~1, two-generator symmetric presentations with central elements that are not a power of~$\tta$; in~2 (a special case of Proposition~\ref{P:Fam4}), two-generators symmetric presentations with quasi-central elements that are not central (for $\pp \ge 1$); in~6, an example similar to Proposition~\ref{P:Fam6} (which corresponds to replacing $\pp+1$ with $\pp + 2$).}
\label{T:RecapBis}
\end{table} 


\section{Limits of the approach}
\label{S:Limits}

So far, we did not discuss the range of our approach, namely the question of whether many monoids of (right)-$O$-type admit (right)-triangular presentations. Owing to the positive results of Sections~\ref{S:Exp} and~\ref{S:Fam}, which provide a number of such monoids of $O$-type, it would even be conceivable that all monoids of $O$-type could admit a triangular presentation. In this section, we show that this is not the case, and give a simple criterion discarding a number of such monoids, in particular the $\nn$-strand Dubrovina--Dubrovin braid monoids for $\nn \ge 4$.

So our starting point is

\begin{ques}
\label{Q:Pres}
Assume that $\MM$ is a monoid of right-$O$-type and $\SS$ is a generating subfamily of~$\MM$. Does $\MM$ admit a right-triangular presentation based on~$\SS$?
\end{ques} 

What is significant in a right-triangular presentation is not the fact that it consists of triangular relations, but the condition that there is at most one letter~$\NN(\ss)$ and one relation $\NN(\ss) =\nobreak \ss \CC(\ss)$ for every~$\ss$: every positive presentation can be trivially transformed into a presentation of the same monoid consisting of triangular relations by introducing, for every relation~$\uu = \vv$, a new letter~$\ss$ and replacing $\uu = \vv$ with the triangular relations $\ss = \uu$, $\ss = \vv$. 

The following result, which is a special case of a result of~\cite{Dfx} for monoids in which any two elements admit a least common right-multiple, may appear relevant for Question~\ref{Q:Pres}.

\begin{fact}
\label{F:Noeth}
Assume that $\MM$ is a monoid of right-$O$-type that satisfies Condition~\eqref{E:Noeth}, and $\SS$ is any generating subfamily of~$\MM$. For all~$\ss, \ss'$ in~$\SS$ with $\ss \dive \ss'$, choose an $\SS$-word~$\ww$ such that $\ss\ww$ represents~$\ss'$. Let $\RR$ be the family of all relations~$\ss\ww = \ss'$ so obtained. Then $\Pres\SS\RR$ is a presentation of~$\MM$.
\end{fact}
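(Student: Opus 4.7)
The plan is to show that the natural surjection $\Phi \colon \Mon\SS\RR \to \MM$ induced by sending each generator to itself is an isomorphism. Well-definedness and surjectivity are immediate: each relation $\ss\ww = \ss'$ of~$\RR$ holds in~$\MM$ by construction, and $\SS$ generates~$\MM$. The task thus reduces to proving injectivity, that is, $\uu \eqpR \vv$ whenever $\uu, \vv$ are $\SS$-words with $\clp\uu = \clp\vv$ in~$\MM$.

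I would proceed by Noetherian induction on the common value $\gg = \clp\uu = \clp\vv$, the well-foundedness of the descent being supplied by Condition~\eqref{E:Noeth}. The base case $\gg = 1$ is immediate since $1$ is the only invertible element of~$\MM$, forcing any $\SS$-word representing~$1$ to be empty. For the inductive step with $\gg \ne 1$, I write $\uu = \ss_1 \uu'$ and $\vv = \ss'_1 \vv'$ with $\ss_1, \ss'_1 \in \SS$. Since $\MM$ is of right-$O$-type, $\ss_1$ and $\ss'_1$ are $\dive$-comparable, and up to exchanging the roles of $\uu$ and $\vv$ I may assume $\ss_1 \dive \ss'_1$. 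The construction of~$\RR$ then provides a relation $\ss_1 \ww = \ss'_1$ (with $\ww = \ew$ in the degenerate case $\ss_1 = \ss'_1$), so that $\vv \eqpR \ss_1 \ww \vv'$. Both $\ss_1 \uu'$ and $\ss_1 \ww \vv'$ now represent~$\gg$ in~$\MM$, and left-cancellativity furnishes a common element $\gg' \in \MM$ with $\ss_1 \gg' = \gg$ and $\clp{\uu'} = \clp{\ww\vv'} = \gg'$. The inductive hypothesis applied to $(\uu', \ww\vv')$ gives $\uu' \eqpR \ww \vv'$, whence $\uu = \ss_1 \uu' \eqpR \ss_1 \ww \vv' \eqpR \ss'_1 \vv' = \vv$.

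The crucial point, and the only one where~\eqref{E:Noeth} genuinely intervenes, is securing the strict inequality $\gg' \prec \gg$, which is indispensable both so that the inductive hypothesis applies to $(\uu', \ww\vv')$ and so that the descent itself terminates. Were $\gg' = \gg$ to hold, then $\ss_1 \gg = \gg$ would iterate to $\ss_1^n \gg = \gg$ for every $n \ge 0$, producing the infinite strictly ascending chain $1 \prec \ss_1 \prec \ss_1^2 \prec \cdots \dive \gg$ of left-divisors of~$\gg$, contradicting~\eqref{E:Noeth}. Once this Noetherian descent is in place, everything else assembles routinely from left-cancellativity, $\dive$-comparability of generators under the right-$O$-type hypothesis, and the specific triangular shape $\ss\ww = \ss'$ of the relations in~$\RR$.
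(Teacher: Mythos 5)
Your proof is correct and follows essentially the same route as the paper's: peel off the first letters, use right-$O$-type comparability to invoke a relation of~$\RR$ aligning them, left-cancel, and recurse along the well-founded descent guaranteed by~\eqref{E:Noeth} (the paper packages this descent as an ordinal-valued rank~$\wit$ with $\wit(\ss\gg)>\wit(\gg)$ for $\ss\ne1$, which is exactly your Noetherian induction). The only cosmetic difference is that you justify strictness of a single step via the chain $1\prec\ss_1\prec\ss_1^2\prec\cdots$, whereas the general termination of the descent uses the analogous chain of partial products $\ss^{(1)}\prec\ss^{(1)}\ss^{(2)}\prec\cdots\dive\gg$; this is the same argument and matches the level of detail of the paper's sketch.
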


\begin{proof}[Proof (sketch)]
We wish to prove for all $\SS$-words~$\uu, \vv$ that $\clp\uu = \clp\vv$ is equivalent to $\uu \eqpR \vv$. By construction, $\RR$ consists of relations that are valid in~$\MM$, hence $\uu \eqpR \vv$ always implies $\clp\uu = \clp\vv$, and the problem is the converse implication. Standard arguments show that \eqref{E:Noeth} is equivalent to the existence of a map~$\wit$ from~$\MM$ to the ordinals such that $\ss\not=1$ implies $\wit(\ss\gg) > \wit(\gg)$. Then one proves that $\clp\uu = \clp\vv$ with $\wit(\clp\uu) = \alpha$ implies $\uu \eqpR \vv$ using induction on~$\alpha$. For $\alpha = 0$, we have $\wit(\clp\uu) = \wit(\clp\vv) = 0$, hence $\clp\uu$ is minimum with respect to proper right-divisibility in~$\MM$, implying $\clp\uu = \clp\vv = 1$, whence $\uu = \vv = \ew$.
Assume now $\alpha > 0$. Then $\uu$ and $\vv$ cannot be empty. Write $\uu = \ss \uu_0$, $\vv = \ss' \vv_0$ with $\ss, \ss'$ in~$\SS$. Then, by definition, we have (*) $\wit(\clp{\uu_0}) < \wit(\clp\uu)$ and $\wit(\clp{\vv_0}) < \wit(\clp\vv)$.
Assume first $\ss' = \ss$. By assumption, we have $\clp\uu = \clp\vv$, \ie, $\ss \clp{\uu_0} = \ss \clp{\vv_0}$. As $\MM$ is left-cancellative, we deduce $\clp{\uu_0} = \clp{\vv_0}$. By~(*) and the induction hypothesis, this implies $\uu_0 \eqpR \vv_0$, whence {\it a fortiori} $\uu = \ss\uu_0 \eqpR \ss\vv_0 = \vv$. Finally, assume $\ss' \not= \ss$. In~$\MM$, the elements~$\ss$ and~$\ss'$ are comparable for~$\dive$, say for instance $\ss \dive \ss'$. Then, by construction, there exists in~$\RR$ one relation $\ss \ww = \ss'$ such that $\ss \clp\ww = \ss'$ holds in~$\MM$. We deduce $\ss \clp{\uu_0} = \ss' \clp{\vv_0} = \ss \clp{\ww} \clp{\vv_0} = \ss \clp{\ww\vv_0}$, whence $\clp{\uu_0} = \clp{\ww \vv_0}$ since $\MM$ is left-cancellative. By~(*) and the induction hypothesis, this implies $\uu_0 \eqpR \ww\vv_0$, whence $\uu = \ss\uu_0 \eqpR \ss\ww\vv_0 \eqpR \ss'\vv_0 = \vv$. So the induction is complete.
\end{proof}

The above positive result is misleading. The range of Fact~\ref{F:Noeth} is nonempty since it applies at least to the monoid~$(\NNNN, +)$, but, as already mentioned, the Noetherianity condition~\eqref{E:Noeth} fails in almost all monoids that admit triangular presentations, and the following example shows that, when \eqref{E:Noeth} fails, we cannot hope for a result similar to Fact~\ref{F:Noeth}. 

\begin{exam}
\label{X:Noeth}
Let $\MM$ be the Klein bottle monoid $\Mon{\tta, \ttb}{\tta = \ttb\tta\ttb}$. Then $\MM$ is of right-$O$-type, and it is generated by~$\tta$ and~$\ttb$. Now, in~$\MM$, we have $\tta = \ttb^2\tta\ttb^2$, so, if Fact~\ref{F:Noeth} were valid here, $\Pres{\tta, \ttb}{\tta = \ttb^2\tta\ttb^2}$ would be an alternative presentation of~$\MM$. This is not the case: by Lemma~\ref{L:Excluded} applied with $\uu = \ttb$ and $\vv = \ew$, monoid~$\Mon{\tta, \ttb}{\tta = \ttb^2\tta\ttb^2}$ is not of right-$O$-type and, therefore, it is not isomorphic to~$\MM$.
\end{exam}

Actually, we shall establish a rather general negative answer to Question~\ref{Q:Pres} in the case of generating families with at least three elements.

\begin{defi}
Assume that $\MM$ is a monoid and $\SS$ is included in~$\MM$. An element~$\ss$ of~$\SS$ is called \emph{preponderant in~$\SS$} if $\gg \dive \hh \ss$ holds for all~$\gg, \hh$ in the submonoid generated by~$\SS {\setminus} \{\ss\}$.
\end{defi}

\begin{prop}
\label{P:NoPres}
Assume that $\MM$ is a monoid of right-$O$-type and $\SS$ is a generating subfamily of~$\MM$ that contains a preponderant element and has at least three elements. Then $\MM$ admits no right-triangular presentation based on~$\SS$.
\end{prop}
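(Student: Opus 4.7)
The strategy is to assume for contradiction that $\MM$ admits a right-triangular presentation based on $\SS$, to use Lemma~\ref{L:Enum} to pin down its form, and then to show that the preponderant element must be the $\dive$-largest generator, which ultimately conflicts with having two more generators below it.

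First I would invoke Lemma~\ref{L:Enum}: since $\MM$ is of right-$O$-type, any two elements admit a common right-multiple, so the presumed presentation has the form $\SS=\{\tta_\ii\mid \ii\in\II\}$ for an interval $\II\subseteq\ZZZZ$, with relations $\tta_{\ii-1}=\tta_\ii\,\CC(\tta_\ii)$ for each non-minimal~$\ii$. The next step is to show that the preponderant $\ss$ must be $\tta_{\min\II}$ (call it $\tta_1$). Suppose instead $\ss=\tta_\kk$ with $\kk-1\in\II$. Then $\tta_{\kk-1}$ lies in $\SS\setminus\{\ss\}\subseteq\MM_\ss$, and preponderance with $\gg=\tta_{\kk-1},\hh=1$ gives $\tta_{\kk-1}\dive\tta_\kk$. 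The triangular relation gives the reverse divisibility $\tta_\kk\dive\tta_{\kk-1}$, so mutual left-divisibility (together with the absence of nontrivial invertibles in a right-$O$-type monoid) forces $\tta_{\kk-1}=\tta_\kk$ in~$\MM$. Left-cancelling $\tta_\kk$ in the relation then yields $\CC(\tta_\kk)=1$ in~$\MM$, which is impossible since $\CC(\tta_\kk)$ is a nonempty word of nontrivial generators in a monoid where $1$ is the only invertible element.

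Knowing now that $\ss=\tta_1$, I would rule out $\tta_1\in\MM_\ss$: otherwise $\tta_1^2\in\MM_\ss$ and preponderance gives $\tta_1^2\dive\tta_1$, combined with the trivial $\tta_1\dive\tta_1^2$ this yields $\tta_1=\tta_1^2$, whence $\tta_1=1$ by left-cancellation, a contradiction. Now using $|\SS|\ge3$, pick two distinct further generators $\tta_2,\tta_3\in\MM_\ss$. Applying preponderance with $\gg=\tta_2 h$ for $h\in\MM_\ss$ and $\hh=1$, using $\tta_1=\tta_2\,\CC(\tta_2)$ and left-cancelling~$\tta_2$, one sees that every $h\in\MM_\ss$ left-divides $\CC(\tta_2)$. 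In particular, since $\tta_1\notin\MM_\ss$ but $\tta_1=\tta_2\,\CC(\tta_2)$, the word $\CC(\tta_2)$ must contain~$\tta_1$ as a letter (otherwise $\CC(\tta_2)\in\MM_\ss$ and then $\tta_1\in\MM_\ss$). Writing $\CC(\tta_2)=A\,\tta_1\,B$ with $A\in(\SS\setminus\{\tta_1\})^*$, the defining relation becomes $\tta_1=\tta_2A\,\tta_1\,B$ in~$\MM$.

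The contradiction is then extracted by applying preponderance a second time using the third generator: with $\gg=\tta_3\tta_2A\in\MM_\ss$ and an appropriate $\hh\in\MM_\ss$, one gets $\tta_3\tta_2A\dive\hh\,\tta_1$, and substituting $\tta_2=\tta_3\,\CC(\tta_3)$ and left-cancelling $\tta_3$ repeatedly forces a divisibility relation among $\CC(\tta_3)$, $A$ and $\tta_1$ that, when combined with $\tta_1=\tta_2A\,\tta_1\,B$ and a comparison in the linear order $\dive$, collapses to an equality $\tta_1=\tta_\jj$ for some $\jj\ge2$, contradicting the distinctness of generators. The hard part is precisely this final collapse: since a right-$O$-type monoid need not be right-cancellative, one cannot pass to the group of fractions or cancel $\tta_1$ on the right, and the whole argument must be carried out at the monoid level, relying exclusively on left-cancellation, the linearity of $\dive$, and the fact that no nonempty product of nontrivial generators equals~$1$.
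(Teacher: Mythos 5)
Your setup is sound and runs parallel to the paper's: Lemma~\ref{L:Enum} pins down the form of the presentation, the preponderant generator must be the top generator~$\tta_1$, the element~$\tta_1$ cannot lie in the submonoid generated by the other generators, and hence $\tta_1$ must occur as a letter of~$\CC(\tta_2)$. The gap is the final step, which you acknowledge is ``the hard part'' and leave unexecuted --- and the strategy you sketch for it cannot succeed. Every divisibility and equality you derive (the preponderance instances, the relation $\tta_1 = \tta_2 A \tta_1 B$, comparisons in the linear order~$\dive$) is a \emph{true} statement about~$\MM$, which by hypothesis genuinely is of right-$O$-type; these facts are mutually consistent, so no collapse $\tta_1 = \tta_\jj$ can follow from them. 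Concretely, in~$\BDD4$ the generator~$\ss_1$ is preponderant, triangular relations $\ss_1 = \ss_2\CC(\ss_2)$ and $\ss_2 = \ss_3\CC(\ss_3)$ do hold for suitable words, the monoid is of right-$O$-type, and the generators are nevertheless distinct; any purported derivation of $\tta_1 = \tta_\jj$ from such data alone would therefore prove too much.

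The missing ingredient is that the hypothesis is not merely that the triangular relations \emph{hold} in~$\MM$, but that they \emph{present}~$\MM$, i.e., that every equality between $\SS$-words valid in~$\MM$ is a consequence of~$\RR$. The paper exploits this through a syntactic induction on $\RR$-derivations (the Claim): writing $\tta_2\CC(\tta_2) = \uu_0\tta_1\vv_0$ with no~$\tta_1$ in~$\uu_0$, every $\SS$-word $\eqpR$-equivalent to a word beginning with~$\tta_1$ must contain the letter~$\tta_1$, and its prefix up to the first occurrence of~$\tta_1$ is $\eqpR$-equivalent to~$\uu_0^\rr$ for some~$\rr$. Since $\tta_3 \not= \clp{\uu_0^\rr}$ for every~$\rr$ (because $1 \div \tta_3 \div \tta_2 \dive \clp{\uu_0}$), no word beginning with~$\tta_3\tta_1$ can be equivalent to a word beginning with~$\tta_1$, so $\tta_1$ and $\tta_3\tta_1$ admit no common right-multiple --- contradicting right-$O$-type. (This also explains why the paper first checks that $\tta_1$ occurs in no~$\CC(\tta_\ii)$ for $\ii \ge 3$: otherwise a derivation step applied inside the $\tta_1$-free prefix could create a new first occurrence of~$\tta_1$ and break the induction.) Your argument never invokes this completeness of the presentation, so it cannot reach the contradiction; to repair it you would need to reintroduce an analysis of derivations along these lines.
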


\begin{proof}
We assume that $\MM$ admits a right-triangular presentation~$\Pres\SS\RR$ and shall derive a contradiction by exhibiting two elements of~$\MM$ that cannot admit a common right-multiple. 

As $\MM$ is of right-$O$-type, owing to Lemma~\ref{L:Enum}, we can enumerate~$\SS$ as $\{\tta_\ii \mid \ii \in \II\}$ so that all relations in~$\RR$ have the form $\tta_{\ii-1} = \tta_\ii \CC(\tta_\ii)$. Assume that $\tta_\ii$ is preponderant in~$\SS$. Then $\ii$ must be minimal in~$\II$ as $\tta_\jj \div \tta_\ii$ holds for every~$\jj \not= \ii$. So we may assume $\II = \{1, 2, ...\}$ (finite or infinite), and that $\tta_1$ is preponderant in~$\SS$.

As $\tta_1$ is preponderant in~$\SS$, it may occur in no word~$\CC(\tta_\ii)$ with $\ii \ge 3$ for, otherwise, writing $\CC(\tta_\ii) = \uu \tta_1 \vv$ with no $\tta_1$ in~$\uu$, applying the definition of preponderance with $\gg = \tta_{\ii-1}^2$ and $\hh = \clp{\tta_\ii \uu}$ would lead to the contradiction
$$\tta_{\ii-1} \div \tta_{\ii-1}^2 \dive \clp{\tta_\ii \uu \tta_1} \dive \clp{\tta_\ii \CC(\tta_\ii)} = \tta_{\ii-1}.$$
On the other hand, $\tta_1$ must occur in~$\CC(\tta_2)$ for, otherwise, we would obtain similarly the contradiction $\clp{\tta_2 \CC(\tta_2)} \div \clp{\tta_2 \CC(\tta_2) \tta_2} \dive \tta_1 = \clp{\tta_2 \CC(\tta_2)}$. Write $\tta_2\CC(\tta_2) = \uu_0 \tta_1 \vv_0$ with no~$\tta_1$ in~$\uu_0$. 

\begin{claim}
Assume that $\ww$ is an $\SS$-word that is $\eqpR$-equivalent to a word beginning with~$\tta_1$. Then $\ww$ contains at least one letter~$\tta_1$ and, if $\uu$ is the initial fragment of~$\ww$ that goes up to the first letter~$\tta_1$, there exists~$\rr \ge 0$ satisfying $\uu \eqpR \uu_0^\rr$.
\end{claim}

We prove the claim using induction on the combinatorial distance~$\nn$ of~$\ww$ to a word beginning with~$\tta_1$, \ie, on the length of an $\RR$-derivation from~$\ww$ to such a word. For $\nn = 0$, \ie, if $\ww$ begins with~$\tta_1$, the word~$\uu$ is empty, and we have $\uu = \ew = \nobreak \uu_0^0$. Assume $\nn > 0$. Let $\ww'$ be a word obtained from~$\ww$ by applying one relation of~$\RR$ that lies at distance $\nn-1$ from a word beginning with~$\tta_1$. By induction hypothesis, $\ww'$ contains at least one letter~$\tta_1$, and we have $\ww' = \uu' \tta_1 \vv'$ with no~$\tta_1$ in~$\uu'$ and $\uu' \eqpR \uu_0^{\rr'}$ for some~$\rr'$. We consider the various ways $\ww$ can be obtained from~$\ww'$. First, if one relation of~$\RR$ is applied inside~$\vv'$, we have $\ww = \uu' \tta_1 \vv$ with $\vv \eqpR \vv'$ and the result is clear with $\uu = \uu'$ and $\rr = \rr'$. Next, assume that the distinguished letter~$\tta_1$ is involved. By hypothesis, $\NN(\tta_1)$ is not defined, so there is no relation $\ss = \tta_1 \CC(\tta_1)$ in~$\RR$. On the other hand, $\uu'$ contains no~$\tta_1$ and, therefore, $\tta_1$ occurs in no relation $\ss = ...$ for $\ss$ occurring in~$\uu'$. So the only ways $\tta_1$ may be involved is either $\tta_1$ being replaced with~$\tta_2 \CC(\tta_2)$, or $\tta_2 \CC(\tta_2)$ (which contains at least one~$\tta_1$) being replaced with~$\tta_1$. In the first case, we obtain $\uu = \uu' \uu_0 \tta_1 \vv_0 \vv'$, which shows that $\ww$ contains a letter~$\tta_1$ and gives $\uu = \uu' \uu_0$, whence $\uu \eqpR \uu_0^{\rr' + 1}$, the expected result with $\rr = \rr'+1$. In the second case, there must exist decompositions $\uu' = \uu \uu_0$ and $\vv' = \vv_0 \vv$ so that we have $\ww' = \uu \uu_0 \tta_1 \vv_0 \vv$ and $\ww = \uu \tta_1 \vv$. Again $\ww$ contains~$\tta_1$, and we find now $\uu\uu_0 \eqpR \uu' \eqpR \uu_0^{\rr'}$, whence $\uu \eqpR \uu_0^{\rr-1}$ because, by assumption, $\MM$ is right-cancellative. This is again the expected result, this time with $\rr = \rr'-1$. Finally, it remains the case when one relation of~$\RR$ is applied inside~$\uu'$. In this case, we obtain $\ww = \uu \tta_1 \vv'$ with $\uu \eqpR \uu'$, whence $\uu \eqpR \uu' \eqpR \uu_0^{\rr'}$, and the result is clear with $\rr = \rr'$. So the proof of the claim is complete.

We shall now easily obtain a contradiction. Indeed, by construction, the word~$\uu_0$ begins with the letter~$\tta_2$, so $\tta_2 \dive \clp{\uu_0}$ holds. By assumption, $\tta_2$ and~$\tta_3$ are distinct, so $\CC(\tta_3)$ is nonempty, and we obtain $1 \div \tta_3 \div \tta_3 \clp{\CC(\tta_3)} = \tta_2 \dive \clp{\uu_0}$, so that $\tta_3 = \clp{\uu_0^\rr}$ fails for every~$\rr$. Then the above claim implies that no $\SS$-word beginning with~$\tta_3\tta_1$ may be $\eqpR$-equivalent to an $\SS$-word beginning with~$\tta_1$. In other words, the elements~$\tta_1$ and~$\tta_3 \tta_1$ cannot admit a common right-multiple in~$\MM$, contrary to the assumption that $\MM$ is of right-$O$-type.
\end{proof}

Proposition~\ref{P:NoPres} prevents a number of monoids of right-$O$-type from admitting a right-triangular presentation. 

\begin{coro}
Assume that $\MM$ is a monoid of right-$O$-type that is generated by~$\tta, \ttb, \ttc$ with $\tta \mult \ttb \mult \ttc$ and $\ttb, \ttc$ satisfying some relation $\ttb = \ttc \vv$ with no~$\tta$ in~$\vv$. Then, unless $\MM$ is generated by~$\ttb$ and~$\ttc$, there is no way to complete $\ttb = \ttc\vv$ with a relation $\tta = \ttb \uu$ so as to obtain a presentation of~$\MM$.
\end{coro}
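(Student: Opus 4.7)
The plan is to reduce the corollary directly to the argument behind Proposition~\ref{P:NoPres}, because the hypotheses supply, without needing preponderance, exactly the two structural properties on which that argument depends.

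First, I would suppose for contradiction that $\Pres\SS\RR$ with $\SS = \{\tta,\ttb,\ttc\}$ and $\RR = \{\tta = \ttb\uu,\ \ttb = \ttc\vv\}$ is a presentation of $\MM$, while $\MM$ is not generated by $\ttb,\ttc$. I would first observe that the letter $\tta$ must appear in $\uu$: otherwise $\tta = \ttb\uu$ would express $\tta$ as a word in $\{\ttb,\ttc\}$, forcing $\MM = \langle\ttb,\ttc\rangle$ and contradicting the hypothesis. Accordingly, I would define $\uu_0$ to be the prefix of $\ttb\uu$ going up to but not including its first occurrence of $\tta$, so that $\ttb\uu = \uu_0\tta\vv_0$ for some $\vv_0$, with $\uu_0$ itself containing no $\tta$.

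Next, I would transport verbatim the central inductive claim from the proof of Proposition~\ref{P:NoPres}: every $\SS$-word $\ww$ that is $\eqpR$-equivalent to a word beginning with $\tta$ must itself contain $\tta$, and, writing $\uu'$ for the prefix of $\ww$ up to its first $\tta$, one has $\uu' \eqpR \uu_0^\rr$ for some $\rr \ge 0$. The induction there rests on exactly three facts: there is no relation in $\RR$ with $\tta$ on the left (true since $\NN(\tta)$ is undefined); $\tta$ does not occur in $\CC(\ttc) = \vv$ (the hypothesis of the corollary); and $\tta$ does occur in $\CC(\ttb) = \uu$ (just established). These are precisely the facts that preponderance was used to extract in Proposition~\ref{P:NoPres}, so the case analysis in the induction transfers without change. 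Verifying case by case that each type of relation application (inside $\uu'$, inside $\vv'$, or straddling the distinguished~$\tta$) still preserves the claim is the step I expect to require the most care, although it is routine given the template.

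Finally, I would close the argument with a common right-multiple contradiction. Since $\uu_0$ begins with $\ttb$, one has $\ttb \dive \clp{\uu_0}$ in $\MM$. Since $\ttb \ne \ttc$, the relation $\ttb = \ttc\vv$ forces $\vv$ to be nonempty, so $\ttc \div \ttb \dive \clp{\uu_0^\rr}$ strictly for every $\rr \ge 1$; combined with the trivial case $\clp{\uu_0^0} = 1 \ne \ttc$, this yields $\ttc \ne \clp{\uu_0^\rr}$ for every $\rr \ge 0$. Applying the claim to $\ww = \ttc\tta$, whose prefix up to its first $\tta$ is just the letter $\ttc$, one concludes that $\ttc\tta$ is not $\eqpR$-equivalent to any word beginning with $\tta$. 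Consequently the elements $\tta$ and $\ttc\tta$ of $\MM$ admit no common right-multiple, contradicting the right-$O$-type hypothesis, under which any two elements are comparable for left-divisibility and therefore share a right-multiple.
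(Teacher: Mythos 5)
Your proof is correct and ends at the same contradiction as the paper's, but the two arguments are packaged differently. The paper's own proof is a two-step reduction: it disposes of the case $\tta\notin\uu$ exactly as you do, and then, when $\tta$ occurs in $\uu$, it notes that since $\tta$ is absent from the relation $\ttb=\ttc\vv$, no word containing $\tta$ can be $\eqpR$-equivalent to a word without $\tta$; hence, for $\gg,\hh$ in the submonoid generated by $\ttb$ and $\ttc$, the relation $\hh\tta\dive\gg$ is impossible, and the right-$O$-type dichotomy forces $\gg\dive\hh\tta$, \ie, $\tta$ is preponderant in $\{\tta,\ttb,\ttc\}$. Proposition~\ref{P:NoPres} is then invoked as a black box. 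You instead bypass preponderance and inline the inductive Claim from the proof of that proposition, correctly observing that the induction only uses the three syntactic facts you list (no relation whose right-hand side begins with $\tta$, $\tta\notin\vv$, $\tta\in\uu$) -- which is exactly what preponderance is used to extract there. Your route is self-contained but redoes the case analysis; the paper's route buys brevity by converting the syntactic hypothesis into the hypothesis of the already-proved proposition. One small point of care at the end: the Claim should be applied not to the single word $\ttc\tta$ but to an arbitrary word $\ttc\tta\ww'$ witnessing a putative common right-multiple of $\tta$ and $\ttc\tta$; since any such word still has prefix $\ttc$ before its first letter $\tta$, the forbidden conclusion $\ttc\eqpR\clp{\uu_0}^\rr$ is still reached, so your argument goes through.
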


\begin{proof}
For a contradiction, assume that $(\tta, \ttb, \ttc ; \tta = \ttb \uu, \ttb = \ttc \vv)$ is a presentation of~$\MM$. If there is no~$\tta$ in~$\uu$, the assumption that $\tta = \ttb \uu$ is valid in~$\MM$ implies that $\tta$ belongs to the submonoid generated by~$\ttb$ and~$\ttc$, so $\MM$ must be generated by~$\ttb$ and~$\ttc$.

Assume now that there is at least one~$\tta$ in~$\uu$. As $\tta$ does not occur in $\ttb = \ttc \vv$, a word containing~$\tta$ cannot be equivalent to a word not containing~$\tta$. This implies that $\tta$ is preponderant in~$\{\tta, \ttb, \ttc\}$. Indeed, assume that $\gg, \hh$ belong to the submonoid of~$\MM$ generated by~$\ttb$ and~$\ttc$. By the above remark, $\hh \tta \gg' = \gg$ is impossible, hence so is $\hh \tta \dive \gg$. As, by assumption, $\MM$ is of right-$O$-type, we deduce $\gg \dive \hh \tta$. Then Proposition~\ref{P:NoPres} gives the result.
\end{proof}

So, for instance, no right-triangular presentation made of $\ttb = \ttc \ttb \ttc$ (Klein bottle relation) or $\ttb = \ttc \ttb^2 \ttc$ (Dubrovina--Dubrovin braid relation) plus a relation of the form $\tta = \ttb ...$ may define a monoid of right-$O$-type. In the case of braids, we obtain the following general result.

\begin{coro}
\label{C:DD}
Let $\BDD\nn$ be the submonoid of the braid group~$\BB_\nn$ generated by $\ss_1 =\nobreak \sig1 \pdots \sig{\nn-1}$, $\ss_2 = (\sig2 \pdots \sig{\nn-1})\inv$, $\ss_3 = \sig3 \pdots \sig{\nn-1}$, ..., $\ss_{\nn-1} = \sigg{\nn-1}{(-1)^\nn}$. Then $\BDD\nn$ is a monoid of $O$-type that admits no right-triangular presentation based on $\{\ss_1 \wdots \ss_{\nn-1}\}$ for $\nn \ge 4$.
\end{coro}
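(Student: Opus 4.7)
The plan is to apply Proposition~\ref{P:NoPres}. By the Dubrovina--Dubrovin theorem~\cite{DuD}, the set $\BDD\nn {\setminus} \{1\}$ is the positive cone of a left-invariant ordering on~$\BB_\nn$, so by Lemma~\ref{L:Cone} the monoid~$\BDD\nn$ is of $O$-type. For $\nn \ge 4$, the set $\{\ss_1 \wdots \ss_{\nn-1}\}$ has at least three elements, hence it remains to produce a preponderant generator.

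I claim that $\ss_1$ is preponderant. Since each~$\ss_\ii$ with $\ii \ge 2$ is by definition a word in $\sig2^{\pm 1} \wdots \sig{\nn-1}^{\pm 1}$, the submonoid $\MM' := \langle \ss_2 \wdots \ss_{\nn-1}\rangle^+$ lies inside the subgroup $\BB_{\nn-1} := \langle\sig2 \wdots \sig{\nn-1}\rangle$ of~$\BB_\nn$. To verify that $\gg \dive \hh \ss_1$ holds in~$\BDD\nn$ for all $\gg, \hh \in \MM'$, it suffices, by the left-invariance of the DD ordering, to prove
\[
    \gamma \ss_1 \in \BDD\nn \qquad \text{for every } \gamma \in \BB_{\nn-1},
\]
since $\gg \dive \hh \ss_1$ is equivalent to $\gg\inv \hh \ss_1 \in \BDD\nn$ and $\gg\inv\hh$ belongs to~$\BB_{\nn-1}$.

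To verify the displayed claim, observe that $\gamma \ss_1 = \gamma \sig1 \sig2 \pdots \sig{\nn-1}$ is $\sig1$-positive in the sense of Dehornoy: since $\gamma$ involves only the generators $\sig2 \wdots \sig{\nn-1}$, the unique occurrence of~$\sig1$ in the word $\gamma \sig1 \sig2 \pdots \sig{\nn-1}$ appears positively. A central feature of the Dubrovina--Dubrovin construction~\cite{DuD} is precisely that the positive cone $\BDD\nn$ contains every $\sig1$-positive braid; this is the sense in which $\ss_1$ extends the DD positive cone of~$\BB_{\nn-1}$ (recovered, up to signs, by $\langle \ss_2 \wdots \ss_{\nn-1}\rangle^+$) hierarchically upwards. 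Hence $\gamma \ss_1 \in \BDD\nn$, completing the verification of preponderance.

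With $\ss_1$ preponderant in a generating set of size at least three, Proposition~\ref{P:NoPres} yields the conclusion. The chief technical step is the assertion that every $\sig1$-positive braid lies in~$\BDD\nn$; this is either extracted directly from~\cite{DuD}, or verified by induction on~$\nn$ using the identity $\sig1 = \ss_1 \ss_2$ (which follows at once from $\ss_1 \ss_2 = \sig1 \sig2 \pdots \sig{\nn-1} \cdot (\sig2 \pdots \sig{\nn-1})\inv = \sig1$) together with a careful rewriting of $\sig1$-positive subwords as positive products in the $\ss_\ii$'s, using the DD structure of~$\BDD{\nn-1}$ supplied by induction.
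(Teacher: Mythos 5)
Your proof is correct and follows the same skeleton as the paper's: reduce to Proposition~\ref{P:NoPres} by exhibiting $\ss_1$ as a preponderant element of the generating set. The only divergence is in how preponderance is justified. You argue \emph{positively}: $\gg\inv\hh\,\ss_1$ is a $\sig1$-positive braid, and the Dubrovina--Dubrovin characterization (every braid that is $\sig\ii$-positive for some odd~$\ii$ lies in~$\BDD\nn$ --- restated in the paper's proof of Proposition~\ref{P:DD}) puts it in the cone; this invokes the full comparison property of the braid ordering. The paper argues \emph{negatively}: a word containing a~$\sig1$ and no~$\siginv1$ cannot represent an element of $\langle\sig2 \pdots \sig{\nn-1}\rangle$ (the acyclicity property from~\cite{Dhr}), so $\hh\ss_1 \dive \gg$ is impossible, and the linearity of~$\dive$ in the $O$-type monoid~$\BDD\nn$ then forces $\gg \dive \hh\ss_1$. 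The paper's route thus consumes a slightly weaker braid-theoretic input, while yours is more direct once the Dubrovina--Dubrovin description of the cone is granted; both are legitimate. Your closing suggestion of an induction on~$\nn$ ``rewriting $\sig1$-positive subwords'' is vague and unnecessary --- the citation of~\cite{DuD} for the cone characterization is all that is needed and is how the paper itself proceeds in Proposition~\ref{P:DD}.
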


\begin{proof}
That $\BDD\nn$ is of $O$-type was established by Dubrovina--Dubrovin in~\cite{DuD}. Now, as a braid that admits an expression containing at least one~$\sig1$ and no~$\siginv1$ cannot admit an expression with no~$\sigg1{\pm1}$~\cite{Dhr}, the generator~$\ss_1$ is preponderant in~$\{\ss_1 \wdots \ss_{\nn-1}\}$. Proposition~\ref{P:NoPres} implies that $\BDD\nn$ admits no triangular presentation based on $\{\ss_1 \wdots \ss_{\nn-1}\}$ for $\nn \ge\nobreak 4$.
\end{proof}

One can indeed convert the standard presentation of the braid group~$B_\nn$ into a presentation in terms of the generators~$\ss_1 \wdots \ss_{\nn-1}$ of Corollary~\ref{C:DD}. For instance, writing $\tta, \ttb,...$ for $\ss_1, \ss_2, ...$, one can check that $\BDD4$ admits the presentation
\begin{equation}
\label{E:Braid}
\Pres{\tta, \ttb, \ttc}{\tta = \ttb^2\tta^2\ttb\tta\ttb\tta^2\ttb^2, \ttb = \ttc\ttb^2\ttc, \tta\ttb\ttc = \ttc\tta\ttb},
\end{equation}
a triangular presentation augmented with a third, additional relation. But the triangular presentation made of the first two relations in~\eqref{E:Braid} is not a presentation of~$\BDD4$, nor of any monoid of $O$-type either.


\section{Further questions}
\label{S:Ques}

Although triangular presentations may seem to be extremely particular, it turns out that a number of monoids of $O$-type with such presentations exist---much more than was expected first. So the main question is to further explore the range of the approach and understand which ordered groups are eligible. In particular, we started here from a monoid viewpoint and did not address the question of starting from an (ordered) group and possibly finding a relevant monoid with a triangular presentation.
We now mention a few more specific problems.


\subsection*{Two-generator monoids of $O$-type}

Whereas describing all monoids of $O$-type is certainly out of reach, the particular case of two-generator monoids seems more accessible, and several natural questions arise.

First, we saw in Section~\ref{S:Limits} that some monoids of $O$-type admit no triangular presentation, but the argument of Proposition~\ref{P:NoPres} requires the existence of at least three generators.

\begin{ques}
\label{Q:NoPres}
Does every two-generator monoid of right-$O$-type admit a right-triangular presentation?
\end{ques} 

If $\MM$ is a monoid of right-$O$-type generated by~$\tta \mult \ttb$, some triangular relation $\tta = \ttb \ww$ is satisfied in~$\MM$. However, the word~$\ww$ is not unique and Question~\ref{Q:NoPres} asks in particular for a preferred choice of~$\ww$.
Here is a negative observation about what could be a natural approach. Let $(\ww_\nn)_{\nn \ge 1}$ be the shortlex-enumeration of $\{\tta, \ttb\}^*$. Using induction on~$\nn$, we can construct a triangular presentation $\Pres{\letter{\ww_1} \wdots \letter{\ww_\nn}}{\RR_\nn}$ that specifies the $\dive$-adjacent elements in $\{\clp{\ww_\ii} \mid \ii \le \nn\}$ in~$\MM$: assuming that the evaluation~$\clp{\ww_\nn}$ of~$\ww_\nn$ lies between~$\clp{\ww_\ii}$ and~$\clp{\ww_\jj}$ in $\{\clp{\ww_1} \wdots \clp{\ww_{\nn}}\}$, we obtain~$\RR_\nn$ from~$\RR_{\nn-1}$ by removing the former relation $\letter{\ww_\jj} = \letter{\ww_\ii} \cdot ...$ and adding two new relations $\letter{\ww_\nn} = \letter{\ww_\ii} \cdot \uu$, $\letter{\ww_\jj} = \letter{\ww_\nn} \cdot \vv$ with $\uu, \vv$ (say) shortlex-minimal. We could expect that, at least if a central element exists, the process converges and leads to a triangular presentation of~$\MM$. Let $\MM = \Mon{\tta, \ttb}{\tta = \ttb \tta \ttb}$. Then $\RR_3$ specifies $\letter{\ew} < \letter\ttb < \letter\tta$ and contains the relation $\letter{\tta} = \letter{\ttb} \cdot \tta\ttb$, whereas $\RR_4$ corresponds to inserting $\letter{\ttb^2}$ between~$\letter{\ttb}$ and~$\letter{\tta}$, so we remove $\letter{\tta} = \letter{\ttb} \cdot \tta\ttb$ and add $\letter{\ttb^2} = \letter{\ttb} \cdot \ttb$ and $\letter{\tta} = \letter{\ttb^2} \cdot \tta\ttb^2$. Now we observed in Example~\ref{X:Noeth} that $\letter{\tta} = \letter{\ttb} \cdot \tta\ttb$ is not a consequence of the latter two relations, so the process cannot converge to a presentation of~$\MM$, and Question~\ref{Q:NoPres} remains open.

Next, all identified monoids of \hbox{$O$-type} with two generators share some properties. 

\begin{ques}
\label{Q:Palindrome}
Is every two-generator triangular presentation defining a monoid of $O$-type necessarily palindromic, \ie,  the relation is invariant under reversing the order of letters? Is the right-ceiling necessarily equal to~$\linfty\ss$, where $\ss$ is the top generator?
\end{ques}

At the moment, we have no counter-example. 

More generally, we can wonder whether a complete description of all monoids of $O$-type that admit a two-generator triangular presentation could be possible. When stated in terms of two-generator presentation, Proposition~\ref{P:Fam5} may appear promising, but a complete solution still seems out of reach. The general form of the relation in a two-generator triangular presentation is $\tta = \ttb \tta^{\ee_1} \ttb \tta^{\ee_2} \ttb ...$. Proposition~\ref{P:Fam1} corresponds to the case when all exponents~$\ee_\ii$ are equal, whereas Proposition~\ref{P:Fam5} corresponds to the case when two exponents~$\pp, \rr$ occur with a periodic distribution
$\tta = \ttb (\tta^\rr \ttb)^\ss (\tta^\pp \ttb)
(\tta^\rr \ttb)^\ss (\tta^\pp \ttb)...$\,. Inductively extending Proposition~\ref{P:Fam5} seems doable---the next case would be that of three relations $\tta = \ttb (\tta^\pp \ttb)^\qq$, $\ttb = \ttc (\tta^\rr \ttc)^\ss$, $\ttc = \ttd (\tta^\tt \ttd)^\uu$ involving three exponents~$\pp, \rr, \tt$---even finding methods that cover all cases with two exponents seems problematic. We give below an example of such a monoid that is indeed of $O$-type but enters no identified family.

\begin{exam}
\label{X:Classif}
Let $\MM$ be the monoid defined by $\Pres{\tta, \ttb}{\tta = \ttb\tta\ttb\tta\ttb^2 \tta\ttb^2 \tta\ttb\tta\ttb}$, which is also defined by $\Pres{\tta, \ttb, \ttc}{\tta = \ttb\ttc \tta\ttc\ttb, \ttb = \ttc\tta\ttc\tta\ttc}$. Working with the second presentation and putting $\delta = \ttb^2$, one can establish the formulas 
$$\tta \delta^\nn \cdot \ttc\tta\ttc\ttb\ttc (\ttb\ttc)^{2\nn} = \ttb\delta^\nn \cdot \ttb = \ttc\delta^\nn \cdot \tta\ttc\tta\ttc\ttb = \delta^{\nn+1}.$$
It follows that $\delta$ dominates $\tta, \ttb$, and $\ttc$ in~$\MM$. Hence, by Proposition~\ref{P:DominO}, $\MM$ is of $O$-type. Experiments suggest that the right-ceiling is $\linfty\tta$, so that, by Lemma~\ref{L:CeilingDomin}, $\tta$ should dominate~$\tta, \ttb, \ttc$ as well, but we have no proof.
\end{exam}


\subsection*{Periodicity of the right-ceiling}

Many of the monoids mentioned in Sections~\ref{S:Exp} and~\ref{S:Fam} admit a central or a quasi-central element that is a power of the top generator, and we could wonder whether this is always the case. It is not.

\begin{exam}
\label{X:NoQuasi}
Consider the monoid~$\MM$ of Proposition~\ref{P:Fam5} with $\rr \ge 2$ and $\pp + 1 \!\not\vert\ \rr + 1$. We have seen in the proof that the relation $\ttc \tta^{\kk\mm+\rr} \dive \tta^{\kk\mm+1}$ holds for every~$\kk$. As $\mm \le \rr$ holds by definition, we deduce $\ttc \tta^{(\kk+1)\mm} \dive \tta^{\kk\mm+1}$ for each~$\kk$, which in turns implies $\ttc \tta^\nn \div \tta^\nn$ for every~$\nn$. So $\tta^\nn \dive \ttc \tta^\nn$ is always impossible, and $\tta^\nn$ is not quasi-central for any~$\nn$.
\end{exam}

This however says nothing about quasi-central elements that are not a power of the top generator, and we can raise

\begin{ques}
\label{Q:Quasi}
Does every monoid of $O$-type that admits a triangular presentation contain a quasi-central element?
\end{ques}

A positive answer seems unlikely but, on the other hand, it is uneasy to \textit{a priori} discard the existence of exotic quasi-central elements. In particular, looking at the right-ceiling is irrelevant in general: a quasi-central element~$\Delta$ dominates all generators but, if $\Delta$ admits no expression that is a right-top word, Lemma~\ref{L:CeilingDomin} cannot be used. For instance, for the presentation $\Pres{\tta\ttb}{\tta = \ttb\tta\ttb^3\tta\ttb}$ (row~5 in Table~\ref{T:Exp2}), the right-ceiling $\linfty\tta$ does not discard the existence of the central element~$(\tta\ttb)^3$. Owing to the explicit formulas in the proof of Proposition~\ref{P:Fam5}, we think it might be possible to prove that no nontrivial quasi-central element exists in the case of $\Pres{\tta, \ttb, \ttc}{\tta = \ttb \tta^2 \ttb, \ttb = \ttc \tta^4 \ttc}$.

The situation is also open for dominating elements. As for the top generator to necessarily dominate the other generators, the answer is negative. Indeed, according to Lemma~\ref{L:CeilingDomin}, the top generator~$\ss$ dominates the other generators if and only if the right-ceiling is~$\linfty\ss$. So, to discard a positive answer, it suffices to exhibit one presentation where the right-ceiling has a different form. The following example does, and, in addition, it shows that not only the top generator, but even any power of it need not dominate the other generators.

\begin{exam}
\label{X:NotDomin} 
Let $\MM$ be defined by the presentation
\begin{equation}
\label{E:DD}
\Mon{\tta, \ttb, \ttc}{\tta = \ttb \tta \ttc, \ttb = \ttc \ttb \tta}
\end{equation}
(case $\pp = \qq = \rr = 0$ in row~6 of Table~\ref{T:RecapBis}). One easily checks that $\ttb^2 \tta^2$ is right-quasi-central in~$\MM$, with $\phi(\tta) = \tta \ttb \tta^2 \ttc \tta \ttc^3$, $\phi(\ttb) = \ttb \tta^2 \ttc^2$, and $\phi(\ttc) = \ttc$, and that $\ttb^2\tta^2$ dominates $\{\tta, \ttb, \ttc\}^4$. By Lemma~\ref{L:CeilingDomin}, it follows that the right-ceiling is the periodic word $\linfty(\ttb^2\tta^2)$ and, therefore, $\tta$ cannot dominate~$\ttb$ and~$\ttc$ in~$\MM$. Moreover, we find $\ttb\tta^2 = \tta^3 \cdot \ttb\tta^2\ttc^3$, and $\ttb (\tta^\nn) = (\tta^\nn)^2 \cdot \ttb\tta^2\ttc^{2\nn}\tta^{\nn-2}$ for $\nn \ge 2$. This shows that $\tta^\nn$ does not dominate~$\ttb$ for any~$\nn$. 
\end{exam}

Now, as in the case of quasi-central elements, this says nothing about dominating elements that are not powers of the top generator and leaves the following natural questions open:

\begin{ques}
\label{Q:Periodic}
Does every monoid of $O$-type that admits a triangular presentation based on a set~$\SS$ contain an element that dominates~$\SS$?
Is the right-ceiling necessarily periodic in a monoid of $O$-type that admits a finite triangular presentation?
\end{ques} 

By Lemma~\ref{L:CeilingDomin}, a positive answer to the second question means the existence of an element of~$\SS^\nn$ that dominates all of~$\SS^\nn$ for some~$\nn \ge 1$, hence \textit{a fortiori}~$\SS$, so it implies a positive answer to the first question. Owing to the examples known so far, we conjecture a positive answer to both questions, but we have no clue toward a proof. 

We add two more related examples. The first one shows that a power of the top generator may dominate the other generators although the top generator does not.

\begin{exam}
\label{X:NonDomin}
Let $\MM$ be defined by $\Pres{\tta, \ttb, \ttc}{\tta = \ttb\ttc\ttb, \ttb = \ttc\ttb\tta\ttb\ttc}$. Then~$\MM$ is generated by~$\ttb$ and~$\ttc$, with the presentation $\Pres{\ttb, \ttc}{\ttb = \ttc\ttb^2\ttc\ttb^2\ttc}$. So, by Proposition~\ref{P:Fam1}, $\ttb^3$ is central and $\MM$ is of $O$-type. Now $\tta^3 = \ttb^3$ happens to hold and, therefore, $\tta^3$ dominates $\ttb$ and~$\ttc$. On the other hand, we find $\ttb\tta = \tta^2 \cdot \ttb\ttc^2\ttb$, whence $\ttb\tta \not\dive \tta^2$, and $\tta$ does not dominate~$\ttb$. 
\end{exam} 

The second one shows that the period of the right-ceiling can be arbitrarily large. 

\begin{exam}
\label{X:Periodic}
Let $\MM_\nn$ be defined by the cycling presentation
$$\Pres{\tta_1 \wdots \tta_\nn}{\tta_1 = \tta_2 \pdots \tta_\nn, \tta_2 = \tta_3 \pdots \tta_\nn \tta_1 \wdots \tta_{\nn-1} = \tta_\nn \tta_1 \pdots \tta_{\nn-1}}.$$
Then $\tta_1^2$ is central in~$\MM_\nn$, and the right-ceiling is $\linfty(\tta_{\nn-1} \pdots \tta_1)$, hence it has period $\nn-1$. 
\end{exam}

A similar behaviour can be found with three generators in the family of row~6 in Table~\ref{T:RecapBis}.


\subsection*{Complexity of reversing}

In the context of a presentation that is complete for right-reversing, the existence of common right-multiples implies the termination of every right-reversing. However, the argument gives no complexity bound, at least no polynomial bound.

\begin{exam}
\label{X:Expon}
Consider the presentation $\Pres{\tta, \ttb}{\tta = \ttb\tta\ttb^{\rr+1}}$ of the Baumslag-Solitar group $\BS(\rr+1,-1)$. For every~$\nn$, the signed word $\tta^{-\nn}\ttb\tta^\nn$ reverses to the word~$\ttb^{(\rr+1)^\nn}$, whose length is exponential in~$\nn$. As every reversing step adds at most $\rr$~letters, the number of steps needed to reverse the length~$2\nn+1$ word $\tta^{-\nn}\ttb\tta^\nn$ must be exponential  in~$\nn$.
 \end{exam}
 
An exponential complexity may occur whenever there exists a right-quasi-central element such that the associated endomorphism duplicates some letter. Now, in all examples, the monoid is not of left-$O$-type: this is the case in Example~\ref{X:Expon} or, for instance, for $\Pres{\tta, \ttb, \ttc}{\tta = \ttb\tta\ttc\ttb, \ttb = \ttc\tta\ttc}$, where~$\tta^2$ is right-quasi-central with $\phi(\ttc) = (\ttc\ttb)^2$ but the opposite presentation leads to the non-terminating reversing $\uu \Rev{12} \vv\inv \uu \vv$ for $\uu = \ttb\inv \ttc^2\tta\ttb$ and $\vv = \ttc^2$. By contrast, such behaviours could not be found for monoids of $O$-type.

\begin{ques}
\label{Q:Complexity}
If a triangular presentation defines a monoid of $O$-type, does the associated reversing necessarily have a polynomial (quadratic?) complexity?
\end{ques}

Note that the existence of a quasi-central element that is not central need not imply an exponential complexity. For instance, for the presentation $\Pres{\tta, \ttb}{\tta = \ttb\tta^2\ttb\tta\ttb\tta^2\ttb}$ (row~2 of Table~\ref{T:RecapBis}) with $\Delta = (\tta^2\ttb)^2$ quasi-central, we have $\phi(\tta) = \tta (\ttb\tta^2\ttb)^2$, and the shortest expression of~$\phi(\tta)$ is longer by 8~letters than that of~$\tta$. However, $\phi(\tta^2) = \tta^2$ holds  in the monoid, and reversing $\Delta^{-\nn} \tta \Delta^\nn$ leads to a word of length linear in~$\nn$ 
in a quadratic number of steps.
The monoid of Example~\ref{X:Classif} is a good test-case for Question~\ref{Q:Complexity}. It turns out that reversing the length~$2\nn$ word $\tta^{-(\nn-1)}\ttb\inv \tta^\nn$ leads to a word of length~$(2\nn)^2$ in a number of steps that is cubic in~$\nn$: this is compatible with a positive answer to Question~\ref{Q:Complexity}, but discards a uniform quadratic upper bound.


\subsection*{Connection with rewrite systems}

Triangular presentations are simple in many respects, and they could be eligible for alternative approaches, in particular rewrite systems.

\begin{exam}
\label{X:Rewrite}
Consider the presentation $\Pres{\tta, \ttb}{\tta = \ttb \tta \ttb}$ of the Klein bottle group. To obtain Noetherianity, we orientate the relation as $\ttb\tta\ttb \rightarrow \tta$. Then there is a  critical pair as $\ttb \tta \ttb \tta \ttb$ rewrites into~$\tta^2 \ttb$ and~$\ttb\tta^2$. Adding $\ttb\tta^2 \rightarrow \tta^2 \ttb$ yields a system that is locally confluent and Noetherian (the rules diminish the length or replace a word with a word of the same length and lexicographically smaller), hence confluent. Then every word rewrites in finitely many steps into a unique, well defined terminal word, providing a unique normal form for the elements of the monoid.
\end{exam}

The existence of a normal form as above gives an easy solution to the word problem of the monoid, and can in turn be extended to the group. However recognizing divisibility properties is not clear.

\begin{ques}
\label{Q:Rewrite}
Can one use the rewrite system approach to investigate the existence of common multiples in the associated monoid?
\end{ques}

We have no answer. Let us also observe that using (as above) the lexicographical ordering to solve critical pairs need not lead to a Noetherian system in general.

\begin{exam}
Consider the triangular presentation $\Pres{\tta, \ttb, \ttc}{\tta = \ttb\ttc, \ttb = \ttc \ttb^2}$. Starting with the two rules $\ttb \ttc \rightarrow \tta$, $\ttc \ttb^2 \rightarrow \ttb$, a critical pair comes from rewriting $\ttb \ttc \ttb^2$ into~$\tta \ttb^2$ and~$\ttb^2$. Using the lexicographical ordering would lead to adding the rule $\ttb^2  \rightarrow \tta \ttb^2$, which clearly contradicts Noetherianity.
\end{exam}


\subsection*{Isolated points in the case of a direct limit}

Theorems~1 and~\ref{T:Main} are valid in the case of an infinite presentation, thus leading to orderable groups with an explicit positive cone. But the argument showing that the involved ordering is isolated in its space of orderings is valid only when the presentation is finite. However, as observed by C.\,Rivas~\cite{Riv}, a non-finitely generated monoid may give rise to an isolated ordering, so it makes sense to raise

\begin{ques}
\label{Q:Infinite}
If $(\SS, \RR)$ is an infinite triangular presentation defining a monoid of $O$-type, may the associated ordering be isolated in the space~$\LO{\Gr\SS\RR}$?
\end{ques}

In the direction of a positive answer, it would be natural to address Question~\ref{Q:Infinite} in the context of a direct limit of finitely generated monoids. The properties of subword reversing make this situation easy to analyze.

\begin{prop}
\label{P:Limit}
Assume that $\Pres\SS\RR$ is an infinite triangular presentation \begin{equation}
\label{E:PresLimit}
\Pres{\tta_1, \tta_2, ...}{\tta_1 = \tta_2 \ww_2 \tta_2 , \tta_2 = \tta_3 \ww_3 \tta_3, ...}
\end{equation}
with $\ww_\ii$ in~$\{\tta_1 \wdots \tta_\ii\}^*$ and, putting $\SS_\nn = \{\tta_1 \wdots \tta_\nn\}$ and $\RR_\nn = \{\tta_{\ii-1} = \tta_\ii \ww_\ii \tta_\ii \mid \ii \le \nn\}$, that the monoid $\Mon{\SS_\nn}{\RR_\nn}$ is of $O$-type for every~$\nn$ (or, at least, for unboundedly many~$\nn$). Then $\Mon\SS\RR$ is a direct limit of the monoids~$\Mon{\SS_\nn}{\RR_\nn}$, it is of $O$-type, and $\Mon\SS\RR {\setminus} \{1\}$ is the positive cone of a left-invariant ordering on the group~$\Gr\SS\RR$.
\end{prop}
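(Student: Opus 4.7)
The plan is to exploit completeness of right-reversing (Proposition~\ref{P:Complete}) together with a subalphabet-stability property of the presentation~\eqref{E:PresLimit}, reducing everything to the finite case. The key observation is that, because each~$\ww_\ii$ lies in $\{\tta_1 \wdots \tta_\ii\}^*$, every iterated complement $\CC^\kk(\tta_\ii)$ appearing in the right-completion~$\RRh$ only involves letters~$\tta_\jj$ with $\jj \le \ii$. A direct inspection of the reversing rules shows that, for $\ii, \jj \le \nn$ and $\ii \ne \jj$, the pattern $\tta_\ii\inv \tta_\jj$ right-$\RRh$-reverses to~$\CC^{\ii-\jj}(\tta_\ii)$ (if $\ii > \jj$) or to $\CC^{\jj-\ii}(\tta_\jj)\inv$ (if $\ii < \jj$), and in both cases no letter of index greater than~$\max(\ii,\jj)$ is produced. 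Consequently, if $\uu, \vv$ are $\SS_\nn$-words, any $\RRh$-reversing sequence from~$\uu\inv \vv$ stays inside $\SS_\nn$ and coincides step for step with an $\RRh_\nn$-reversing sequence.

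From this stability I would first deduce that the obvious homomorphism $\iota_\nn\colon \Mon{\SS_\nn}{\RR_\nn} \to \Mon\SS\RR$ is injective: if $\uu \eqpR \vv$ in~$\Mon\SS\RR$ for two $\SS_\nn$-words, Proposition~\ref{P:Complete} gives $\uu\inv \vv \revRh \ew$, this reversing sequence lies in~$\SS_\nn$, hence it is an $\RRh_\nn$-reversing, and we obtain $\uu \eqpR[\nn] \vv$. Since every element of~$\Mon\SS\RR$ is represented by a word in some~$\SS_\nn^*$, this identifies $\Mon\SS\RR$ with the direct limit of the $\Mon{\SS_\nn}{\RR_\nn}$ along the injective maps~$\iota_\nn$. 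The same completeness argument, combined with Lemma~\ref{L:Empty}, also shows that left-divisibility between two elements represented by $\SS_\nn$-words is the same whether computed in~$\Mon{\SS_\nn}{\RR_\nn}$ or in~$\Mon\SS\RR$: the terminating reversing of $\uu\inv \vv$ stays in~$\SS_\nn$ and produces a witness complement which belongs to~$\Mon{\SS_\nn}{\RR_\nn}$. The mirror argument applies to left-reversing with~$\RRa$ and right-divisibility.

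Next I would deduce that $\Mon\SS\RR$ is of $O$-type. Given $\gg, \hh \in \Mon\SS\RR$, pick~$\nn$ large enough to carry $\SS_\nn$-representatives of~$\gg$ and~$\hh$ and such that $\Mon{\SS_\nn}{\RR_\nn}$ is of $O$-type (possible since the hypothesis holds for unboundedly many~$\nn$). Then $\gg$ and~$\hh$ are comparable both for~$\dive$ and for~$\multeR$ inside~$\Mon{\SS_\nn}{\RR_\nn}$, and the transfer principle of the previous paragraph carries these comparabilities into~$\Mon\SS\RR$. Cancellativity and the absence of nontrivial invertible elements transfer in the same way: an equality $\kk\gg = \kk\hh$ in~$\Mon\SS\RR$ lives inside some~$\Mon{\SS_\nn}{\RR_\nn}$ via~$\iota_\nn$ (since the latter is injective) and can be cancelled there, and a nontrivial invertible element of~$\Mon\SS\RR$ would together with its inverse descend to some~$\Mon{\SS_\nn}{\RR_\nn}$, contradicting its being of $O$-type.

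Finally, once $\Mon\SS\RR$ is known to be of $O$-type, the last assertion is a direct application of Lemma~\ref{L:Cone} (or the first part of Proposition~\ref{P:Recipe}): $\Mon\SS\RR$ is cancellative and any two of its elements share a common right-multiple, so Ore's theorem embeds it into a group of right-fractions which, by the standard argument, admits the presentation~$\Pres\SS\RR$ and is therefore $\Gr\SS\RR$; the image of~$\Mon\SS\RR{\setminus}\{1\}$ is then the positive cone of a left-invariant ordering on~$\Gr\SS\RR$. The only nontrivial point in the whole argument is the opening lemma about subalphabet stability of $\RRh$-reversing; once it is in place, completeness transports everything from the finite monoids to the direct limit essentially for free, and no new phenomenon occurs.
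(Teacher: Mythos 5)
Your proposal is correct and follows essentially the same route as the paper: the key point in both is that, because each $\ww_\ii$ lies in $\{\tta_1\wdots\tta_\ii\}^*$, an $\RRh$-reversing of $\uu\inv\vv$ for $\SS_\nn$-words never leaves $\SS_\nn$ (equivalently, never invokes a relation of $\RR\setminus\RR_\nn$), so completeness (Proposition~\ref{P:Complete}) identifies $\eqpR$-equivalence of $\SS_\nn$-words in $\Mon\SS\RR$ with equivalence in $\Mon{\SS_\nn}{\RR_\nn}$, making the maps injective and $\Mon\SS\RR$ the union of the $\Mon{\SS_\nn}{\RR_\nn}$. Your additional explicit transfers of cancellativity, invertibility, and divisibility are fine (only the easy direction of the divisibility transfer is actually needed), and the final appeal to Proposition~\ref{P:Recipe} is exactly how the paper concludes.
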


\begin{proof}
Assume $\nn < \mm$. Owing to the assumption about~$\ww_\ii$, the presentations~$\Pres{\SS_\nn}{\RR_\nn}$ and $\Pres{\SS_\mm}{\RR_\mm}$ are well defined and, by definition, they are right-triangular, so that, by Proposition~\ref{P:Complete}, $\Pres{\SS_\nn}{\widehat{\RR_\nn}}$ and $\Pres{\SS_\mm}{\widehat{\RR_\mm}}$ are complete for right-reversing. Now assume that $\uu, \vv$ are $\SS_\nn$-words. Then $\uu$ and~$\vv$ represent the same element in~$\Mon{\SS_\nn}{\RR_\nn}$ (\resp in $\Mon{\SS_\mm}{\RR_\mm}$) if and only if $\uu\inv \vv$ is $\widehat{\RR_\nn}$-reversible (\resp $\widehat{\RR_\mm}$-reversible) to the empty word. By definition of reversing, the relations in~$\RR_\mm\setminus \RR_\nn$ are never involved in the reversing of~$\uu\inv \vv$, so the latter two relations are both equivalent to $\uu\inv \vv$ being $\RR$-reversible to~$\ew$. It follows that the identity on $\SS_\nn$ induces an embedding of~$\Mon{\SS_\nn}{\RR_\nn}$ into~$\Mon{\SS_\mm}{\RR_\mm}$. So $\Mon{\SS_\nn}{\RR_\nn}$ identifies with the submonoid of $\Mon{\SS_\mm}{\RR_\mm}$  generated by $\SS_\nn$ and $\Mon\SS\RR$ is then the direct limit, here the union, of all monoids~$\Mon{\SS_\nn}{\RR_\nn}$.

It follows that $\Mon\SS\RR$ is of $O$-type. Indeed, a direct limit of monoids of right-$O$-type is of right-$O$-type: any two elements of the limit belong to some monoid of the considered direct system, hence are comparable with respect to left-divisibility in that monoid, and therefore in the limit. 
\end{proof}

The interest of Proposition~\ref{P:Limit} is to provide \emph{local} conditions for recognizing a monoid of $O$-type: in order to show that the monoid~$\Mon\SS\RR$ is, say, of right-$O$-type, it is sufficient to consider the finite type monoids~$\Mon{\SS_\nn}{\RR_\nn}$. A typical example is the group
\begin{equation}
\label{E:Infinite}
\Gr{\ttx_1, \ttx_2...}{\ttx_1^2 = \ttx_2^\qq, \ttx_2^2 = \ttx_3^\qq, ...}
\end{equation}
considered in Proposition~\ref{P:Fam4}. For $\qq = 2$, the element~$\Delta = \ttx_1^2$ is central in~$\Mon\SS\RR$, and Theorem~1 implies that $\Mon\SS\RR$ is of $O$-type. By contrast, for $\qq $ odd, the element $\Delta_\nn = \ttx_1^{2^{\nn-2}}$ is central in~$\Mon{\SS_\nn}{\RR_\nn}$, but not in  in~$\Mon{\SS_{\nn+1}}{\RR_{\nn+1}}$. It follows that $\Mon{\SS_\nn}{\RR_\nn}$ is of $O$-type for every~$\nn$, and $\Mon\SS\RR$ is of $O$-type by Proposition~\ref{P:Limit}, but, in this case, no power of~$\ttx_1$ is central in~$\Mon\SS\RR$. The multi-toric groups~\eqref{E:Infinite} are natural test-cases for Question~\ref{Q:Infinite}.


\subsection*{The specific case of braids}

The braid group~$B_3$ is eligible for our current approach, as the submonoid~$\BDD3$ generated by~$\sig1\sig2$ and~$\siginv2$ turns out to be a monoid of $O$-type with the triangular presentation $\Pres{\tta, \ttb}{\tta = \ttb\tta^2\ttb}$. By contrast, we saw in Section~\ref{S:Limits} that, for $\nn \ge 4$, the submonoid~$\BDD\nn$ of~$B_\nn$ generated by the Dubrovina-Dubrovin generators~$\ss_1 \wdots \ss_{\nn-1}$ admits no triangular presentation based on~$\{\ss_1 \wdots \ss_{\nn-1}\}$. This however does not discard the possibility that $\BDD\nn$ admits a triangular presentation based on other generators.

\begin{ques}
\label{Q:Braids}
Does the monoid~$\BDD\nn$ admit a (finite) triangular presentation for $\nn \ge 4$?
\end{ques}

Natural candidates could be the Birman--Ko-Lee band generators~\cite{BKL}. For $1 \le \ii < \jj \le\nobreak \nn$, put $\aa_{\ii, \jj} =  \sig\ii \pdots \sig{\jj-2} \sig{\jj-1} \siginv{\jj-2} \pdots \siginv\ii$, whence in particular $\sig\ii = \aa_{\ii, \ii+1}$. Then the family $(\aa_{\ii, \jj})_{1 \le \ii < \jj \le \nn}$ generates~$B_\nn$, and the submonoid of~$B_\nn$ generated by the~$\aa_{\ii,\jj}$'s, which is known as the dual braid monoid, has many nice properties. Now there exists a simple connection between the monoid~$\BDD\nn$ and the elements~$\aa_{\ii, \jj}$. 

\begin{prop}
\label{P:DD}
Put $\bb_{\ii, \jj} = \aa_{\ii, \jj}^{\parity{\ii+1}}$. Then, for every~$\nn$, the monoid~$\BDD\nn$ is generated by the elements~$\bb_{\ii, \jj}$.
\end{prop}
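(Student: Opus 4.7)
The plan is to establish the two inclusions that together give the equality between the submonoid~$M$ of~$B_\nn$ generated by the $\bb_{\ii,\jj}$'s and the submonoid~$\BDD\nn$ generated by the $\ss_\kk$'s.

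First, I would show $\BDD\nn \subseteq M$ by exhibiting each $\ss_\kk$ as an explicit positive word in the $\bb_{\ii,\jj}$'s:
\begin{equation*}
\ss_\kk = \bb_{\kk,\nn}\,\bb_{\kk,\nn-1}\,\pdots\,\bb_{\kk,\kk+1} \text{ for $\kk$ odd}, \qquad \ss_\kk = \bb_{\kk,\kk+1}\,\bb_{\kk,\kk+2}\,\pdots\,\bb_{\kk,\nn} \text{ for $\kk$ even}.
\end{equation*}
These are verified by induction on the number of factors: substituting the definition $\bb_{\kk,\jj} = \sig\kk\,\sig{\kk+1}\,\pdots\,\sig{\jj-2}\,\sig{\jj-1}^{\parity{\kk+1}}\,\siginv{\jj-2}\,\pdots\,\siginv\kk$ into successive terms and applying the braid relation, the interior letters telescope and leave the prefix $\sig\kk\,\sig{\kk+1}\,\pdots\,\sig{\nn-1}$ of the correct sign, which is exactly~$\ss_\kk$. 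The paper's discussion of~$B_3$ (where $\ss_1\,\ss_2 = \sig1 = \bb_{1,2}$ and $\ss_2\,\ss_1 = \bb_{1,3}$) is the base case.

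Second, to obtain $M \subseteq \BDD\nn$, each $\bb_{\ii,\jj}$ must be expressed as a positive word in the $\ss_\kk$'s. The adjacent cases $\jj = \ii+1$ follow at once: $\bb_{\nn-1,\nn} = \ss_{\nn-1}$ and, for $\ii < \nn-1$, $\bb_{\ii,\ii+1} = \ss_\ii\,\ss_{\ii+1}$ when $\ii$ is odd, $\bb_{\ii,\ii+1} = \ss_{\ii+1}\,\ss_\ii$ when $\ii$ is even, by the telescoping identity $(\sig\ii\,\pdots\,\sig{\nn-1})(\siginv{\nn-1}\,\pdots\,\siginv{\ii+1}) = \sig\ii$. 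The extremal cases $\jj = \nn$ admit the two-letter positive expressions $\bb_{\ii,\nn} = \ss_{\ii+1}\,\ss_\ii$ for $\ii$ odd and $\bb_{\ii,\nn} = \ss_\ii\,\ss_{\ii+1}$ for $\ii$ even, verified by combining the same telescoping with the commutations $\sig\kk\,\sig\ell = \sig\ell\,\sig\kk$ for $|\kk - \ell| \ge 2$ and the braid relation $\sig\jj\,\sig{\jj-1}\,\sig\jj = \sig{\jj-1}\,\sig\jj\,\sig{\jj-1}$. For the intermediate cases $\ii+1 < \jj < \nn$, I would proceed by reverse induction on $\ii$ (equivalently, induction on $\jj - \ii$), using the Birman--Ko--Lee conjugation identity $\aa_{\ii,\jj} = \aa_{\ii,\ii+1}\,\aa_{\ii+1,\jj}\,\aa_{\ii,\ii+1}^{-1}$ in~$B_\nn$: substituting the inductive positive $\ss$-expression for $\bb_{\ii+1,\jj}$ together with the base case for $\bb_{\ii,\ii+1}$, the inverse factor $\aa_{\ii,\ii+1}^{-1}$ is absorbed into the tail of the $\ss$-expression for $\bb_{\ii+1,\jj}$, producing a purely positive $\ss$-word for~$\bb_{\ii,\jj}$.

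Combining the two inclusions, $M = \BDD\nn$. The main obstacle is this intermediate case: absorbing $\aa_{\ii,\ii+1}^{-1}$ into the positive $\ss$-expression for $\bb_{\ii+1,\jj}$ demands a parity-sensitive choice between the two formulas from the first step (increasing versus decreasing in the second index), together with a delicate bookkeeping of the braid relations. A more conceptual alternative would be to invoke the Ito--Navas characterization of $\BDD\nn$ as the set of Dubrovina--Dubrovin-positive braids and to verify DD-positivity of each~$\bb_{\ii,\jj}$ directly, for instance via Dehornoy's handle reduction applied to the standard expression of $\aa_{\ii,\jj}^{\pm 1}$, thereby bypassing the need for explicit $\ss$-formulas.
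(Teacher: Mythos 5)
The first half of your argument---the telescoping identities expressing each $\ss_\kk$ as $\bb_{\kk,\nn}\pdots\bb_{\kk,\kk+1}$ or $\bb_{\kk,\kk+1}\pdots\bb_{\kk,\nn}$ according to parity---is exactly the paper's computation and is correct. The gap is in your primary route for the reverse inclusion, at the intermediate case $\ii+1<\jj<\nn$. Because the signs alternate, $\bb_{\ii+1,\jj}=\aa_{\ii+1,\jj}^{\parity{\ii+2}}$ is the \emph{inverse} of $\aa_{\ii+1,\jj}^{\parity{\ii+1}}$, so substituting your inductive positive $\ss$-expressions into $\aa_{\ii,\jj}=\aa_{\ii,\ii+1}\,\aa_{\ii+1,\jj}\,\aa_{\ii,\ii+1}^{-1}$ yields $\bb_{\ii,\ii+1}\,\bb_{\ii+1,\jj}^{-1}\,\bb_{\ii,\ii+1}^{-1}$ (for odd~$\ii$): one positive factor followed by two negative ones, and nothing gets ``absorbed''. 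Concretely, in $B_4$ with $(\ii,\jj)=(1,3)$ one has $\bb_{1,2}=\ss_1\ss_2$ and $\bb_{2,3}=\ss_3\ss_2$, and the substitution gives $\aa_{1,3}=\ss_1\ss_3^{-1}\ss_2^{-1}\ss_1^{-1}$. A positive $\ss$-word for $\aa_{1,3}$ does exist (since $\aa_{1,3}$ lies in~$\BDD4$), but your induction does not produce it, and finding explicit positive $\ss$-expressions for all the $\bb_{\ii,\jj}$ is a genuinely delicate problem that the proposal does not solve.

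The ``more conceptual alternative'' you relegate to a closing remark is in fact the paper's actual proof of the inclusion $\bb_{\ii,\jj}\in\BDD\nn$, and it is easier than you suggest: no handle reduction is needed. The braid relations give $\aa_{\ii,\jj}=\sig{\jj-1}\pdots\sig{\ii+1}\sig\ii\siginv{\ii+1}\pdots\siginv{\jj-1}$, an expression that is $\sig\ii$-positive on its face (one letter~$\sig\ii$, no~$\siginv\ii$, no generator of smaller index), so $\bb_{\ii,\jj}$ is $\sig\ii$-positive for odd~$\ii$ and $\sig\ii$-negative for even~$\ii$; by the Dubrovina--Dubrovin description of~$\BDD\nn$ as the set of such braids (recalled from~\cite{Dhr}, and due to~\cite{DuD} rather than Ito--Navas), this places $\bb_{\ii,\jj}$ in~$\BDD\nn$ with no explicit $\ss$-word required. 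That observation, combined with your telescoping identities, is the whole proof; I would promote it from fallback to main argument and drop the attempted explicit induction.
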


\begin{proof}
We recall from~\cite{Dhr} that a braid is called $\sig\ii$-positive (\resp negative) if it admits a decomposition in terms of the generators~$\sig\kk$ that contains no letter~$\sigg\kk{\pm1}$ with $\kk < \ii$, and contains at least one letter~$\sig\ii$ and no letter $\siginv\ii$ (\resp at least one letter~$\siginv\ii$ and no letter~$\sig\ii$). Then an element of~$\BB_\nn$ belongs to~$\BDD\nn$ if and only if it is either $\sig\ii$-positive for some odd~$\ii$ or $\sig\ii$-negative for some even~$\ii$. The braid relations imply $\aa_{\ii, \jj} = \sig{\jj-1} \pdots \sig{\ii+1} \sig\ii \siginv{\ii + 1} \pdots \siginv{\jj-1}$ for $\ii < \jj$, hence $\aa_{\ii, \jj}$ is $\sig\ii$-positive, and $\bb_{\ii, \jj}$ is $\sig\ii$-positive for odd~$\ii$ and $\sig\ii$-negative for even~$\ii$. Therefore, $\bb_{\ii, \jj}$ belongs to~$\BDD\nn$ for all~$\ii, \jj$.
Conversely, in~$B_\nn$, we have
$$\sig\ii \pdots \sig{\nn-1} = (\sig\ii \pdots \sig{\nn-2} \sig{\nn-1} \siginv{\nn-2} \pdots \siginv\ii)(\sig\ii \pdots \sig{\nn-3} \sig{\nn-2} \siginv{\nn-3} \pdots \siginv\ii) \pdots (\sig\ii \sig{\ii+1} \siginv\ii) (\sig\ii),$$
whence $\ss_\ii = (\sig\ii \pdots \sig{\nn-1})^{\parity{\ii+1}} = 
\begin{cases}
\bb_{\ii, \nn} \bb_{\ii, \nn-1} \pdots \bb_{\ii, \ii+1}
&\mbox{for odd~$\ii$},\\
\bb_{\ii, \ii+1} \pdots \bb_{\ii, \nn-1} \bb_{\ii, \nn}
&\mbox{for even~$\ii$}.
\end{cases}$ Hence $\ss_\ii$ belongs to the submonoid of~$B_\nn$ generated by the~$\bb_{\ii, \jj}$'s and, finally, $\BDD\nn$ coincides with the latter. 
\end{proof}

Proposition~\ref{P:DD} makes it natural to wonder whether $\BDD\nn$ admit a triangular presentation based on the elements~$\bb_{\ii, \jj}$, or on connected elements.
The answer is positive for $\nn = 3$. Indeed, starting from the standard presentation of~$B_3$ in terms of the~$\aa_{\ii, \jj}$'s, here $\sig1\sig2 = \sig2 \aa_{1,3} = \aa_{1,3}\sig1$, one deduces that $\BDD3$ admits the presentation $\Mon{\bb_{1,2}, \bb_{2,3}, \bb_{1,3}}{\bb_{1, 2} = \bb_{1,3} \bb_{1,2} \bb_{2,3}, \bb_{1,3} = \bb_{2,3} \bb_{1,3} \bb_{1,2}}$, which is~\eqref{E:DD} with $\tta, \ttb, \ttc$ interpreted as $\sig1$, $\aa_{1,3}$, and~$\siginv2$. The quasi-central element is then $\sig1\sigg22\sig1$ and the associated endomorphism is defined by $\phi(\sig1) = \sigg22\sig1\sigg2{-2}$ and $\phi(\sig2) = \sig2$.

For $\nn \ge 4$, the question remains open. It might be natural to replace, for even~$\ii$, the generators~$\aa_{\ii, \jj}\inv$ with the symmetric versions $\siginv\ii \pdots \siginv\jj\sig{\jj-1}\pdots \sig\ii$, \ie, to put the negative factors first. Then relations similar to those for~$\BDD3$ arise, typically $\tta = \ttb \tta\ttc$, $\ttb = \ttc\ttb\tta$, $\ttc = \ttd \ttc \tte$, $\ttd = \tte \ttd \ttc$ for~$\BDD4$, but the associated monoid is not~$\BDD4$ because $\tta\tte = \tte\tta$ is missing.

We conclude with an amusing application. For small values of~$\nn$, the connection of Proposition~\ref{P:DD} between the Dubrovina--Dubrovin ordering and the Birman--Ko--Lee generators of braid groups implies the existence of a braid ordering on~$\BB_\nn$ that is isolated (contrary to the Dehornoy ordering) and, at the same time, includes the positive braid monoid~$\BP\nn$ (contrary to the Dubrovina--Dubrovina ordering).

\begin{coro}
The subsemigroup generated by $\sig1$, $\sig2$, and $\sig1\siginv2\siginv1$ is the positive cone of an isolated left-invariant ordering on the braid group~$\BB_3$. The subsemigroup generated by $\sig1$, $\sig2$, $\sig3$, $\sig1\sig2\siginv1$, $\sig2\siginv3\siginv2$, and $\sig1\sig2\siginv3\siginv2\siginv1$ is the positive cone of an isolated left-invariant ordering on the braid group~$\BB_4$.
\end{coro}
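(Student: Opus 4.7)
The plan is uniform for both cases: for each $\nn \in \{3,4\}$, exhibit a triangular presentation $\Pres\SS\RR$ whose associated monoid is of $O$-type (by the tools of Sections~\ref{S:Mult}--\ref{S:Groups}) and whose associated group $\Gr\SS\RR$ is isomorphic to $\BB_\nn$, with the generators in $\SS$ matching the specific elements in the statement. Theorem~\ref{T:Main} then directly provides the left-invariant ordering on $\BB_\nn$ with positive cone the subsemigroup generated by the listed elements, and finiteness of $\SS$ ensures isolation in $\LO{\BB_\nn}$.

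For $\nn = 3$, set $\aa = \sig1$, $\ttc = \sig2$, and $\ttb = \sig1\siginv2\siginv1$. The braid relation $\sig1\sig2\sig1 = \sig2\sig1\sig2$ gives immediately $\ttb\aa\ttc = \aa$ and $\aa\ttc\ttb = \ttc$ in $\BB_3$, so these three elements realise the triangular presentation $\Pres{\aa,\ttb,\ttc}{\aa = \ttb\aa\ttc,\ \ttc = \aa\ttc\ttb}$. After the cyclic relabeling $(\tta,\ttb,\ttc) \mapsto (\ttc,\aa,\ttb)$ this is the $\pp = \qq = \rr = 0$ case of row~6 of Table~\ref{T:RecapBis} (Example~\ref{X:NotDomin}): the abstract monoid is of $O$-type with central element $(\ttc\aa)^3$ by Proposition~\ref{P:Fam6}, and its enveloping group is $\Gr{\ttx,\tty}{\tty^3 = \ttx^2} \cong \BB_3$ by Corollary~\ref{C:Fam6}. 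This completes the $\nn = 3$ case.

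For $\nn = 4$, write the six generators as $\tta,\ttb,\ttc,\ttx,\tty,\ttz$ so that $\tta = \sig1$, $\ttb = \sig2$, $\ttc = \sig3$, $\ttx = \sig1\sig2\siginv1$, $\tty = \sig2\siginv3\siginv2$, $\ttz = \sig1\sig2\siginv3\siginv2\siginv1$. Direct computation with the braid relations $\sig1\sig2\sig1 = \sig2\sig1\sig2$, $\sig2\sig3\sig2 = \sig3\sig2\sig3$ and the commutation $\sig1\sig3 = \sig3\sig1$ yields four triangular relations of the $\nn = 3$ shape, namely $\tty\ttb\ttc = \ttb$, $\ttb\ttc\tty = \ttc$, $\ttz\ttx\ttc = \ttx$, and $\ttx\ttc\ttz = \ttc$, together with the auxiliary identities $\ttx\tta = \tta\ttb$, $\ttz\tta = \tta\tty$, and $\ttc\tta = \tta\ttc$ that entwine $\tta$ with the rest. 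Assemble these (after a Tietze transformation introducing an auxiliary letter to make the $\NN$-chain linear) into a triangular presentation $\Pres\SS\RR$ of the submonoid $\MM_4 \subset \BB_4$ they generate; take as distinguished element $\Delta$ a suitable power of the Garside half-twist $\Delta_4 = \sig1\sig2\sig3\sig1\sig2\sig1$, which is central in $\BB_4$, lies in $\BP{4} \subset \MM_4$, and satisfies $\ss \dive \Delta$ in $\MM_4$ for every generator $\ss$ because $\ss\inv \Delta_4^{2\kk}$ is a positive braid for $\kk$ large (and positive braids belong to $\MM_4$ since $\tta,\ttb,\ttc \in \SS$). Theorem~\ref{T:Main} then yields the desired ordering, with Tietze transformations confirming $\Gr\SS\RR \cong \BB_4$.

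The main obstacle for $\nn = 4$ lies in producing the fifth triangular relation attaching $\tta = \sig1$ to the $\NN$-chain: none of $\tta\inv\ss$ or $\ss\inv\tta$ is a positive word in the six generators for any $\ss \in \{\ttb,\ttc,\ttx,\tty,\ttz\}$, so attaching $\tta$ requires either the Tietze enlargement sketched above (say, by introducing a letter naming $\ttx\tta = \tta\ttb$) or an appeal to the symmetric left-$O$-type criterion of Proposition~\ref{P:Main1} in place of the right-$O$-type one. Once this is done, the remaining steps---computing the endomorphism witnessing quasi-centrality and verifying the presentation really defines $\BB_4$---are finite verifications within reach of the experimental techniques of Section~\ref{S:Exp}.
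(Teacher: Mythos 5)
Your $\BB_3$ argument is essentially sound and coincides with the identification the paper itself records right after its proof: the submonoid generated by $\sig2$, $\sig1$, $\sig1\siginv2\siginv1$ realizes the presentation~\eqref{E:DD}, i.e.\ $\Pres{\tta,\ttb,\ttc}{\tta=\ttb\tta\ttc,\ \ttb=\ttc\ttb\tta}$, whose monoid is of $O$-type. One correction, though: Proposition~\ref{P:Fam6} and Corollary~\ref{C:Fam6} concern the family with exponent $\pp+2$, i.e.\ relations $\tta=\ttb\tta^{\pp+2}\cdots$, and do \emph{not} specialize to $\tta=\ttb\tta\ttc$; the relevant entry is row~6 of Table~\ref{T:RecapBis} (the $\pp+1$ variant, whose verification the paper skips) together with Example~\ref{X:NotDomin}, where the quasi-central element $\ttb^2\tta^2$ is exhibited. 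You also need to check that \eqref{E:DD} is a group presentation of $\BB_3$ on these generators, which is routine but not written out.

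The $\BB_4$ half has a genuine gap. The four relations you list, read as triangular relations, give $\ttc=\ttb\,(\ttc\tty)$ and $\ttc=\ttx\,(\ttc\ttz)$, i.e.\ two relations with the same left-hand side $\ttc$, so $\NN$ is not injective and this is not a right-triangular presentation in the sense of Definition~\ref{D:Triang} (it is exactly the pattern of Example~\ref{X:Abelian} that the definition excludes); moreover $\sig1$ is not attached to the chain at all, and the ``Tietze enlargement'' that is supposed to fix both problems is never produced. This is not a repairable omission within the paper's framework: the submonoid in question is an automorphic image of $\BDD4$, Corollary~\ref{C:DD} shows via Proposition~\ref{P:NoPres} that $\BDD4$ admits no right-triangular presentation on its standard generators, Question~\ref{Q:Braids} records that the existence of \emph{any} triangular presentation for $\BDD\nn$, $\nn\ge4$, is open, and the paper explicitly notes that the natural candidate relations of the type you wrote fail to present $\BDD4$ because the relation $\tta\tte=\tte\tta$ is missing. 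So your route for $\BB_4$ asks you to solve an open problem.

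The paper's actual proof avoids triangular presentations entirely and is two lines: $\BDD\nn$ is of $O$-type by Dubrovina--Dubrovin, Proposition~\ref{P:DD} rewrites its generating set in terms of the band generators $\aa_{\ii,\jj}$, and conjugation by $\sig1\pdots\sig{\nn-1}$ is an automorphism $\phi_\nn$ of $\BB_\nn$ permuting the $\aa_{\ii,\jj}$ cyclically. Hence $\phi_3(\BDD3)$ and $\phi_4^2(\BDD4)$ are again submonoids of $O$-type, with finitely many generators that compute out to exactly the elements in the statement; finite generation of the positive cone gives isolation as in Proposition~\ref{P:Recipe}. You should substitute this transport-by-automorphism argument for your $\BB_4$ construction (it also subsumes the $\BB_3$ case).
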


\begin{proof}
Conjugation by~$\sig1 \pdots \sig\nn$ defines an order~$\nn$ automorphism~$\phi_\nn$ of~$B_\nn$ that rotates the Birman--Ko--Lee generators~\cite{Dhr}. For $\nn = 3$, one has $\phi_3 : \sig1 \mapsto \sig2 \mapsto \aa_{1,3} \mapsto\nobreak \sig1$. By Propos\-ition~\ref{P:DD}, $\BDD3$ is generated by~$\sig1$, $\aa_{1,3}$, $\siginv2$. Hence the monoid~$\phi_3(\BDD3)$ is generated by~$\sig2$, $\sig1$, $\aa_{1,3}\inv$, and it is (when $1$ is removed) the positive cone of a left-invariant ordering on~$B_3$. 

Similarly, for $\nn = 4$, we have $\phi_4 : \sig1 \mapsto \sig2 \mapsto \sig3 \mapsto \aa_{1,4} \mapsto \sig1$ and $\aa_{1,3} \leftrightarrow \aa_{2,4}$. By Proposition~\ref{P:DD}, the monoid~$\BDD4$ is generated by~$\sig1$, $\aa_{1,3}$, $\aa_{1,4}$, $\siginv2$, $\aa_{2,4}\inv$, $\sig3$. Hence $\phi_4^2(\BDD4)$ is generated by~$\sig3$, $\aa_{1,3}$, $\sig2$, $\sig3$, $\aa_{1,4}\inv$, $\aa_{2,4}\inv$, $\sig1$, and it is (with $1$ removed) the positive cone of a left-invariant ordering.
\end{proof}

By the above remarks, the monoid~$\phi_3(\BDD3)$ admits the presentation~\eqref{E:DD} with $\tta = \sig2$, $\ttb = \sig1$, $\ttc = \aa_{1,3}\inv = \sig1 \siginv2 \siginv1$. The construction does not extend to $\nn \ge 5$, because the negative entries~$\siginv2$ and~$\siginv4$ cannot be eliminated simultaneously.


\end{document}